\documentclass[a4paper,12pt]{amsart}

\usepackage{framed}
\usepackage{accents}
\usepackage{syntonly}
\usepackage{amsmath,amsfonts,amssymb,amsthm,euscript}
\usepackage{enumerate}
\setlength{\textwidth}{418pt}
\setlength{\oddsidemargin}{17.5pt}
\setlength{\evensidemargin}{17.5pt}
\setlength{\parskip}{3pt}

\setcounter{section}{-1}
\newcommand{\kappal}{\mbox{\large{$\kappa$}}}
\def\g{g_+}
\def\M{M_+}
\def\Rico{\operatorname{Ric}^{g_+}}
\usepackage{mathrsfs}
\def\scrI{\mathscr{I}}
\usepackage{enumitem}
\usepackage{url}
\usepackage{tikz}
\usetikzlibrary{arrows}
\usetikzlibrary{decorations.pathreplacing}

\def\bbX{{\mathbb{X}}}

\newcommand{\tnabla}{\underline{\nabla}}
\newcommand{\TN}{\mathrm{N}}

\newcommand{\bbL}{\mathbb{L}}
\newcommand{\cZ}{\mathcal{Z}}
 
\newcommand{\og}{\overline{g}}
\newcommand{\occ}{\overline{\cc}}

\providecommand{\pramb}[1]{\accentset{\mathsf{v}}{#1}} 
\newcommand{\chnabla}{\pramb{\nabla}}
\newcommand{\bnabla}{\overline{\nabla}}
\newcommand{\TS}{\mathrm{S}}

\newcommand{\bX}{\bar{X}}

\newcommand{\bZ}{\bar{Z}}
\newcommand{\Fialkow}{\mathcal{F}} 

\def\Cal{\mathcal}

\newcommand{\up}{\Upsilon}
\newcommand{\Up}{\up}
\newcommand{\ocT}{\overline{\cT}} 


\newcommand{\V}{\mbox{\sf P}}
\newcommand{\J}{\mbox{\sf J}}
\newcommand{\cc}{\boldsymbol{c}}


\newcommand{\ce}{{\Cal E}}
\newcommand{\ct}{{\Cal T}}
\newcommand{\cT}{{\mathcal T}}

\newcommand{\rpl}                         
{\mbox{$
\begin{picture}(12.7,8)(-.5,-1)
\put(0,0.2){$+$}
\put(4.2,2.8){\oval(8,8)[r]}
\end{picture}$}}


\def\cQ{\mathcal{Q}}

\def\cV{\mathcal{V}}
\def\cB{\mathcal{B}}
\def\cF{\mathcal{F}}
\def\cH{\mathcal{H}}

\def\cN{\mathcal{N}}
\def\cU{\mathcal{U}}
\def\cL{\mathcal{L}}

\def\al{\alpha}

\def\ph{\phi}

\def\bg{\mbox{\boldmath $g$}}

\newcommand{\bV}{\mathbb{V}}

\newcommand{\lto}{\longrightarrow}

\newcommand{\om}{\omega}

\newcommand{\si}{\sigma}

\newcommand{\End}{\operatorname{End}}

\newcommand{\lpl}{
  \mbox{$
  \begin{picture}(12.7,8)(-.5,-1)
  \put(2,0.2){$+$}
  \put(6.2,2.8){\oval(8,8)[l]}
  \end{picture}$}}

\newtheorem{theorem}{Theorem}[section]
\newtheorem{lemma}[theorem]{Lemma}
\newtheorem{proposition}[theorem]{Proposition}
\newtheorem{corollary}[theorem]{Corollary}

\theoremstyle{remark}
\newtheorem{remark}[theorem]{\rm\bf Remark}
\newtheorem{problem}[theorem]{Problem}  

\newcommand{\nn}[1]{(\ref{#1})}

\newcommand{\Ric}{\operatorname{Ric}}
\newcommand{\Sc}{\operatorname{Sc}}
\newcommand{\nd}{\nabla}


\def\endrk{\hfill \begin{tikzpicture}
\clip (0,-0.15) rectangle (0.17, 0.15);
\draw [black, line width=90] (0,0) -- (0.17,0);
\end{tikzpicture}}

\def\sideremark#1{\ifvmode\leavevmode\fi\vadjust{\vbox to0pt{\vss
 \hbox to 0pt{\hskip\hsize\hskip1em
 \vbox{\hsize3cm\tiny\raggedright\pretolerance10000
  \noindent #1\hfill}\hss}\vbox to8pt{\vfil}\vss}}}%

                                                   %

\renewenvironment{leftbar}[1][\hsize]
{%
\MakeFramed{\hsize#1\advance\hsize-\width\FrameRestore}%
}
{\endMakeFramed}
\setlength{\FrameSep}{0pt}

\newenvironment{HM-adds}{\begin{leftbar}}{\end{leftbar}}

\author{Sean Curry \& A.\ Rod Gover}

\title{An introduction to conformal geometry and tractor calculus, with a view to applications in general relativity}
   \thanks{ARG gratefully acknowledges
    support from the Royal Society of New Zealand via Marsden Grant
  13-UOA-018.}

\address{SC \& ARG: Department of Mathematics\\
  The University of Auckland\\
  Private Bag 92019\\
  Auckland 1142\\
 New Zealand} \email{r.gover@auckland.ac.nz}
\email{sean.curry@auckland.ac.nz}

\subjclass[2010]{Primary 53A30, 35Q75, 53B15, 53C25; Secondary 83C05, 35Q76, 53C29}  
\keywords{Einstein metrics, conformal differential geometry, conformal infinity}

 \newtheorem{exe}{Exercise}
 \newtheorem{prob}{Problem}
\newtheorem{definition}{Definition}

\begin{document}

\begin{abstract}
The following are expanded lecture notes for the course of eight one
hour lectures given by the second author at the 2014 summer school
Asymptotic Analysis in General Relativity held in Grenoble by the
Institut Fourier. The first four lectures deal with conformal geometry
and the conformal tractor calculus, taking as primary motivation the
search for conformally invariant tensors and diffrerential
operators. The final four lectures apply the conformal tractor
calculus to the study of conformally compactified geometries,
motivated by the conformal treatment of infinity in general
relativity.
\end{abstract}

\maketitle

\pagestyle{myheadings} \markboth{Curry \& Gover}{Conformal geometry and GR}


\tableofcontents

\thanks{ARG gratefully acknowledges support from the Royal
  Society of New Zealand via Marsden Grant 13-UOA-018 }

\section{Introduction}

\begin{definition}\label{cman}
\noindent A conformal $n$-manifold ($n\geq 3$) is the structure $(M,\cc)$ where
\begin{itemize}
\item $M$ is an $n$-manifold,
\item $\cc$ is a  conformal equivalence class of signature $(p,q)$ metrics,\\ i.e. $g,\widehat{g}\in \cc\stackrel{{\rm def.}}{\Longleftrightarrow} \widehat{g}=\Omega^2 g$ and $C^\infty(M)\ni\Omega>0$.
\end{itemize}
\end{definition}

To any pseudo-Riemannian $n$-manifold $(M,g)$ with $n\geq3$ there is an associated conformal manifold $(M,[g])$ where $[g]$ is the set of all metrics $\hat{g}$ which are smooth positive multiples of the metric $g$. In Riemannian signature $(p,q)$ $=$ $(n,0)$ passing to the conformal manifold means geometrically that we are forgetting the notion of lengths (of tangent vectors and of curves) and retaining only the notion of angles (between tangent vectors and curves) and of ratios of lengths (of tangent vectors at a fixed point) associated to the metric $g$. In Lorentzian signature $(n-1,1)$ passing to the conformal manifold means forgetting the ``spacetime interval'' (analogous to length in Riemannian signature) and retaining only the light cone structure of the Lorentzian manifold. On a conformal Lorentzian manifold one also has the notion of angles between intersecting spacelike curves and the notion of orthogonality of tangent vectors at a point, but the conformal structure itself is determined by the light cone structure, justifying the use of the word \emph{only} in the previous sentence. 

The significance of conformal geometry for general relativity largely stems from the fact that the light cone structure determines the \emph{causal structure} of spacetime (and, under some mild assumptions, the causal structure determines the light cone structure). On top of this we shall see in the lectures that the Einstein field equations admit a very nice interpretation in terms of conformal geometry.

In these notes we will develop the natural invariant calculus on conformal manifolds, the (conformal) tractor calculus, and apply it to the study of conformal invariants and of conformally compactified geometries. The course is divided into two parts consisting of four lectures each. The first four lectures deal with conformal geometry and the conformal tractor calculus, taking as primary motivation the problem of constructing conformally invariant tensors and differential operators. The tools developed for this problem however allow us to tackle much more than our original problem of invariants and invariant operators. In the final four lectures they will be applied in particular to the study of conformally compactified geometries, motivated by the conformal treatment of infinity in general relativity. Along the way we establish the connection between the conformal tractor calculus and Helmut Friedrich's conformal field equations. We also digress for one lecture, discussing conformal hypersurface geometry, in order to facilitate the study of the relationship of the geometry of conformal infinity to that of the interior. Finally we show how the tractor calculus may be applied to treat aspects of the asymptotic analysis of boundary problems on conformally compact manifolds. For completeness an appendix has been added which covers further aspects of the conformal tractor calculus as well as discussing briefly the canonical conformal Cartan bundle and connection.

The broad philosophy behind our ensuing discussion is that conformal geometry is important not only for understanding conformal manifolds, or conformally invariant aspects of pseudo-Riemannian geometry (such as conformally invariant field equations), but that it is highly profitable to think of a pseudo-Riemannian manifold as a kind of symmetry breaking (or holonomy reduction) of a conformal manifold whenever there are any (even remotely) conformal geometry related aspects of the problem being considered. Our discussion of the conformal tractor calculus will lead us naturally to the notions of almost Einstein and almost pseudo-Riemannian geometries, which include Einstein and pseudo-Riemannian manifolds (respectively) as well as their respective conformal compactifications (should they admit one). The general theory of Cartan holonomy reductions then enables us to put constraints on the smooth structure and the geometry of conformal infinities of Einstein manifolds, and the tractor calculus enables us to partially generalise these results to pseudo-Riemannian manifolds.

We deal exclusively with conformal manifolds of at least three
dimensions in these notes. That is not to say that two dimensional
conformal manifolds cannot be fitted in to the framework which we
describe. However, in order to have a canonical tractor calculus on
two dimensional conformal manifolds the conformal manifold needs to be
equipped with an extra structure, weaker than a Riemannian structure
but stronger than a conformal structure, called a \emph{M\"obius
  structure}. In higher dimensions a conformal structure determines a
canonical M\"obius structure via the construction of the canonical
conformal Cartan bundle and connection (outlined in the appendix,
Section \ref{CartanConn}). In two dimensions there is no canonical
Cartan bundle and connection associated to a conformal manifold, this
(M\"obius) structure must be imposed as an additional assumption if we
wish to work with the tractor calculus. We note that to any Riemannian
$2$-manifold, or to any (nondegenerate) $2$-dimensional submanifold of
a higher dimensional conformal manifold, there is associated a
canonical M\"obius structure and corresponding tractor calculus.

Also left out in these notes is any discussion of conformal spin geometry. In this case there is again a canonical tractor calculus, known as \emph{spin tractor calculus} or \emph{local twistor calculus}, which is a refinement of the usual conformal tractor calculus in the same way that spinor calculus is a refinement of the usual tensor calculus on pseudo-Riemannian spin manifolds. The interested reader is referred to \cite{Branson-ConfSpin,PenroseMacCallum}.

\subsection{Notation and conventions} We may use abstract indices, or no indices, or frame indices according to convenience. However, we will make particularly heavy use of the abstract index notation. For instance if $L$ is a linear endomorphism of a finite dimensional vector space $V$ then we may choose to write $L$ using    abstract indices as $L^a{_b}$ (or $L^b{_c}$, or $L^{a'}{_{b'}}$, it makes no difference as the indices are just place holders meant to indicate tensor type, and contractions). In this case we would write a vector $v\in V$ as $v^a$ (or $v^b$, or $v^{a'}$, ...) and we would write the action of $L$ on $v$ as $L^a{_b}v^b$ (repeated indices denote tensor contraction so $L^a{_b}v^b$ simply means $L(v)$). Similarly if $w\in V^*$ then using abstract indices we would write $w$ as $w_a$ (or $w_b$, or $w_{a'}$, ...) and $w(v)$ as $w_a v^a$, whereas the outer product $v\otimes w \in \mathrm{End}(V)$ would be written as $v^a w_b$ (or $v^b w_c$, or $w_b v^a$, ...). A covariant 2-tensor $T\in \otimes^2 V^*$ may be written using abstract indices as $T_{ab}$, the symmetric part of $T$ is then denoted by $T_{(ab)}$ and the antisymmetric part by $T_{[ab]}$, that is
$$
T_{(ab)}=\frac{1}{2} \left(T_{ab}+T_{ba}\right) \quad \mbox{and} \quad T_{[ab]}=\frac{1}{2} \left(T_{ab}-T_{ba}\right).
$$
Note that swapping the indices $a$ and $b$ in $T_{ab}$ amounts to swapping the ``slots'' of the covariant two tensor (so that $b$ becomes the label for the first slot and $a$ for the second), this gives in general a different covariant two tensor from $T_{ab}$ whose matrix with respect to any basis for $V$ would be the transpose of that of $T_{ab}$. We can similarly define the symmetric or antisymmetric part of any covariant tensor $T_{ab\cdots e}$ and these are denoted by $T_{(ab\cdots e)}$ and $T_{[ab\cdots e]}$ respectively. We use the same bracket notation for the symmetric and antisymmetric parts of contravariant tensors. Note that we do not have to symmetrise or skew-symmetrise over all indices, for instance $T^{a[bc]}{_d}$ denotes
$$
\frac{1}{2} \left(T^{abc}{_d} - T^{acb}{_d}\right). 
$$

The abstract index notation carries over in the obvious way to vector and tensor fields on a manifold. The virtue of using abstract index notation on manifolds is that it makes immediately apparent the type of tensorial object one is dealing with and its symmetries without having to bring in extraneous vector fields or 1-forms. We will commonly denote the tangent bundle of $M$ by $\ce^a$, and the cotangent bundle of $M$ by $\ce_a$. We then denote the bundle of covariant 2-tensors by $\ce_{ab}$, its subbundle of symmetric 2-tensors by $\ce_{(ab)}$, and so on. In order to avoid confusion, when working on the tangent bundle of a manifold $M$ we will always use lower case Latin abstract indices taken from the beginning of the alphabet ($a$, $b$, etc.) whereas we will take our frame indices from a later part of the alphabet (starting from $i$, $j$, etc.). 

It is common when working with abstract index notation to use the same notation $\ce^a$ for the tangent bundle and its space of smooth sections. Here however we have used the notation $\Gamma(\cV)$ for the space of smooth sections of a vector bundle $\cV$ consistently throughout, with the one exception that for a differential operator $D$ taking sections of a vector bundle $\mathcal{U}$ to sections of $\cV$ we have written
$$
D:\mathcal{U}\rightarrow \cV
$$
in order to simplify the notation.

Consistent with our use of $\ce^a$ for $TM$ we will often denote by $\ce$ the trivial $\mathbb{R}$-bundle over our manifold $M$, so that $\Gamma(\ce)=C^\infty (M)$. When using index free notation we denote the space of vector fields on $M$ by $\mathfrak{X} (M)$, and we use the shorthand $\Lambda^k$ for $\Lambda^k T^*M$ (when the underlying manifold $M$ is understood). Unless otherwise indicated $[\,\cdot\, , \,\cdot\,]$ is the commutator bracket acting on pairs of endomorphisms. Note that the Lie bracket arises in this way when we consider vector fields as derivations of the algebra of smooth functions. In all of the following all structures (manifolds, bundles, tensor fields, etc.) will be assumed smooth, meaning $C^\infty$.
 
\subsubsection{Coupled connections} We assume that the reader is familiar with the notion of a linear connection on a vector bundle $\cV \rightarrow M$ and the special case of an affine connection (being a connection on the tangent bundle of a manifold). Given a pair of vector bundles $\cV$ and $\cV'$ over the same manifold $M$ and linear connections $\nabla$ and $\nabla'$ defined on $\cV$ and $\cV'$ respectively, then there is a natural way to define a connection on the tensor product bundle $\cV\otimes \cV'\rightarrow M$. The \emph{coupled connection} $\nabla^\otimes$ on $\cV\otimes \cV'$ is given on a simple section $v\otimes v'$ of $\cV\otimes \cV'$ by the Leibniz formula
$$
\nabla^{\otimes}_X (v\otimes v') = (\nabla_X v)\otimes v' + v\otimes (\nabla'_X v')
$$
for any $X\in \mathfrak{X}(M)$. Since $\Gamma(\cV\otimes \cV')$ is (locally) generated by simple sections, this formula determines the connection $\nabla^{\otimes}$ uniquely. In order to avoid clumsy notation we will often simply write all of our connections as $\nabla$ when it is clear from the context which (possibly coupled) connection is being used.

\subsubsection{Associated bundles} We assume also that the reader is familiar with the notion of a vector bundle on a smooth manifold $M$ as well as that of an $H$-principal bundle over $M$, where $H$ is a Lie group. If $\pi:\mathcal{G}\rightarrow M$ is a (right) $H$-principal bundle and $\mathbb{V}$ is a (finite dimensional) representation of $H$ then the \emph{associated vector bundle} $\mathcal{G}\times_H \mathbb{V} \rightarrow M$ is the vector bundle with total space defined by
$$
\mathcal{G}\times_H \mathbb{V} = \mathcal{G}\times \mathbb{V} /\sim
$$
where $\sim$ is the equivalence relation 
$$
(u,v) \sim (u\cdot h,h^{-1}\cdot v), \quad h\in H
$$
on $\mathcal{G}\times \mathbb{V}$; the projection of $\mathcal{G}\times_H \mathbb{V}$ to $M$ is simply defined by taking $[(u,v)]$ to $\pi(u)$. For example if $\mathcal{F}$ is the linear frame bundle of a smooth $n$-manifold $M$ then
$$
TM = \mathcal{F}\times_{\mathrm{GL}(n)} \mathbb{R}^n \quad \mbox{and} \quad T^*M = \mathcal{F}\times_{\mathrm{GL}(n)} (\mathbb{R}^n)^*.
$$

Similarly if $H$ is contained in a larger Lie group $G$ then one can extend any principal $H$-bundle $\mathcal{G}\rightarrow M$ to a principal $G$-bundle $\tilde{\mathcal{G}}\rightarrow M$ with total space
$$
\tilde{\mathcal{G}}= \mathcal{G}\times_H G = \mathcal{G}\times G/\sim \quad \mbox{where} \quad (u,g) \sim (u\cdot h,h^{-1}\cdot g).
$$
For example if $(M,g)$ is a Riemannian $n$-manifold and $\mathcal{O}$ denotes its orthonormal frame bundle then
$$
\mathcal{F} = \mathcal{O}\times_{\mathrm{O}(n)}\mathrm{GL}(n)
$$
is the Linear frame bundle of $M$.

\section{Lecture 1: Riemannian invariants and invariant operators}\label{L1}

Recall that if $\nabla$ is an affine connection then its {\em torsion} is the tensor field 
$T\in \Gamma (TM\otimes \Lambda^2 T^*M) $ defined by  
$$
T^{\nabla}(u,v) = \nabla_u v-\nabla_v u -[u,v] \quad \mbox{for all} \quad u,v\in \mathfrak{X}(M).
$$
It is interesting that this is a
tensor; by its construction one might expect a differential operator.
Importantly torsion is an {\em invariant} of connections. On a smooth manifold the map
$$
\nabla\mapsto T^\nabla
$$ 
taking connections to their torsion depends only the smooth
structure. By its construction here it is clear that it is independent
of any choice of coordinates. 

For any connection $\nabla$ on a vector bundle $\cV$ its
curvature is defined by
$$
R^{\nabla}(u,v)W:= [\nabla_u,\nabla_v] W-\nabla_{[u,v]}W \quad \mbox{for all} \quad u,v\in \mathfrak{X}(M), \quad \mbox{and} \quad W\in \Gamma(\cV),
$$
and $R^{\nabla}\in \Gamma(\Lambda^2 T^*M\otimes \End(\cV))$.
Again the map taking connections to their curvatures 
$$
\nabla \mapsto R^{\nabla} 
$$ clearly depends only the (smooth) vector bundle structure of $\cV$
over the smooth manifold $M$. The curvature is an invariant of linear
connections. In particular this applies to affine connections,
i.e. when $\cV=TM$.

\smallskip

\subsection{Ricci calculus and Weyl's invariant theory} 
The most familiar setting for these objects is pseudo-Riemannian
geometry. In this case we obtain a beautiful local calculus that is
sometimes called the Ricci calculus. Let us briefly recall how this
works. Recall that a pseudo-Riemannian manifold consists of an
$n$-manifold $M$ equipped with a metric $g$ of signature $(p,q)$, that is a section 
$g\in \Gamma(S^2T^* M)$  such that pointwise $g$ is non-degenerate and of signature $(p,q)$.
Then $g$ canonically determines a distinguished affine connection called 
the Levi-Civita connection. This is the unique 
connection $\nabla$ satisfying:
\begin{itemize}[itemsep=4pt]
\item $\nabla g =0$ \;\;\;\;\mbox{(metric compatibility), and}
\item  $T^{\nabla}=0$ \;\;\;\;\mbox{(torsion freeness).} 
\end{itemize}

\noindent Thus on a smooth manifold we have a canonical map from each metric to
its Levi-Civita connection
$$
g\mapsto \nabla^g
$$ and, as above, a canonical map which takes each Levi-Civita
connection to its curvature $ \nabla^g\mapsto R^{\nabla^g}, $ called
the {\em Riemannian curvature}. Composing these we get a canonical map that
takes each metric to its curvature
$$
g\longmapsto R^g, 
$$ 
and this map depends only on the smooth structure of $M$. So we say
that $R^g$ is an invariant of the pseudo-Riemannian manifold $(M,g)$. How
can we construct more such invariants? Or ``all'' invariants, in some
perhaps restricted sense?

The first, and perhaps most important, observation is that using the Levi-Civita connection and Riemannian curvature one can proliferate Riemannian invariants. To simplify the explanation let's fix a pseudo-Riemannian manifold $(M,g)$ and use abstract index notation
(when convenient). Then the metric is written $g_{ab}$ and we write
$R_{ab}{}^c{}_d$ for the Riemannian curvature. So if $u,v,w$ are
tangent vector fields then so is $R(u,v)w$ and this is written
$$
u^av^b R_{ab}{}^c{}_d w^d.
$$ 
From curvature we can form the {\em Ricci} and {\em Scalar}
curvatures, respectively:
$$
\Ric_{ab}:=R_{ca}{}^c{}_b \quad \mbox{and}\quad \Sc:=g^{ab}\Ric_{ab},
$$
and these are invariants of $(M,g)$. As also are:
$$
\nabla_b R_{cdef}, \,  \nabla_a\nabla_b R_{cdef}, 
$$
which are {\em tensor valued invariants} and 
$$
\Ric^{ab}\Ric_{ab}\Sc= \vert \Ric \vert^2\Sc, \, \, R_{abcd}R^{abcd}=\vert R\vert^2, \, \, (\nabla_a R_{bcde})\nabla^a R^{bcde}=\vert \nabla R\vert^2,
$$ 
which are some {\em scalar valued invariants}. Here we have used
the metric (and its inverse) to raise and lower indices and
contractions are indicated by repeated indices.

Since this is a practical and efficient way to construct invariants,
it would be useful to know: Do \underline{all local Riemannian
  invariants} arise in this way? That is from partial or complete
contractions of expressions made using $g$, $ R$ and its covariant
derivatives $\nabla\cdots \nabla R$ (and the metric volume form
$vol^g$, if $M$ is oriented). We shall term invariants constructed
this way {\em Weyl invariants}.

Before answering this one first needs to be careful about what is
meant by a local invariant. For example, the following is a reasonable
definition for scalar invariants.
\begin{definition}\label{R-invtdef}
A {\em scalar Riemannian invariant} 
$P$ is a function which assigns to each pseudo-Riemannian $n$-manifold
$(M,g)$  a function
$P(g)$ such that:
\begin{enumerate}
\item[(i)] $P(g)$ is natural, in the sense that
for any diffeomorphism $\phi:M\to M$ we have $P(\phi^* g ) =
\phi^* P(g) $.
\item[(ii)] $P$ is given by a universal polynomial expression of the
  nature that, given a local coordinate system $(x^i)$ on $(M,g)$,
  $P(g)$ is given by a polynomial in the variables $g_{mn}$,
  $\partial_{i_1}g_{mn}$, $\cdots$,
  $\partial_{i_1}\partial_{i_2}\cdots \partial_{i_k} g_{mn}$, $(\det
  g)^{-1}$,  for some positive
  integer $k$.
\end{enumerate}
\end{definition}
Then with this definition, and a corresponding definition for
tensor-valued invariants, it is true that all local invariants arise
as {\em Weyl invariants}, and the result goes by the name of {\em
Weyl's classical invariant theory}, see e.g. \cite{AtiyahBP,Stredder}. Given this result, in the following when we mention pseudo-Riemannian invariants we will mean Weyl invariants.

\subsection{Invariant operators, and analysis} \label{iops} 
In a similar way we can use the Ricci calculus to construct invariant
differential operators on pseudo-Riemannian manifolds. For example the 
 (Bochner) Laplacian is given by the formula
\begin{equation}\nonumber
\nabla^a\nabla_a=\Delta: \mathcal{E}\longrightarrow\mathcal{E},
\end{equation}
in terms of the Levi-Civita connection $\nabla$.
 There are also obvious ways to make operators with curvature in coefficients, e.g.
\begin{equation}\nonumber
R_{a\phantom{c}b}^{\phantom{a}c\phantom{b}d}\nabla_c\nabla_d:\mathcal{E}\longrightarrow\mathcal{E}_{(ab)}.
\end{equation}

With suitable restrictions imposed, in analogy with the case of
invariants, one can make the statement that all local natural
invariant differential operators arise in this way. It is beyond our
current scope to make this precise, suffice to say that when we
discuss invariant differential operators on pseudo-Riemannian
manifolds we will again mean operators constructed in this
way.

\begin{remark}
If a manifold has a spin structure, then essentially the above is
still true but there is a further ingredient involved, namely the
Clifford product. This allows the construction of important operators
such as the Dirac operator. 
\end{remark}

The main point here is that the ``Ricci calculus'' provides an
effective and geometrically transparent route to the construction of
invariants and invariant operators. 

Invariants and invariant operators are the basic objects underlying
the first steps (and often significantly more than just the first steps) of
treating problems in general relativity and, more generally, in: 
\begin{itemize}
\item the global analysis of manifolds;
\item the study and application of geometric PDE;
\item Riemannian spectral theory;
\item physics and mathematical physics.
\end{itemize}
Furthermore from a purely theoretical point of view, we cannot claim
to understand a geometry if we do not have a good theory of local
invariants and invariant operators.

\section{Lecture 2: Conformal transformations and conformal covariance}

A good theory of conformal geometry should provide some hope of
treating the following closely related problems:
\begin{prob} Describe a practical way to generate/construct 
(possibly all)
local natural invariants of a conformal structure.
\end{prob}
\begin{prob} Describe a practical way to generate/construct 
(possibly all) natural linear differential operators that are canonical and well-defined on (i.e. are invariants of) a conformal structure.
\end{prob}
\noindent We have not attempted to be precise in these statements, since here they are mainly for the purpose of motivation. Let us first approach these na\"{\i}vely.

\subsection{Conformal Transformations} \label{n-one}

Recall that for any metric $g$ we can associate its Levi-Civita
connection $\nabla$. Let $e_i=\frac{\partial}{\partial
  x^i}=\partial_i$ be a local coordinate frame and $E^i$ its dual. Locally, any
connection is determined by how it acts on a frame field. For the Levi-Civita connection the resulting {\em connection coefficients}
$$
\Gamma^i_{\phantom{i}jk}:= E^i(\nabla_k e_j), \quad \mbox{where}\quad \nabla_k:=\nabla_{e_k} ,
$$ 
are often called the {\em Christoffel symbols}, and are given by the Koszul formula:
\begin{equation}\nonumber
\Gamma^i_{\phantom{i}jk}:= \frac{1}{2}g^{il}\left(g_{lj,k}+g_{lk,j}-g_{jk,l}\right)
\end{equation}
where $g_{ij}=g(e_i,e_j)$ and $g_{lj,k}=\partial_k g_{lj}$.

Using this formula for the Christoffel symbols we can easily compute
the transformation formula for $\nabla$ under a {\em conformal
  transformation} $g\mapsto \widehat{g}=\Omega^2 g$.  Let
$\Upsilon_a:=\Omega^{-1}\nabla_a\Omega$, $v^a\in\Gamma(\mathcal{E}^a)$
and $\om_b\in\Gamma(\mathcal{E}_b)$. Then we have:
\begin{equation}\label{eq:one}
\nabla^{\widehat{g}}_a v^b=\nabla_a v^b+\Upsilon_a v^b-\Upsilon^b v_a+\Upsilon^c v_c \delta_a^b,
\end{equation}
\begin{equation}\label{eq:two}
\nabla^{\widehat{g}}_a \om_b=\nabla_a \om_b-\Upsilon_a \om_b-\Upsilon_b \om_a+\Upsilon^c \om_c g_{a b} .
\end{equation}

For $\om_b\in \Gamma(\ce_b)$ we 
have from \nn{eq:two}
that 
$$
\om_b\mapsto \nabla_a \om_b- \nabla_b \om_a
$$
is conformally invariant. But this is just the exterior derivative $\om \mapsto d\om $, its conformal invariance is better seen from the fact that it
 is defined on a smooth manifold without further structure: for $u,v\in \mathfrak{X}(M)$
$$
\mathrm{d}\om (u,v) = u \om(v)-v\om (u)-\om ( [u,v]). 
$$ Inspecting \nn{eq:two} it is evident that this is the only first
order conformally invariant linear differential operator on $T^*M$
that takes values in an irreducible bundle.

The Levi-Civita connection acts also on other tensor bundles. We
can use the formulae \nn{eq:one} and \nn{eq:two} along with the product
rule to compute the conformal transformation of the result. For
example for a simple covariant 2-tensor
$$
u_b\otimes w_c \quad \mbox{we have} \quad \widehat{\nabla}_a(u_b\otimes w_c)=
(\widehat{\nabla}_a u_b)\otimes w_c+ u_b\otimes (\widehat{\nabla}_a w_c). 
$$ 
Thus we can compute $\widehat{\nabla}_a(u_b\otimes w_c) $ by
using \nn{eq:two} for each term on the right-hand-side.  But locally
any covariant 2-tensor is a linear combination of simple 2-tensors and
so we conclude that for a covariant 2-tensor $F_{bc}$
\begin{equation}\label{Ft}
\widehat{\nabla}_a F_{bc}= \nabla_a F_{bc}- 2\Upsilon_a F_{bc}
-\Upsilon_b F_{ac}-\Upsilon_c F_{ba} + \Upsilon^d F_{d c}g_{ab} +
\Upsilon^d F_{b d}g_{ac} .
\end{equation}
By the obvious extension of this idea one quickly calculates the
formula for the conformal transformation for the Levi-Civita covariant
derivative of an $(r,s)$-tensor.

{From} the formula \nn{Ft} we see that the completely skew part
$\nabla_{[a}F_{bc]}$ is conformally invariant. In the case $F$ is
skew, in that $F_{bc}=-F_{cb}$, this recovers that $dF$ is conformally
invariant. A more interesting observation arises with 
 the
divergence $\nabla^b F_{bc}$.  We have
$$
\widehat{\nabla}^b F_{bc}= \widehat{g}^{ab} \widehat{\nabla}_a F_{bc}, 
$$
and $\widehat{g}^{ab}= \Omega^{-2}g^{ab}$. Thus we obtain 
\begin{equation}\nonumber
\begin{split} \widehat{\nabla}^b F_{bc}=&\, 
    \Omega^{-2}g^{ab}\big( \nabla_a F_{bc}-
2\Upsilon_a F_{bc} -\Upsilon_b F_{ac}-\Upsilon_c F_{ba} + \Upsilon^d
F_{d c}g_{ab} + \Upsilon^d F_{b d}g_{ac} \big)\\
=&\,\Omega^{-2}\big( \nabla^b F_{bc} + (n- 3)\Upsilon^d
F_{d c}+   \Upsilon^d
F_{cd}-\Upsilon_c 
F_{b}{}^b \big).
\end{split}
\end{equation} 
In particular then, if $F$ is skew then
$F_{b}{}^b=0$ and we have simply
\begin{equation}\label{Divt}
 \widehat{\nabla}^b F_{bc}=\Omega^{-2}\big( \nabla^b F_{bc} + (n- 4)\Upsilon^d
F_{d c}\big).
\end{equation}
So we see that something special happens in dimension 4. Combining with 
our earlier observation we have the following result.
\begin{proposition}\label{EM}
In dimension 4 the differential operators 
$$
\operatorname{Div}:\Lambda^2\to \Lambda^1 \quad \mbox{and} \quad 
\operatorname{Max}:\Lambda^1\to \Lambda^1
$$
given by 
$$
F_{ bc}\mapsto \nabla^b F_{ bc}\quad \mbox{and} \quad u_c \mapsto 
\nabla^b\nabla_{[b}u_{c]}
$$
respectively, are conformally covariant, in that 
\begin{equation}\label{Maxw}
 \widehat{\nabla}^b F_{bc}=\Omega^{-2} \nabla^b F_{bc}\quad
\mbox{and} \quad \widehat{\nabla}^b \widehat{\nabla}_{[b} u_{c]}=
\Omega^{-2} \nabla^b \nabla_{[b}u_{c]}.
\end{equation}
\end{proposition}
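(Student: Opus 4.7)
The plan is to obtain both identities in \eqref{Maxw} as essentially cost-free consequences of what has already been established: the divergence formula \eqref{Divt} for a skew $2$-tensor, together with the conformal invariance of the exterior derivative on $1$-forms that was observed right after \eqref{eq:two}.

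For the first identity I would simply specialise \eqref{Divt} to $n=4$. Since $F_{bc}$ is assumed skew (a section of $\Lambda^2$), its trace $F_b{}^b$ vanishes and the coefficient $(n-4)$ multiplying the only remaining correction term is $0$, leaving $\widehat{\nabla}^b F_{bc}=\Omega^{-2}\nabla^b F_{bc}$ immediately. No fresh computation is needed here; the work was already done in deriving \eqref{Divt}.

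For the second identity the strategy is to view the Maxwell operator as the composition $\operatorname{Max}=\operatorname{Div}\circ d$, and to exploit that each factor transforms cleanly. Concretely, \eqref{eq:two} shows that the skew part $\nabla_{[b}u_{c]}$ of the Levi-Civita covariant derivative of a $1$-form is independent of the choice of metric in the conformal class (indeed it is just half the exterior derivative $du$, which is defined without reference to $g$), so
$$
\widehat{\nabla}_{[b} u_{c]} = \nabla_{[b} u_{c]}.
$$
Setting $F_{bc}:=\nabla_{[b}u_{c]}$, this $F$ is a genuine conformally invariant $2$-form, so I can feed it into the first identity to get
$$
\widehat{\nabla}^b \widehat{\nabla}_{[b} u_{c]} \;=\; \widehat{\nabla}^b F_{bc} \;=\; \Omega^{-2}\nabla^b F_{bc} \;=\; \Omega^{-2}\nabla^b\nabla_{[b} u_{c]},
$$
which is the stated covariance of $\operatorname{Max}$.

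Rather than a genuine obstacle, the only thing to keep straight is the bookkeeping of conformal weights at each step: the exterior derivative $d:\Lambda^1\to\Lambda^2$ is flatly invariant (weight $0$) in every dimension, whereas $\operatorname{Div}:\Lambda^2\to\Lambda^1$ only acquires a clean $\Omega^{-2}$ transformation when $n=4$, precisely because the term $(n-4)\Upsilon^d F_{dc}$ in \eqref{Divt} is the sole obstruction to conformal covariance of the divergence on $2$-forms. The dimensional accident at $n=4$ is what makes both identities in \eqref{Maxw} hold simultaneously, and of course is the conformal invariance underlying Maxwell's equations on a physical spacetime.
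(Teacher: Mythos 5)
Your proposal is correct and follows exactly the route the paper takes: the first identity is the $n=4$ specialisation of \eqref{Divt}, and the second follows by composing this with the conformal invariance of $u_c\mapsto\nabla_{[b}u_{c]}$ (the exterior derivative) noted after \eqref{eq:two}. Nothing further is needed.
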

The non-zero powers of $\Omega$ (precisely $\Omega^{-2}$)
appearing in \nn{Maxw} mean that these objects are only {\em
  covariant} rather than invariant. Conformal covariance is still a
strong symmetry property however, as we shall see. Before we discuss
that in more detail note that, for the equations, these factors of
$\Omega$ make little difference: 
$$
\nabla^b F_{bc}=0 \quad \Leftrightarrow \quad \widehat{\nabla}^b F_{bc}=0
$$
and 
$$ \nabla^b \nabla_{[b}u_{c]}=0 \quad \Leftrightarrow \quad
\widehat{\nabla}^b \widehat{\nabla}_{[b} u_{c]}=0.
$$ In this sense these equations are conformally invariant.  
\begin{remark}\label{Maxeq}
In fact these
equations are rather important. If we add the condition that $F$ is
closed then on Lorentzian signature 4-manifolds the system
$$\mathrm{d}F=0  \quad
\mbox{and} \quad \operatorname{Div}(F)=0
$$ 
is the field strength formulation of the source-free {\em Maxwell
equations} of electromagnetism. The locally equivalent equations
$\operatorname{Div} (d u)=0$ give the potential \label{RS: check
  language} formulation of the (source-free) Maxwell equations. The conformal
invariance of these has been important in Physics. 
 \endrk
 \end{remark}

Returning to our search for conformally covariant operators and
equations, our preliminary investigation suggests that such things
might be rather rare. From \nn{Ft} we see that the divergence of a
2-form is not conformally invariant except in dimension 4. In fact, in
contrast to what this might suggest, there is a rich theory of
conformally covariant operators. However there are some subtleties
involved. Before we come to this it will be useful
to examine how conformal rescaling affects the curvature.

\subsection{Conformal rescaling and curvature}\label{ccurv}

Using \nn{eq:one}, \nn{eq:two} and the observations following these  we can
compute, for example, the conformal transformation formulae for the
Riemannian curvature and its covariant derivatives and so forth. At
the very lowest orders this provides a tractable approach to finding
conformal invariants.

Let us fix a metric $g$. With respect to metric traces, we can
decompose the curvature tensor of $g$ into a trace-free part and a
trace part in the following way:
\begin{equation}\nonumber
R_{abcd}=\underbrace{W_{abcd}}_\textrm{trace-free}+\underbrace{2g_{c[a}P_{b]d}+2g_{d[b}P_{a]c}}_\textrm{trace part}.
\end{equation}
Here $P_{ab}$, so defined, is called the Schouten tensor while the tensor $W_{ab}{}^c{}_d$ is called the Weyl tensor.
In dimensions $n\geq 3$ we have 
$\Ric_{ab}=(n-2) P_{ab}+J g_{ab}$, where  $J:=g^{ab}P_{ab}$. So the Schouten 
tensor $P_{ab}$ is a trace modification of the  Ricci tensor.
\begin{exe}
Prove using \nn{eq:one} that under a conformal transformation 
$g\mapsto \widehat{g}=\Omega^2 g$, as above, the Weyl and Schouten tensors transform as follows:
\begin{equation}\nonumber
W^{\widehat{g}\phantom{ab}c}_{\phantom{\widehat{g}}ab\phantom{c}d}=
W^{g\phantom{ab}c}_{\phantom{g}ab\phantom{c}d}
\end{equation}
and
\begin{equation}\label{Ptrans}
P^{\widehat{g}}_{\phantom{\widehat{g}}ab}=P_{ab}-\nabla_a\Upsilon_b
+\Upsilon_a\Upsilon_b-\frac{1}{2}g_{ab}\Upsilon_c\Upsilon^c .
\end{equation}
\end{exe}
Thus the Weyl curvature is a conformal invariant, while objects such
as $\vert W \vert^2:=W_{abcd}W^{abcd}$ may be called
\textit{conformal covariants} 
because under the conformal change they
pick up a power of the conformal factor $\vert
\widehat{W}\vert^2=\Omega^{-4}\vert W\vert^2$ (where for simplicity we
are hatting the symbol for the object rather than the
metric). We will see shortly that such objects correspond to invariants.

Here we are defining {\em conformal invariants} to be Riemannian
invariants, that have the additional property of being unchanged under
conformal transformation.

\begin{exe}
Consider computing the conformal transformation of the derivatives
of the curvature: $\nabla R$, $\nabla \nabla R$, etcetera. Then
possibly using this and an undetermined coefficient approach to
finding conformal invariants or conformal covariants. This rapidly
gets intractable.
\end{exe}

\subsection{Conformally invariant linear differential operators} \label{cildos}

We may try the corresponding approach for constructing further conformally
invariant linear differential operators:
\begin{equation}\label{cildo}
D^g:\mathcal{U}\rightarrow\mathcal{V}
\end{equation}
such that $D^{\widehat{g}}=D^g$.  Namely, first consider the possible
Riemannian invariant linear differential operators between the bundles
concerned and satisfying some order constraint. Next compute their
transformation under a conformal change. Finally seek a linear
combination of these that forms a conformally invariant operator. For
our purposes this also defines what we mean by conformally invariant
linear differential operator. For many applications we require that
the domain and target bundles $\cU$ and $\cV$ are irreducible. 

It turns out that irreducible tensor (or even spinor) bundles are not
sufficient to deal with conformal operators. Let us see a first
glimpse of this by recalling the construction of one of the most well
known conformally invariant differential operators.

\subsubsection{The conformal Wave operator} \label{Y}

For analysis on pseudo-Riemannian manifolds $(M,g)$ the Laplacian is
an extremely important operator.  The {\em Laplacian} $\Delta$
on functions, which is also called the {\em Laplace-Beltrami operator}, 
is given by
$$
\Delta:= \nabla^a\nabla_a :\ce\to \ce,
$$ 
where $\nabla$ is the Levi-Civita connection for $g$. 

Let us see how this behaves under conformal rescaling. For a function $f$,
$\nabla_a f$ is simply the exterior derivative of $f$ and so is
conformally invariant. So to compute the Laplacian for
$\widehat{g}=\Omega^2 g$ we need only use \nn{eq:two} with $u:=df$:
\begin{equation}\nonumber
\begin{split}
 \Delta^{\widehat{g}}f=&\,\widehat{\nabla}^a\nabla_a f\\
=&\,\widehat{g}^{ab}\widehat{\nabla}_b\nabla_a f\\
=&\,\Omega^{-2}g^{ab}\left(\nabla_b\nabla_a f-\Upsilon_b\nabla_a f 
-\Upsilon_a\nabla_b f+g_{ab}\Upsilon^c\nabla_c f\right).
\end{split}
\end{equation}
So 
\begin{equation}\label{Bt}
\Delta^{\hat{g}} f = 
\Omega^{-2}\left(\Delta f +(n-2)\Upsilon^c\nabla_c f\right).
\end{equation}
By inspecting this formula we learn two things. Let us summarise with a
proposition.
\begin{proposition}\label{lap2}
The Laplacian on functions is conformally covariant in
dimension 2, but not in other dimensions.
\end{proposition}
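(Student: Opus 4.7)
The plan is to read the result directly off the transformation formula (Bt) that has just been derived, namely
\[
\Delta^{\hat{g}} f \;=\; \Omega^{-2}\!\left(\Delta f + (n-2)\,\Upsilon^c \nabla_c f\right),
\]
so the proof amounts to analyzing when the first-order correction term vanishes identically and when it obstructs covariance.

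First I would handle the positive statement. When $n=2$ the coefficient $(n-2)$ is zero, so the correction term disappears and one is left with $\Delta^{\hat g} f = \Omega^{-2}\,\Delta f$. This is exactly the kind of relation that was called conformal covariance in Proposition \ref{EM}: the operator transforms by a pure power of the conformal factor, and hence the equation $\Delta f = 0$ is conformally invariant in two dimensions.

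Next I would establish the negative statement for $n \neq 2$. The approach is to exhibit a single pair $(g,\Omega,f)$ for which no power $\Omega^w$ relates $\Delta^{\hat g} f$ to $\Delta f$. The cleanest choice is flat Euclidean space, take $f$ to be a nonconstant harmonic function (so $\Delta f = 0$ but $\nabla f \not\equiv 0$), and choose a conformal factor $\Omega$ with $\Upsilon^c \nabla_c f \not\equiv 0$ at some point. Then (Bt) gives $\Delta^{\hat g} f = \Omega^{-2}(n-2)\Upsilon^c \nabla_c f$, which is not identically zero, whereas $\Omega^w \Delta f = 0$ identically for every $w$. This rules out any relation of the form $\Delta^{\hat g} = \Omega^w \Delta$, and more generally rules out conformal covariance interpreted as the operator descending to a well-defined operator between any line-bundle-weighted versions of $\ce$ (since that too would force the correction term to vanish as a differential operator on $f$).

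The main obstacle, if any, is being precise about the meaning of ``not conformally covariant'': one must rule out the possibility that a clever coupling of weights on the domain and target could absorb the first-order term. This is not actually an obstacle once one notes that the correction $(n-2)\Upsilon^c \nabla_c f$ is a genuine first-order differential operator in $f$ (with coefficients depending on $\Omega$), while any rescaling of $\Delta$ by a power of $\Omega$ remains a pure second-order operator in $f$. Thus for $n \neq 2$ no such absorption is possible, completing the proof.
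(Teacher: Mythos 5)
Your proposal is correct and follows essentially the same route as the paper, which simply reads the dichotomy off the transformation formula \nn{Bt}: the correction term $(n-2)\Upsilon^c\nabla_c f$ vanishes identically precisely when $n=2$, and otherwise obstructs any relation of the form $\Delta^{\widehat{g}}=\Omega^{w}\Delta$. The explicit harmonic-function counterexample you add for $n\neq 2$ is a welcome sharpening of the paper's ``by inspecting this formula'', and is exactly the right way to make the negative half precise for covariance in the sense of Proposition \ref{EM}.
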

\noindent This is reminiscent of our observations surrounding the
expression \nn{Divt} and the Maxwell system (cf.\ Proposition \ref{EM}). 

We need to modify the Laplacian to have any hope of obtaining an
invariant operator in other dimensions.  A key idea will be to
introduce a curvature into the formula for a new Laplacian. However
inspecting the formulae for curvature transformation in Section
\ref{ccurv} it is easily seen that this manoeuvre alone will not deal
with the term $\Upsilon^c\nabla_c f$ in \nn{Bt}.

To eliminate the term $(n-2)\Upsilon^c\nabla_c f$ we will make what
seems like a strange move (and we will explain later the mathematics
behind this).  We will allow the domain ``function'' to depend on the choice
of metric in the following way: We decree that the function $f$ on
$(M,g)$ corresponds to $\widehat{f}:=\Omega^{1-\frac{n}{2}}f$ on
$(M,\widehat{g})$, where $\widehat{g}=\Omega^2 g$ as above. Now let us
calculate $\widehat{\Delta}\widehat{f} $. First we have:
\begin{equation}\nonumber
\begin{split}
 \nabla_a (\Omega^{1-\frac{n}{2}}f)=&
 \,\left(1-\frac{n}{2}\right)\Omega^{1-\frac{n}{2}}\Upsilon_a f+
 \Omega^{1-\frac{n}{2}} df\\
=&\,\Omega^{1-\frac{n}{2}}\left(\nabla_a f+\left(1-\frac{n}{2}\right)
\Upsilon_a f\right).
\end{split}
\end{equation}
We use this in the next step:
\begin{equation}\nonumber
\begin{split}
\Delta^{\widehat{g}}\widehat{f}
=&\,\Omega^{-2} g^{ab} \widehat{\nabla}_b\left[\Omega^{1-\frac{n}{2}}
\left(\nabla_a f+\left(1-\frac{n}{2}\right)\Upsilon_a f \right)\right].\\
 =&\,\Omega^{-1-\frac{n}{2}}g^{ab}\left[\nabla_b\nabla_a f +
 \left(1-\frac{n}{2}\right)\Upsilon_a\nabla_b f +
 \left(1-\frac{n}{2}\right)\Upsilon_b\nabla_a f+
 \left(1-\frac{n}{2}\right)^2\Upsilon_a\Upsilon_b f\right.\\
 &\,\hspace{18mm}+\left(1-\frac{n}{2}\right)f\nabla_b\Upsilon_a -
 \Upsilon_b\nabla_a f -\left(1-\frac{n}{2}\right)\Upsilon_b\Upsilon_a f
 -\Upsilon_a\nabla_b f \\
 &\,\left.\hspace{18mm}-\left(1-\frac{n}{2}\right)\Upsilon_a\Upsilon_b f
 +g_{ab}\left(\Upsilon^c\nabla_c f+\left(1-\frac{n}{2}\right)\Upsilon^2f\right)
 \right]\\
=&\,\Omega^{-1-\frac{n}{2}}\left[\Delta f+ \left(1-\frac{n}{2}\right)
\left(\nabla^a\Upsilon_a+\Upsilon^2\left(\frac{n}{2}-1\right)\right)f \right],
\end{split}
\end{equation}
where we used \nn{eq:two} once again and have written $\Upsilon^2$ as a shorthand for $\Upsilon^a\Upsilon_a$.

Now in the last formula for $\Delta^{\widehat{g}}\widehat{f}$  the terms involving $\Upsilon$ there are no terms differentiating $f$.
Thus there is hope of matching this with a curvature transformation.
Indeed contracting \nn{Ptrans} with $\widehat{g}^{-1}$ gives
\begin{equation}\label{Jtrans}
\J^{\widehat{g}}=\Omega^{-2}\left(\J-\nabla^a\Upsilon_a+
(1-\frac{n}{2})\Upsilon^2\right),
\end{equation}
and so
\begin{equation}\label{eq:ast}
\left(\Delta^{\widehat{g}}+\left(1-\frac{n}{2}\right)\J^{\widehat{g}}\right)
\widehat{f}=
\Omega^{-1-\frac{n}{2}}\left(\Delta+\left(1-\frac{n}{2}\right)\J\right)f,
\end{equation}
and we have found a Laplacian operator with a symmetry under conformal
rescaling. Using the relation between $\J$ and the scalar curvature
this is written as in the definition here.
\begin{definition}\label{Tdef}
On a pseudo-Riemannian manifold $(M,g)$ the operator
$$ Y^g: \ce\to \ce \quad \mbox{defined by}\quad Y^g:=\Delta^g+
\frac{n-2}{4(n-1)} \Sc^g
$$
is called the {\em conformal Laplacian} or, in Lorentzian signature, the 
{\em conformal wave operator}. 
\end{definition}
\begin{remark}
On Lorentzian signature manifolds it is often called the
{\em conformal wave operator} because the leading term agrees with the
operator giving usual wave equation. 
It seems that it was in this setting that the operator was first
discovered and applied \cite{Veblen,Dirac}.  On the other hand in the
setting of Riemannian signature $Y$ is often called the {\em Yamabe
operator} because of its role in Yamabe problem of scalar curvature
prescription. \endrk
\end{remark} 
\noindent According to our calculations above this has the following
remarkable symmetry property with respect to conformal rescaling.
\begin{proposition}\label{Yop} The conformal Laplacian is conformally covariant 
in the sense that
$$
Y^{\widehat{g}}\circ \Omega^{1-\frac{n}{2}}= \Omega^{-1-\frac{n}{2}}\circ Y^g.
$$
\end{proposition}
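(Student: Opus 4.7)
The plan is to recognize that the proposition is essentially a repackaging of the identity \nn{eq:ast} that was established immediately above the definition of $Y^g$. The only work is to translate between the Schouten-trace $\J$ (in which \nn{eq:ast} is phrased) and the scalar curvature $\Sc$ (in which $Y^g$ is defined), and then to unpack both sides of the claimed operator identity pointwise.

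First I would establish the algebraic relation between $\J$ and $\Sc$. Tracing the identity $\Ric_{ab} = (n-2)P_{ab} + \J g_{ab}$ with $g^{ab}$ gives $\Sc = 2(n-1)\J$, equivalently $\J = \Sc/(2(n-1))$. Matching this against the coefficient $(1-n/2)$ appearing in \nn{eq:ast}, one rewrites the conformal Laplacian of Definition \ref{Tdef} in the equivalent form
\[
Y^g \;=\; \Delta^g + \tfrac{n-2}{4(n-1)}\Sc^g \;=\; \Delta^g + \bigl(1-\tfrac{n}{2}\bigr)\J^g,
\]
(with the sign conventions for $\J$ and $\Sc$ used in the paper), and likewise for $\hat{g}$.

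Second, I would interpret the operator identity $Y^{\widehat{g}}\circ \Omega^{1-n/2} = \Omega^{-1-n/2}\circ Y^g$ by evaluating it on an arbitrary $f \in C^\infty(M)$. Setting $\widehat{f} := \Omega^{1-n/2}f$, the claim is precisely
\[
Y^{\widehat g}\widehat f \;=\; \Omega^{-1-n/2}\, Y^g f,
\]
which, after the rewriting of $Y$ in terms of $\J$, becomes
\[
\bigl(\Delta^{\widehat g} + (1-\tfrac{n}{2})\J^{\widehat g}\bigr)\widehat f
\;=\; \Omega^{-1-n/2}\bigl(\Delta + (1-\tfrac{n}{2})\J\bigr)f.
\]
This is exactly \nn{eq:ast}, which has already been derived in the text by combining the two-step computation of $\Delta^{\widehat g}\widehat f$ (one application of \nn{eq:two} to rewrite $\widehat\nabla$ on the one-form $d\widehat f$, plus the product rule for $d(\Omega^{1-n/2}f)$) with the trace \nn{Jtrans} of the Schouten transformation rule \nn{Ptrans}; the $\nabla^a\Upsilon_a$ terms cancel and the $\Upsilon^2$ terms cancel in dimension-independent fashion, leaving only the claimed right-hand side.

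Thus there is essentially no new calculation to perform: the proposition follows by assembling \nn{eq:ast} with the trace identity $\Sc = 2(n-1)\J$. The only step requiring genuine care — and the main (purely bookkeeping) obstacle — is correctly matching the dimension-dependent constants and signs so that the coefficient of $\J$ produced by the conformal change of $\Delta$ on the density-weighted function $\Omega^{1-n/2}f$ agrees precisely with the curvature correction built into the definition of $Y^g$; once that is checked, the identity is immediate.
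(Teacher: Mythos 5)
Your proposal is correct and follows exactly the paper's route: the paper offers no separate argument for Proposition \ref{Yop} beyond the phrase ``according to our calculations above,'' i.e.\ it too treats the proposition as the identity \nn{eq:ast} restated after converting $\J$ to $\Sc$ via the trace of $\Ric_{ab}=(n-2)\V_{ab}+\J g_{ab}$. One small caution on the bookkeeping you flag: that trace gives $\Sc=2(n-1)\J$, so $(1-\tfrac{n}{2})\J=-\tfrac{n-2}{4(n-1)}\Sc$, and the sign in your (and the paper's) identification of the two forms of $Y^g$ should be checked against the intended curvature conventions.
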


This property of the conformal Laplacian motivates a definition. 
\begin{definition}\label{ccov}
On pseudo-Riemannian manifolds a natural linear differential operator
$P^g$, on a function or tensor/spinor field is said to be a {\em conformally
  covariant operator} if for all positive functions $\Omega$
$$
P^{\widehat{g}} \circ \Omega^{w_1}= \Omega^{w_2}\circ 
P^g   ,
$$ 
where $\widehat{g}=\Omega^2 g$, $(w_1,w_2)\in \mathbb{R}\times \mathbb{R}$, 
and where we view
the powers of
$\Omega$ as multiplication operators.
\end{definition}
\noindent In this definition it is not meant that the
domain and target bundles are necessarily the same. The example in our next
exercise will be important for our later discussions.
\begin{exe}\label{AEex} On pseudo-Riemannian manifolds $(M^{n\geq3},g)$ 
show that
\begin{equation}\label{eq:AE}
\begin{split}
 A^g_{ab}:\mathcal{E}&\longrightarrow\mathcal{E}_{(ab)_0} \; \mbox{ given
   by}\\ f &\longmapsto\nabla_{(a}\nabla_{b)_0} f +\V_{(ab)_0} f
\end{split}
\end{equation}
is conformally covariant with $(w_1,w_2)=(1,1)$. 
That is if 
$\widehat{g}=\Omega^2 g$, for some positive function $\Omega$, then 
$$
A^{\widehat{g}} (\Omega f) = \Omega (A^g f) .
$$
\end{exe}

\subsection{Conformal Geometry}\label{cg}

Recall that we defined a conformal manifold as a manifold $M$ equipped
only with an equivalence class of conformally related metric (see
Definition \ref{cman}).  Conformally covariant operators, as in
Definition \ref{ccov}, have good conformal properties but (in general)
fail to be invariant in the sense of \nn{cildo}. This is not just an
aesthetic shortcoming, it means that they are not well-defined on
conformal manifolds $(M,\cc)$. To construct operators on $(M,g)$ that do
descend to the corresponding conformal structure $(M,\cc=[g])$ we need
the notion of conformal densities.

\subsubsection{Conformal densities and the conformal metric}

\newcommand{\cq}{{\Cal Q}} 
\newcommand{\vol}{\operatorname{vol}}
\newcommand{\Ga}{\Gamma}

Let $(M,\cc)$ be a  {\em conformal
manifold\/} of signature $(p,q)$ (with $p+q=n$).
For a point $x\in M$, and two metrics $g$ and $\hat g$ from the
conformal class, there is an element $s\in\mathbb{R}_+$ such that $\hat
g_x=s^2g_x$ (where the squaring of $s$ is a convenient convention). Thus, we may view the conformal class as being
given by a smooth ray subbundle $\cq\subset S^2T^*M$, whose fibre at $x$ is
formed by the values of $g_x$ for all metrics $g$ in the conformal
class. By construction, $\cq$ has fibre $\mathbb{R}_+$ and the metrics in
the conformal class are in bijective correspondence with smooth
sections of $\cq$.

Denoting by $\pi:\cq\to M$ the restriction to $\cq$ of the canonical
projection $S^2T^*M\to M$, we can view this as a principal bundle with
structure group $\mathbb{R}_+$. The usual convention is to rescale a metric $ g$ to $\hat g=\Omega^2g$. This corresponds to a principal action given by $\rho^s(g_x)=s^2g_x$ for $s\in\mathbb{R}_+$ and $g_x\in \cq_x$, the fibre of $\cq$ over $x\in M$.

Having this, we immediately get a family of basic real line bundles
$\ce[w]\to M$ for $w\in\mathbb{R}$ by defining $\ce[w]$ to be the
associated bundle to $\cq$ with respect to the action of $\mathbb{R}_+$ on
$\mathbb{R}$ given by $s\cdot t:=s^{-w}t$. The usual correspondence
between sections of an associated bundle and equivariant functions on
the total space of a principal bundle then identifies the space
$\Ga(\ce[w])$ of smooth sections of $\ce[w]$ with the space of all
smooth functions $f:\cq\to\mathbb{R}$ such that $f(\rho^s(g_x))=s^w
f(g_x)$ for all $s\in\mathbb{R}_+$. We shall call $\ce[w]$ the bundle of
{\em conformal densities of weight} $w$. Note that each bundle
$\ce[w]$ is trivial, and inherits an orientation from that on
$\mathbb{R}$. We write $\ce_+[w]$ for the ray subbundle consisting of
positive elements. 

If $\widehat{g}=\Omega^2 g\in c$ then the conformally related metrics
$\widehat{g}$ and $g$ each determine sections of $\cq$. We may pull
back $f$ via these sections and we obtain functions on $M$ related by
$$
f^{\widehat{g}}= \Omega^w f^g.
$$ With $w=1-\frac{n}{2}$ this explains the ``strange move'' in Section
\ref{Y} for the domain function of the conformal wave operator; the conformal Wave operator is really an operator on the bundle $\ce[1-\frac{n}{2}]$.

Although the bundle $\ce[w]$ as we defined it depends on the choice of
the conformal structure, it is naturally isomorphic to a density
bundle (which is independent of the conformal structure). Recall that
the bundle of $\al$--densities is associated to the full linear frame
bundle of $M$ with respect to the $1$--dimensional representation
$A\mapsto |\det(A)|^{-\al}$ of the group $GL(n,\mathbb{R})$. In
particular, $2$-densities may be canonically identified with the
oriented bundle $(\Lambda^nT^*M)^2$, and 
$1$--densities are exactly the
geometric objects on manifolds that may be integrated (in a coordinate--independent way).

 To obtain the link with conformal densities, as defined above, recall
that any  metric $g$ on $M$ determines a nowhere vanishing
1--density, the volume density $\vol(g)$. In a local frame, this
density is given by $\sqrt{|\det(g_{ij})|}$, which implies
that for a positive function $\Omega$ we get $\vol(\Omega^2g)=\Omega^n\vol(g)$.
So there is bijective a map from 1-densities to functions $\cq\to\mathbb{R}$ that are homogeneous of degree $-n$ given by the map 
$$
\phi\mapsto \ph(x)/\vol(g)(x),
$$
and this gives an
identification of the $1$-density bundle with $\ce[-n]$ and thus an
identification of $\ce[w]$ with the bundle of
$(-\frac{w}{n})$--densities on $M$. 

So we may think of conformal density bundles as those bundles
associated to the frame bundle via 1-dimensional representations, just
as tensor bundles are associated to higher rank representations. Given any vector bundle $\cB$ we will use the  notation
$$
\cB[w]:=\cB\otimes \ce[w],
$$
and say the bundle is {\em weighted} of weight $w$. Note that $\ce[w]\otimes\ce[w']= \ce[w+ w']$ and that in the above we assume that $\cB$ is not a density bundle itself and is unweighted (weight zero).

Clearly, sections of such weighted bundles may be viewed as homogeneous
(along the fibres of $\cq$) sections of the pullback along $\pi:
\cq\to M$. Now the tautological inclusion of
$\tilde{\bg}:\cq \to \pi^*S^2T^*M$ is evidently homogeneous of degree
2, as for $(s^2g_x,x)\in \cq$, we have $\tilde{\bg}(s^2g_x,x)=
(s^2g_x,x)\in\pi^*S^2T^*M$. So $\tilde{\bg}$ may be identified with a canonical
section of $ \bg\in \Gamma(S^2T^*M[2])$ which provides another
description of the conformal class.  We call $\bg$ the {\em conformal
  metric}. 

Another way to recover $\bg$ is to observe that any metric $g\in \cc$
is a section of $\cq$, and hence determines a section $\si_g\in
\Gamma(\ce_+[1])$ with the characterising property that the
corresponding homogeneous function $\tilde{\si_g}$ on $\cq$ takes the
value 1 along the section $g$. Then 
\begin{equation}\label{easybg}
\bg=(\si_g)^2g
\end{equation}
and it is easily
verified that this is independent of the choice of $g\in c$. Conversely it is clear that any section  $\si\in \Gamma(\ce_+[1])$ determines a metric via
$$ 
g:=\si^{-2}\bg.
$$ 
On a conformal manifold we call $\si\in \Gamma(\ce_+[1])$, or equivalently the corresponding $g\in \cc$, a {\em choice of scale}.

A nice application of $\bg$ is that we can use it to raise, lower,
and contract tensor indices on a conformal manifold, for example
$$
\bg_{ab}:\ce^a\to \ce_b[2] \quad \mbox{by}\quad v^a\mapsto \bg_{ab}v^a, 
$$ just as we use the metric in pseudo-Riemannian geometry. 
Also $\bg$ gives the isomorphism 
\begin{equation}\label{bgcong}
\otimes^n \bg: \big( \Lambda^n TM \big)^2\stackrel{\simeq}{\longrightarrow}\ce[2n].
\end{equation}

\subsubsection{Some calculus with conformal densities}\label{dcalc}

Observe that a choice of scale $g\in c$ determines a connection on
$\ce[w]$ via the formula
\begin{equation}\label{dconn}
\nabla^g \tau : = \si^w \big( \mathrm{d} (\si^{-w} \tau) \big), \quad \tau\in \Gamma(\ce[w]) ,
\end{equation}
where $\mathrm{d}$ is the exterior derivative and $\si\in \Gamma(\ce_+[1])$
satisfies $\bg=\si^2 g$, as $\si^{-w} \tau$ is a function (i.e.\ is
a section of $\ce[0]=\ce$).  Coupling this to the Levi-Civita connection
for $g$ (and denoting both $\nabla^g$) we have at once that $\nabla^g
\si=0$ and hence 
\begin{equation}\label{bgpar}
\nabla^g \bg=0.
\end{equation}

On the other hand the Levi-Civita connection directly determines a
linear connection on $\ce[w]$ since the latter is associated to the
frame bundle, as mentioned above. But \nn{bgpar}, with \nn{bgcong},
shows that this agrees with \nn{dconn}. That is, \nn{dconn} is the
Levi-Civita connection on $\ce[w]$. Thus we have:
\begin{proposition}\label{bgparprop} On a conformal manifold $(M,\cc)$ 
the conformal metric $\bg$ is preserved by the Levi-Civita connection
$\nabla^g$ for any $g\in \cc$.
\end{proposition}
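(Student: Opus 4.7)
The plan is to reduce the statement to the two facts that (i) the scale density $\sigma_g$ is parallel for the connection defined in \nn{dconn}, and (ii) the metric $g$ itself is parallel for its Levi-Civita connection; then the Leibniz rule for the coupled connection applied to the product decomposition $\bg=\sigma_g^2 g$ gives the result. The conceptual crux is that, although $\bg$ is built from the ostensibly arbitrary choice $g\in\cc$, the particular density $\sigma_g$ that one pairs with $g$ is tuned exactly so that the density connection $\nabla^g$ on $\ce[1]$ is compatible with this scale.

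First I would verify $\nabla^g\sigma_g=0$ directly from the defining formula \nn{dconn}. Since $\sigma_g\in\Gamma(\ce_+[1])$ is precisely the section satisfying $\bg=\sigma_g^2 g$, the corresponding homogeneous function on $\cq$ takes the value $1$ along the section $g\subset \cq$; equivalently $\sigma_g^{-1}\sigma_g\equiv 1$ as a weight-$0$ section, i.e.\ the constant function. Applying \nn{dconn} with $w=1$ and $\tau=\sigma_g$ gives $\nabla^g\sigma_g=\sigma_g\,\mathrm{d}(\sigma_g^{-1}\sigma_g)=\sigma_g\,\mathrm{d}(1)=0$.

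Next I would write $\bg=\sigma_g^2 g$ and apply the Leibniz rule for the coupled connection on $S^2T^*M[2]=S^2T^*M\otimes\ce[2]$ (using the density connection on the $\ce[2]$ factor and the Levi-Civita connection on the tensor factor). This gives
\[
\nabla^g\bg=\nabla^g(\sigma_g^2\,g)=2\sigma_g(\nabla^g\sigma_g)\,g+\sigma_g^2\,\nabla^g g,
\]
and both terms vanish: the first by the previous paragraph, the second by metric compatibility of the Levi-Civita connection. This establishes \nn{bgpar} and hence the proposition.

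The only step that really requires care, and the one I would expect to be the main conceptual obstacle, is the consistency claim implicit in the discussion following \nn{bgpar}: namely that the density connection $\nabla^g$ defined by the ad hoc formula \nn{dconn} coincides with the Levi-Civita connection induced on $\ce[w]$ through its description as a bundle associated to the linear frame bundle (via the identification of $\ce[w]$ with $(-w/n)$-densities). Since both connections are linear and are compatible with the isomorphism \nn{bgcong} — the former by the computation above and the latter tautologically, because Levi-Civita preserves the volume density $\vol(g)$ — they must agree on the line bundle $\ce[w]$. Once this identification is in place, the computation $\nabla^g\bg=0$ is genuinely the statement of parallelism of the conformal metric under the Levi-Civita connection, completing the proof.
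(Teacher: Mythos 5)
Your proof is correct and takes essentially the same route as the paper: the text likewise observes that $\nabla^g\si=0$ follows at once from \nn{dconn}, deduces $\nabla^g\bg=0$ by the Leibniz rule applied to $\bg=\si^2 g$, and then identifies \nn{dconn} with the Levi-Civita connection on $\ce[w]$ by comparing the two via \nn{bgcong} and the parallelism of the volume density. Your final paragraph simply makes that last identification slightly more explicit than the paper does.
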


In view of  Proposition \ref{bgparprop} it is reasonable to
use the conformal metric to raise and lower tensor indices even when
working in a scale! We shall henceforth do this unless we state otherwise. 
This enables us to give formulae for natural
conformally invariant operators acting between density bundles. For example 
now choosing $g\in \cc$ and forming the Laplacian, and trace of Schouten,   
$$
\Delta:= \bg^{ab}\nabla_a\nabla_b \quad\mbox{and}\quad \J:=\bg^{ab} P_{ab} ,
$$
we have the following. The  conformal Laplacian can be interpreted as a  differential operator
$$
Y:\ce[1-\frac{n}{2}]\to \ce[-1-\frac{n}{2}], \quad\mbox{given by}\quad 
\Delta+\left(1-\frac{n}{2}\right)\J ,
$$
that is {\em conformally invariant}, meaning that it is well-defined on conformal manifolds.

Similarly the operator $A^g_{ab}$ of \nn{eq:AE} is equivalent to a conformally invariant operator,
$$
A_{ab}:\ce[1] \to \ce_{(ab)_0}[1].
$$ 

\subsubsection{Conformal transformations}\label{ctagain}

The above constructions enable us to understand how conformally covariant
objects may be reinterpreted as objects that descend to invariant
operators on conformal manifolds. A similar result applies to
curvature covariants. However we have not advanced the problem of
constructing these.

It follows at once from the formula \nn{dconn} that under conformal
transformations $g\mapsto \widehat{g}=\Omega^2 g$ the Levi-Civita
connection on $\ce[w]$ transforms by
\begin{equation}\label{lct}
\nabla^{\widehat{g}}_a \tau= \nabla^g_a\tau + w\Upsilon_a \tau,
\end{equation}
since $\si_{\widehat{g}}=\Omega^{-1}\si_g$. We can combine this with
the transformation formulae \nn{eq:one}, \nn{eq:two}, and \nn{Ptrans}
to compute the conformal transformations of weighted tensors and
Riemannian invariants. However this remains a hopeless approach to finding
conformal invariants.

\section{Lecture 3: Prolongation and the tractor connection}

If we are going to be successful at calculating conformal invariants we are going to need a better way to calculate, one which builds in conformal invariance from the start. In the next two lectures we develop such a calculus, the \emph{conformal tractor calculus}, which can be used to proliferate conformally invariant tensor (or tractor) expressions.

For treating conformal geometry it would be clearly desirable to find
an analogue of the Ricci calculus available in the pseudo-Riemannian
setting. On the tangent bundle a conformal manifold $(M,\cc)$ has a
distinguished equivalence class of connections but no distinguished
connection from this class. So at first the situation does not look
promising. However we will see that if we pass from the tangent bundle to a vector bundle with two more dimensions (the standard tractor bundle), then there is indeed a distinguished connection.

There are many ways to see how the tractor calculus arises on a conformal manifold; we will give a very explicit construction which facilitates calculation, but first it will be very helpful to examine how the tractor calculus arises on the flat model space of (Riemannian signature) conformal geometry, the conformal sphere. 

\begin{remark}
Recall that inverse stereographic projection maps Euclidean space conformally into the sphere as a one point (conformal) compactification, so that it makes sense to think of the sphere as the conformally ``flat'' model of Riemannian signature conformal geometry. The real reason however is that the conformal sphere arises naturally as the geometry of a homogeneous space of Lie groups and that conformal geometry can be thought of as the geometry of curved analogues of this homogeneous geometry in the sense of \'Elie Cartan (see, e.g. \cite{CSbook}). \endrk
\end{remark}



\subsection{The model of conformal geometry -- the conformal 
sphere}\label{model}

We now look at the conformal sphere, which is an extremely important
example. We shall see that the sphere can be viewed as a homogeneous space on which is naturally endowed a conformal structure, and that the conformal tractor calculus arises naturally from this picture. This is the model for Riemannian signature conformal geometry, but a minor variation of this applies to other signatures.

First some notation.  Consider an $(n+2)$-dimensional real vector space
$\bV\cong \mathbb{R}^{n+2}$. Considering the equivalence relation on
$\bV \setminus \{0 \}$ given by
$$ v\sim v', \quad \mbox{if and only if} \quad v'=rv \quad \mbox{for
  some} \quad r>0
$$
 we now write 
$$
\mathbb{P}_+(\bV):= \{ [v] \mid v \in \bV \}
$$ where $[v]$ denotes the equivalence class of $v$. We view this as a
smooth manifold by the identification with one, equivalently any,
round sphere in $\bV$.

Suppose now that $\bV$ is equipped with a non-degenerate bilinear form
$\mathcal{H}$ of signature $(n+1,1)$.  The {\em null cone} $\cN$ of
zero-length vectors forms a quadratic variety in $\bV$. Choosing a time
orientation,  let us
write $\cN_+$ for the forward part of $\cN\setminus \{0 \}$.  Under
the the $\mathbb{R}_+$-ray projectivisation of $\bV$, meaning the
natural map to equivalence classes $\bV\to \mathbb{P}_+(\bV)$, the
forward cone $\cN_+$ is mapped to a quadric in $\mathbb{P}_+(\bV)$.  This 
image is topologically a sphere $S^n$ and we
will write $\pi$ for the submersion $\cN_+\to S^n=\mathbb{P}_+(\cN_+)$.

\begin{figure}[ht]
\begin{center}
	\begin{tikzpicture}[xscale=0.6,yscale=0.6]
		\draw  (0,3) ellipse (4 and 1);
		\draw (-3.95,2.84) -- (0,-3) -- (3.96,2.86);
		\node at (-1.95,1.3) {$\mathcal{N}_+$} ;
		
		\draw[-latex, thick] (5,0)--(8.5,0);
		\node at (6.7,0.5) {$\mathbb{P}_+$} ;
		
		\draw  (13,0) circle (3);
		\draw[dashed,xscale=1,yscale=1,domain=0:3.141,smooth,variable=\t] plot ({3*cos(\t r)+13},{0.6*sin(\t r)});
		\draw[xscale=1,yscale=1,domain=3.141:6.283,smooth,variable=\t] plot ({3*cos(\t r)+13},{0.6*sin(\t r)});
		\node at (13,-1.5) {$\mathbb{P}_+(\mathcal{N}_+) \cong S^n$} ;

		\draw [-latex] (0,-3)--(1,0);
		\node at (0.2452,-0.3952) {$x$} ;
		\draw  [fill] (14.5,1.5) ellipse (0.05 and 0.05);
		\node at (13.6,1.6) {$\pi(x)$} ;

	\end{tikzpicture}
\end{center}
\end{figure}

Each point $x\in \cN_+$ determines a positive definite inner product
$g_x$ on $T_{\pi{(x)}} S^n$ by $g_x(u,v)=\mathcal{H}_x(u',v')$ where
$u',v'\in T_x\cN_+$ are {\em lifts} of $u,v\in T_{\pi{(x)}} S^n$, meaning
that $\pi(u')=u$, $\pi(v')=v$. For a given vector $u\in T_{\pi{(x)}} S^{n}$
two lifts to $x\in \cN_+$ differ by a vertical vector.  By
differentiating the defining equation for the cone we see that any
vertical vector is normal (with respect to $\mathcal{H}$) to the cone,
and so it follows that $g_x$ is independent of the choices of lifts.
Clearly then, each section of $\pi$ determines a metric on $S^n$ and
by construction this is smooth if the section is. Evidently the
metric agrees with the pull-back of $\mathcal{H}$ via the section
concerned. We may choose a coordinates $X^A$, $A=0,\cdots ,n+1$, for
$\bV$ so that $\cN$ is the zero locus of the form $-(X^0)^2+
(X^1)^2+\cdots +(X^{n+1})^2$, in which terms the usual round sphere
arises as the section $X^0=1$ of $\pi$.

Now, viewed as a metric on $T\bV$, $\mathcal{H}$ is homogeneous of
degree 2 with respect to the standard Euler vector field $E$ on $\bV$,
that is $\cL_E \mathcal{H}=2 \mathcal{H}$, where $\cL$ denotes the Lie
derivative. In particular this holds on the cone, which we note is
generated by $E$.  Write $\bg$ for the restriction of $\mathcal{H}$ to
vector fields in $T\cN_+$ which are the lifts of vector fields on
$S^n$. Note that $u'$ is the lift of a vector field $u$ on $S^n$
means that for all $x\in \cN_+$, $d\pi (u'(x))=u(\pi(x))$, 
and so $\cL_E u'=0$ on $\cN_+$.
Thus for any pair $u,v\in \Gamma(TS^n)$, with lifts to vector fields
$u',v'$ on $\cN_+$, $\bg(u',v')$ is a function on $\cN_+$ homogeneous
of degree 2, and which is independent of how the vector fields were
lifted.  It follows that if $s>0$ then $g_{sx}=s^2g_x$, for all $x\in
\cN_+$.  Evidently $\cN_+$ may be identified with the total space
$\mathcal{Q}$  of a bundle of conformally related metrics on $\mathbb{P}_+ (\cN_+)$. Thus $\bg(u',v')$ may be
identified with a conformal density of weight $2$ on $S^n$. That is,
this construction canonically determines a section of $\ce_{(ab)}[2]$
that we shall also denote by $\bg$. This has the property that if
$\si$ is any section of $\ce_+[1]$ then $\si^{-2}\bg$ is a metric
$g^\si$.  Obviously different sections of $\ce_+[1]$ determine
conformally related metrics and, by the last observation in the
previous paragraph, there is a section $\si_o$ of $\ce_+[1]$ so so
that $g^{\si_o}$ is the round metric.

Thus we see that $\mathbb{P}_+(\cN_+)$ is canonically equipped with
the standard conformal structure on the sphere, but with no preferred
metric from this class. Furthermore $\bg$, which arises here from
$\mathcal{H}$ by restriction, is the conformal metric on
$\mathbb{P}_+(\cN_+)$.
In summary then we have the following. 
\begin{lemma}\label{NidQ} Let $\cc$ be the conformal 
class of $S^n=\mathbb{P}_+(\cN_+)$ determined canonically by
$\cH$. This includes the round metric.  The map $\cN_+\ni 
(\pi(x),g_x) \in \cQ$ gives an identification of $\cN_+$ with $\cQ$,
the bundle of conformally related metrics on $(S^n,\cc)$.
 
Via this identification: functions homogeneous of degree $w$ on $\cN_+$
are equivalent to functions homogeneous of degree $w$ on $\cQ$ and
hence correspond to conformal densities of weight $w$ on $(S^n,\cc)$; the 
conformal metric $\bg$ on  $(S^n,\cc)$ agrees with, and is determined by, the restriction of $\cH$ to the lifts of vector fields on
$\mathbb{P}_+(\cN_+)$.
\end{lemma}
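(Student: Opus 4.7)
The plan is to harvest the pieces already assembled in the paragraphs preceding the lemma and check that they fit together. Define $\Phi: \cN_+ \to \cQ$ by $x \mapsto (\pi(x), g_x)$. This is well-defined by the preceding discussion: independence of $g_x$ from the choice of lifts follows from differentiating $\cH(X,X)=0$ along the cone, which shows every vertical vector is $\cH$-orthogonal to $T\cN_+$. To promote $\Phi$ to a diffeomorphism, I would exhibit it as an isomorphism of the two $\mathbb{R}_+$-principal bundles over $S^n$: the action on $\cN_+$ is $x \mapsto sx$, and the action on $\cQ$ is $\rho^s(g_x) = s^2 g_x$. The relation $\cL_E \cH = 2\cH$ (applied to lifts, which have zero Euler derivative) forces $g_{sx} = s^2 g_x$, so $\Phi$ is equivariant. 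Since $\Phi$ covers the identity on $S^n$ and is surjective on each fibre (any metric in the fibre is $s^2 g_x$ for a unique $s>0$), it is a principal bundle isomorphism, hence a diffeomorphism.

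Next I would record that any local section of $\pi$ pulls $\cH$ back to a positive definite metric on $S^n$, and two such sections differ by an $\mathbb{R}_+$-valued function, so the pulled-back metrics are conformally related. This defines $\cc$, and the coordinate section $X^0 = 1$ delivers the round metric, confirming it belongs to $\cc$.

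The density correspondence then follows immediately from the equivariance of $\Phi$: a function $f:\cN_+\to\mathbb{R}$ satisfying $f(sx) = s^w f(x)$ transports to a function on $\cQ$ satisfying $f(\rho^s(g_x)) = s^w f(g_x)$, which is precisely the defining property of a section of $\ce[w]$ as introduced in Section \ref{cg}.

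Finally, to identify $\bg$ with the restriction of $\cH$ to lifts, I would invoke the same argument used to construct $g_x$: for $u, v \in \Gamma(TS^n)$ with lifts $u', v'$ to $\cN_+$, the quantity $\cH(u', v')$ is (i) independent of the chosen lifts and (ii) homogeneous of degree $2$ under the Euler flow. By the density correspondence just established, this is a well-defined section of $\ce_{(ab)}[2]$. Consistency with \nn{easybg} is a direct check: for $\sigma \in \Gamma(\ce_+[1])$ corresponding to a section of $\pi$, pulling back $\cH$ by that section gives $\sigma^{-2}\bg = g^\sigma$, as required. The only real bookkeeping point is keeping the two $\mathbb{R}_+$-scalings straight and verifying they intertwine under $\Phi$; all remaining assertions are direct substitutions into constructions already in place.
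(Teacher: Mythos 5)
Your proposal is correct and follows essentially the same route as the paper, which itself treats the lemma as a summary of the immediately preceding discussion: well-definedness of $g_x$ via orthogonality of vertical vectors to the cone, homogeneity $g_{sx}=s^2g_x$ from $\cL_E\cH=2\cH$ together with $\cL_E u'=0$ for lifts, the round metric from the section $X^0=1$, and the weight-$2$ density interpretation of $\bg$. Your explicit packaging of the identification as an $\mathbb{R}_+$-principal bundle isomorphism is a slightly more formal framing of what the paper leaves implicit, but it is the same argument.
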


The conformal sphere, as constructed here, is acted on transitively by
$G={\rm O}_+(\cH)\cong {\rm O}_+(n+1,1)$, where this is the time
orientation preserving subgroup of orthogonal group preserving
$\mathcal{H}$, ${\rm O}(\cH)\cong {\rm O}(n+1,1)$. Thus as a
homogeneous space $\mathbb{P}_+(\cN_+)$ may be identified with $G/P$,
where $P$ is the (parabolic) Lie subgroup of $G$ preserving a
nominated null ray in $\cN_+$.

\subsubsection{Canonical Calculus on the model}\label{mc}
Here we sketch briefly one way to see this on the model
$\mathbb{P}_+(\cN_+)$.

Note that as a manifold $\mathbb{V}$ has some special structures that
we have already used. In particular an origin and, from the vector
space structure of $\mathbb{V}$, the Euler vector field $E$ which
assigns to each point $X\in \mathbb{V}$ the vector $X\in
T_X\mathbb{V}$, via the canonical identification of $T_X\mathbb{V}$
with $\mathbb{V}$.

The vector
space $\mathbb{V}$ has an affine structure and this induces a global
parallelism: the tangent space $T_x\mathbb{V}$ at any point $x\in
\mathbb{V}$ may be canonically identified with $\mathbb{V}$.  Thus, in
particular, for any parameterised curve in $\mathbb{V}$ there is a
canonical notion of parallel transport along the given curve. This
exactly means that viewing $\mathbb{V}$ as a manifold, it is equipped
with a canonical affine connection $\nabla^{\mathbb{V}}$. The affine
structure gives more than this of course. It is isomorphic to
$\mathbb{R}^{n+2}$ with its usual affine structure, and so the tangent
bundle to $\mathbb{V}$ is trivialised by everywhere parallel tangent
fields. It follows that the canonical connection $\nabla^{\mathbb{V}}$
is flat and has trivial holonomy.

Next observe that $\cH$ determines a signature $(n+1,1)$ metric on
$\mathbb{V}$, where the latter is viewed as an affine manifold. By the
definition of its promotion from bilinear form to metric, one sees at
once that for any vectors $U,V$ that are parallel on $\mathbb{V}$ the
quantity $\cH(U,V)$ is constant. This means that $\cH$ is itself
parallel since for any vector field $W$ we have
$$
(\nabla_W \cH)(U,V)= W\cdot \cH(U,V)- \cH(\nabla_W U,V)- \cH( U,\nabla_W V)=0.
$$

The second key observation is that a restriction of these structures
descend to the model. We observed above that $\cN_+$ is an $\mathbb{R}_+$-ray bundle over $S^n$. We may identify $S^n$ with $\cN_+/\sim$ where the
equivalence relation is that $x\sim y$ if and only if $x$ and $y$ are
points of the same fibre $\pi^{-1}(x')$ for some $x'\in S^n$. The 
restriction $T\mathbb{V}|_{\mbox{\scriptsize{$\cN_+$}}}$
is a rank $n+2$ vector bundle over $\cN_+$.  Now we may define an
equivalence relation on $T\mathbb{V}|_{\mbox{\scriptsize{$\cN_+$}}}$
that covers the relation on $\cN_+$. Namely we decree $U_x\sim V_y$ if
and only if $x,y\in \pi^{-1}(x')$ for some $x'\in S^n$, {\em and} $U_x$
and $V_y$ are parallel. Considering parallel transport up the fibres
of $\pi$, it follows that
$T\mathbb{V}|_{\mbox{\scriptsize{$\cN_+$}}}/\sim$ is isomorphic to the
restriction $T\mathbb{V}|_{\operatorname{im} (S)}$ where $S$ is any
section of $\pi$ (that $S:S^n\to \cN_+$ is a smooth map such that
$\pi\circ S=\operatorname{id}_{S^n}$). But $\operatorname{im} (S)$ is
identified with ${S^n}$ via $\pi$ and it follows that
$T\mathbb{V}|_{\mbox{\scriptsize{$\cN_+$}}}/\sim$ may be viewed as a
vector bundle $\cT$ on ${S^n}$. Furthermore it is clear from the
definition of the equivalence relation on
$T\mathbb{V}|_{\mbox{\scriptsize{$\cN_+$}}}$ that $\cT$ is independent
of $S$. The vector bundle $\cT$ on $S^n$ is the \emph{(standard) tractor bundle} of $(S^n,\cc)$.

\begin{figure}[ht]
\begin{center}
	\begin{tikzpicture}[xscale=0.6,yscale=0.6]
		\draw  (0,3) ellipse (4 and 1);
		\draw (-3.95,2.84) -- (0,-3) -- (3.96,2.86);
		\node at (-1.95,1.3) {$\mathcal{N}_+$} ;
		
		\draw [-latex] (0.67,-2)--(2.67,-1);
		\draw [-latex] (1.35,-1)--(3.35,0);
		\draw [-latex] (2.03,0)--(4.03,1);
		\draw [-latex] (2.7,1)--(4.7,2);
		
		\draw [decorate, decoration={brace, amplitude=5pt}, shift={(-0.8,0.1)}] (5.7684,1.8941)--(3.5892,-1.3162);
		\node at (6,0) {\begin{tabular}{c}a tractor \\ at $\pi(x)$ \end{tabular}} ;
		
		\draw [-latex] (0,-3)--(1.75,-0.41);
		\node at (1.33,-0.3144) {$x$} ;
	\end{tikzpicture}
	\caption{An element of $\cT_{\pi(x)}$ corresponds to a homogeneous of degree zero vector field along the ray generated by $x$.}
\end{center}
\end{figure}
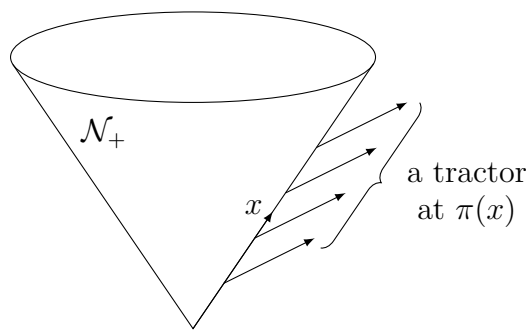

By restriction $\cH$ and $\nabla^{\mathbb{V}}$ determine,
respectively, a (signature $(n+1,1)$) metric and connection on the
bundle $T\mathbb{V}|_{\mbox{\scriptsize{$\cN_+$}}}$ that we shall denote
with the same notation.  Since a vector field which is parallel along
a curve $\gamma$ in $\cN_+$ may be uniquely extended to a vector field
which is also parallel along every fibre of $\pi$ through the curve
$\gamma$, it is clear that $\nabla^{\mathbb{V}}$ canonically
determines a connection on $\cT$ that we shall denote
$\nabla^{\mbox{\scriptsize{$\cT$}}}$. Sections $U,V\in \Gamma(\cT)$
are represented on $\cN_+$ by vector fields $\tilde{U},\tilde{V}$ that
are parallel in the direction of the fibres of $\pi:\cN_+\to {S^n}$.  On the
other hand $\cH$ is also parallel along each fibres of $\pi$ and so
$\cH(\tilde{U},\tilde{V} )$ is constant on each fibre. Thus $\cH$
determines a signature $(n+1,1)$ metric $h$ on $\cT$ satisfying
$h(U,V)= \cH(\tilde{U},\tilde{V} )$. What is more, since
$\nabla^{\mathbb{V}}\cH=0$ on $\cN_+$, it follows that $h$ is preserved
by $\nabla^{\mbox{\scriptsize{$\cT$}}}$, that is
$$
\nabla^{\mbox{\scriptsize{$\cT$}}} h=0.
$$

Summarising the situation thus far we have the following.  
\begin{theorem}\label{mtr}
The model $({S^n},\cc)$ is canonically equipped with the following:
 a canonical rank $n+2$
bundle $\cT$;  a  signature $(n+1,1)$ metric $h$ on this; and a connection
$\nabla^{\mbox{\scriptsize{$\cT$}}}$ on $\cT$ that preserves $h$.
 \end{theorem}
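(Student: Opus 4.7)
The plan is to make precise the descent construction sketched in the paragraphs immediately preceding the statement. I would define the tractor bundle as the quotient $\cT := T\mathbb{V}|_{\cN_+}/\sim$, where $U_x \sim V_y$ iff $\pi(x) = \pi(y)$ and $U_x, V_y$ are $\nabla^{\mathbb{V}}$-parallel along the common fibre through $x$ and $y$. The first task is to verify this is a smooth rank $n+2$ vector bundle over $S^n$: any local section $S$ of $\pi$ over $U \subset S^n$ trivialises $\cT|_U$ as $T\mathbb{V}|_{\operatorname{im}(S)} \cong U \times \mathbb{V}$, and different local sections are related by fibre-parallel transport, which is smooth because $\nabla^{\mathbb{V}}$ is flat with trivial holonomy. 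This simultaneously shows the construction is independent of the auxiliary choice of section.

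Next I would descend $\cH$. Given $U, V \in \Gamma(\cT)$, lift them to fibre-parallel vector fields $\tilde U, \tilde V$ along $\cN_+$; since $\nabla^{\mathbb{V}} \cH = 0$ and $\nabla^{\mathbb{V}}_E \tilde U = \nabla^{\mathbb{V}}_E \tilde V = 0$, the function $\cH(\tilde U, \tilde V)$ is constant along each fibre of $\pi$ and so descends to a function $h(U,V)$ on $S^n$. Non-degeneracy and signature $(n+1,1)$ are inherited fibrewise, since restricted to each fibre the quotient map $T\mathbb{V}|_{\cN_+} \to \cT$ identifies the $(n+2)$-dimensional space of fibre-parallel sections isomorphically with $\cT_{\pi(x)}$.

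For the connection, given $U \in \Gamma(\cT)$ and $v \in T_{\pi(x)} S^n$, I pick any lift $\tilde v \in T_x \cN_+$ and a fibre-parallel lift $\tilde U$ of $U$, and set $\nabla^{\cT}_v U := [\nabla^{\mathbb{V}}_{\tilde v} \tilde U]$. The main obstacle is showing this is well-defined, namely that $\nabla^{\mathbb{V}}_{\tilde v} \tilde U$ is itself fibre-parallel (so that its equivalence class lies in $\cT$) and is independent of the choice of lift $\tilde v$. Extending $\tilde v$ as a fibre-parallel lift so that $[E, \tilde v] = 0$, and using flatness and torsion-freeness of $\nabla^{\mathbb{V}}$, one obtains $\nabla^{\mathbb{V}}_E \nabla^{\mathbb{V}}_{\tilde v} \tilde U = \nabla^{\mathbb{V}}_{\tilde v} \nabla^{\mathbb{V}}_E \tilde U + \nabla^{\mathbb{V}}_{[E,\tilde v]} \tilde U = 0$, so fibre-parallelism is preserved. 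A routine verification then gives independence of both the lift of $v$ (two lifts differ by a multiple of $E$, on which $\nabla^{\mathbb{V}} \tilde U$ vanishes) and the fibre-parallel lift of $U$.

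Finally, compatibility $\nabla^{\cT} h = 0$ is immediate from $\nabla^{\mathbb{V}} \cH = 0$: for fibre-parallel lifts $\tilde U, \tilde V$ and any lift $\tilde v$ of $v$, the identity $v(h(U,V)) - h(\nabla^{\cT}_v U, V) - h(U, \nabla^{\cT}_v V) = \tilde v(\cH(\tilde U, \tilde V)) - \cH(\nabla^{\mathbb{V}}_{\tilde v} \tilde U, \tilde V) - \cH(\tilde U, \nabla^{\mathbb{V}}_{\tilde v} \tilde V) = (\nabla^{\mathbb{V}}_{\tilde v} \cH)(\tilde U, \tilde V) = 0$ completes the proof.
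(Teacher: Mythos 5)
Your proposal is correct and follows essentially the same route as the paper: the paper's own argument (the discussion of Section \ref{mc} preceding the statement, of which the theorem is a summary) likewise defines $\cT$ as the quotient of $T\mathbb{V}|_{\cN_+}$ by fibre-parallelism and descends $\cH$ and the flat affine connection $\nabla^{\mathbb{V}}$, with you merely supplying the well-definedness checks the paper leaves as ``clear''. One terminological quibble: the extension of $\tilde v$ your curvature computation actually uses satisfies $[E,\tilde v]=0$ (the homogeneity-zero lift, as in the paper's condition $\cL_E u'=0$), which is not the fibre-parallel condition $\nabla^{\mathbb{V}}_E\tilde v=0$ (for the Euler field these differ, since $\nabla^{\mathbb{V}}_{\tilde v}E=\tilde v$); since the identity you invoke is the correct one, this is only a naming slip and not a gap.
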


Although we shall not go into details here it is straightforward to show the following:
\begin{proposition}\label{Tstr}
The tractor bundle $\cT$ of the model $({S^n},\cc)$ has 
a composition structure
\begin{equation}\label{mTcomp}
\cT=\ce[1]\lpl T{S^n}[-1]\lpl \ce[-1].
\end{equation}
The restriction of $h$ to the subbundle $\cT^0= T{S^n}[-1]\lpl \ce[-1]$
induces the conformal metric $\bg: T{S^n}[-1]\times T{S^n}[-1]\to \ce$. 
Any null $Y\in \Gamma{\cT}[-1]$ satisfying $h(X,Y)=1$ determines a 
splitting 
$\cT\stackrel{\cong}{\lto} \ce[1]\oplus
T{S^n}[-1]\oplus \ce[-1]$ such that the metric $h$ is given by $(\si,~\mu,~\rho
)\mapsto 2\si \rho+ \bg(\mu,\mu)$ as a quadratic form.
\end{proposition}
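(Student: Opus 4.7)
The plan is to work throughout with the concrete realisation of $\cT$ as $T\mathbb{V}|_{\cN_+}/\!\sim$, so that a section of $\cT[w]$ corresponds to a smooth map $F\colon \cN_+ \to \mathbb{V}$ homogeneous of degree $w$ under the $\mathbb{R}_+$-action. The key geometric input is the pointwise filtration $\mathbb{R}\cdot x \subset T_x\cN_+ = x^\perp \subset \mathbb{V}$ at each $x \in \cN_+$, which will descend to the composition series on $\cT$.

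First I would introduce the canonical section $X \in \Gamma(\cT[1])$ arising from the Euler field on $\mathbb{V}$: the map $x \mapsto x$ is homogeneous of degree one and $\cH(x,x) = 0$ on $\cN_+$, so $h(X,X) = 0$. Define $\cT^1 := \{fX : f \in \Gamma(\ce[-1])\}$ and $\cT^0 := \ker\bigl(h(X,\cdot)\colon \cT \to \ce[1]\bigr)$; the nullity of $X$ gives $\cT^1 \subset \cT^0$. The identification $\cT^1 \cong \ce[-1]$ via $f \mapsto fX$ is tautological, and since $h$ is non-degenerate and $X$ nowhere vanishes, $h(X,\cdot)$ is surjective, giving $\cT/\cT^0 \cong \ce[1]$. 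For the middle piece, sections of $\cT^0$ correspond to homogeneity-$0$ maps $F\colon \cN_+ \to \mathbb{V}$ with $F(x) \in T_x\cN_+$. A short computation from the definition of $\pi$ gives $d\pi_{sx} = s^{-1} d\pi_x$ when both domains are viewed as the common subspace $x^\perp \subset \mathbb{V}$; hence $x \mapsto d\pi_x F(x)$ is a $TS^n$-valued function on $\cN_+$ homogeneous of degree $-1$, with kernel the vertical direction $\mathbb{R}\cdot x$, producing the required isomorphism $\cT^0/\cT^1 \cong TS^n[-1]$.

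The metric statement is then almost immediate: because $h(X,\cdot)$ vanishes on $\cT^0$ and $h(X,X) = 0$, the restriction of $h$ to $\cT^0$ is degenerate along $\cT^1$ and so factors through $\cT^0/\cT^1$. Unwinding the definition, if $U,V \in \cT^0$ are represented by fibre-parallel lifts $\tilde U, \tilde V$ tangent to $\cN_+$, then $h(U,V)(x) = \cH_x(\tilde U(x),\tilde V(x))$, which is precisely the defining construction of $\bg$ in Lemma \ref{NidQ}. Thus the factored form coincides with $\bg\colon TS^n[-1]\times TS^n[-1] \to \ce$.

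For the splitting induced by a null $Y \in \Gamma(\cT[-1])$ with $h(X,Y) = 1$, I would decompose $T \in \Gamma(\cT)$ as $T = \sigma Y + \tilde\mu + \rho X$ with
\[
\sigma := h(X,T) \in \Gamma(\ce[1]), \qquad \rho := h(Y,T) \in \Gamma(\ce[-1]), \qquad \tilde\mu := T - \sigma Y - \rho X.
\]
Using $h(X,X) = h(Y,Y) = 0$ and $h(X,Y) = 1$, a direct check gives $h(X,\tilde\mu) = h(Y,\tilde\mu) = 0$, so $\tilde\mu \in X^\perp \cap Y^\perp$. Since $X \notin Y^\perp$, this intersection is complementary to $\cT^1$ inside $\cT^0$ and maps isomorphically onto $\cT^0/\cT^1 \cong TS^n[-1]$, allowing me to identify $\tilde\mu$ with $\mu \in \Gamma(TS^n[-1])$. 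Expanding $h(T,T)$ by bilinearity, all cross terms except $2\sigma\rho\, h(X,Y) = 2\sigma\rho$ vanish, leaving $h(T,T) = 2\sigma\rho + \bg(\mu,\mu)$. The main obstacle is the careful weight bookkeeping in the iso $\cT^0/\cT^1 \cong TS^n[-1]$ and the verification that its induced form agrees with the $\bg$ of Lemma \ref{NidQ} rather than some rescaling of it; once that is in hand, everything else is linear algebra exploiting the nullity of $X$ and $Y$.
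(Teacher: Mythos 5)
Your argument is correct and follows exactly the route the paper indicates (the paper omits the details, offering only the remark that the filtration comes from the vertical line $\mathbb{R}\cdot x\subset T_x\cN_+\subset \mathbb{V}$ at each point of the cone): you identify $\cT^1$ with the span of the Euler field, $\cT^0$ with $X^\perp$, i.e.\ the classes tangent to $\cN_+$, and the quotient with $TS^n[-1]$ via $d\pi$, with the homogeneity/weight bookkeeping and the comparison with the $\bg$ of Lemma~\ref{NidQ} carried out correctly. The splitting determined by $Y$ and the resulting quadratic form $2\si\rho+\bg(\mu,\mu)$ are then the straightforward linear algebra you describe, so the proposal fills in precisely the details the paper leaves to the reader.
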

It is easily seen how this composition structure arises
geometrically. The subbundle $\cT^0$ of $\cT$ corresponds to the fact
that $T\cN_+$ is naturally identified with a subbundle of
$T\mathbb{V}|_{\cN_+}$. The vertical directions in there correspond to the fact
that $\ce[-1]$ is a subbundle of $\cT^0$, and the {\em semidirect sum}
symbols $\lpl$ record this structure.

\subsubsection{The abstract approach to the tractor connection}
An alternative way of seeing how the tractor connection arises on the flat model is via the group picture. If $G/H$ is a homogeneous space of Lie groups, then the canonical projection $G\rightarrow G/H$ gives rise to a principal $H$-bundle over $G/H$ (with total space $G$). If $\bV$ is a representation of $H$ then one obtains a homogeneous vector bundle $\cV:=G\times_H \bV$ over $G/H$ whose total space is defined to be the quotient of $G\times \bV$ by the equivalence relation
$$
(g,v) \sim (gh,h^{-1}\cdot v).
$$
If $\bV$ is in fact a representation of $G$, then the bundle $\cV:=G\times_H \bV$ is trivial. The isomorphism
$$
G\times_H \bV \cong (G/H)\times \bV
$$
is given by
$$
[g,v]\mapsto (gH,g\cdot v)
$$
which is easily checked to be well defined. This trivialisation gives rise to a flat connection on $G\times_H \bV$. In the case where we have the conformal sphere $G/P\cong S^n$ and $\bV$ is the defining representation of $G$ (i.e. $\mathbb{R}^{n+2}$) then $G\times_P \bV$ is (naturally identified with) the tractor bundle $\cT$ and the connection induced by the trivialisation $G\times_P \bV \cong (G/P)\times \bV$ is the tractor connection. The trivialisation $\cT \cong (G/P)\times \bV$ also immediately gives the existence of a bundle metric preserved by the tractor connection induced by the bilinear form on $\bV$.

\subsubsection{The conformal model in other signatures} \label{modelOS}
With only a little more effort one can treat the case of general
signature $(p,q)$.  This again begins with a real vector space
$\bV\cong \mathbb{R}^{n+2}$, but now we equip it with a non-degenerate
bilinear form $\mathcal{H}$ of signature $(p+1,q+1)$, where $p+q=n$. 

Writing again $\cN$ for the quadratic variety of vectors which have
zero-length with respect to $\cH$, we see that the space of null rays
in $\cN\setminus \{0 \}$ has the topology of $S^p\times S^q$. This is connected, unless $p$ or $q$ is zero in either which case we get two copies of $S^n$. In any case an easy adaption of the earlier discussion shows that $\mathbb{P}_+(\cN)$ is equipped canonically with a conformal structure of signature $(p,q)$ and on this a tractor connection preserving now a tractor metric of signature $(p+1,q+1)$. (One can easily check that the conformal class $\cc$ of $\mathbb{P}_+(\cN)$ contains a metric $g$ which is of the form $g_{S^p}-g_{S^q}$ for some identification of $\mathbb{P}_+(\cN)$ with $S^p\times S^q$.)

As constructed here the conformal space $\mathbb{P}_+(\cN)$ is acted
on transitively by $G={\rm O}(\cH)\cong {\rm O}(p+1,q+1)$, the
orthogonal group preserving $\mathcal{H}$. As a homogeneous space
$\mathbb{P}_+(\cN)$ may be identified with $G/P$, where $P$ is the
Lie subgroup of $G$ preserving a nominated null ray in $\cN$.  We
may think of this group picture as a good model for general conformal
manifolds. Of course there are other possible choices of $G/P$ (which may result in models which are only locally equivalent to the ones here), as we have already seen in the Riemannian case. See \cite{GWsubtleties} for a discussion of the possible choices of $(G,P)$ and the connection with global aspects of the conformal tractor calculus.

\begin{remark} 
Note that the model space $S^1\times S^{n-1}$ of Lorentzian signature conformal geometry is simply the quotient of the (conformal) Einstein universe $\mathbb{R}\times S^{n-1}$ by integer times $2\pi$ translations. Thus the usual embeddings of Minkowski and de Sitter space into the Einstein universe can be seen (by passing to the quotient) as conformal embeddings into the flat model space. In fact, $S^1\times S^{n-1}$ can be seen as two copies of Minkowski space glued together along a null boundary with two cone points, or as two copies of de Sitter space glued together along a spacelike boundary with two connected components which are $(n-1)$-spheres. The significance of this will become clearer as we continue to develop the tractor calculus and then move on to study conformally compactified geometries.  \endrk
\end{remark}

\subsection{Prolongation and the tractor connection}

Here we construct the tractor bundle, connection, and metric on a conformal manifold of dimension at least three. We will see that the  ``conformal to Einstein'' condition plays an important role. The tractor bundle and connection are obtained by ``prolonging'' the ``almost Einstein equation'' \nn{eq:AE}.

First we state what is meant by Einstein here.
\begin{definition}
In dimensions $n\geq 3$, a metric will be said to be {\em Einstein} if 
$$
\Ric^g =\lambda g
$$ 
for some function $\lambda$. 
\end{definition}The Bianchi identities imply that,
for any metric satisfying this equation, $\lambda$ is constant (as we
assume $M$ connected). Throughout the following we shall assume that $n\geq 3$.

Recalling the model above, note that a parallel co-tractor $I$
corresponds to a homogeneous polynomial which, in standard coordinates
$X^A$ on $\mathbb{V}\stackrel{\simeq}{\lto}\mathbb{R}^{n+2}$, is given by
$\tilde{\si}=I_AX^A$. Then $\tilde{\si}=1$ is a section of $\cN_+$
that corresponds to the intersection of $\cN_+$ with the hyperplane
$I_AX^A=1$, in $\bV$. For at least some of these distinguished (conic)
sections the resulting metric on $S^n$, $g=\tilde{\si}^{-2}\bg$ (on
the open set where the corresponding density $\si\in \Gamma (\ce[1])$
is nowhere vanishing) is obviously Einstein. For example the round metric was already discussed. It is in fact true that all metrics obtained on open regions of $S^n$ in this way are Einstein, as we shall shortly see.

\subsection{The almost Einstein equation} \label{AEeq}

Let $(M,\cc)$ be a conformal manifold with $\dim (M)\geq  3$. It is clear that $g^o\in \cc$ is Einstein if and only if $\V^{g^o}_{(ab)_0}=0$, and this is the link with the equation \nn{eq:AE}.  As a conformally invariant
equation this takes the form
\begin{equation}\tag{AE}\label{AE}
\nabla^g_{(a}\nabla^g_{b)_0}\sigma+\V^g_{(ab)_0}\sigma=0,
\end{equation}
where $\si\in \ce_+[1]$ encodes $g^o=\si^{-2}\bg$ and we have used the superscripts to emphasise that
we have picked some metric $g\in \cc$ in order to write the equation.
 
Suppose that $\sigma$ is a solution with the property that it is
nowhere zero. Then, without loss of generality, we may assume that
$\si$ is positive, that is $\sigma\in \mathcal{E}_+[1]$. So $\si$ is
a scale, and $g^o=\sigma^{-2}\bg$ is a well-defined metric.  Since
\nn{eq:AE} is conformal invariant, there is no loss if we
calculate the equation \nn{AE} in this scale. But then
$\nabla^{g^o}\sigma=0$. Thus we conclude that
$$ \V_{(ab)_0}=0.$$

Conversely suppose that $\V^{g^o}_{(ab)_0}=0$ for some $g^o\in c$.
Then $g^o=\sigma^{-2}\bg$ for some $\sigma\in\mathcal{E}_+[1]$.
Therefore $\sigma$ solves \nn{AE} by the reverse of the same argument. 
Thus in summary we have the following, cf. \cite{LeBrunAmbi85}.
\begin{proposition}\label{Ecase}
$(M,\cc)$ is conformally Einstein (i.e.\ there is an Einstein metric $g^o$
in $\cc$) if and only if there exists $\sigma\in \mathcal{E}_+[1]$
that solves \nn{AE}. If $\sigma\in \mathcal{E}_+[1]$ solves
\nn{AE} then $g^o:=\si^{-2}\bg$ is the corresponding
Einstein metric.
\end{proposition}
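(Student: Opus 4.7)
The plan is to exploit the conformal invariance of the operator $A^g_{ab}$ established in the exercise preceding the proposition, together with the fact that any metric $g^o\in\cc$ arises from a canonical scale $\si_{g^o}\in\Gamma(\ce_+[1])$ satisfying $g^o=\si_{g^o}^{-2}\bg$ and, by \nn{dconn}, $\nabla^{g^o}\si_{g^o}=0$ (because in its own scale the density $\si_{g^o}$ corresponds to the constant function $1$).

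For the forward direction, I would first translate the Einstein condition into vanishing of the trace-free Schouten tensor. Since $\Ric_{ab}=(n-2)\V_{ab}+\J g_{ab}$ for $n\geq 3$, a metric $g^o\in\cc$ is Einstein if and only if $\V^{g^o}_{(ab)_0}=0$. Taking $\si:=\si_{g^o}$ and evaluating the left-hand side of \nn{AE} in the scale $g^o$ itself, the first term vanishes because $\nabla^{g^o}\si=0$, and the second term vanishes because $\V^{g^o}_{(ab)_0}=0$. Hence $\si$ solves \nn{AE} in the scale $g^o$; conformal invariance of the operator then guarantees that $\si$ solves \nn{AE} in every scale $g\in\cc$.

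For the converse, suppose $\si\in\Gamma(\ce_+[1])$ solves \nn{AE}. Positivity of $\si$ ensures that $g^o:=\si^{-2}\bg$ is a genuine metric in $\cc$, with $\si_{g^o}=\si$. I would again use conformal invariance to rewrite \nn{AE} in the scale $g^o$: here $\nabla^{g^o}\si=0$, so the equation collapses to $\V^{g^o}_{(ab)_0}\,\si=0$. Since $\si$ is nowhere zero this forces $\V^{g^o}_{(ab)_0}=0$, i.e., $\Ric^{g^o}$ is pure trace, so $g^o$ is Einstein.

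The argument has no serious obstacle: the proposition is essentially a translation of the Einstein condition through the dictionary between metrics in $\cc$ and positive sections of $\ce[1]$, with the conformal covariance of $A^g_{ab}$ doing the heavy lifting. The only point worth checking explicitly is the identity $\nabla^{g^o}\si_{g^o}=0$, which is immediate from \nn{dconn} since in the scale $g^o$ the ratio $\si_{g^o}^{-1}\si_{g^o}$ is the constant function $1$ whose exterior derivative vanishes.
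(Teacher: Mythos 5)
Your proof is correct and follows essentially the same route as the paper: both directions reduce to the equivalence of the Einstein condition with $\V^{g^o}_{(ab)_0}=0$, and then exploit the conformal invariance of the operator in \nn{eq:AE} to evaluate \nn{AE} in the scale $g^o$ itself, where $\nabla^{g^o}\si=0$ kills the second-order term. Your explicit check of $\nabla^{g^o}\si_{g^o}=0$ via \nn{dconn} is a point the paper establishes earlier and simply invokes.
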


There are some important points to make here.  
\begin{remark}
Equation \nn{AE} is equivalent to a system of
$\frac{(n+2)(n-1)}{2}$ scalar equations on one scalar variable. So it
is overdetermined and we do not expect it to have solutions in
general.  \endrk  
\end{remark}

\subsection{The connection determined by a conformal structure}\label{AEsec}

Proposition \ref{Ecase}  has the artificial feature that it is a
statement about nowhere vanishing sections of $\ce[1]$. Let us rectify
this.
\begin{definition} Let $(M,\cc)$ be a conformal manifold, of any signature, and $\si\in \ce[1]$. 
We say that $(M,\cc,\sigma)$ is an \emph{almost Einstein structure} if
$\sigma \in \ce[1]$ solves equation \nn{AE}. 
\end{definition}
\noindent We shall term \nn{AE} the (conformal) almost Einstein
equation. 

Since the
almost Einstein equation is conformally invariant it is natural to
seek integrability conditions that are also conformally
invariant. There is a systematic approach to this using a procedure
known as prolongation that goes as follows. We fix a metric $g\in \cc$
to facilitate the calculations. With this understood, for the most
part in the following we will omit the decoration by $g$ of the
natural objects it determines; for example we shall write $\nabla_a$
rather than $\nabla^g_a$.

As a first step observe that the equation \nn{AE} is equivalent to the 
equation
\begin{equation}\label{aeVar}
\nabla_a\nabla_b\sigma+\V_{ab}\sigma+\bg_{ ab}\rho=0
\end{equation}
 where we have introduced the new variable $\rho\in\ce[-1]$ to
absorb the trace terms. 
 The key idea is to attempt to construct an
equivalent first order closed system.  We introduce $\mu_a\in
\mathcal{E}_a[1]$, so our equation is replaced by the equivalent system
 \begin{equation}\label{smu}
 \nabla_a\sigma-\mu_a=0 , \quad \mbox{ and} \quad
 \nabla_a\mu_b+\V_{ab}\sigma+\bg_{ab}\rho=0 ~.
\end{equation}
This system is almost closed in the sense that the derivatives of
$\sigma$ and $\mu_b$ are given algebraically in terms of the unknowns
$\sigma$, $\mu_b$, and $\rho$. However to obtain a similar result for
$\rho$ we must differentiate the system; by definition {\em
  (differential) prolongation} is precisely concerned with this
process of producing higher order systems, and their consequences. 
 Here we use notation from earlier.

The Levi-Civita covariant derivative of \nn{aeVar} gives
\begin{equation}\nonumber
\begin{split}
\nabla_a\nabla_b\nabla_c \sigma+\bg_{bc}\nabla_a\rho+
(\nabla_a \V_{bc})\sigma+\V_{bc}\nabla_a\sigma=& ~0 .\\
\end{split}
\end{equation}
Contracting this using, respectively, $\bg^{ab}$ and $\bg^{bc}$ yields
 \begin{equation}\nonumber
\begin{split}
\bg^{ab}:\hspace{10mm}\Delta\nabla_c\sigma+\nabla_c\rho+
(\nabla^a \V_{ac})\sigma+\V^{a}_{\phantom{a}c}\nabla_a\sigma=& 
0 \hspace{3mm}(1)\\
\bg^{bc}:\hspace{14mm}\nabla_c\Delta\sigma+n\nabla_c\rho+
(\nabla_c J)\sigma+J\nabla_c\sigma=& 0\hspace{3mm} (2).\\
\end{split}
\end{equation}
Then the difference $(2)-(1)$ is simply 
$$
(n-1) \nabla_c\rho+J\nabla_c\sigma-\V^a_{\phantom{a}c}\nabla_a\sigma+
R_{cb\phantom{b}d}^{\phantom{cb}b}\nabla^d\sigma=0
$$ where we have used the contracted Bianchi identity $\nabla^a
 \V_{ac}=\nabla_c \J$ and computed the commutator $[\nabla_c,\Delta]$
 acting on $\sigma$. But 
$$
R_{cb\phantom{b}a}^{\phantom{cb}b}\nabla^a\sigma=-R_{ca}\nabla^{a}\sigma
=(2-n)\V_c{}^a\nabla_a\sigma-J\nabla_c\sigma ,
$$
and using this we obtain
\begin{equation}\label{rhoeq}
\nabla_c\rho- \V_c^{\phantom{c}a}\mu_a=0,
\end{equation}
after dividing by the overall factor $(n-1)$.  So we have our closed
system and, what is more, this system yields a linear connection. We
discuss this now.

On a conformal manifold $(M,\cc)$ let us write $\left[\mathcal{T}\right]_g$
to mean the pair consisting of a direct
sum bundle and $g\in \cc$, as follows: 
\begin{equation}\label{bform}
\left[\mathcal{T}\right]_g:=\big(
\mathcal{E}[1]\oplus\mathcal{E}_a[1]\oplus\mathcal{E}[-1], g \big)
\end{equation}
\begin{proposition}\label{tp} On a conformal manifold $(M,\cc)$, fix any metric
$g\in \cc$. There is a linear connection $\nabla^{\cT}$ on the bundle
$$
 \left[\mathcal{T}\right]_g \cong \begin{array}{c}
\mathcal{E}[1] \\ 
\oplus \\ 
\mathcal{E}_a[1] \\ 
\oplus\\ 
\mathcal{E}[-1]
\end{array} 
$$
given by 
\begin{equation}\label{cform}
\nabla_a^{\mathcal{T}} \left(\begin{array}{c}
\sigma \\ 
\mu_b \\ 
\rho
\end{array}\right):=\left(
\begin{array}{c}
\nabla_a\sigma -\mu_a \\ 
\nabla_a\mu_b+\bg_{ab}\rho+\V_{ab}\sigma\\ 
\nabla_a\rho-\V_{ab}\mu^b
\end{array} 
\right).
\end{equation}
Solutions of the almost Einstein equation \nn{AE} are in one-to-one
correspondence with sections of the bundle $ [{\cT}]_g$
that are parallel for the  connection $\nabla^\cT$. 
\end{proposition}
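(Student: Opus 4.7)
The plan is to split the proposition into two parts: first, verify that formula \nn{cform} really does define a linear connection on the direct sum bundle $[\mathcal{T}]_g$; second, establish the claimed bijection between solutions of \nn{AE} and sections parallel for $\nabla^{\mathcal{T}}$.

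For the first part I would simply check the Leibniz rule. On each weighted tensor summand the Levi-Civita connection $\nabla_a$ satisfies Leibniz, and the remaining contributions on the right-hand side of \nn{cform} (namely $-\mu_a$, $\bg_{ab}\rho + \V_{ab}\sigma$, and $-\V_{ab}\mu^b$) are assembled from $C^\infty(M)$-linear algebraic operations with $\bg$ and $\V$. Hence $\nabla^{\mathcal{T}}$ inherits the Leibniz rule and is a genuine linear connection.

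For the forward direction of the bijection, suppose $\sigma\in\Gamma(\ce[1])$ solves \nn{AE}. I would then set $\mu_a := \nabla_a\sigma$ and $\rho := -\tfrac{1}{n}(\Delta\sigma + J\sigma)$, the unique density making the $\bg$-trace of $\nabla_a\nabla_b\sigma + \V_{ab}\sigma + \bg_{ab}\rho$ vanish. Since its trace-free part in $(ab)$ is exactly \nn{AE}, which vanishes by hypothesis, the entire tensor vanishes, giving \nn{aeVar}. By construction the first two rows of $\nabla^{\mathcal{T}}_a(\sigma,\mu_b,\rho)$ are zero, while the third row $\nabla_a\rho - \V_{ab}\mu^b$ vanishes by identity \nn{rhoeq}, already derived above from differentiating \nn{aeVar} together with the contracted Bianchi identity $\nabla^a\V_{ac} = \nabla_c J$ and the commutator $[\nabla_c,\Delta]$ acting on $\sigma$. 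Hence $(\sigma,\mu_b,\rho)$ is parallel.

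For the converse, given a parallel section $(\sigma,\mu_b,\rho)$, vanishing of the first row forces $\mu_a = \nabla_a\sigma$, and substituting this into the vanishing of the second row reproduces \nn{aeVar}; taking its trace-free part in $(a,b)$ yields \nn{AE}, so $\sigma$ is almost Einstein. The two assignments are manifestly mutual inverses (the auxiliary fields $\mu_b,\rho$ in a parallel section are uniquely determined by $\sigma$ via its first and second derivatives), which establishes the bijection. The only nontrivial analytic content in this entire argument is the derivation of the third row of \nn{cform}, i.e. the prolongation identity \nn{rhoeq}; since that is already in hand, what remains is essentially bookkeeping.
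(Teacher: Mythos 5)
Your argument is correct and follows essentially the same route as the paper: the connection claim is disposed of by observing that \nn{cform} is the Levi-Civita connection on the direct sum plus an endomorphism-valued one-form, and the bijection is exactly the prolongation argument of Section \ref{AEsec} (introducing $\rho$ to absorb the trace, setting $\mu_a=\nabla_a\sigma$, and invoking the derived identity \nn{rhoeq} for the closure of the third slot). The paper simply places the prolongation computation before the proposition and leaves only the linear-connection check inside the formal proof, so there is nothing substantive to add.
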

\begin{proof}
It remains only to prove that $\nabla^\cT$ is a linear connection. But this is
an immediate consequence of its explicit formula 
which we see takes the form $\nabla+\Phi$ where $\nabla$ is the Levi-Civita
connection on the direct sum bundle $\left[\mathcal{T}\right]_g =
\mathcal{E}[1] \oplus \mathcal{E}_a[1] \oplus \mathcal{E}[-1]$
and $\Phi$ is a section of $\End (\left[\mathcal{T}\right]_g) $.
\end{proof}

We shall call $\nabla^\cT$, of \nn{cform}, the {\em (conformal)
tractor connection}.  Interpreted na\"{\i}vely the statement in the
Proposition might appear to be not very strong: we have already
remarked that on a particular manifold it can be that the equation
\nn{AE} has no non-trivial solutions.  However this is a universal
result, and so it in fact gives an extremely useful tool for
investigating the existence of solutions. Note that the connection
\nn{cform} is well defined on any pseudo-Riemannian manifold. The
point is that in using \nn{cform} for any application, we have
immediately available the powerful theory of linear connections (e.g. parallel transport, curvature, etc.).

It is an immediate consequence of Proposition \ref{tp} that the almost Einstein equation \nn{AE} can have at most $n+2$ linearly independent solutions. However we shall see that far stronger results are available after we refine our understanding of the tractor connection \nn{cform}. Furthermore this connection will be seen to have a role that extends far beyond the almost Einstein equation.

\subsection{Conformal properties of the tractor connection} \label{conft}

Although derived from a conformally invariant equation, the bundle and
connection described in \nn{bform} and \nn{cform} are expressed in a
way depending a priori on a choice of $g\in c$, so we wish to study
their conformal properties. Let us again fix some choice $g\in c$,
before investigating the consequences of changing this conformally.

Given the data of $(\sigma, \mu_b,\rho)\in [{\cT}]_g$ at $x\in M$, it
follows from the general properties of linear connections that we may always
solve 
\begin{equation}\label{atx}
\nabla^\cT (\sigma, \mu_b,\rho)=0, \quad \mbox{at} \quad x\in M.
\end{equation}
This imposes no
restriction on either the conformal class $c$ or the choice $g\in
c$. Examining the formula \nn{cform} for the tractor connection we see
that if \nn{atx} holds then, at the point $x$, we necessarily have
\begin{equation}\label{Dsys}
\mu_b=\nabla_b \sigma, \quad \rho =-\frac{1}{n}(\Delta \si+ \J \sigma),
\end{equation}
where the second equation follows by taking a $\bg^{ab}$ trace of
the middle entry on the right hand side of \nn{cform}. Thus canonically associated to the tractor connection there is the second order differential $ \ce[1]\to [{\cT}]_g$ given by 
\begin{equation}\label{D1form}
[D \sigma ]_g = \left(\begin{array}{c}
n \sigma \\
n \nabla_b \sigma \\
- (\Delta \si+ \J \sigma)
\end{array}\right),
\end{equation} where we have included the normalising factor 
$n$ (=$\dim (M)$) for
later convenience.

Recall that we know how the Levi-Civita connection,
and hence also $\Delta$ and $\J$ here, transform conformally. Thus it
follows that $[D \sigma ]_g$, or equivalently \nn{Dsys}, determines
how the variables $\sigma$, $\mu_b$ and $\rho$ of the prolonged system
must transform if they are to remain compatible with $\nabla^\cT$
under conformal changes. If $\widehat{g}=\Omega^2 g$, for some
positive function $\Omega$, then a brief calculation reveals
$$ \nabla^{\widehat{g}}_b\sigma =\nabla^{g}_b\sigma +
\Upsilon_b\sigma, \quad \mbox{and} \quad
-\frac{1}{n}(\Delta^{\widehat{g}} \si+ \J^{\widehat{g}} \sigma) =
-\frac{1}{n}(\Delta^{g} \si+ \J^{g} \sigma) -\Upsilon^b\nabla_b\sigma
-\frac{1}{2}\sigma\Upsilon^b \Upsilon_b ,
$$
where as usual $\Upsilon$ denotes $d \Omega$. Thus we decree
$$ 
\widehat{\sigma}:= \sigma, \quad \widehat{\mu}_b:=
\mu_b+\Upsilon_b\sigma, \quad \widehat{\rho}:= \rho -\Upsilon^b\mu_b
-\frac{1}{2}\sigma\Upsilon^b \Upsilon_b ,
$$
or, writing $\Upsilon^2:=\Upsilon^a\Upsilon_a$, this may be otherwise
written using an obvious matrix notation:
\begin{equation}\label{ttrans}
[\cT]_{\widehat{g}}\ni \left(\begin{array}{c}
\widehat{\sigma}\\
\widehat{\mu}_b\\
\widehat{\rho}
\end{array}\right)=
\left(\begin{array}{ccc}
1 & 0 & 0\\
\Upsilon_b & \delta^c_b & 0\\
-\frac{1}{2}\Upsilon^2 & -\Upsilon^c & 1
\end{array}\right)\left(\begin{array}{c}
\sigma\\
\mu_c\\
\rho
\end{array}\right)~\sim~ \left(\begin{array}{c}
\sigma\\
\mu_b\\
\rho
\end{array}\right)\in [\cT]_g.
\end{equation} 
Note that at each point $x\in M$ the transformation here is manifestly
by a group action. Thus in the obvious way this defines an equivalence
relation among the direct sum bundles $[\cT]_g$ (of \nn{bform}) that
covers the conformal equivalence of metrics in $\cc$, and the quotient
by this defines what we shall call the conformal standard tractor
bundle $\cT$ on $(M,\cc)$. More precisely we have the following
definition.
\begin{definition}\label{cTdef} On a conformal manifold $(M,\cc)$
the {\em standard tractor bundle} is 
$$
\cT:= \bigsqcup_{g\in c} [\cT]_g /\sim ,
$$ meaning the disjoint union of the $[\cT]_g$ (parameterised by $g\in
c$) modulo equivalence relation given by \nn{ttrans}. We shall also use the 
abstract index notation $\ce_A$ for $\cT$.
\end{definition}
We shall carry many conventions from tensor calculus over to bundles
and tractor fields. For example we shall write $ \ce_{(AB)} [w]$ to
mean $S^2 \cT \otimes \ce[w]$, and so forth.

There are some immediate consequences of the Definition \ref{cTdef}
that we should observe. First, from this definition, the next statement
follows tautologically.
\begin{proposition}\label{D1}
The formula \nn{D1form} determines a conformally invariant
differential operator
$$
D:\ce[1]\to \cT.
$$
\end{proposition}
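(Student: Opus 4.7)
The plan is to verify that the formula \nn{D1form} is compatible with the equivalence relation \nn{ttrans} defining the tractor bundle $\cT$. Since $\cT = \bigsqcup_{g \in \cc} [\cT]_g / \sim$, to show $D$ descends to a well-defined map $\ce[1] \to \cT$ it suffices to check that for any $\sigma \in \Gamma(\ce[1])$ and any two conformally related metrics $g$ and $\hat{g} = \Omega^2 g$, the triples $[D\sigma]_g$ and $[D\sigma]_{\hat{g}}$ represent the same equivalence class, i.e.\ are related by the matrix transformation \nn{ttrans}.

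The crucial observation is that the transformation law \nn{ttrans} was constructed for exactly this purpose: it encodes how the triple $\bigl(\sigma,\, \nabla_b \sigma,\, -\tfrac{1}{n}(\Delta\sigma + \J \sigma)\bigr)$ transforms under conformal rescaling, as discussed right before Definition \ref{cTdef}. Since $[D\sigma]_g$ is precisely $n$ times this triple, equivariance under \nn{ttrans} reduces component-by-component to identities we have already established.

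Concretely I would verify the three components in turn. The top slot is $n\sigma$ in either scale, matching $\hat\sigma = \sigma$ in \nn{ttrans}. For the middle slot, the Levi-Civita connection on $\ce[1]$ satisfies $\hat\nabla_b \sigma = \nabla_b \sigma + \Upsilon_b \sigma$ by \nn{lct}, so $n\hat\nabla_b \sigma = n\nabla_b\sigma + \Upsilon_b(n\sigma)$, which matches $\hat\mu_b = \mu_b + \Upsilon_b \sigma$ with the first-slot contribution inserted. The bottom slot is the main obstacle and requires the explicit computation
\[
\hat\Delta \sigma + \hat\J\, \sigma = \Delta\sigma + \J \sigma + n\,\Upsilon^b \nabla_b \sigma + \tfrac{n}{2}\,\Upsilon^2 \sigma,
\]
which I would derive by combining the conformal variation of $\nabla_a\nabla_b\sigma$ computed from \nn{eq:two} and \nn{lct} (taking the $\bg^{ab}$-trace) with the transformation \nn{Jtrans} of $\J$. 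Multiplying the resulting identity by $-1$ and comparing with the bottom-row action of \nn{ttrans}, namely $\hat\rho = \rho - \Upsilon^c \mu_c - \tfrac{1}{2}\Upsilon^2 \sigma_{\text{top}}$, gives exactly the required matching after accounting for the overall factor of $n$.

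Once these three identifications are in place, the map $\sigma \mapsto [D\sigma]_g$ descends unambiguously to a map $\ce[1] \to \cT$, and differential operator-ness is manifest from the formula. I expect the bookkeeping in the third slot to be the only nontrivial point; however, since it repeats in structure the derivation already carried out in Section \ref{Y} (merely at weight $1$ instead of $1 - \tfrac{n}{2}$), no new ideas are required.
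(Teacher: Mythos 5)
Your proposal is correct and follows essentially the same route as the paper: the paper observes that Proposition \ref{D1} is tautological precisely because the transformation law \nn{ttrans} was \emph{derived} by computing how the triple $\bigl(\sigma,\nabla_b\sigma,-\tfrac{1}{n}(\Delta\sigma+\J\sigma)\bigr)$ rescales, which is exactly the slot-by-slot verification you carry out. Your bottom-slot identity and the bookkeeping with the factor of $n$ both match the computation the paper performs immediately before stating \nn{ttrans}, so nothing is missing.
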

This operator is evidently intimately connected with the very
definition of the standard tractor bundle. Because of its fundamental role we
make the following definition.   
\begin{definition}\label{scaletractor}
Given a section of $\sigma\in \ce[1]$ we shall call 
$$
I:= \frac{1}{n}D \sigma
$$
the {\em scale tractor} corresponding to $\si$. 
\end{definition} 

Next observe also that from \nn{ttrans} it is clear that $\cT$ is a
filtered bundle; we summarise this using a semi-direct sum notation
\begin{equation}\label{Tcomp}
\cT= \ce[1]\lpl \ce_a[1]\lpl \ce[-1]
\end{equation}
meaning that $\cT$ has a subbundle $\cT^1\subset \cT$ isomorphic to
$\ce[-1]$, $\ce_a[1]$ is isomorphic to a subbundle of the quotient
$\cT/\cT^1$ bundle, and $\ce[1]$ is the final factor. 
We use $X$, to denote the bundle surjection $X: \cT \to \ce[1]$
or in abstract indices:
\begin{equation}\label{cant}
X^A :\ce_A\to \ce[1].
\end{equation}
Note that given any metric $g\in \cc$ we may interpret $X$ as the map $[\cT]_g \to \ce[1]$ given by
$$
\left(\begin{array}{c}
\si\\
\mu_b\\
\rho
\end{array}\right) \mapsto \si.
$$

For our current purposes the critical result at this point is that the tractor
connection $\nabla^\cT$ ``intertwines'' with the transformation
\nn{ttrans} in the following sense.
\begin{exe} \label{tint}
Let $V=(\sigma, \mu_b, \rho)$, a section of $[\cT]_g$, and
$\widehat{V}=(\widehat{\sigma}, \widehat{\mu}_b, \widehat{\rho})$, a section
of $[\cT]_{\widehat{g}}$, be
related by \nn{ttrans}, where  $\widehat{g}=\Omega^2 g$.  Show that then
\begin{equation}\nonumber
\left(\begin{array}{c}
\widehat{\nabla_a\sigma-\mu_a}\\
\widehat{\nabla_a\mu_b+g_{ab}\rho+\V_{ab}\sigma}\\
\widehat{\nabla_a\rho-\V_{ab}\mu^b}
\end{array}
\right)=
\left(\begin{array}{ccc}
1 & 0 & 0\\
\Upsilon_b & \delta^c_b & 0\\
-\frac{1}{2}\Upsilon^2 & -\Upsilon^c & 1
\end{array}\right)
\left(
\begin{array}{c}
\nabla_a \sigma-\mu_a\\
\nabla_a\mu_b+g_{ac}\rho+\V_{ac}\sigma\\
\nabla_a\rho-\V_{ac}\mu^c
\end{array}
\right).
\end{equation}
Here $\Upsilon =d\Omega$, as usual, and for example
$\widehat{\nabla_a\sigma-\mu_a}$ means
$\widehat{\nabla}_a\widehat{\sigma}-\widehat{\mu}_a$.
\end{exe}
Given a tangent vector field $v^a$, the exercise shows that
$v^a\nabla_a^\cT V$ {\em transforms conformally as a standard
tractor field}, that is by \nn{ttrans}. Whence $\nabla^\cT$ descends to a
well defined connection on $\cT$. Let us summarise as follows.
\begin{theorem}\label{tthm}
Let $(M,\cc)$ be a conformal manifold of dimension at least 3. The
formula \nn{cform} determines a conformally invariant connection 
$$
\nabla^\cT: \cT\to \Lambda^1\otimes \cT~.
$$
\end{theorem}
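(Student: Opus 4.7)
The plan is to verify that the family of connections $\nabla^{\cT}$ defined on each $[\cT]_g$ by \nn{cform} descends to a well-defined connection on the quotient bundle $\cT$, and that the descended object is genuinely a linear connection on $\cT$.

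First I would unpack what descent means here. A section of $\cT$ is, by Definition \ref{cTdef}, an equivalence class of triples $V = (\sigma,\mu_b,\rho) \in \Gamma([\cT]_g)$, where $V \sim \widehat V$ (with $\widehat V \in \Gamma([\cT]_{\widehat g})$ and $\widehat g = \Omega^2 g$) exactly when the components are related by the lower-triangular matrix in \nn{ttrans}. Consequently, to make sense of $\nabla^{\cT}$ as an operator on $\cT$ we must show that for any $v^a \in \Gamma(TM)$ the triple $v^a\nabla^{\cT}_a V$ produced by \nn{cform} on $[\cT]_g$ transforms by the same matrix as $V$ itself when we pass to $\widehat g = \Omega^2 g$. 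This is precisely the intertwining property.

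The key step is the content of Exercise \ref{tint}, whose calculation I would carry out by assembling the already-derived transformation laws: \nn{eq:one} and \nn{eq:two} for $\nabla$ acting on vectors and covectors, \nn{lct} for $\nabla$ on density-valued factors, \nn{Ptrans} for the Schouten tensor, and the companion formula \nn{Jtrans} for $\J$. Row by row one checks that the hatted expressions
\begin{equation*}
\widehat{\nabla_a\sigma - \mu_a},\qquad \widehat{\nabla_a\mu_b + \bg_{ab}\rho + \V_{ab}\sigma},\qquad \widehat{\nabla_a\rho - \V_{ab}\mu^b}
\end{equation*}
match the unhatted ones under the action of the matrix in \nn{ttrans}. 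The bottom row is the main obstacle: there the $\V$-transformation produces terms $\Upsilon_a\Upsilon_b - \tfrac{1}{2}\bg_{ab}\Upsilon^2 - \nabla_a\Upsilon_b$ paired against $\mu^b$ and $\sigma$, and these must combine precisely with the $\Upsilon$-contributions coming from $\nabla_a\rho$ (via the weight $-1$ rule \nn{lct}) and from $\nabla_a\widehat\mu_b$ (which picks up $\nabla_a\Upsilon_b\cdot\sigma + \Upsilon_b\nabla_a\sigma$) to reproduce the $-\tfrac{1}{2}\Upsilon^2$, $-\Upsilon^c$, $1$ entries of the bottom row of the transformation matrix. Middle and top rows are lighter, involving only \nn{eq:one}--\nn{eq:two} and the weight-$1$ density rule.

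Granting this intertwining, the theorem follows at once. For $V \in \Gamma(\cT)$ and $v \in \mathfrak{X}(M)$, define $v^a\nabla^{\cT}_a V$ by choosing a representative in some $[\cT]_g$, applying \nn{cform}, and taking the $\sim$-class; Exercise \ref{tint} says the result is independent of the choice of $g \in \cc$. Because $\nabla^{\cT}$ on each $[\cT]_g$ is a linear connection by Proposition \ref{tp} (being of the form $\nabla + \Phi$ with $\Phi \in \Gamma(\End[\cT]_g)$), the descended operator inherits $C^\infty(M)$-linearity in $v^a$ and the Leibniz rule in $V$. Hence $\nabla^{\cT} \colon \cT \to \Lambda^1\otimes \cT$ is a well-defined, conformally invariant linear connection.
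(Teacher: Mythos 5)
Your proposal is correct and follows essentially the same route as the paper: the paper's own argument consists precisely of the intertwining computation recorded as Exercise \ref{tint} (carried out with the transformation laws \nn{eq:one}, \nn{eq:two}, \nn{lct}, \nn{Ptrans}), from which it concludes that $v^a\nabla^\cT_a V$ transforms by \nn{ttrans} and hence that $\nabla^\cT$ descends to the quotient bundle $\cT$, with linearity already supplied by Proposition \ref{tp}. The only difference is one of emphasis: you sketch which terms must cancel in the bottom row, whereas the paper delegates that verification entirely to the exercise.
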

\noindent For obvious reasons this will also be called the {\em conformal
tractor connection} (on the standard tractor bundle $\cT$); the formula
\nn{cform}  is henceforth regarded as the incarnation of this
conformally invariant object on the realisation $[\cT]_g$ of $\cT$, as 
determined by the choice $g\in \cc$.

It is important to realise that the conformal tractor connection
exists canonically on any conformal manifold (of dimension at least
3). (One also has the tractor connection on 2 dimensional \emph{M\"obius} conformal manifolds). In particular its existence does not rely on solutions to the equation \nn{AE}.  Nevertheless by its construction in
Section \ref{AEsec} (as a prolongation of the equation \nn{AE}), and
using also equation \nn{Dsys}, we have at once the following important
property. 
\begin{theorem} \label{para}
On a conformal manifold $(M,\cc)$  we have the following. 
There is a 1-1 correspondence
between sections $\si\in \ce[1]$, satisfying the conformal equation
$$ \mbox{\rm (AE)} \quad \quad \quad
\nabla_{(a}\nabla_{b)_0}\sigma+\V_{(ab)_0}\sigma=0,
$$ 
and parallel standard tractors $I$.
The mapping from
almost Einstein scales to parallel tractors is given by $\si\mapsto
\frac{1}{n}D_A \si$ while the inverse map is $I_A \mapsto X^A I_A$.
\end{theorem}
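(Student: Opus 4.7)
The plan is to fix a metric $g\in\cc$, use the component formula \nn{cform} for $\nabla^\cT$, and check directly that the two maps $\si\mapsto \tfrac{1}{n}D_A\si$ and $I_A\mapsto X^AI_A$ are mutually inverse and match the almost Einstein equation \nn{AE} with the parallel condition $\nabla^\cT I=0$. Since both $D:\ce[1]\to\cT$ and $\nabla^\cT$ are conformally invariant (Proposition \ref{D1} and Theorem \ref{tthm}), verifying the correspondence in any single scale automatically descends to the conformal manifold.

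For the forward direction, given $\si\in\ce[1]$ satisfying \nn{AE}, I would write the scale tractor in the splitting determined by $g$ as
$$I=\tfrac{1}{n}D\si=\bigl(\si,\;\nabla_b\si,\;-\tfrac{1}{n}(\Delta\si+\J\si)\bigr)$$
and compute $\nabla_a^\cT I$ slot by slot via \nn{cform}. The top slot $\nabla_a\si-\mu_a$ vanishes by the definition $\mu_b:=\nabla_b\si$. For the middle slot $\nabla_a\nabla_b\si+\bg_{ab}\rho+\V_{ab}\si$, decompose in $(ab)$ into trace-free symmetric, pure trace, and skew parts: the trace-free symmetric part is exactly the left hand side of \nn{AE} and so vanishes by hypothesis; the pure trace is cancelled by the chosen value of $\rho$; and the skew part vanishes because the Levi-Civita Hessian on a weight-$1$ density is symmetric (the density bundle has trivial Levi-Civita curvature, since $R_{ab}{}^c{}_c=0$) and $\V_{ab}$ is symmetric. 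For the bottom slot $\nabla_a\rho-\V_{ab}\nabla^b\si$ I would invoke the calculation already performed in Section \ref{AEsec}: differentiating \nn{AE}, contracting with $\bg$, and applying the contracted Bianchi identity $\nabla^a\V_{ab}=\nabla_b\J$ together with the commutator $[\nabla_c,\Delta]$ on a weight-$1$ density, produced exactly \nn{rhoeq}, which is the vanishing of this slot.

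For the reverse direction, suppose $I=(\si,\mu_b,\rho)$ is parallel. The top slot of $\nabla_a^\cT I=0$ forces $\mu_b=\nabla_b\si$, and a $\bg^{ab}$-trace of the middle slot then forces $\rho=-\tfrac{1}{n}(\Delta\si+\J\si)$, so $I=\tfrac{1}{n}D\si$. Substituting $\mu_b=\nabla_b\si$ back into the middle slot and extracting its trace-free symmetric part in $(ab)$ recovers \nn{AE} for $\si=X^AI_A$. The composition $X^A\circ\tfrac{1}{n}D$ is the identity on $\ce[1]$ directly from the top entry of \nn{D1form} (the normalising factor $1/n$ is chosen precisely so that $X^AD_A\si=\si$), while $\tfrac{1}{n}D\circ X^A$ is the identity on parallel tractors by the component identifications just made.

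I expect no deep obstacle: the theorem is essentially a repackaging of the prolongation procedure by which $\nabla^\cT$ was constructed from \nn{AE}, and each verification reduces to computations already done in Section \ref{AEsec}. The one point needing some attention is the bottom-slot check in the forward direction, but this is precisely the step that dictated the form of the bottom component of $\nabla^\cT$, and so is immediate once \nn{rhoeq} is recalled.
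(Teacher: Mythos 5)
Your proposal is correct and follows essentially the same route as the paper: Theorem \ref{para} is stated there as an immediate consequence of the prolongation construction in Section \ref{AEsec} (Proposition \ref{tp}) together with the identification \nn{Dsys}, and your slot-by-slot verification simply spells out those same computations, including the key bottom-slot identity \nn{rhoeq}. The only detail worth retaining explicitly is your observation that the skew part of the middle slot vanishes because the Hessian of a weight-$1$ density is symmetric; the paper leaves this implicit in Proposition \ref{tp}.
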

\noindent So a parallel tractor is necessarily a scale tractor, as in
Definition \ref{scaletractor}, but in general the converse does not hold.

\subsection{The tractor metric}\label{msec}
It turns out that the tractor bundle has beautiful and important
structure that is perhaps not at expected from its origins via
prolongation in Section \ref{AEsec} above.
\begin{proposition} \label{metp} Let $(M,\cc)$ be a conformal manifold of signature $(p,q)$. The formula
\begin{equation}\nonumber 
[V_A]_g=(\sigma,\mu_a,\rho)\mapsto 2\sigma\rho+\bg^{ab}\mu_a\mu_b=:h(V,V)
\end{equation}
defines, by polarisation, a signature $(p+1,q+1)$ metric on $\mathcal{T}$.
\end{proposition}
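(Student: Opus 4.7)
The plan is to verify that the formula is well-defined (i.e.\ independent of the choice of $g\in \cc$) and then to compute its signature. Both tasks amount to routine algebra once the transformation law \nn{ttrans} is in hand, so no deep obstacle is expected.

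First I would establish well-definedness. Fix $g\in \cc$ and $\widehat{g}=\Omega^2 g$, with $\Upsilon = d\log\Omega$, and let $V\in \cT$ be represented by $(\sigma,\mu_a,\rho)\in [\cT]_g$ and by $(\widehat{\sigma},\widehat{\mu}_a,\widehat{\rho})\in [\cT]_{\widehat{g}}$ related via \nn{ttrans}. Since the conformal metric $\bg$ (and hence $\bg^{ab}$) is a canonical section of a weighted bundle on $(M,\cc)$ and does not depend on the choice of scale, it suffices to expand
\begin{equation}\nonumber
2\widehat{\sigma}\widehat{\rho}+\bg^{ab}\widehat{\mu}_a\widehat{\mu}_b
= 2\sigma\Bigl(\rho-\Upsilon^c\mu_c-\tfrac12\sigma\Upsilon^2\Bigr)
+ \bg^{ab}(\mu_a+\Upsilon_a\sigma)(\mu_b+\Upsilon_b\sigma),
\end{equation}
and check that the cross-terms $-2\sigma\Upsilon^c\mu_c$ and $+2\sigma\Upsilon^b\mu_b$ cancel, as do the $\Upsilon^2$ terms $-\sigma^2\Upsilon^2$ and $+\sigma^2\Upsilon^2$. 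What remains is exactly $2\sigma\rho+\bg^{ab}\mu_a\mu_b$, proving that the formula descends to a well-defined function on the fibres of $\cT$, i.e.\ a section of $S^2\cT^*$.

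Next I would compute the signature by choosing a convenient splitting. Fix any $g\in \cc$ and use the isomorphism $\cT\cong \ce[1]\oplus \ce_a[1]\oplus\ce[-1]$ of \nn{bform}. In the three-block decomposition the quadratic form $Q(V):=2\sigma\rho+\bg^{ab}\mu_a\mu_b$ has matrix
\[
\begin{pmatrix} 0 & 0 & 1 \\ 0 & \bg^{ab} & 0 \\ 1 & 0 & 0 \end{pmatrix},
\]
where the off-diagonal $\ce[1]$--$\ce[-1]$ pairing is nondegenerate and contributes a hyperbolic plane of signature $(1,1)$, while the middle block is the pointwise pseudo-Riemannian inner product of signature $(p,q)$ determined by $\bg$. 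The direct sum therefore has signature $(p+1,q+1)$, and in particular $Q$ is nondegenerate. By polarisation this $Q$ defines the symmetric bilinear form $h$ claimed in the proposition.

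The only point requiring any care is to be explicit that the splitting used in the signature computation is unambiguous at each point (even though globally the middle summand is only a subquotient of $\cT$), and that the use of $\bg^{ab}$ rather than $g^{ab}$ to raise indices is what makes the above independent of scale; this is handled by the observation of Section \ref{dcalc} that $\bg$ is parallel for every Levi-Civita connection in $\cc$, so no weight factors intervene.
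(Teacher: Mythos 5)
Your proposal is correct and follows essentially the same route as the paper: the paper likewise verifies conformal invariance by expanding $2\widehat{\sigma}\widehat{\rho}+\widehat{\mu}^a\widehat{\mu}_a$ via \nn{ttrans} and cancelling the cross and $\Upsilon^2$ terms, and reads off the signature $(p+1,q+1)$ from the same block-matrix form \nn{mform} of $h$ in a scale $g$. The only difference is the order of the two steps, which is immaterial.
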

\begin{proof}
As a symmetric bilinear form field on the bundle $[\cT]_g$, $h$ takes the form
\begin{equation}\label{mform}
h(V',V)\stackrel{g}{=}
\begin{array}{c c c} \boldsymbol{(}~\sigma' & \mu' & \rho'~\boldsymbol{)} \\ 
&&\\
&&
\end{array} 
\left(
\begin{array}{c c c}
0 & 0 & 1\\ 
0 & \phantom{1}\bg^{-1} & 0\\
1 & 0 & 0
\end{array}
\right)\left(
\begin{array}{c}
\sigma\\ 
 \mu \\
\rho
\end{array}
\right),
\end{equation}
where $\stackrel{g}{=}$ should be read as ``equals, calculating in the
scale $g$''. So we see that the signature is as claimed. By
construction $h(V,V')$ has weight 0.  It remains to check the
conformal invariance. Here we use the notation from \nn{ttrans}:
\begin{equation}\nonumber
\begin{split}
2\widehat{\sigma}\widehat{\rho}+\widehat{\mu}^a\widehat{\mu}_a=&
2\sigma(\rho-\Upsilon_c\mu^c-\frac{1}{2}\Upsilon^2\sigma)+
(\mu^a+\Upsilon^a\sigma)(\mu_a+\Upsilon_a\sigma)\\
=& 2\sigma\rho+\mu^a\mu_a-2\sigma\Upsilon\mu-\sigma^2\Upsilon^2+
2\Upsilon\mu\sigma+\Upsilon^2\sigma^2\\
=& 2\sigma\rho+\mu^a\mu_a
\end{split}
\end{equation}
\end{proof}
In the abstract index notation the tractor metric is  $h_{AB}\in \Gamma(\ce_{(AB)})$,
and its inverse $h^{BC}$. 
 
The standard
tractor bundle, as introduced in Sections \ref{AEsec} and \ref{conft},
would more naturally have been defined as the dual tractor bundle. But
Proposition \nn{metp} shows that we have not damaged our development;
the tractor bundle is canonically isomorphic to its dual and normally
we do not distinguish these, except by the raising and lowering of
abstract indices using $h_{AB}$.

From these considerations we see that there is no ambiguity in viewing
$X$, of \nn{cant}, as a section of $\cT\otimes \ce[1]\cong \cT[1]$. In
this spirit we refer to $X$ as the {\em canonical tractor}.  At this
point it is useful to note that in view of this canonical self duality
$\cT\cong\cT^*$, and the formula \nn{mform}, we have the following result.
 \begin{proposition}\label{Xnull}
The canonical tractor $X^A$ is null, in that 
$$
h_{AB}X^AX^B =0.
$$ 
Furthermore $X_A =h_{AB}X^B$ gives the canonical inclusion of
$\ce[-1]$ into $\ce_A$:
$$
X_A: \ce[-1]\to \ce_A. 
$$
In terms of the decomposition of $\ce_A$ given by a choice of metric this is simply
$$
\rho \mapsto \left(
\begin{array}{c}
0 \\ 
0 \\
\rho
\end{array}
\right).
$$
\end{proposition}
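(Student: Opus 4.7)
The plan is to work in a fixed scale $g\in \cc$, identify the components of $X^A$ in the splitting $[\cT]_g$ of \nn{bform}, and then read off both assertions from the explicit form \nn{mform} of the tractor metric. The only real subtlety is keeping straight the dictionary between $X^A$ in its guise as the bundle surjection $\ce_A\to \ce[1]$ and as an abstract-index section $X^A\in \Gamma(\ce^A[1])$ under the self-duality $\cT\cong\cT^*$ provided by $h_{AB}$; once this is sorted out, the computations themselves are trivial.

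First, fix $g\in \cc$. By definition $X^A V_A = \sigma$ whenever $V_A$ has components $(\sigma,\mu_b,\rho)$ in $[\cT]_g$. Under the self-duality induced by $h_{AB}$, the contraction $X^A V_A$ is just $h(X,V)$. Expanding via \nn{mform} and demanding the result equal $\sigma$ for every choice of $(\sigma,\mu_b,\rho)$ forces the components of $X^A$ in the scale $g$ to be $(0,0,1)$. Equivalently, $X^A$ takes values in the bottom subbundle $\ce[-1]$ of $[\cT]_g$.

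From this explicit form both conclusions follow. Applying the quadratic form \nn{mform} to $X=(0,0,1)$ gives
\[h_{AB}X^AX^B = 2\cdot 0\cdot 1 + \bg^{ab}\cdot 0\cdot 0 = 0,\]
which is the nullity statement. Lowering the tractor index via $h$ preserves the components $(0,0,1)$, so for $\rho\in\Gamma(\ce[-1])$ the product $X_A\rho$ is the section of $\ce_A$ with components $(0,0,\rho)$, i.e.\ the canonical inclusion. Scale-independence of this description is immediate from \nn{ttrans}: setting $\sigma=0$ and $\mu_b=0$ there yields $(\widehat{\sigma},\widehat{\mu}_b,\widehat{\rho})=(0,0,\rho)$, so tractors of this special form are fixed under conformal rescaling, confirming that the inclusion $\rho\mapsto(0,0,\rho)$ does not depend on $g\in\cc$.
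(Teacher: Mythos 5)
Your argument is correct and follows essentially the same route the paper intends: the proposition is presented there as an immediate consequence of the self-duality $\cT\cong\cT^*$ and the explicit formula \nn{mform}, which is precisely what you spell out by identifying $X^A\stackrel{g}{=}(0,0,1)$ and reading off nullity, the inclusion $\rho\mapsto(0,0,\rho)$, and its conformal invariance from \nn{ttrans}. No gaps.
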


As another immediate application of the metric we observe the
following.  Given a choice of scale $\si\in \ce_+[1]$, consider the
corresponding scale tractor $I$, and in particular is squared length
$h(I,I)$. This evidently has conformal weight zero and so is a
function on $(M,\cc)$ determined only by the choice of scale
$\si$. Explicitly we have
\begin{equation}\label{I2}
h(I,I) \stackrel{g}{=} \bg^{ab}(\nabla_a \si)(\nabla_b \si) -
\frac{2}{n} \si (\J+\Delta )\si
\end{equation}
from Definition \ref{scaletractor} with \nn{D1form} and
\nn{mform}. Here we have calculated the right hand side in terms of some metric $g$ in the conformal class (hence the notation $\stackrel{g}{=}$). But since $\si$ is a scale we may, in particular, use $g:=\si^{-2}\bg$. Then $\nabla^g \si=0$ and we find the following result.
\begin{proposition}\label{scalarcurv}
On a conformal manifold $(M,\cc)$, let $\sigma\in \ce_+[1]$, and $I=
\frac{1}{n}D\si $ the corresponding scale tractor.  Then
$$
h^{AB} I_B I_C = - \frac{2}{n} \J^\si ,
$$
where $\J^\si:=g^{ab} \V_{ab}$, and $g_{ab}=\si^{-2}\bg_{ab}$.
\end{proposition}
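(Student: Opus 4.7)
My proof plan is to take full advantage of the fact that the formula for $h(I,I)$ in an arbitrary scale has essentially already been assembled in equation \nn{I2} just above the proposition statement; all that remains is to choose the scale cleverly. The strategic move is to evaluate in the scale determined by $\sigma$ itself, namely $g := \sigma^{-2}\bg$, because in this scale $\sigma$ is the distinguished weight-$1$ density trivialising $\ce[1]$ and hence satisfies $\nabla^g \sigma = 0$.

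In more detail: first I would recall from Definition \ref{scaletractor} combined with \nn{D1form} that, in any choice of scale $g\in\cc$, the scale tractor has components
\begin{equation*}
[I]_g \;=\; \bigl(\sigma,\;\nabla_a\sigma,\;-\tfrac{1}{n}(\Delta\sigma+\J\sigma)\bigr).
\end{equation*}
Applying the tractor metric in the form \nn{mform} then reproduces \nn{I2}, namely
$$ h(I,I) \;\stackrel{g}{=}\; (\nabla^a\sigma)(\nabla_a\sigma) \;-\; \tfrac{2}{n}\sigma(\Delta\sigma+\J\sigma).$$
Since both sides are weight-$0$ and conformally invariant, I am free to evaluate them in any particular scale of my choosing.

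Choosing the scale $g=\sigma^{-2}\bg$, the first two terms collapse: as noted in Section \ref{dcalc}, the Levi-Civita connection of this metric kills $\sigma$ (since $\si_g=\sigma$ in this scale, by \nn{dconn}), so $\nabla^g_a\sigma=0$ and therefore $\Delta^g\sigma=0$ as well. What survives is just
$$ h(I,I) \;\stackrel{g}{=}\; -\tfrac{2}{n}\sigma^2\,\J^g.$$
The final step is to recognise $\sigma^2\J^g$ as $\J^\sigma$. Either one invokes the relation $\bg^{ab}=\sigma^{-2}g^{ab}$ between the conformal metric and $g$ to compute $\sigma^2\J = \sigma^2\bg^{ab}\V_{ab} = g^{ab}\V_{ab} = \J^\sigma$, or equivalently observes that in the chosen scale $\sigma$ trivialises to $1$ so the weight-$2$ factor $\sigma^2$ simply disappears when reading off the scalar. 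Either way the result $h(I,I)=-\frac{2}{n}\J^\sigma$ drops out.

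There is really no hard step here; the content of the proposition is that equation \nn{I2} admits a dramatic simplification once one exploits the compatibility of $\sigma$ with its own induced Levi-Civita connection. The only small bookkeeping point to be careful about is keeping track of conformal weights in the identification $\sigma^2\J=\J^\sigma$, but this is a one-line check using $\bg=\sigma^2 g$.
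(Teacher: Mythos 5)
Your proof is correct and is essentially identical to the paper's own argument: both start from the general-scale formula \nn{I2} for $h(I,I)$, specialise to the scale $g=\si^{-2}\bg$ where $\nabla^g\si=0$, and read off $h(I,I)=-\tfrac{2}{n}\si^2\J=-\tfrac{2}{n}\J^\si$. The weight bookkeeping $\J^\si=\si^2\J$ is handled exactly as the paper intends.
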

\noindent Note we usually write $\J$ to mean the density
$\bg^{ab}\V_{ab}$, as calculated in the scale $g$. So here
$\J^\si=\si^2 \J$. In a nutshell the conformal meaning of scalar
curvature is that it is the length squared of the scale tractor.

Next we come to the main reason the tractor metric is important,
namely because it is preserved by the tractor connection. With $V$ as in
Proposition \ref{metp} we have
\begin{equation}\nonumber
\begin{split}
\nabla_a h(V,V)\overset{g}{=}& 
2[\rho\nabla_a\sigma+\sigma\nabla_a\rho+\bg^{bc}\mu_b\nabla_a\mu_c]\\
=&2[\rho(\nabla_a\sigma-\mu_a)+\sigma(\nabla_a\rho-P_{ab}\mu^b)+
\bg^{bc}\mu_b(\nabla_a\mu_c+g_{ac}\rho+P_{ac}\sigma)]\\
=&2h(V,\nabla_a^{\mathcal{T}}V).
\end{split}
\end{equation}
We summarise this with also the previous result.
\begin{theorem}\label{mpara}
On a conformal manifold $(M,\cc)$ of signature $(p,q)$, the tractor
bundle carries a canonical conformally invariant metric $h$ of
signature $(p+1,q+1)$. This is preserved by the tractor connection.
\end{theorem}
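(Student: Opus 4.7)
The first assertion (existence, conformal invariance, and signature of $h$) was already established in Proposition \ref{metp}, so what remains in the plan is only the second assertion: that the tractor connection $\nabla^\cT$ preserves $h$. I would prove this by a direct calculation in an arbitrarily chosen scale $g\in\cc$, using the explicit splitting $[\cT]_g = \ce[1]\oplus\ce_a[1]\oplus\ce[-1]$ and the bilinear form \nn{mform}. Since both $h$ and $\nabla^\cT$ are well-defined on $\cT$ (by Proposition \ref{metp} and Theorem \ref{tthm} respectively), and the identity $\nabla^\cT h = 0$ is tensorial, it suffices to verify it pointwise in any one scale.

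First, I would fix $V=(\sigma,\mu_b,\rho)\in\Gamma([\cT]_g)$ and compute $\nabla_a\bigl(2\sigma\rho+\bg^{bc}\mu_b\mu_c\bigr)$ using the Leibniz rule, recalling from Proposition \ref{bgparprop} that $\bg$ is Levi-Civita parallel. This produces
\[
\nabla_a h(V,V)\stackrel{g}{=} 2\rho\nabla_a\sigma+2\sigma\nabla_a\rho+2\bg^{bc}\mu_b\nabla_a\mu_c.
\]
Then I would expand $2h(V,\nabla_a^\cT V)$ by plugging the three slots of \nn{cform} into the bilinear form \nn{mform}. The whole point is to observe two cancellations: the Schouten term $-\sigma\V_{ab}\mu^b$ arising from pairing $\sigma$ with the bottom slot of $\nabla^\cT_aV$ cancels the Schouten term $+\bg^{bc}\mu_b\V_{ac}\sigma$ arising from pairing $\mu$ with the middle slot, and similarly the $-\rho\mu_a$ from the top slot cancels the $+\bg^{bc}\mu_b\bg_{ac}\rho=\mu_a\rho$ from the middle slot. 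What remains is exactly the right-hand side displayed above, yielding $\nabla_a h(V,V)=2h(V,\nabla^\cT_a V)$, and hence $\nabla^\cT h=0$ by polarisation.

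I do not expect any real obstacle: both cancellations are forced by the symmetry of $\V_{ab}$ and of $\bg_{ab}$, and both reflect the fact that the off-diagonal ``curvature'' and ``shift'' terms in \nn{cform} were chosen precisely so that $\nabla^\cT$ is the sum of the Levi-Civita connection plus an endomorphism $\Phi_a\in\Gamma(\End[\cT]_g)$ that is skew with respect to $h$. So in a more conceptual presentation I could instead verify directly that the matrix of $\Phi_a$ in the basis adapted to \nn{mform} satisfies $h(\Phi_a V,W)+h(V,\Phi_a W)=0$, which then gives $\nabla^\cT h=0$ for free since the Levi-Civita piece preserves $h$ slot-by-slot. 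Either route is a short bookkeeping exercise.

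As a sanity check, one should also note the consistency with the model $(S^n,\cc)$ of Section \ref{model}: there $h$ is just the restriction of the flat ambient form $\cH$ on $\bV$ to the tractor bundle $\cT=T\bV|_{\cN_+}/\!\sim$, and $\nabla^\cT$ is the descent of the flat connection $\nabla^\bV$, which manifestly preserves $\cH$. The curved calculation above is precisely the infinitesimal shadow of this ambient fact.
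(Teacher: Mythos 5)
Your proposal is correct and matches the paper's own argument: the paper likewise reduces the first claim to Proposition \ref{metp} and then verifies $\nabla_a h(V,V)=2h(V,\nabla_a^{\cT}V)$ in a scale $g$ by exactly the two cancellations you identify (the symmetric Schouten terms and the $\mp\rho\mu_a$ terms). The skew-endomorphism reformulation and the check against the model are consistent additions but not needed.
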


We see at this point that the tractor calculus is beginning to look
like an analogue for conformal geometry of the Ricci calculus of
(pseudo-)Riemannian geometry: A metric on a manifold canonically
determines a unique Levi-Civita connection on the tangent bundle
preserving the metric. The analogue here is that a conformal structure
of any signature (and dimension at least 3) determines canonically the
standard tractor bundle $\cT$ equipped with the connection
$\nabla^\cT$ and a metric $h$ preserved by $\nabla^\cT$.

Note also that the tractor bundle, metric, and connection seem to be
analogues of the corresponding structures found for model in Theorem
\ref{mtr}. Especially in view of matching filtration structures:
\nn{Tcomp} should be compared with that found on the model in
Proposition \ref{mTcomp} (noting that $TS^n[-1]\cong T^*S^n[1]$). In
fact the tractor connection here of Theorem \ref{tthm}, and the
related structures, generalise the corresponding objects on the
model. This follows by more general results in \cite{CapGoamb}, or
alternatively may be verified directly by computing the formula for
the connection of Theorem \ref{mtr} in terms of the Levi-Civita
connection on the round sphere.

\section{Lecture 4: The tractor curvature, conformal invariants and invariant operators.} 

Let us return briefly to our motivating problems: the construction of
invariants and invariant operators. 

\subsection{Tractor curvature}\label{Tcurv}

Since the tractor connection $\nabla^\cT$ is well defined on a
conformal manifold its curvature $\kappal$  on $\ce^A$ depends only on
the conformal structure;  by construction it is an invariant of
that. 
If we couple the tractor connection with any torsion free connection (in particular with the Levi-Civita connection of any metric in the conformal calss) then, according to our conventions from Lecture
\ref{L1}, the curvature of the tractor connection is given by
$$
(\nabla_a\nabla_b-\nabla_b\nabla_a) U^C={\kappal}_{ab}{}^C{}_D U^D 
\quad \mbox{for all} \quad U^C \in \Gamma(\ce^C) ,
$$
where $\nabla$ denotes the coupled connection.  Now using that
the tractor connection preserves the inverse metric $h^{CD}$ we have
$$
0=(\nabla_a\nabla_b-\nabla_b\nabla_a)h^{CD}
= {\kappal}_{ab}{}^C{}_Eh^{ED}+{\kappal}_{ab}{}^D{}_Eh^{CE}.
$$
In other words, raising an index with $h^{CD}$ 
\begin{equation}\label{kskew}
{\kappal}_{ab}{}^{CD}=-{\kappal}_{ab}{}^{DC} .
\end{equation}

It is straightforward to compute $\kappal$ in a scale $g$. We have 
\begin{equation}\label{curvform}
(\nabla_a\nabla_b-\nabla_b\nabla_a) \left(\begin{array}{c} \si\\
\mu^c\\ 
\rho
\end{array}\right) =\left(\begin{array}{ccc} 0 & 0 & 0 \\
C_{ab}{}^c & W_{ab}{}^c{}_d & 0 \\ 
0 & -C_{abd} & 0 \\
\end{array}\right) \left(\begin{array}{c} \si\\
\mu^d\\ 
\rho
\end{array}\right)
\end{equation}
where, recall $W$ is the Weyl curvature and $C$ is the Cotton tensor,
 $$
 C_{abc}:= 2\nabla_{[a}\V_{b]c} .
 $$

In the expression for $\kappal$,  the
  zero in the top right follows from the skew symmetry \nn{kskew},
  while the remaining zeros of the right column show that the
  canonical tractor $X^D$ annihilates the curvature,
$$
{\kappal}_{ab}{}^C{}_D X^D =0. 
$$ 
This with the skew symmetry determines the top row of the curvature
matrix. It follows from the conformal transformation properties of the
tractor splittings that the central entry of the matrix is conformally
invariant, and this is consistent with the appearance there of the
Weyl curvature $W_{ab}{}^c{}_d $. Note that in dimension 3 this
necessarily vanishes, and so it
follows that then the tractor curvature is fully
captured by and equivalent to the Cotton curvature $C_{abc}$. Again
this is consistent with the well known conformal invariance of that quantity in dimension
3. Thus 
we have the following result. 
\begin{proposition}\label{curvOb}
The normal conformal tractor connection $\nabla^\cT$ is flat if and
only if the conformal manifold is locally equivalent to the flat model. 
\end{proposition}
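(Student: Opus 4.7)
The plan is to prove the two implications separately, using the explicit curvature formula \eqref{curvform} together with classical results about conformal flatness.

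\textbf{Easy direction (flat model $\Rightarrow$ flat tractor connection).} A local conformal equivalence $\phi$ between an open set of $(M,\cc)$ and an open set of the flat model $(S^n,\cc_{\mathrm{model}})$ pulls back the entire tractor package: since $\cT$, $h$, and $\nabla^{\cT}$ are canonically determined by the conformal structure (Theorems \ref{tthm} and \ref{mpara}), we have $\phi^{\ast}\nabla^{\cT_{\mathrm{model}}} = \nabla^{\cT}$. As noted in the ``abstract approach'' subsection, the tractor connection on $G/P$ arises from the trivialisation $\cT_{\mathrm{model}} \cong (G/P)\times \bV$ and is therefore flat, so $\nabla^{\cT}$ is flat too.

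\textbf{Hard direction (flat tractor connection $\Rightarrow$ local equivalence with the model).} I would read off from the curvature formula \eqref{curvform} that the vanishing of $\kappal$ is equivalent to the simultaneous vanishing of the Weyl tensor $W_{ab}{}^{c}{}_{d}$ and the Cotton tensor $C_{abc}$. In dimension $n\geq 4$ the Cotton tensor is, up to sign, the divergence of the Weyl tensor (by a contracted Bianchi identity), so the condition reduces to $W=0$; in dimension $n=3$ the Weyl tensor vanishes identically and the condition reduces to $C=0$. I would then invoke the classical Weyl--Schouten theorem (for $n\geq 4$, the vanishing of $W$ is necessary and sufficient for local conformal flatness) and its three-dimensional counterpart (for $n=3$, the vanishing of $C$ is necessary and sufficient). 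Either yields a local diffeomorphism to the flat model that intertwines the conformal classes.

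\textbf{Alternative (tractor-intrinsic) route.} More in the spirit of these notes, flatness of $\nabla^{\cT}$ means that locally $\cT$ admits a parallel frame of $n+2$ sections $I^{(0)},\dots,I^{(n+1)}$. Evaluating the section $X \in \Gamma(\cT[1])$ against these parallel tractors gives $n+2$ weighted functions on $M$ whose common zero locus and homogeneity structure allow one to construct a local map $M \to \mathbb{P}_{+}(\cN)$ intertwining the conformal metrics; this is the conformal \emph{developing map}. Parallelism of the frame, compatibility with $h$ (Theorem \ref{mpara}), and the fact that $X^A$ is null (Proposition \ref{Xnull}) together ensure that the image lands in $\mathbb{P}_{+}(\cN)$ and that the pullback of $\bg_{\mathrm{model}}$ equals $\bg$.

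\textbf{Main obstacle.} The nontrivial content sits entirely in the ``flat tractor curvature $\Rightarrow$ conformally flat'' step. In the classical approach the obstacle is just to cite (or reprove) Weyl--Schouten and the Cotton analogue; in the tractor-intrinsic approach the obstacle is to produce the developing map carefully, checking that the map built from a parallel frame is a local diffeomorphism and that it respects the conformal class rather than just the underlying smooth/linear structure. I would favour the first route for brevity, reserving the developing-map construction as a remark that foreshadows the Cartan-geometric viewpoint sketched in the appendix.
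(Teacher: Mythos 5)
Your proof is correct and follows essentially the same route as the paper: the paper reads off from the curvature formula \nn{curvform} that $\kappal=0$ is equivalent to the vanishing of the Weyl tensor (for dimension $\geq 4$) or of the Cotton tensor (in dimension $3$), and then appeals to the classical Weyl--Schouten characterisation of local conformal flatness. Your extra material (the easy direction via naturality of the tractor package, and the developing-map alternative) goes beyond what the paper records, but the core argument is the same.
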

\noindent So we shall say a conformal manifold $(M,\cc)$ is conformally flat if $\kappal=0$.

\subsubsection{Application: Conformally invariant obstructions to metric being conformal-to-Einstein}\label{obstr}

\newcommand{\kk}{\mbox{\large{$\kappa$}}}
Note that as an immediate application we can use the tractor curvature
to easily manufacture obstructions to the existence of an Einstein
metric in the conformal class $\cc$. 

From Proposition \ref{Ecase} and Theorem \ref{para} a metric $g$ is Einstein
if and only if the corresponding scale tractor $I_A=\frac{1}{n}D_A \si$ is parallel, where $g=\si^{-2}\bg$.
Thus if $g$ is Einstein then we have 
\begin{equation}\label{compat}
\kk_{ab}{}^C{}_DI^D=0 .
\end{equation}
Recall that $\kk_{ab}{}^C{}_DX^D=0$. If at any point $p$ the kernel
of $\kk_{ab}{}^C{}_D:\ce^D\to \ce_{ab}{}^C$ is exactly 1-dimensional
then we say that tractor curvature has maximal rank. We have:
\begin{proposition}\label{obp} Let $(M,\cc)$ be a conformal manifold.
If at any point $p\in M$ the tractor curvature has maximal rank 
then
there is no Einstein metric in the conformal equivalence class $\cc$.
\end{proposition}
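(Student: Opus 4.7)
The plan is to argue by contradiction. Suppose $g^o \in \cc$ is Einstein, so that $g^o = \si^{-2}\bg$ for some strictly positive $\si \in \Gamma(\ce_+[1])$. By Proposition \ref{Ecase} this $\si$ satisfies \nn{AE}, and then by Theorem \ref{para} the scale tractor $I_A := \tfrac{1}{n} D_A \si$ is parallel for $\nabla^\cT$ on all of $M$. Parallelism immediately yields the compatibility condition $\kk_{ab}{}^C{}_D I^D = 0$ at every point, in particular at the designated point $p$.

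The next step is to combine this with the universal identity $\kk_{ab}{}^C{}_D X^D = 0$ noted just after \nn{curvform}. Both $I^D(p)$ and $X^D(p)$ therefore lie in the kernel of the map $\kk_{ab}{}^C{}_D : \ce^D \to \ce_{ab}{}^C$ at $p$. The maximal rank hypothesis says exactly that this kernel is one-dimensional, so it must be spanned by $X^D(p)$, and there exists a scalar $c$ with $I^D(p) = c\, X^D(p)$. Lowering the index using the non-degenerate (and parallel) tractor metric gives $I_A(p) = c\, X_A(p)$.

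To extract the contradiction I would apply the canonical top-slot projection $X^A : \ce_A \to \ce[1]$ to both sides of this equality. On the right, $X^A X_A = 0$ by Proposition \ref{Xnull}, since $X$ is null. On the left, the formula \nn{D1form} for $D_A \si$ shows that the top-slot component of $I_A$ is precisely $\si$, i.e.\ $X^A I_A = \si$. Consequently $\si(p) = 0$, contradicting the strict positivity of $\si$, and hence $\cc$ contains no Einstein metric.

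The argument is essentially a diagram chase assembling three ingredients already in hand: the parallel-tractor characterisation of Theorem \ref{para}, the fact that $X^D$ lies universally in the kernel of the tractor curvature (visible in \nn{curvform}), and the nullity of $X$ from Proposition \ref{Xnull}. I do not anticipate any substantive obstacle; the only mild pitfall is to keep the two roles of the canonical tractor separate (namely $X_A$ as the inclusion of $\ce[-1]$ into $\ce_A$ and $X^A$ as the top-slot surjection onto $\ce[1]$), but the null condition $X^A X_A = 0$ makes the projection step in the final paragraph unambiguous.
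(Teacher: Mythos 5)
Your argument is correct and is essentially the paper's own proof read in the contrapositive direction: the paper notes that $\si$ nowhere zero forces $I^D_p$ and $X^D_p$ to be linearly independent elements of the kernel of $\kappal_{ab}{}^C{}_D$, contradicting maximal rank, while you deduce $I^D(p)=c\,X^D(p)$ from maximal rank and then obtain $\si(p)=X^AI_A(p)=0$ from the nullity of $X$. The ingredients (Theorem \ref{para}, the identity $\kappal_{ab}{}^C{}_DX^D=0$, and $X^AI_A=\si$) are identical, so no further comment is needed.
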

From this observation it is easy to manufacture conformal invariants
that must vanish on an Einstein manifold, see \cite{GN}.
\begin{proof}
If a metric $g$ is Einstein then $\si$ is a true scale and hence
nowhere zero.  Thus $I^D_p$ is not parallel to $X^D_p$, and the
result follows from \nn{compat}. 
\end{proof}

\subsection{Toward tractor calculus} \label{trc}
Although the tractor connection and its curvature are conformally
invariant it is still not evident how to easily manufacture conformal
invariants. The tractor curvature takes values in
$\Lambda^2(T^*M)\otimes \End(\cT)$ and this not a bundle on which
tractor connection acts. 

To deal with this and the related problem of constructing differential
invariants of tensors and densities we need need additional tools.

The simplest of these is the Thomas-D operator. Recall that in
Proposition \ref{D1} (cf.\ \nn{D1form}) we constructed a differential
operator $D:\ce[1]\to \cT$ that was, by construction, tautologically
conformally invariant. This generalises. Let us write $\ce^\Phi[w]$ to
denote any tractor bundle of weight $w$. Then:
\begin{proposition}\label{Def}
There is a conformally invariant differential operator 
$$
D_A: \ce^\Phi[w]\to \ce_A\otimes \ce^\Phi[w-1],
$$
defined in a scale $g$ by
$$
 V \mapsto [D_A V]_g:= \left(\begin{array}{c}
(n+2w-2)w V \\
(n+2w-2) \nabla_a V \\
- (\Delta V + w \J V)
\end{array}\right)
$$
where $\nabla$ denotes the coupled tractor--Levi-Civita connection.
\end{proposition}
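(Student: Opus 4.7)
The plan is to show, in analogy with the derivation of $D:\ce[1]\to\cT$ given in Section \ref{conft}, that the three components of $[D_AV]_g$ transform between any two scales $g$ and $\widehat g=\Omega^2g$ precisely by the tractor transformation \nn{ttrans} on the $A$-index (coupled with the identity on the $\Phi$-indices and on the weight--$(w-1)$ factor). Once this is verified, the formula descends, exactly as in Definition \ref{cTdef} and Exercise \ref{tint}, from the family $\bigsqcup_g[\cT]_g\otimes\ce^\Phi[w-1]$ to a well-defined section of $\ce_A\otimes\ce^\Phi[w-1]$, and conformal invariance of $D_A$ follows.

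The top and middle slots are routine. The top, $(n+2w-2)wV$, is manifestly scale-independent since $V$ is a fixed section, matching $\widehat\sigma=\sigma$ in \nn{ttrans}. For the middle, since the tractor connection is itself conformally invariant, the coupled tractor/Levi-Civita connection on $\ce^\Phi[w]$ picks up only the density part of the correction: $\nabla^{\widehat g}_aV=\nabla^g_aV+w\Upsilon_aV$ (this is \nn{lct} tensored with the identity on the tractor factor). Multiplying by $(n+2w-2)$ gives $\widehat\mu_V^a=\mu_V^a+\Upsilon_a\sigma_V$, which is precisely the $\Upsilon_b\sigma$ off-diagonal entry of \nn{ttrans}.

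The real work is in the bottom slot. Here I would apply $\nabla^{\widehat g}$ twice to $V$, treating $\nabla^{\widehat g}_aV$ as a weight--$w$ covector-tractor and using the transformation formula analogous to \nn{eq:two} with the additional $w\Upsilon_a$ weight contribution. Tracing with $\bg^{ab}$ and collecting terms gives, after some bookkeeping,
\begin{equation*}
\Delta^{\widehat g}V=\Delta^gV+w(\nabla^a\Upsilon_a)V+(n+2w-2)\Upsilon^a\nabla^g_aV+w(n+w-2)\,\Upsilon^c\Upsilon_c\,V.
\end{equation*}
Combining this with the transformation \nn{Jtrans} of $\J$---so that $w(\J^{\widehat g}-\J^g)V$ contributes $-w(\nabla^a\Upsilon_a)V+w(1-\tfrac{n}{2})\Upsilon^c\Upsilon_c\,V$---the $\nabla^a\Upsilon_a$ terms cancel and the coefficient of $\Upsilon^c\Upsilon_c\,V$ collapses to exactly $\tfrac12w(n+2w-2)$. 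Hence
\begin{equation*}
-(\Delta^{\widehat g}V+w\J^{\widehat g}V)=-(\Delta^gV+w\J^gV)-\Upsilon^a\mu_V^a-\tfrac12\,\Upsilon^c\Upsilon_c\,\sigma_V,
\end{equation*}
which is the bottom row of \nn{ttrans}.

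The main obstacle is the careful bookkeeping of the various transformation contributions---the weight--$w$ correction to the first derivative, the weight and covector corrections to the second derivative, the $n$ from tracing, and the $\J$ correction---so that the final coefficients consolidate into the forms demanded by \nn{ttrans}. The specific normalising factors $(n+2w-2)w$ and $(n+2w-2)$ in the top and middle slots are exactly those required for these cancellations to occur at arbitrary weight $w$; specialising to $w=1$ with trivial $\Phi$ one recovers the earlier operator of \nn{D1form}.
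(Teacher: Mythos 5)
Your proposal is correct and follows exactly the paper's (very terse) proof, which simply asserts that $[D_A V]_g$ transforms by \nn{ttrans} under $g\mapsto \widehat{g}=\Omega^2 g$; you have supplied the explicit verification, and your bottom-slot computation (including the coefficient $\tfrac12 w(n+2w-2)$ of $\Upsilon^c\Upsilon_c V$ and the cancellation of the $\nabla^a\Upsilon_a$ terms against \nn{Jtrans}) checks out. You also correctly handle the one genuine subtlety the paper flags in its remark, namely that for tractor-valued $V$ the second-order term must be computed using the conformal invariance of the tractor part of the coupled connection together with the covector transformation \nn{eq:two} applied to $\nabla_a V$.
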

\begin{proof} Under a conformal transformation 
$g\mapsto \widehat{g}=\Omega^{2}g$,
$[D_A f]_g$ transforms by \nn{ttrans}.
\end{proof}
\noindent Note that this result is not as trivial as the written proof
suggests since $V$ (with any indices suppressed) is a section of any
tractor bundle, and the operator is second order. In fact there are
nice ways to construct the Thomas-D operator from more elementary
invariant operators \cite{CGTracBund,Go-srni1}.

\subsubsection{Application: differential  invariants of densities and weighted tractors } \label{appSec}
A key point about the Thomas-D operator is that it can be iterated. For $V\in \Gamma(\ce^\Phi[w])$ we may form (suppressing all indices):
$$
V\mapsto (V,~D V,~D\circ D V, ~D\circ D\circ D V, \cdots )
$$ and, for generic weights $w\in \mathbb{R}$, in a conformally
invariant way this encodes the jets of the section $V$ entirely into
weighted tractor bundles. Thus we can proliferate invariants of $V$. 

For example for $f\in \Gamma(\ce[w])$ we can form 
$$
(D^A f)D_Af= -2w(n+2w-2)f(\Delta f + w\J f) +(n+2w-2)^2 (\nabla^a f)\nabla_a f.
$$ By construction this is conformally invariant, for {\em any weight}
$w$. So in fact it is a family of invariants (of densities).
Similarly we may form
$$
(D^A D^B f)D_AB_B f= -2(n+2w-4)(n+2w-2)w(w-1) f \Delta^2 f 
+ \mbox{\it{lower order terms}},
$$
and so forth.

\subsubsection{Conformal Laplacian-type linear operators}\label{lap1}

One might hope that the tractor-D operator is also effective for the
construction of conformally invariant linear differential
operators. In particular the construction of Laplacian type operators
is important. Certainly $D^AD_A$ is by construction conformally invariant but, on any weighted tractor bundle:
$$
D^AD_A =0.
$$ 
This is verified by a straightforward calculation, but it should not
be surprising as by construction it would be invariant on, for
example, densities of any weight. On the standard conformal sphere
it is well-known that there is no non-trivial operator with this property. 

From our earlier work we know the domain bundle of the conformal Laplacian is
$\ce[1-\frac{n}{2}]$. Observe that for $V\in
\Gamma(\ce^\Phi[1-\frac{n}{2}])$ we have $n+2w-2=0$, and
$$
D_Af \stackrel{g}{=}\left(\begin{array}{c}
0 \\
0 \\
- (\Delta  + \frac{2-n}{2}\J )V 
\end{array}
\right), \quad\mbox{that is} \quad D_A V= -X_A\Box V
$$
where $\Box$ is the (tractor-twisted) conformal Laplacian. 
In particular, the proof of Proposition \ref{Def} was also a proof of this result:  
\begin{lemma} The operator 
$\Box\stackrel{g}{=}(\Delta  + \frac{2-n}{2}\J )$ is a conformally invariant differential operator 
$$
\Box : \ce^\Phi[1-\frac{n}{2}]\to\ce^\Phi[-1-\frac{n}{2}], 
$$
where $\ce^\Phi$ is any tractor bundle.
\end{lemma}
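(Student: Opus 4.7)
The plan is to extract $\Box$ directly from the conformally invariant Thomas-D operator of Proposition \ref{Def}, specialised to the critical weight $w = 1 - n/2$. The reason this should work is that at exactly this weight the scale factor $n + 2w - 2$ appearing in the top two slots of $[D_A V]_g$ vanishes, so the Thomas-D operator degenerates to something entirely living in the bottom slot, where the formula is precisely $-(\Delta + \frac{2-n}{2}\J)V$.

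First I would substitute $w = 1 - n/2$ into the slot formula of Proposition \ref{Def}, and record that $[D_A V]_g = (0,\ 0,\ -\Box V)^\top$ as a section of $\ce_A \otimes \ce^\Phi[-n/2]$. Next I would apply Proposition \ref{Xnull}, which identifies the inclusion of the bottom slot $\ce[-1] \hookrightarrow \ce_A$ with the canonical tractor $X_A$; twisting by $\ce^\Phi$, this reads as the tractor identity $D_A V = -X_A\, \Box V$. Since $D_A$ is conformally invariant by Proposition \ref{Def} and $X_A$ is canonically defined on $(M,\cc)$ independent of scale, the factor $\Box V$ must itself be conformally invariant. A quick weight check confirms the target: $D_A V$ has weight $w - 1 = -n/2$ and $X_A$ has weight $1$, so $\Box V$ lives in $\ce^\Phi[-1 - n/2]$, exactly as claimed.

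There is essentially no obstacle; the only point that warrants care is ensuring that the passage ``$X_A \Box V = D_A V \Rightarrow \Box V$ is invariant'' is genuinely valid, i.e.\ that $X_A$ is injective as a bundle map $\ce^\Phi[-1-n/2] \to \ce_A \otimes \ce^\Phi[-n/2]$. This is immediate since $X_A$ is a nowhere-vanishing section. Equivalently, one can bypass the $X_A$ argument altogether and read the invariance directly from the transformation rule \nn{ttrans}: when the top two components of a standard tractor are zero, the bottom component is manifestly unchanged under conformal rescaling, and this is precisely the situation for $[D_A V]_g$ at the critical weight. Either route delivers the lemma in one line once Proposition \ref{Def} is in hand.
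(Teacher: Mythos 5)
Your proposal is correct and follows essentially the same route as the paper: the paper also obtains the lemma by specialising the Thomas-D operator of Proposition \ref{Def} to the weight $w=1-\frac{n}{2}$, noting that $n+2w-2=0$ kills the top two slots so that $D_AV=-X_A\Box V$, and then reading off the conformal invariance of $\Box$ from that of $D_A$ (the paper remarks that ``the proof of Proposition \ref{Def} was also a proof of this result''). Your additional care about the injectivity of $X_A$, and the alternative reading directly from the transformation rule \nn{ttrans}, are sound and consistent with the paper's treatment.
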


This is already quite useful, as the next exercise shows.
\begin{exe}
Show that if $f\in\ce[2-\frac{n}{2}]$ then 
\begin{equation}\label{pan}
\Box D_A f=-X_A P_4 f .
\end{equation}
Thus there is a conformally invariant differential operator 
$$
P_4 :\ce[2-\frac{n}{2}]\to \ce[-2-\frac{n}{2}] \quad\mbox{where}\quad P_4\stackrel{g}{=}\Delta^2+ \mbox{\it{lower order terms}}.  
$$ 
In fact this is the celebrated {\em Paneitz operator}
discovered by Stephen Paneitz in 1983, see \cite{PanSIGMA} for a
reproduction of his preprint from the time. See  \cite{Go-srni1,GoPet-Lap} for further discussion and generalisations.
\end{exe}
An important point is that \nn{pan} does \underline{not} hold if we
replace $f$ with a tractor field of the same weight! Those who 
complete the exercise will observe the first hint of
this subtlety, in that during the calculation derivatives will need to
be commuted.

\subsection{Splitting tractors}\label{splitt} 
Although the importance of the tractor connection stems from its
conformal invariance we need efficient ways to handle the decomposition of the
tractor bundle corresponding to a choice of scale.

Recall that a metric $g\in \cc$ determines an isomorphism 
\begin{equation}\label{isot}
 \begin{array}{c}
\mathcal{E}[1] \\ 
\oplus \\ 
\mathcal{E}_a[1] \\ 
\oplus\\ 
\mathcal{E}[-1]
\end{array}  \stackrel{\simeq}{\longrightarrow} \ce^A, \quad
\mbox{mapping}
\quad 
 \left(\begin{array}{c} \si\\
\mu_a\\ 
\rho
\end{array}\right) =[U^A]_g \mapsto U^A\in \ce^A.
\end{equation} 
The inclusion of $\ce[-1]$ into the direct sum followed by this map is just 
$X^A:\ce[-1]\to \ce^A$, as observed earlier in Proposition \nn{Xnull}.
This is conformally invariant. However let
us now fix the notation
\begin{equation}\label{YZ}
Z^{Aa}: \mathcal{E}_a[1]\to \ce^A, \quad
\mbox{and}
\quad
Y^A: \ce[-1]\to \ce^A ,
\end{equation}
for the other two bundle maps determined by \nn{isot}. We call these (along with $X^A$) 
the {\em tractor projectors} and view them as bundle sections
$Z^{Aa}\in \Gamma(\ce^{Aa})[-1]$, and $Y^A\in \ce^A[-1]$.
So in summary $[U^A]_g=(\si,\mu_a,\rho)$ is equivalent to 
\begin{equation}\label{Uform}
U^A= Y^A\si + Z^{Aa}\mu_a +X^A \rho .
\end{equation}
Using the formula \nn{mform} for the tractor metric it follows at once that 
$X^AY_A=1$, $Z^{Aa}Z_{Ab}=\delta^a_b$ and all other quadratic combinations of the 
$X$, $Y$, and $Z$ are zero as summarised in Figure \ref{tmf}.
\begin{figure}[ht]
$$
\begin{array}{l|ccc}
& Y^A & Z^{Ac} & X^{A}
\\
\hline
Y_{A} & 0 & 0 & 1
\\
Z_{Ab} & 0 & \delta_{b}{}^{c} & 0
\\
X_{A} & 1 & 0 & 0
\end{array}
$$
\caption{Tractor inner product}
\label{tmf}
\end{figure}
Thus we also have
$
Y_AU^A=\rho,\ \ X_AV^A=\si,\ \ Z_{Ab}U^A=\mu_b\, $
and the metric may be decomposed into a sum of projections, $
h_{AB}=Z_A{}^cZ_{Bc}+X_AY_B+Y_AX_B\,$.

The projectors $Y$ and $Z$ depend on the metric $g\in \cc$.  If
$\hat{Y}^A$ and $ \hat{Z}^A{}_b$ are the corresponding quantities in
terms of the metric $ \hat{g}=\Omega^2g$ then altogether we have
\begin{equation}\label{XYZtrans}
\textstyle
\widehat{X}^A=X^A, \hspace{1mm}  \widehat{Z}^{Ab}=Z^{Ab}+\Up^bX^A, \hspace{1mm}
\widehat{Y}^A=Y^A-\Up_bZ^{Ab}-\frac12\Up_b\Up^bX^A
\end{equation}
as follows immediately from \nn{Uform} and \nn{ttrans}.

\begin{remark}\label{adaptedf}
In the notation $Z^A{}_a$ the tractor and tensor indices are both
abstract.  If we move to a concrete frame field for $TM$, $e_1,\cdots
,e_n$ and then write $Z^A{}_i$, $i=1,\cdots, n$, for the contraction
$Z^A{}_a e^a_i$ we come to the (weighted) tractor frame field:
$$
X^A,Z^A{}_1,\cdots ,Z^A{}_n, Y^A .  
$$ 
This is a frame for the tractor bundle adapted to the filtration
\nn{Tcomp}, as reflected in the conformal and inner product properties described in \nn{XYZtrans} and figure \ref{tmf}. \endrk
\end{remark}

Of course tensor products of the tractor bundle are also decomposed by
the isomorphism \nn{isot} and this is described by the tensor products
of the projectors in an obvious way. To illustrate consider the case
of the bundle of {\em tractor $k$-forms} $\Lambda^k \cT$ (which note is
non-zero for $k=0,\cdots ,n+2$.)  The composition series $\cT=\ce[1]\lpl
T^*M[-1]\lpl \ce[-1]$ determines the composition series for $\Lambda^k
\cT$,
\begin{equation} \label{formtractorcomp}
\Lambda^k{\mathcal{T}}\cong \Lambda^{k-1} [k]\lpl \begin{array}{c} \Lambda^{k-2}[k-2] \\ \oplus \\ \Lambda^k [k]\end{array}\lpl \Lambda^{k-1} [k-2] .
\end{equation}
Given a choice of metric $g$ from the conformal class there is a
splitting of this composition series corresponding to the splitting
\nn{isot} of ${\mathcal{T}}$, and this is easily computed and dealt with 
using the ``projectors'' $X$, $Y$, and $Z$, see e.g.\ \cite{BrGo-deRham}.

\subsection{The connection} 
Just as connections are often described in terms of their action on a
suitable frame field it is useful to give the tractor connection in
terms of its action on the projectors $X$, $Y$, and $Z$. The tractor
covariant derivative of a field $U^A\in \Gamma(\ce^A)$, as in
\nn{Uform}, is given by \nn{cform}. Using the isomorphism \nn{isot} this 
is written
$$
\nabla_a U^B=  Y^B(\nabla_a \si-\mu_a) + Z^{Bb}(\nabla_a\mu_b +\bg_{ab} \rho +\V_{ab}\si)+X^B(\nabla_a \rho-\V_{ab}\mu^b) .
$$ 
On the other hand applying the connection directly to $U^A$
expanded as in \nn{Uform} we have 
$$
\nabla_a U^B= Y^B\nabla_a \si + \si \nabla_a Y^B + Z^{Bb}\nabla_a\mu_b +\mu_b\nabla_a Z^{Bb} +X^B\nabla_a \rho + \rho \nabla_a X^B,
$$ 
where we have used the Leibniz rule for $\nabla_a$, viewed as the
coupled tractor-Levi-Civita connection. Comparing these we obtain:
\begin{equation}\label{connids}
\nd_aX^B=Z^B{}_{a}\,, \hspace{1mm}
\nd_aZ^B{}_{b}=-\V_{ab}X^B-\bg_{ab}Y^B\,,\hspace{1mm} \nd_aY^B=\V_{ab}Z^{Bb} .
\end{equation}
This gives the transport equations for the projectors (and determines
these for the adapted frame as in the Remark \ref{adaptedf}
above). From a practical point of view the formulae \nn{connids} here
enable the easy computation of the connection acting on a tractor field of
any valence. 

\subsubsection{Application: Computing, and conformal Laplacian operators} \label{laplcn}
The formulae are effective for reducing  most 
tractor calculations to a routine task. For example suppose we want to compute $D^A X_A f = D^A (X_A f)$ for $f\in \Gamma(\ce^\Phi[w])$, i.e. a section of any weighted tractor bundle. We calculate in some scale $g$. First note that 
\begin{equation}\nonumber
\begin{split}
 X^A (\Delta +(w+1)\J)X_ A f=&\, X^A \Delta X_ A f, \\
=&\, X^A [\Delta, X_ A]  f, \\
=&\,X^A \nabla^b[\nabla_b, X_ A]  f+ X^A [\nabla_b, X_ A]\nabla^b f\\
=&\, X^A (\nabla^b Z_{bA}) f + 2 X^A Z_{bA} \nabla^b f\\
= &\, X^A (-\J X_A -n Y_A) f\\
= &\, -n f,
\end{split}
\end{equation}
where we used that $X^AX_A=0$,  $X^A Z_{bA}=0 $, and $X^AY_A=1$. 
Thus we have 
\begin{equation}\nonumber
\begin{split}
 D^A X_ A f=&\, [(w+1)(n+2w)Y^A + (n+2w)Z^{Aa}\nabla_a -X^A \Delta] X_A f\\
=&\,(w+1)(n+2w) f + (n+2w)Z^{aA}Z_{aA} f + n f\\
=&\, (w+1)(n+2w) f + (n+2w)n f + n f,\\
\end{split}
\end{equation}
and collecting terms we come to 
\begin{equation}\label{DX}
D^A X_A f= (n+2w+2)(n+w) f .
\end{equation}

A straightforward induction using \nn{DX} and \nn{connids} then enables us to show that:
\begin{theorem}\label{lapthm}
On a conformal manifold $(M^n,\cc)$,  for any tractor bundle $\ce^\Phi$, and for each $k\in \mathbb{Z}_{\geq 1}$ if $n$ is odd (or $k\in \mathbb{Z}_{\geq 1}$ and $2k<n$ if $n$ is even) there is a conformally invariant differential operator 
$$
\Box_{2k}: \ce^\Phi[k-\frac{n}{2}]\to\ce^\Phi[-k-\frac{n}{2}] 
$$
of the form $\Delta^k+\mbox{\it{lower order terms}}$ (up to a non-zero constant factor). These are given by
$$
\Box_{2k} := D^{A_1}\cdots D^{A_{k-1}}\Box D_{A_{k-1}}\cdots D_{A_1} .
$$
\end{theorem}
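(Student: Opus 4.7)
The plan is induction on $k$. The base case $k=1$ is immediate: $\Box_2 = \Box$ is the conformal Laplacian of the lemma preceding the exercise, already known to be conformally invariant $\ce^\Phi[1-\tfrac{n}{2}] \to \ce^\Phi[-1-\tfrac{n}{2}]$ with principal part $\Delta$. For the inductive step I take the recursive form
\begin{equation*}
\Box_{2(k+1)} \;=\; D^A \, \Box_{2k}\, D_A,
\end{equation*}
where the inner $\Box_{2k}$ is understood on the tractor bundle $\ce^\Phi \otimes \ce_A$ (itself a tractor bundle, so the inductive hypothesis applies). This matches the spelled-out composite $D^{A_1}\cdots D^{A_{k}}\Box D_{A_{k}}\cdots D_{A_1}$ up to the trivial reordering of contractions.

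Conformal invariance follows immediately: Proposition \ref{Def} supplies the invariance of $D_A$ at the domain weight, the inductive hypothesis supplies that of $\Box_{2k}$ on $\ce^\Phi \otimes \ce_A$, and $D^A = h^{AB}D_B$ is invariant by Proposition \ref{Def} and Theorem \ref{mpara}. The weights compose as $\ce^\Phi[(k+1)-\tfrac{n}{2}] \to \ce_A\otimes\ce^\Phi[k-\tfrac{n}{2}] \to \ce_A\otimes\ce^\Phi[-k-\tfrac{n}{2}] \to \ce^\Phi[-(k+1)-\tfrac{n}{2}]$, exactly as claimed.

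The substance lies in the principal-part assertion. I propose to establish, along with the induction, the intermediate identity
\begin{equation*}
\Box_{2k}\, D_A f \;=\; -X_A\, M_k f \;+\; (\text{strictly lower derivative order in $f$}),
\end{equation*}
where $M_k$ is a scalar conformally invariant operator with leading part $c_k \Delta^{k+1}$ for a nonzero constant $c_k$; the $k=1$ instance is precisely the preceding exercise, in which $M_1 = P_4$. Granted this, one applies $D^A$ and invokes \nn{DX} at the weight $w = -(k+1)-\tfrac{n}{2}$ of $M_k f$:
\begin{equation*}
\Box_{2(k+1)} f \;=\; -D^A(X_A M_k f) \;=\; -(n + 2w + 2)(n + w)\, M_k f \;=\; k(n - 2k - 2)\, M_k f.
\end{equation*}
Thus $c_{k+1} = k(n-2k-2)\,c_k$ and the accumulated constant of $\Box_{2k}$ is a nonzero multiple of $\prod_{j=1}^{k-1} j(n-2j-2)$. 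This fails to vanish exactly when none of the factors $n - 2(j+1)$ is zero for $j = 1,\ldots,k-1$, which always holds if $n$ is odd and is equivalent to $2k < n$ if $n$ is even --- precisely the stated dimensional hypothesis.

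The main obstacle is the intermediate identity $\Box_{2k} D_A f = -X_A M_k f + (\text{l.o.t.})$. To prove it, one decomposes $D_A f$ into its $Y_A$, $Z_A{}^a$, and $X_A$ components via Proposition \ref{Def} and commutes $\Box_{2k}$ through the tractor projectors using \nn{connids}, which at each $\nabla$-hit on $X,Y,Z$ converts one slot into another and generates Schouten-curvature contributions. The required collapse into the $X_A$ slot at top derivative order depends delicately on the prefactor $n+2w-2 = 2k$ appearing in the $Y_A$ and $Z_A{}^a$ components of $D_A f$ at the weight $w = (k+1)-\tfrac{n}{2}$; this precise coefficient matching is what the author deems ``straightforward''. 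Once the identity is in hand, the constant-factor computation via \nn{DX} completes the induction.
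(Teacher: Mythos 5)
Your skeleton---the recursion $\Box_{2(k+1)}=D^{A}\Box_{2k}D_{A}$ with the inner operator acting on $\ce^\Phi\otimes\ce_A$, conformal invariance from Proposition \ref{Def}, and the constant count via \nn{DX} producing the factor $k(n-2k-2)$ and hence the restriction $2k<n$ in even dimensions---is exactly the induction the paper intends by ``a straightforward induction using \nn{DX} and \nn{connids}'', and your arithmetic is correct. The gap is in the step ``granted this, one applies $D^A$'': the inductive hypothesis $\Box_{2k}D_Af=-X_AM_kf+(\text{strictly lower derivative order})$ is too weak to close the induction, because $D^A$ does not respect ``lower order''. Using \nn{connids} one finds $D^A(Y_A u)=-\Delta u+\cdots$, which raises the differential order by two, and $D^A(Z_A{}^a v_a)=(n+2\w)\nabla^a v_a+\cdots$, which raises it by one. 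Hence a remainder of order $2k+1$ lying in the $Y_A$ slot would, after applying $D^A$, produce a term of order $2k+3$ that swamps the claimed principal part $\Delta^{k+1}$ of order $2k+2$; even an order-$2k$ remainder in the $Y_A$ slot contributes at order $2k+2$ and could corrupt the leading symbol. Your displayed exact equality $\Box_{2(k+1)}f=-D^A(X_AM_kf)$ also cannot be literally correct: the paper explicitly warns that \nn{pan} fails once $f$ is tractor-valued, so nonzero remainder terms are unavoidable and must be accounted for rather than dropped.

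The missing ingredient is an inductive hypothesis that records the tractor filtration slot of the remainder together with its order, for instance $\Box_{2k}D_Af=-X_AM_kf+Z_A{}^a\alpha_a+Y_A\beta$ with $\alpha_a$ of order at most $2k$ and $\beta$ of order at most $2k-1$ in $f$. This is what the slot computation actually yields: the order-$(2k+1)$ contributions to the $Z$ slot and the order-$2k$ contributions to the $Y$ slot cancel---by precisely the coefficient matching with the prefactor $n+2w-2$ that you flag as the delicate point---leaving only curvature-coupled terms of lower order. With that refinement, $D^A$ applied to the remainder has order at most $2k+1$, the principal part is untouched, and your computation of the constants then completes the induction exactly as you describe.
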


\begin{remark}
The operators in the Theorem were first reported in \cite{Go-srni1},
(and with a different proof) as part of joint work of the second
author with M.G. Eastwood.

Acting on densities of weight $(k-n/2)$ there are the GJMS operators
of \cite{GJMS}.  For $k\geq 3$ these operators of Theorem \ref{lapthm}
differ from the GJMS operators, as follows easily from the discussion
in \cite{GoPet-Lap}. \endrk
\end{remark}

\subsection{Constructing invariants} 

We can now put together the above tools and proliferate curvature invariants. 
As a first step, following \cite{Go-Adv} we may form 
$$
W_{AB}{}^{K}{}_{L}:=\frac{3}{n-2}D^PX_{[P}Z_{A}{}^aZ_{B]}^b \kappal_{ab}{}^{K}{}_{L} .
$$
This is conformally invariant by construction, as it is immediate from \nn{XYZtrans} that 
$X_{[P}Z_{A}{}^aZ_{B]}^b$ is conformally invariant. It is exactly the object $\bbX^3$ which, for example, gives the conformally invariant injection
$$
\bbX^3:\Lambda^2[1]\to \Lambda^3\cT.
$$ 
It turns out that the $W$-tractor $W_{ABCD}$ has the symmetries of an
algebraic Weyl tensor. 
In fact in a choice of conformal scale, $W_{ABCE}$ is given by
\begin{equation}\label{Wform}
\begin{array}{l}
(n-4)\left( Z_A{}^aZ_B{}^bZ_C{}^cZ_E{}^e W_{abce}
-2 Z_A{}^aZ_B{}^bX_{[C}Z_{E]}{}^e C_{abe}\right. \\ 
\left.-2 X_{[A}Z_{B]}{}^b Z_C{}^cZ_E{}^e C_{ceb} \right)
+ 4 X_{[A}Z_{B]}{}^b X_{[C} Z_{E]}{}^e B_{eb},
\end{array}
\end{equation}
where $C_{abc} $ is the Cotton tensor,
 and 
\begin{equation}\label{Bachform}
B_{ab}:=\nabla^c
C_{cba}+\V^{dc}W_{dacb},
\end{equation}
see \cite{GoPetObstrn}.
Note that from \nn{Wform} it follows that, in dimension 4, $B_{eb}$ is
conformally invariant. 
This is the {\em Bach tensor}. 

Since $W_{ABCD}$ takes values in a weighted tractor bundle we may
apply the Thomas-D to this to capture jets of structure invariantly 
$$
W_{ABCD} \mapsto (W,~D W,~D\circ D W, ~D\circ D\circ D W, \cdots ) 
$$ following the idea of Section \ref{appSec}.  Contractions of these
terms then yield invariants. On odd dimensional manifolds this idea
with some minor additional input produces a generating set of scalar
conformal invariants \cite{Go-Adv}.  An alternative (but closely
related \cite{CapGoamb}) approach to the construction of conformal
invariants uses the Fefferman-Graham ambient metric
\cite{BEGr,FGbook}. With either approach, in even dimensions the
situation is far more subtle, deeper ideas are needed, and even for
the construction of scalar conformal invariants open problems remain (as
mentioned in the sources referenced).

\section{Lecture 5: Conformal compactification of pseudo-Riemannian 
manifolds}\label{compS}

At this point we change directions from developing the theory of tractor calculus for conformal geometries to applying this calculus to various problems inspired by asymptotic analysis in general relativity. We will see that the conformal treatment of infinity in general relativity fits very nicely with the conformal tractor calculus, and our basic motivation will be to produce results which are useful in this setting. For the most part will work fairly generally however, and much of what is presented will be applicable in other situations involving hypersurfaces or boundaries (such as Cauchy surfaces or various kinds of horizons in general relativity, or to the study of Poincar\'e-Einstein metrics in differential geometry). From this point on we will adopt the convention that $d=n+1$ (rather than $n$ as before) is the dimension of our manifold $M$, so that $n$ will be the dimension of $\partial M$ or of a hypersurface in $M$. 

\subsection{Asymptotic flatness and conformal infinity in general relativity}

It is natural in seeking to understand and describe the physics of general
relativity to want to study isolated systems. In particular we want
to be able to understand the mass and/or energy of the system (as
well as other physical quantities) and how the system radiates gravitational energy (or how the system interacts with gravitational radiation coming in from infinity, i.e. from outside the system). Clearly it is quite unnatural to try to isolate a physical system from a spacetime by simply considering the inside of a timelike tube containing the system
(events inside the tube would depend causally on events outside, and
there would be no natural choice of such a tube anyway). From early
on in the history of general relativity then physicists have sought
to define isolated systems in terms of spacetimes which are \emph{asymptotically flat}, that is which approach the geometry of Minkowski space in a suitable way as you approach ``infinity''.

Definitions of asymptotic flatness typically involve the existence
of special coordinate systems in which the metric components and other
physical fields fall off sufficiently quickly as you approach infinity
(infinity being defined by the coordinate system). Once a definition
of asymptotically flat spacetimes is established one can talk about
the \emph{asymptotic symmetry group} of such spacetimes (which is
not in general the Poincare group) and the corresponding physical
quantities such as mass and energy. One must be careful in defining
the asymptotic flatness condition to not require the fields to fall
off too fast (thereby excluding massive spacetimes, or gravitationally
radiating ones) nor too slow (so that one loses the asymptotic symmetries
needed to define physical quantities). 

A condition on asymptotically flat spacetimes which came to be seen
as important is the \emph{Sachs peeling-off property}, this is the
condition that along a future directed null geodesic which goes out
to infinity with affine parameter $\lambda$ the Weyl curvature $W^{a}{_{bcd}}$
satisfies
\[
W=\frac{W^{(4)}}{\lambda}+\frac{W^{(3)}}{\lambda^{2}}+\frac{W^{(2)}}{\lambda^{3}}+\frac{W^{(1)}}{\lambda^{4}}+O\left(\frac{1}{\lambda^{5}}\right)
\]
where each tensor $W^{(k)}$ is of special algebraic type with the
tangent vector to the null geodesic as a $k$-fold repeated principal
null direction. Typically asymptotically flat spacetimes were required
to be Ricci flat near infinity, so the Weyl curvature was in fact
the full curvature tensor in this region; the term $W^{(4)}$ in the
asymptotic expansion was interpreted as the gravitational radiation
reaching infinity.

The mass associated with this approach to isolated systems is called
the \emph{Bondi mass}, it is not a conserved quantity but satisfies
a \emph{mass loss formula} as energy is radiated away from the spacetime.
The asymptotic symmetry group (which is the same abstract group for
any spacetime) is known as the \emph{BMS} (Bondi-Metzner-Sachs) \emph{group}.
There is an alternative way of defining asymptotic flatness using
the 3+1 formalism. In this case one talks about the spacetime being
\emph{asymptotically flat at spatial infinity}, the corresponding
mass is the \emph{ADM} (Arnowitt-Deser-Misner) \emph{mass}, and the
asymptotic symmetry group is known as the \emph{SPI group}. Questions
of how these two notions of asymptotic flatness are related to each
other can be subtle and tricky. For a good introductory survey of
these issues see the chapter on asymptotic flatness in \cite{Wald}.

After a great deal of work in these areas, a new approach was
suggested by Roger Penrose in the 1960s
\cite{Penrose-ConfInfLetter,Penrose-LightConeInf,Penrose-zrmFieldAsympt}.
Penrose required of an asymptotically flat spacetime that the
conformal structure of spacetime extend to a pair of null
hypersurfaces called future and past null infinity. This mirrored the
the conformal compactification of Minkowski space obtained by adding a
lightcone at infinity. It was quickly shown that (i) Penrose's notion
of conformal infinity satisfied the appropriate uniqueness property \cite{Geroch-LocalChar},
(ii) this form of asymptotic flatness implied the peeling property \cite{Penrose-ConfInfLetter,Penrose-zrmFieldAsympt}, and (iii) the corresponding group of asymptotic symmetries was the
usual BMS group \cite{Geroch-Asympt}. It is not totally surprising that this idea worked
out so well: the causal (or light-cone) structure of spacetime, which is encoded by the conformal structure, had
played an important role in the analysis of gravitational radiation up
to that point. The conformal invariance of the zero rest mass equations for arbitrary
spin particles and their peeling-off properties also fit nicely with Penrose's proposal, providing further motivation. What is really nice about this
approach however is that it is both natural and coordinate free.

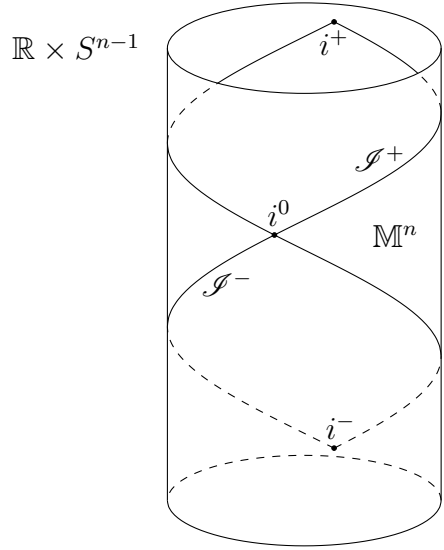
\begin{figure}[ht]
\begin{center}
	\begin{tikzpicture}[xscale=0.6,yscale=0.6]
		\draw  (0,3)  ellipse (3 and 1);
		\draw[dashed,xscale=1,yscale=1,domain=0:3.141,smooth,variable=\t] plot ({3*cos(\t r)},{sin(\t r)- 7});
		\draw[xscale=1,yscale=1,domain=3.141:6.282,smooth,variable=\t] plot ({3*cos(\t r)},{sin(\t r)- 7});
		\draw (-3,3) -- (-3,-7);
		\draw (3,3) -- (3,-7);
		\node at (-5,3) {$\mathbb{R}\times S^{n-1}$} ;
		\node at (5,3) {$\phantom{\mathbb{R}\times S^{n-1}}$} ;
				
		\node at (2,-1.15) {$\mathbb{M}^n$} ; 
		\draw[xscale=1,yscale=1,domain=-0.69:0.22,smooth,variable=\t] plot ({3*sin(\t r)},{1.5*\t +3.253}); 
		\draw[dashed,xscale=1,yscale=1,domain=-1.57:-0.69,smooth,variable=\t] plot ({3*sin(\t r)},{1.5*\t +3.253});  
		\draw[xscale=1,yscale=1,domain=-4.7:-1.57,smooth,variable=\t] plot ({3*sin(\t r)},{1.5*\t +3.253}); 
		\draw[dashed, xscale=1,yscale=1,domain=-6.062:-4.7,smooth,variable=\t] plot ({3*sin(\t r)},{1.5*\t +3.253}); 
				
		\draw[xscale=1,yscale=1,domain=2.16:2.924,smooth,variable=\t] plot ({3*sin(\t r)},{1.5*\t - 0.8});
		\draw[dashed,xscale=1,yscale=1,domain=1.61:2.16,smooth,variable=\t] plot ({3*sin(\t r)},{1.5*\t - 0.8});
		\draw[xscale=1,yscale=1,domain=-1.57:1.61,smooth,variable=\t] plot ({3*sin(\t r)},{1.5*\t - 0.8});	\draw[dashed,xscale=1,yscale=1,domain=-3.3639:-1.57,smooth,variable=\t] plot ({3*sin(\t r)},{1.5*\t - 0.8});
				
		\draw  [fill] (0.654,3.58) ellipse (0.05 and 0.05);
		\draw  [fill] (0.654,-5.84) ellipse (0.05 and 0.05);
		\draw  [fill] (-0.654,-1.13) ellipse (0.05 and 0.05);
		\node at (1.65,0.55) {$\mathscr{I}^+$} ;
		\node at (-1.7,-2.2) {$\mathscr{I}^-$} ;
		\node at (-0.55,-0.65) {$i^0$} ;
		\node at (0.65,3.12) {$i^+$} ;
		\node at (0.8,-5.36) {$i^-$} ;
	\end{tikzpicture}
	\caption{The standard conformal embedding of $n$-dimensional Minkowski space $\mathbb{M}^n$ into the Einstein cylinder.}
	\label{Fig:ConfMinkowskiEmb}
\end{center}
\end{figure}

Let us now give the formal definition(s), following \cite{Joerg-ConfInf}.

\begin{definition}
A smooth (time- and space-orientable) spacetime $(\M,\g)$ is
called \emph{asymptotically simple} if there exists another smooth
Lorentzian manifold $(M,g)$ such that
\begin{enumerate}[label=(\roman*)]
\item $\M$ is an open submanifold of $M$ with smooth boundary $\partial\M=\scrI$;
\item there exists a smooth scalar field $\Omega$ on $M$, such that
$g=\Omega^{2}\g$ on $\M$, and so that $\Omega=0$, $\mathrm{d}\Omega\neq0$
on $\scrI$;
\item every null geodesic in $\M$ acquires a future and a past endpoint on $\scrI$.
\end{enumerate}
An asymptotically simple spacetime is called \emph{asymptotically
flat} if in addition $\Rico=0$ in a neighbourhood of $\scrI$.
\end{definition}

The Lorentz manifold $(M,g)$ is commonly referred to as the \emph{unphysical
spacetim}e, and $g$ is called the \emph{unphysical metric}. One can
easily see that $\scrI$ must have two connected components
(assuming $\M$ is connected) $\scrI^{+}$ and $\scrI^{-}$ consisting
of the future and past endpoints of null geodesics respectively. It
is also easy to see that $\scrI^{+}$ and $\scrI^{-}$ are booth smooth
null hypersurfaces. Usually (in the $4$-dimensional setting) $\scrI^{+}$ and $\scrI^{-}$ will have topology $\mathbb{S}^{2}\times\mathbb{R}$, however this is not necessarily the case.
\begin{remark}
The third condition is in some cases too strong a requirement, for
instance in a Schwarzschild black hole spacetime there are null geodesics which circle about the singularity forever. To include Schwarzschild and
other such sapcetimes one must talk about \emph{weakly asymptotically
flat} spacetimes (see \cite{Penrose-str}). \endrk
\end{remark}
\begin{remark}
(For those wondering ``whatever happened to the cosmological constant?'')
One can also talk about \emph{asymptotically de Sitter} spacetimes
in which case the boundary hypersurface(s) will be spacelike and one
asks for the spacetime to be Einstein, rather than Ricci flat, in
a neighbourhood of the conformal infinity. Similarly one can talk
about \emph{asymptotically anti-de Sitter} spacetimes, which have
a timelike hypersurface as conformal infinity. \endrk
\end{remark}
\medskip{}
Just how many spacetimes satisfy this definition of asyptotic flatness,
and how do we get our hands on them? These questions lead us to the
discussion of Friedrich's conformal field equations. We will see that the conformal field equations arise very naturally from the tractor picture. However, first we will discuss more generally the mathematics of conformal compactification (inspired by the conformal treatment of infinity above) for pseudo-Riemannian manifolds, and the geometric constraints placed on such ``compactifications''. Again we shall see that this fits nicely within a tractor point of view.

\subsection{Conformal compactification}

A pseudo-Riemannian manifold $(\M,\g)$ is said to {\em conformally compact} if $\M$ may be identified with the interior of a
smooth compact manifold with boundary $M$ and there is a defining
function $r$ for the boundary $\Sigma =\partial \M$ so that
$$
\g=r^{-2}g  \quad \mbox{on }~ \M
$$ where $g$ is a smooth metric on $M$. In calling $r$  a  {\em defining function} for $\Sigma$ we mean that $r$ is a smooth real valued function on $M$ such that $\Sigma$ is exactly the zero locus $\cZ(r)$ of $r$ and furthermore that  $\mathrm{d} r$ is non-zero at every point of $\Sigma$.

Any such $g$ induces a metric $\og$ on $\Sigma$, but the
defining function $r$ by $r'=f\cdot r$ where $f$ is any non-vanishing
function. This changes $\og$ conformally and so canonically the
boundary has a conformal structure determined by $\g$, but no
metric. The metric $\g$ is then complete and $(\Sigma,\occ)$ is
sometimes called the conformal infinity of $\M$. Actually for our
discussion here we are mainly interested in the structure and geometry
of the boundary and the asymptotics of $\M$ near this, so it is not
really important to us that $M$ is compact. An important problem is
how to link the conformal geometry and conformal field theory of
$\Sigma$ to the corresponding pseudo-Riemannian objects on $\M$.

We will see that certainly this kind of ``compactification'' is not
always possible, but it is useful in a number of settings. Clearly
conformal geometry is involved. In a sense it arises in two ways
(that are linked). Most obviously the boundary has a conformal
structure. Secondly the interior was conformally rescaled to obtain
the metric $g$ that extends to the boundary. This suggests a strong
role for conformal geometry -- and it is this that we want to discuss.
 
Toward our subsequent discussion let us first make a first step by
linking this notion of compactification to our conformal tools and notations. Let us fix a choice of $r$ and hence $g$ above. 
If $\tau\in \Gamma (\ce[1])$ is any non-vanishing scale on $M$ then $\si:=r\tau$ is also a section of $\ce[1]$ but now with zero locus
$\cZ(\si)=\Sigma$. This satisfies that $\nabla^{g}\si $ is nowhere
zero along $\Sigma$ and so we say that $\si$ is a {\em defining density} for $\Sigma$. Clearly we can choose $\tau$ so that 
$$
g = \tau^{-2}\bg
$$
where $\bg$ is the conformal metric on $M$. Then 
$$
\g = \si^{-2}\bg .
$$
Thus we may think of a conformally compact manifold, as defined above, as a conformal manifold with boundary $(M,\cc)$ equipped with a section $\si\in \Gamma
(\ce[1])$ that is a defining density for the boundary $\partial M$. 
  

The key examples of conformally compactified manifolds fit nicely
within this framework. Indeed, consider the Poincare ball model of
hyperbolic space $\mathbb{H}^d$. This conformal compactification can
be realised by considering the extra structure induced on the
conformal sphere $(S^d,\cc)$ by a choice of constant spacelike tractor
field $I$ on $S^d$ (i.e. a constant vector $I$ in $\mathbb{V}$). This
choice gives a symmetry reduction of the conformal group
$G=\mathrm{SO}_+(\cH)$ to the subgroup $H\cong \mathrm{SO}_+(d,1)$
which stabilises $I$. On each of the two open orbits of $S^d$ under
the action of $H$ there is induced a hyperbolic metric, whereas the
closed orbit (an $n$-sphere) recieves only the conformal structure
induced from $(S^d,\cc)$. The two open orbits correpond to the two
regions where the scale $\sigma$ of $I$ is positive and negative
respectively, and the closed orbit is the zero locus of $\sigma$. If
we choose coordinates $X^A$ for $\mathbb{V}$ as before, then the
$1$-density $\sigma$ corresponds to the homogeneous degree one
polynomial $\tilde{\si}= I_A X^A$ (dualising $I$ using $\cH$); that
$\sigma^{-2}\bg$ gives a hyperbolic metric on each of the open orbits
can be seen by noting that the hyperplanes $I_A X^A=\pm 1$ intersect
the future light cone $\mathcal{N}_+$ in hyperbolic sections, and the
fact that $\cZ(\si)$ recieves only a conformal structure can be seen
from the fact that the hyperplane $I_A X^A=0$ intersects
$\mathcal{N}_+$ in a subcone. Thus we see that the scale corresponding
to a constant spacelike tractor $I$ on $(S^d,\cc)$ gives rise to a
decomposition of the conformal $d$-sphere into two copies of
conformally compactified hyperbolic space glued along their
boundaries.

\begin{figure}[ht]
\begin{center}
	\begin{tikzpicture}[xscale=0.6,yscale=0.6]
		\draw  (0,3) ellipse (4 and 1);
		\draw (-3.95,2.84) -- (0,-3) -- (3.96,2.86);
		\node at (-2,1.3) {$\mathcal{N}_+$} ;
		
		\draw[-latex] (0,-3) --(1.5,-3);
		\node at (1.5,-3.5) {$I$} ;
		\draw [dotted] (0,-3)  -- (0.99,3.95);
		\draw [dotted] (0,-3) -- (-0.99,2.03);
		\draw [dotted] plot[variable=\t,samples=1000,domain=-41.1:49.11] ({tan(\t)+1.33},{8.9*(sec(\t)-1)-0.908});
		\node at (1,4.35) {$\scriptstyle{\tilde{\sigma}=0}$} ;
		\node at (2.5,4.15) {$\scriptstyle{\tilde{\sigma}=1}$} ;
		\node at (3.5,-1.5) {$\tilde{\sigma}= I_A X^A$} ;
		
		\draw[-latex, thick] (5,0)--(8.5,0);
		\node at (6.7,0.5) {$\mathbb{P}_+$} ;
		
		\draw  (13,0) circle (3);
		\draw[dashed,xscale=1,yscale=1,domain=0:3.141,smooth,variable=\t] plot ({0.6*sin(\t r)+13},{3*cos(\t r)});
		\draw[xscale=1,yscale=1,domain=3.141:6.283,smooth,variable=\t] plot ({0.6*sin(\t r)+13},{3*cos(\t r)});
		\node at (14.75,0) {$\mathbb{H}^d$} ;
		\draw[thin] [-stealth] (13,3.7) -- (13,3.06);
		\node at (13.37,4.1) {$S^{d-1}$};
	\end{tikzpicture}
	\label{Fig:HyperbolicSpaceViaHolonomy}
	\caption{The orbit decomposition of the conformal sphere corresponding to the subgroup $H$ of the conformal group $G$ preserving a fixed spacelike vector $I$. An open orbit may be thought of as $\mathbb{H}^d$ conformally embedded into $\mathbb{S}^d$.}
\end{center}
\end{figure}
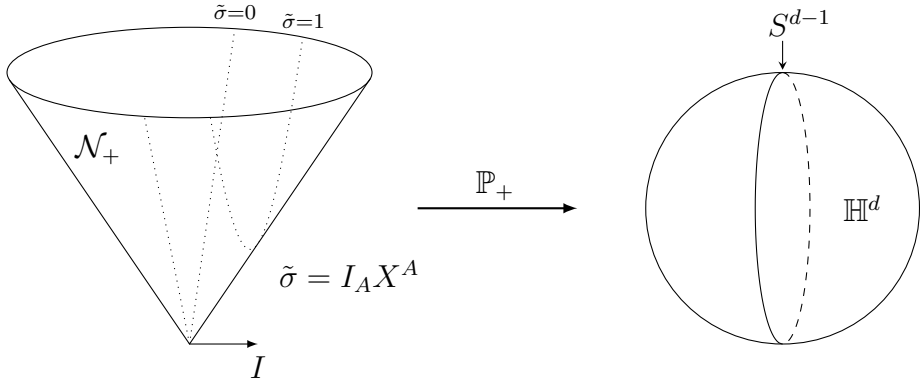

One can repeat the construction above in the case of the Lorentzian signature model space $S^n\times S^1$ in which case one obtains a decomposition of $S^n\times S^1$ into two copies of comformally compactified $AdS^d/\mathbb{Z}$ glued together along their conformal infinities (where the $\mathbb{Z}$-action on anti-de Sitter space is given by integer times $2\pi$ time translations); this conformal compactification of $AdS^d/\mathbb{Z}$ corresponds to the usual embedding of $AdS^d$ into (half of) the Einstein universe after we take the quotient by $\mathbb{Z}$. If one instead takes a constant null tractor $I$ on $S^n\times S^1$ then the corresponding scale $\sigma$ gives decomposition of $S^n\times S^1$ into two copies of conformally compactified Minkowski space glued along their null boundaries (plus two isolated points where $\cZ(\si)$ has a double cone-like singularity).

\subsection{Geometry of scale}\label{scale}

There is an interesting perspective which emerges from the above correspondence between constant (parallel) tractors and conformal compactifications. This may be motivated by Proposition \ref{tp} and Theorem \ref{para} which linked almost Einstein scales to parallel tractors. Together they imply:
\begin{proposition}
An Einstein manifold $(M,g)$ is the same as a conformal manifold
$(M,\cc)$ equipped with a parallel standard tractor $I$ such that
$\si:=X^AI_A$ is nowhere zero: Given $\cc$ and $I$ the metric is recovered by 
\begin{equation}\label{rel}
g=\si^{-2}\bg.
\end{equation}
Conversely a metric $g$ determines $\cc:=[g]$, and $\si\in \ce_+[1]$
by \nn{rel} again, now used as an expression for this variable. Then
$I=\frac{1}{n}D\si$.
\end{proposition}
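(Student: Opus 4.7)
The plan is to deduce this proposition as a direct packaging of Proposition \ref{Ecase} and Theorem \ref{para}, together with a small argument handling the sign ambiguity inherent in the phrase ``nowhere zero''.

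First I would treat the forward direction. Given an Einstein metric $g$, I set $\cc:=[g]$ and let $\si\in \Gamma(\ce_+[1])$ be the unique positive scale determined by $g=\si^{-2}\bg$. Proposition \ref{Ecase} then says that $\si$ solves the almost Einstein equation \nn{AE}. Applying Theorem \ref{para} to this $\si$ produces a parallel standard tractor $I:=\tfrac{1}{n}D\si$, and the inverse half of the same theorem gives $X^A I_A=\si$, which is strictly positive, hence nowhere zero. So the triple $(M,\cc,I)$ is of the required form.

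For the reverse direction, I would start with a conformal manifold $(M,\cc)$ and a parallel tractor $I$ with $\si:=X^A I_A$ nowhere zero. Theorem \ref{para} immediately gives that $\si\in \Gamma(\ce[1])$ satisfies \nn{AE}. Since $M$ is connected and $\si$ is a continuous nowhere-vanishing section of the oriented line bundle $\ce[1]$, $\si$ has constant sign; replacing $I$ by $-I$ if necessary (which preserves both the parallel condition and the nowhere-vanishing condition on $X^AI_A$) I may assume $\si\in\Gamma(\ce_+[1])$. Then $g:=\si^{-2}\bg\in\cc$ is a bona fide pseudo-Riemannian metric, and Proposition \ref{Ecase} certifies that $g$ is Einstein.

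Finally I would verify that the two procedures are mutually inverse, which is essentially bookkeeping. Starting from $g$, extracting $\si$ via $g=\si^{-2}\bg$, forming $I=\tfrac{1}{n}D\si$, and reconstructing $g'=(X^AI_A)^{-2}\bg$ returns $g$ tautologically. In the other order, starting from $(\cc,I)$, producing $\si=X^AI_A$ and then $I':=\tfrac{1}{n}D\si$, the bijectivity statement in Theorem \ref{para} (which is itself a consequence of the prolongation procedure of Section \ref{AEsec}) gives $I'=I$. The only genuine ``obstacle'' is the sign observation already used above, ensuring that the ``nowhere zero'' hypothesis can be promoted to ``strictly positive'' after an allowed symmetry of $I$; everything else is a direct quotation of prior results.
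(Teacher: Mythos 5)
Your proposal is correct and follows essentially the same route the paper intends: the proposition is stated there as an immediate consequence of the earlier correspondence results (Proposition \ref{Ecase} together with Proposition \ref{tp}/Theorem \ref{para}), with no separate proof given. Your extra care about the sign of $\si$ (replacing $I$ by $-I$ to land in $\ce_+[1]$) mirrors the ``without loss of generality'' step already made in Section \ref{AEeq} and is the only point requiring any argument beyond direct quotation.
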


So this suggests that if we wish to draw on conformal geometry then
using the package $(M,\cc,I)$ may give perspectives not easily seen
via the $(M,g)$ framework. Some questions arise:
\begin{itemize}[noitemsep, topsep=0pt]
\item[(1)] First the restriction that $\si:=X^AI_A$ is nowhere zero
  seems rather unnatural from this point of view. So what happens if
  we drop that? Then, if $I$ is a parallel standard tractor, recall we
  say that $(M,\cc,I)$ is an almost Einstein structure. What, for
  example, does the zero locus $\cZ(\si)$ of $\si=X^AI_A$ look like in
  this case?
\item[(2)] Is there a sensible way to drop the Einstein condition and use this approach on general pseudo-Riemannian manifolds?
\end{itemize}

\subsubsection{The zero locus of almost Einstein and ASC manifolds} \label{HolonomyRed}
In fact there is now known a way to answer the first question via a
very general theory.  Using ``the package $(M,\cc,I)$'' amounts to
recovering the underlying pseudo-Riemannian structure (and its
generalisations as below) as a type of {\em structure group reduction}
of a conformal Cartan geometry. If $I$ is parallel then this a {\em
  holonomy reduction}.  On an extension of the conformal Cartan bundle
to a principal bundle with fibre group $G=\mathrm{O}(p+1,q+1)$, the
parallel tractor $I$ gives a bundle reduction to a principal bundle
with fibre $H$ where this is: (i) $\mathrm{O}(p,q+1)$, if $I$ is
spacelike; (ii) $\mathrm{O}(p+1,q)$, if $I$ is timelike; and (iii) a
pseudo-Euclidean group $\mathbb{R}^d \rtimes O(p,q)$, if $I$ is null.

By the general theory of Cartan holonomy reductions \cite{CGH} any
such reduction yields a canonical stratification of the underlying
manifold into a disjoint union of {\em curved orbits}. These are
parametrised by $H\backslash G/P$, as are the orbits of $H$ on
$G/P$. Each curved orbit is an initial submanifold carrying a
canonically induced Cartan geometry of the same type as that of the
corresponding orbit in the model. Furthermore this curved orbit
decomposition must look locally like the decomposition of the model
$G/P$ into $H$-orbits. This means that in an open neighbourhood $U$ of
any point on $M$ there is a diffeomorphism from $U$ to the an open set
$U_\circ$ in the model that maps each curved orbit (intersected with
$U$) diffeomorphically to the corresponding $H$-orbit (intersected
with $U_\circ$) of $G/P$.

In our current setting $P$ is the stabiliser in $G$ of a null ray in $\mathbb{R}^{p+1,q+1}$ and the model $G/P$ is isomorphic to conformal $S^d\times \{1,-1\}$, if $\cc$ is Riemannian, and conformal $S^p\times S^q$ otherwise. The curved orbits arise here because as we move
around the manifold the algebraic relationship between the parallel
object $I$ and the canonical tractor $X$ changes. (For general
holonomy reductions of Cartan geometries the situation is a simple
generalisation of this.)  In particular, in Riemannian signature (or if $I^2\neq 0$), it is easily verified using these tools that the curved orbits (and the $H$-orbits on the model) are distinguished by the strict sign of $\si=X^AI_A$, see \cite[Section 3.5]{CGH}. By examining these sets on the model we conclude.
\begin{theorem}\label{AE-dec} The curved orbit decomposition of an 
almost Einstein manifold $(M,\cc,I)$ is according to the strict sign
of $\si=I_AX^A$.  The zero locus satisfies: 
\begin{itemize}[itemsep=2pt]
\item If $I^2\neq 0$ (i.e.\ $g^o$ Einstein and not Ricci flat) then $\cZ(\si)$ is
either empty or is a smooth embedded hypersurface.
\item If $I^2=0$ (i.e.\ $g^o$ Ricci flat) then $\cZ(\si)$ is
either empty or, after excluding isolated points from $\cZ(\si)$, is a smooth embedded hypersurface.
\end{itemize} 
\end{theorem}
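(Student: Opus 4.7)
The plan is to exploit the fact that, since $I$ is parallel and the tractor connection preserves $h$ (Theorem \ref{mpara}), the scalar $I^2 := h^{AB}I_AI_B$ is constant on $M$, and to read off its meaning in a choice of scale $g \in \cc$. From Definition \ref{scaletractor}, the formula \nn{D1form} for $D\si = nI$, and the tractor metric \nn{mform}, in any scale one has
\[
I^2 \;=\; 2\si\rho + \bg^{ab}\mu_a\mu_b, \qquad \mu_a := \nabla_a\si, \quad \rho := -\tfrac{1}{n}(\Delta\si + \J\si).
\]
Restricted to the zero locus $\cZ(\si)$ this collapses to $I^2 = \bg^{ab}\mu_a\mu_b$, the squared length of $\nabla\si$ with respect to the conformal metric. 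The partition of $M$ into the three pieces where $\si$ is respectively positive, negative, or zero therefore gives a natural stratification; by the general theory of Cartan holonomy reductions this matches the curved orbit decomposition, since the analogous $H$-orbits on the flat model $G/P$ are distinguished in exactly the same way by the strict sign of $X^A I_A$.

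For the first bullet, suppose $I^2 \neq 0$. Then $\bg^{ab}\mu_a\mu_b = I^2 \neq 0$ at every point of $\cZ(\si)$, so $d\si = \mu$ is nowhere vanishing on $\cZ(\si)$. Hence $0$ is a regular value of $\si$ and the regular value theorem gives that $\cZ(\si)$ is either empty or a smooth embedded hypersurface.

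For the second bullet, suppose $I^2 = 0$. At a point $p \in \cZ(\si)$ we have $\bg^{ab}\mu_a\mu_b(p)=0$, which in indefinite signature need not force $\mu(p)=0$: whenever $\nabla\si(p)$ is null but nonzero the regular value theorem again produces a smooth hypersurface locally at $p$. The only remaining case is $\mu(p)=0$; call such a $p$ a \emph{singular} point of $\cZ(\si)$. At such a point the splitting \nn{Uform} gives $I_A(p) = \rho(p) X_A(p)$, and since the nowhere-zero parallel tractor $I$ cannot vanish we must have $\rho(p) \neq 0$. The key ingredient is the middle prolongation equation in \nn{smu}, which at $p$ yields
\[
(\nabla_a\mu_b)(p) \;=\; -\rho(p)\,\bg_{ab}(p).
\]
Because $\bg_{ab}(p)$ is nondegenerate and $\rho(p) \neq 0$, this identifies $p$ as a nondegenerate zero of the section $\mu \in \Gamma(\ce_a[1])$; the inverse function theorem then forces $p$ to be an isolated zero of $\mu$. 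Hence the singular points form a discrete subset of $\cZ(\si)$; removing them leaves a set on which $d\si$ is nowhere zero, which by the regular value theorem is a smooth embedded hypersurface (or empty).

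The main obstacle is the Ricci-flat case $I^2=0$, where $\cZ(\si)$ can a priori have genuine singularities along its null locus. The essential point is that the prolonged almost-Einstein system rigidly controls the derivative of $\mu$ at a singular point, pinning it to be a nonzero multiple of the conformal metric, and this nondegeneracy is exactly what forces the singular set to be discrete.
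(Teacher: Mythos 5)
Your proof is correct, but it follows a genuinely different route from the paper's. For Theorem \ref{AE-dec} the paper gives essentially no computation: it invokes the general theory of Cartan holonomy reductions from \cite{CGH}, namely that the curved orbit decomposition induced by the parallel tractor $I$ must look locally like the $H$-orbit decomposition of the model $G/P$, and then simply reads off the structure of $\cZ(\si)$ from the model (hyperplane sections of the null cone in the non-null case, the vertex-type isolated points plus a smooth quadric in the null case). You instead extract everything directly from the prolonged almost Einstein system: the identity $I^2=2\si\rho+\bg^{ab}\mu_a\mu_b$ restricted to $\cZ(\si)$ handles the $I^2\neq 0$ case exactly as the paper itself does later for the more general almost pseudo-Riemannian setting (Theorem \ref{APRthm}), and your treatment of the $I^2=0$ case --- using $\nabla_a\mu_b = -\rho\,\bg_{ab}$ at a point where $\si$ and $\mu$ both vanish, together with $\rho\neq 0$ (since the parallel tractor $I$ is nowhere zero, assuming $\si\not\equiv 0$), to conclude via the inverse function theorem that such points are isolated zeros of $\mu$ --- is a clean, self-contained replacement for the appeal to \cite{CGH} that the paper relies on for precisely this bullet point. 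What the paper's approach buys is uniformity (the same theorem covers all holonomy reductions of all Cartan geometries, and yields the finer orbit-by-orbit information such as the induced geometry on each stratum); what yours buys is elementarity and the fact that it visibly uses only the tractor machinery already developed in Lectures 3--4. The one part of the statement where you and the paper necessarily coincide is the identification of the strata with the curved orbits, which you, like the paper, refer back to the model via the general theory.
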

\noindent Here $g^o$ means the metric $\si^{-2}\bg$ on the open orbits (where $\si$ is nowhere zero).

\begin{figure}[ht]
\begin{center}
	\begin{tikzpicture}[xscale=0.6,yscale=0.6]
		\draw  plot[smooth cycle, tension=.7] coordinates {(0,0) (-0.6144,-0.0792) (-1.2476,-0.5479) (-1.8886,-1.5127) (-3.0696,-2.2064) (-4.2909,-2.1141) (-5.6555,-0.8581) (-6.2517,1.4852) (-6.0128,3.5791) (-4.7755,5.1173) (-3.1071,4.7968) (-1.9073,3.8474) (-0.97,2.5164) (0.0798,1.6166) (1.0312,1.742) (1.8606,3.0771) (3.0229,4.4081) (4.9726,5.0669) (7.0534,4.5989) (8.2532,2.9867) (8.1407,1.2368) (6.716,-0.6191) (6.0071,-1.9692) (4.8683,-2.4613) (3.4763,-2.2926) (2.3234,-1.688) (1.6545,-0.9879) (1.2795,-0.4817) (0.6375,-0.1222)};
		\draw  plot[smooth, tension=.7] coordinates {(0,0) (-0.2179,0.0547) (-0.3697,0.2797)  (-0.4259,0.5946)  (-0.3585,0.9714)  (-0.1954,1.2807)  (-0.021,1.4663)  (0.1589,1.5507)  (0.3164,1.545)};
		\draw[dashed]  plot[smooth, tension=.7] coordinates {(0,0) (0.2714,0.066) (0.502,0.2853)  (0.6201,0.6115)  (0.6314,0.9433)  (0.5976,1.1739)  (0.547,1.3145)  (0.4514,1.4494)  (0.3164,1.545)};
		\node at (4.9948,2.5018) {$M_+$} ;
		\node at (-3.9472,3.3876) {$M_-$} ;
		\node at (0.4401,2.3999) {$M_0$} ;
		\draw[thin] [-stealth] (0.3783,2.0906) -- (0.2827,1.6238);
		
		\node at (4.3621,-2.9393) {$\sigma > 0$} ;
		\node at (-3.6379,-2.8128) {$\sigma < 0$} ;
		\node at (-0.026,-0.54) {$\sigma=0$} ;
		
		\draw  plot[smooth, tension=.7] coordinates {(-4.402,1.86) (-4.0456,1.3489) (-3.3567,1.2927) (-2.5553,1.7848)};
		\draw  plot[smooth, tension=.7] coordinates {(-4.1996,1.4888) (-3.834,1.7138) (-3.007,1.4719)};
		\draw  plot[smooth, tension=.7] coordinates {(3.1811,-0.3383) (3.8278,-1.0834) (4.798,-1.1959) (5.3182,-0.4367)};
		\draw  plot[smooth, tension=.7] coordinates {(3.4939,-0.7889) (3.8278,-0.3242) (4.5589,-0.3664) (4.9807,-1.0412)};
		\end{tikzpicture}
		\label{ConfCurvedOrbits}
		\caption{The curved orbit decomposition of an almost Einstein manifold with $\cZ(\si)$ an embedded separating hypersurface.}
\end{center}
\end{figure}
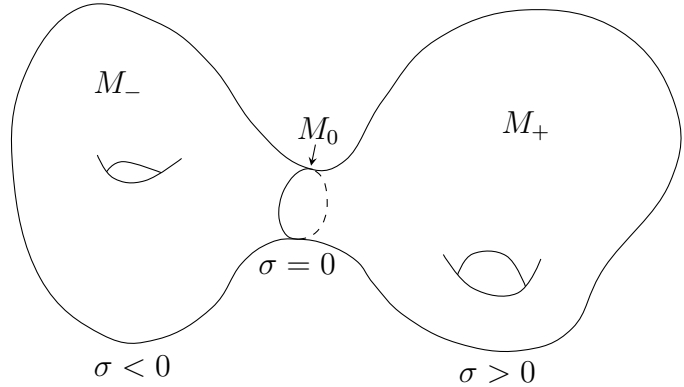

\begin{remark}
Much more can be said using the tools mentioned. For example in the case of  Riemannian signature it is easily shown that:
\begin{itemize}[itemsep=2pt, topsep=0pt]
\item If $I^2<0$ (i.e.\ $g^o$ Einstein with positive scalar curvature) then 
$\cZ(\si)$ is empty.
\item If $I^2=0$ (i.e.\ $g^o$ Ricci flat) then $\cZ(\si)$ is
either empty or consists of isolated points.
\item If $I^2>0$ (i.e.\ $g^o$ Einstein with negative scalar curvature) then $\cZ(\si)$ is either empty or is a smooth embedded separating hypersurface.
\end{itemize}
This holds because this is how things are locally on the flat model \cite{Gal,CGH}; if $I^2<0$ then on the model we have a round metric induced on the whole conformal sphere, the one open orbit is the whole space; if $I^2=0$ then on the model we are looking at the one point conformal compactification of Euclidean space given by inverse stereographic projection, and there are two orbits, one open and one an isolated point, so in the curved case $\cZ(\si)$ is either empty or consists of isolated points; if $I^2>0$ then on the model we are looking at two copies of conformally compactified hyperbolic space glued along their boundaries as discussed earlier, in this case the closed orbit is a separating hypersurface so in the curved case $\cZ(\si)$ is either empty or is a smooth embedded separating hypersurface. In the Lorentzian setting one can obtain a similar improvement of Theorem \ref{AE-dec} by considering the model cases. \endrk
\end{remark}

In fact similar results hold in greater generality (related to the question (2) above) and this is easily seen using the earlier tractor calculus and elementary considerations. We learn from the Einstein case above that an important role is played by the scale tractor
$$
I_A=\frac{1}{d}D_A \si .
$$
In the Einstein case this is parallel and hence non-zero everywhere. Let us drop the condition that $I$ is parallel and for convenience say that a structure 
$$
(M^d,\cc,\si) \quad \mbox{where} \quad \si\in \Gamma(\ce[1])
$$ is {\em almost pseudo-Riemannian} if the scale tractor
$I_A:=\frac{1}{d}D_A\si$ is nowhere zero. Note then that $\si$ is
non-zero on an open dense set, since $D_A\si$ encodes part of the
2-jet of $\si$. So on an almost pseudo-Riemannian manifold there is
the pseudo-Riemannian metric $g^o=\si^{-2}\bg$ on the same open dense
set. In the following the notation $I$ will always refer to
a scale tractor, so $I=\frac{1}{d}D\si$, for some $\si\in
\Gamma(\ce[1])$. Then we often mention $I$ instead of $\si$ and refer to $(M,\cc,I)$ as an almost pseudo-Riemannian manifold.

Now recall from \nn{I2} that 
\begin{equation}\label{I22}
I^2 \stackrel{g}{=} \bg^{ab}(\nabla_a \si)(\nabla_b \si) -
\frac{2}{d} \si (\J+\Delta )\si
\end{equation}
where $g$ is any metric from $\cc$ and $\nabla$ its Levi-Civita connection. 
This is well-defined everywhere on an almost pseudo-Riemannian manifold, while
according to Proposition \ref{scalarcurv}, where $\si$ is non-zero, it computes 
$$
I^2= -\frac{2}{d}\J^{g^o}= -\frac{\Sc^{g^o}}{d(d-1)} \quad \mbox{where} \quad g^o=\si^{-2}\bg .
$$ Thus $I^2$ gives a generalisation of the scalar curvature (up to a
constant factor $-1/d(d-1)$); it is canonical and smoothly extends the
scalar curvature to include the zero set of $\si$. We shall use the
term {\em ASC manifold} (where ASC means almost scalar constant) to
mean an almost pseudo-Riemannian manifold with
$I^2=\mbox{\it{constant}}$. Since the tractor connection preserves the
tractor metric, an almost Einstein manifold is a special case, just as
Einstein manifolds have constant scalar curvature.

Much of the previous theorem still holds in the almost pseudo-Riemannian setting when $I^2\neq 0$:
\begin{theorem}\label{APRthm} Let $(M,\cc,I)$ be an almost pseudo-Riemannian manifold with $I^2\neq 0$. Then $\cZ(\si)$, if not empty, is a smooth embedded separating hypersurface. This has a spacelike (resp.\ timelike) normal if $g$ has negative scalar (resp.\  positive) scalar curvature.

If $\cc$ has Riemannian signature and $I^2< 0$ then $\cZ(\si)$ is empty.
\end{theorem}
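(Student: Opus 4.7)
The plan is to derive everything from formula \nn{I22} for $I^2$, exploiting the fact that its second term carries an overall factor of $\si$ and hence vanishes identically on $\cZ(\si)$. First I would evaluate \nn{I22} at a point $p\in\cZ(\si)$, which collapses it to
$$
I^2(p)=\bg^{ab}(\nabla_a\si)(\nabla_b\si)\big|_p,
$$
the squared $\bg$-length of the weighted $1$-form $\nabla\si$ at $p$. Since $I^2(p)\neq 0$ by hypothesis, this forces $\nabla\si|_p\neq 0$, so $\si$ is a local defining density for $\cZ(\si)$ near $p$; the implicit function theorem then yields that $\cZ(\si)$ is a smooth embedded hypersurface whose conormal line at $p$ is spanned by $(\nabla_a\si)|_p$.

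For the causal character of the normal, I would combine the identity just established with Proposition \ref{scalarcurv}, which gives $I^2=-\Sc^{g^o}/(d(d-1))$ on the open set $\{\si\neq 0\}$, where $g^o=\si^{-2}\bg$. Continuity of $I^2$ across $\cZ(\si)$ then shows that the sign of the conormal's $\bg$-norm squared at $p$ equals the opposite of the sign of $\Sc^{g^o}$ in the adjacent interior, giving the claimed spacelike-when-$\Sc^{g^o}<0$ and timelike-when-$\Sc^{g^o}>0$ dichotomy. For the separating claim, I would use that $\ce[1]$ is canonically oriented (as an associated bundle to $\cQ$), so the sign of $\si$ is well-defined pointwise; setting $M_\pm:=\{\pm\si>0\}$, the nonvanishing of $d\si$ at every point of $\cZ(\si)$ forces $\si$ to take both signs in every neighbourhood of such a point, exhibiting $M\setminus\cZ(\si)=M_+\sqcup M_-$ as a decomposition into two nonempty disjoint open pieces.

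Finally, for the last assertion I would observe that in Riemannian signature $\bg$ is positive definite, so $\bg^{ab}(\nabla_a\si)(\nabla_b\si)\geq 0$ everywhere on $M$; combined with the identity from the first step, this forces $I^2(p)\geq 0$ at every $p\in\cZ(\si)$, contradicting the assumption $I^2<0$, so $\cZ(\si)$ must be empty. I do not expect any serious obstacle here: the whole theorem is essentially a one-line consequence of the vanishing of the second term in \nn{I22} on $\cZ(\si)$, with only a minor subtlety in invoking the canonical orientation of $\ce[1]$ to make the separating statement precise.
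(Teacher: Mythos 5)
Your proposal is correct and follows essentially the same route as the paper's own proof: restrict \nn{I22} to $\cZ(\si)$ so that $I^2=\bg^{ab}(\nabla_a\si)(\nabla_b\si)$ there, deduce $\nabla\si\neq 0$ and apply the implicit function theorem, read off the causal character of the (co)normal and the separation from the sign of $\si$, and get the Riemannian contradiction from positive definiteness of $\bg$. The only difference is that you spell out the "evidently" steps (continuity of $I^2$ across $\cZ(\si)$ and the orientation of $\ce[1]$ making the sign of $\si$ well defined), which the paper leaves implicit.
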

\begin{proof}
This is an immediate consequence of \nn{I22}: Along the zero locus $\cZ(\si)$ of $\si$ (assuming it is non empty) we have 
$$
I^2=\bg^{ab}(\nabla_a \si)(\nabla_b\si) .
$$ 
in particular $\nabla \si$ is nowhere zero on $\cZ(\si)$, and so $\si$ is a defining density. Thus $\cZ(\si)$ is a smoothly embedded hypersurface
by the implicit function theorem. Evidently $\cZ(\si)$ separates $M$ according to the sign of $\si$. Also $\nabla \si$ is a (weight 1) conormal field along $\cZ(\si)$ so the claimed signs for the normal also follow from the display. 

Finally if $\cc$ (and hence $\bg$) has Riemannian signature, then the
display shows that at any point of $Z(\si)$ the constant $I^2$ must be
positive. This is a contradiction if $I^2<0$ and so
$Z(\si)=\emptyset$.
\end{proof}

\begin{remark}\label{moregen}
Note that if $M$ is compact then Theorem \ref{APRthm} gives a decomposition of $M$ into conformally compact manifolds glued along their conformal infinities. Note also that if $M$ is allowed to have boundary then we only mean that $\cZ(\si)$ is separating if it is not a boundary component of $M$. \endrk
\end{remark}
 
What we can conclude from all this is that almost pseudo-Riemannian manifolds $(M,\cc,I)$ naturally give rise to nicely conformally compactified metrics (at least in the case where $I^2\neq0$). If the scalar curvature of the manifold $(\M,\g)$ admitting a conformal compactification $(M,c)$ is bounded away from zero then the conformal compactification must arise from an almost pseudo-Riemannian manifold $(M,\cc,I)$ in this way because $I^2$ is then bounded away from zero on $\M$ and hence $I$ extends to be nowhere zero on $M$ (all that we really need is that $I^2\neq 0$ on $\partial \M$). Thus we can apply the results above to the conformal compactification of $(\M,\g)$. This tells us for instance that Riemannian manifolds with negative scalar curvature bounded away from zero must have a nice smooth boundary as conformal infinity if they can be conformally compactified. In the next section we develop this kind of idea.

\subsection{Constraints on possible conformal compactifications} \label{cinf}

Here we show that elementary geometric considerations restrict the topological (and geometric) possibilities for a conformal infinity.

\begin{theorem}\label{newcpt} 
Let $(\M,\g)$ be a geodesically complete pseudo-Riemannian manifold and $i:\M\to M$ an embedding of $\M$ as an open submanifold in a closed conformal manifold $(M,\cc)$ and $\si$ a smooth section of $\ce[1]$ on $M$ so that on the image $i(\M)$, which we identify with $\M$, we have that
$$
\g = \si^{-2}\bg
$$ 
where $\bg$ is the conformal metric on $M$. If the scalar curvature
of $\g$ is bounded away from zero then either $\M=M$ or  the boundary points of $\M$ in $M$ form a smooth embedded hypersurface in $M$. This has a timelike normal field if $\Sc^{\g}>0$ and a spacelike normal field if $\Sc^{\g} <0$. 
\end{theorem}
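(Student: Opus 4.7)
The plan is to combine the scale-tractor identity \nn{I22} with geodesic completeness to force $\partial\M$ into the zero locus of $\si$, and then to conclude via the reasoning already used in the proof of Theorem \ref{APRthm} that this zero locus is a smooth embedded hypersurface of the claimed causal type. If $\M=M$ there is nothing to prove, so assume $\M\neq M$ and work near a point $p\in\overline{\M}\setminus\M$.

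First note that the scale tractor $I=\tfrac{1}{d}D\si$ is a global smooth tractor field on $M$, so $I^2=h(I,I)$ is smooth on all of $M$. Proposition \ref{scalarcurv} gives $I^2=-\Sc^{\g}/(d(d-1))$ on $\M$; since $\Sc^{\g}$ is bounded away from zero there, $|I^2|$ is bounded below by a positive constant on $\M$ and hence, by continuity, on $\overline{\M}$. In particular $I^2(p)\neq 0$.

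The crux is to show that $\si(p)=0$. Suppose for contradiction $\si(p)\neq 0$; then on a neighbourhood $U$ of $p$ the density $\si$ is nowhere vanishing, so $\tilde g:=\si^{-2}\bg$ is a smooth pseudo-Riemannian metric on $U$ that coincides with $\g$ on $U\cap\M$. Shrinking $U$ to a $\tilde g$-normal neighbourhood of $p$ and picking $q\in U\cap\M$ sufficiently close to $p$, there is a $\tilde g$-geodesic $\eta:[0,1]\to U$ with $\eta(0)=q$ and $\eta(1)=p$. Set
\[
T:=\sup\{t\in[0,1]\;:\;\eta([0,t])\subset\M\}.
\]
Openness of $\M$ gives $T>0$, while $\eta(T)\in\overline{\M}\setminus\M$ by construction. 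But on $[0,T)$ the curve $\eta$ is a $\g$-geodesic in $\M$, so by geodesic completeness it extends to a $\g$-geodesic $\zeta:[0,\infty)\to\M$ with $\zeta|_{[0,T)}=\eta|_{[0,T)}$; continuity then forces $\eta(T)=\lim_{t\to T^-}\zeta(t)=\zeta(T)\in\M$, contradicting $\eta(T)\notin\M$.

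Thus $\partial\M\subset\cZ(\si)$. At any $p\in\partial\M$ the identity \nn{I22} collapses to $I^2(p)=\bg^{ab}(\nabla_a\si)(\nabla_b\si)(p)\neq 0$, so $d\si(p)\neq 0$ and $\cZ(\si)$ is a smooth embedded hypersurface near $p$ by the implicit function theorem. This hypersurface separates a small neighbourhood $V$ of $p$ into two connected open pieces $V^\pm=\{\pm\si>0\}\cap V$; the set $V\cap\M$ is a nonempty open subset of $V^+\cup V^-$ that is also closed there (any limit in $V^+\cup V^-$ would otherwise be a boundary point of $\M$ outside $\cZ(\si)$), hence $V\cap\M$ is a union of connected components of $V^+\cup V^-$ and $\cZ(\si)\cap V=\partial\M\cap V$. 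So $\partial\M$ is locally the smooth hypersurface $\cZ(\si)$. Finally, $\bg^{ab}(\nabla_a\si)(\nabla_b\si)(p)=I^2(p)=-\Sc^{\g}/(d(d-1))$ by continuity, so the normal is timelike exactly when $\Sc^{\g}>0$ and spacelike exactly when $\Sc^{\g}<0$. The hardest step is the ODE/completeness argument of the third paragraph, which is what rules out smoothly extending $\g$ across the purported boundary; once this inextendibility is in hand, the remainder is direct from \nn{I22} and the argument of Theorem \ref{APRthm}.
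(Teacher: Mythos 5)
Your proof is correct and follows essentially the same route as the paper's: bound $|I^2|$ away from zero on $\overline{\M}$, use geodesic completeness to force $\partial\M\subset\cZ(\si)$, and then read off the hypersurface property and the causal type of the normal from the identity \nn{I22}. You merely supply more detail than the paper does at two points (the sup/extension argument making the completeness contradiction precise, and the local identification of $\partial\M$ with $\cZ(\si)$), both of which are welcome elaborations rather than departures.
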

\begin{proof}
We can form the scale tractor 
$$
I_A=\frac{1}{d}D_A \si. 
$$  
Then $I^2$ is a smooth function on $M$ that is bounded away from zero on the open set $\M \subset M$. Thus $I^2 \neq 0$ on the topological closure $\overline{M}_+$ of $\M$ in $M$.  Now since $I^2 \neq 0$ on $\overline{M}_+$ we have at any point $p\in \cZ(\si) \cap \overline{M}_+$ that $\nabla \si (p)\neq 0$. Thus the zero locus of $\si$ (intersected with $\overline{M}_+$) is a regular embedded submanifold of $\overline{M}_+$. By construction this lies in the set $\partial \M$ of boundary points to $\M$. 

On the other hand any boundary point is in the zero locus, as otherwise
for any $p\in \partial \M$ s.t. $\si(p)\neq 0$ there is an open
neighbourhood of $p$ in $\overline{M}_+$ such that $\si$ is nowhere zero
and so $g$ extends as a metric to this neighbourhood. So then there is a
geodesic from a point in $\M$ that reaches $p$ in finite time, which
contradicts that $\M$ is geodesically complete.
\end{proof}

In particular if $\g$ is Einstein with non-zero cosmological constant
then $I$ is parallel, and hence $I^2$ is constant and nonzero. So the above theorem applies in this case. In fact in the Einstein case we can also get results which in some aspects are stronger. For example:

\begin{theorem} \label{cify} Suppose that
$\M$ is an open dense submanifold in a compact connected conformal manifold $(M^d,\cc)$, possibly with boundary, and that one of the following two possibilities hold: either $M$ is a manifold with boundary $\partial M$ and $M\setminus \M =\partial M$, or $M$ is closed and $M\setminus \M$ is contained in a smoothly embedded submanifold of $M$ of codimension at least 2. Suppose also that $\g$ is a geodesically complete Einstein, but not Ricci flat, pseudo-Riemannian metric on $\M$ such that on $\M$ the conformal structure $[\g]$ coincides with the restriction of $\cc$.
 Then, either
    \begin{itemize}
        \item $\M$ is closed and $\M = M$; or
        \item we are in the first setting with
          $M\setminus \M$ the smooth $n$-dimensional
          boundary for $M$.  
    \end{itemize}
\end{theorem}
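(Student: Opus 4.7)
The plan is to use the scale tractor of the Einstein metric to realize $(\M,\g)$ as the top-dimensional curved orbit of an almost Einstein structure on the whole of $(M,\cc)$, and then read off the structure of $M\setminus\M$ from Theorem \ref{APRthm} together with the geodesic completeness argument already used in Theorem \ref{newcpt}. On $\M$ let $\sigma_0\in\Gamma(\ce[1]|_\M)$ be the positive density with $\g=\sigma_0^{-2}\bg$, and form the scale tractor $I_A=\tfrac{1}{d}D_A\sigma_0$. Because $\g$ is Einstein but not Ricci-flat, Theorem \ref{para} makes $I$ a $\nabla^\cT$-parallel section of $\cT$ on $\M$, and Proposition \ref{scalarcurv} combined with $\nabla^\cT h=0$ makes $h(I,I)$ a nonzero real constant on $\M$.

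The first technical step is to extend $I$ to a smooth parallel section of $\cT$ on all of $M$. In the first setting I would work in a collar neighborhood of $\partial M$: since the tractor connection extends smoothly to $M$, solving the parallel ODE $\nabla_\partial I=0$ along the inward normal direction of the collar extends $I$ smoothly up to $\partial M$, and the remaining components of the parallel equation pass to $\partial M$ by continuity. In the second setting one exploits that the complement is contained in a codimension-two submanifold, so loops in $M$ based in $\M$ can be homotoped off the exceptional set by transversality, reducing the path-independence of parallel transport to the trivial holonomy of $I$ on $\M$. Setting $\sigma:=X^AI_A$ then yields an almost Einstein structure $(M,\cc,\sigma)$ with $h(I,I)$ a nonzero constant, and Theorem \ref{APRthm} forces $\cZ(\sigma)$ to be either empty or a smooth embedded separating hypersurface of $M$.

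Finally I would split into cases. If $\cZ(\sigma)=\emptyset$ then $g:=\sigma^{-2}\bg$ is a smooth pseudo-Riemannian metric on the whole of $M$ which restricts to $\g$ on $\M$; to conclude $\M=M$ I would mimic the argument from Theorem \ref{newcpt}, picking any $p\in M\setminus\M$ and producing a short $g$-geodesic $\gamma$ with $\gamma(0)=p$ that runs into $\M$ (inward from $\partial M$ in the first setting, in a direction transverse to the codimension-two exceptional set in the second). On the open portion $\gamma$ is a $\g$-geodesic, so by completeness of $\g$ it extends within $\M$ to a $\g$-geodesic defined for all real parameter, contradicting $\gamma(0)=p\notin\M$ via uniqueness of the geodesic ODE. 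If instead $\cZ(\sigma)\neq\emptyset$ then, since $\sigma>0$ on $\M$, one has $\cZ(\sigma)\subseteq M\setminus\M$; the second setting is excluded because a smooth hypersurface cannot sit inside a codimension-two submanifold, so we must be in the first setting, and the same completeness argument applied at boundary points gives $\partial M\subseteq\cZ(\sigma)$. Equality of these two smooth hypersurfaces of the same dimension then identifies $M\setminus\M=\partial M=\cZ(\sigma)$ as a smooth $n$-dimensional boundary, delivering the dichotomy.

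The main obstacle I anticipate is the smooth parallel extension of $I$ across $M\setminus\M$, especially in the second setting: one must verify that $I$ is not merely continuous but smoothly parallel at interior points of the codimension-two exceptional set, and that the extension is unambiguous as one varies the parallel-transport path. Once this extension is cleanly established, all remaining steps are straightforward applications of Theorem \ref{APRthm} and the geodesic-completeness argument of Theorem \ref{newcpt}.
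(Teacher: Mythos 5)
Your proposal is correct and follows essentially the same route as the paper: construct the parallel scale tractor on $\M$, extend it (and hence $\si=X^AI_A$) smoothly and parallelly across $M\setminus\M$ using parallel transport (the paper defers the codimension-two smoothness issue you flag to the reference \cite{DiScalaManno}), and then reduce to Theorem \ref{newcpt} / Theorem \ref{APRthm} plus the geodesic-completeness and codimension arguments to obtain the dichotomy. The only difference is that you unpack the final case analysis explicitly where the paper simply cites Theorem \ref{newcpt}.
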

\begin{proof}
$\M$ is  canonically equipped with  a parallel standard tractor $I$
such that $I^2=c\neq 0$, where $c$ is constant.

Now $[\g]$ on $\M$ is the restriction of a smooth conformal structure
$\cc$ on $M$. Thus the conformal tractor connection on $\M$ is the
restriction of the smooth tractor connection on $M$. Working locally
it is straightforward to use parallel transport along a congruence of
curves to give a smooth extension of $I$ to a sufficiently small open
neighborhood of any point in $M\setminus \M$, and since $\M$ is dense
in $M$ the extension is parallel and unique. (See \cite{DiScalaManno} for
this extension result in general for the case where $M\setminus \M$ lies in
a submanifold of dimension at least 2.)  It follows that $I$
extends as a parallel field to all of $M$.

It follows that $\si:=I_A X^A$ also extends smoothly to
all of $M$. This puts us back in the setting of Theorem \ref{newcpt}, so that $M\setminus \M$ is either empty or is an embedded hypersurface. If $M\setminus \M$ is empty then $M$ is closed and $M=N$. If $M\setminus \M$ is an embedded hypersurface then, buy our assumptions, we must be in the case where $M$ is a manifold with boundary and $M\setminus \M=\partial M$ (since an embedded hypersurface cannot be contained in a submanifold of codimension two).
\end{proof}

\begin{remark}
In Theorem \ref{newcpt} we had to assume that there was a smooth globally defined conformal 1 density $\sigma$ which glued the metric on $\M$ smoothly to the conformal structure of $M$. In Theorem \ref{cify} we are able to drop this condition because we are requiring that the metric $\g$ on $\M$ is Einstein with conformal structure agreeing with that on $M$, allowing us to recover the smooth global 1 density $\sigma$ by parallel extension of the scale tractor for $\g$ to $M\setminus \M$. However, it is not hard to see that one must place some restriction on the submanifold $M\setminus \M$ in order for a parallel extension to exist, this is where the condition that $M\setminus \M$ must lie in a submanifold of codimension two comes in for the case where $M$ is closed. \endrk
\end{remark}

\subsection{Friedrich's conformal field equations and tractors}

In this section we briefly discuss Friedrich's conformal field equations and their applications. We will see that the equations can be very easily arrived at using tractor calculus, and that the tractor point of view is also nicely compatible with the way the equations are used in applications.

\subsubsection{The equations derived}

Suppose that $(\M,\g)$ is an asymptotically flat spacetime with corresponding unphysical spacetime $(M,g)$ and conformal factor $\Omega$. (We may assume that $M=\M\cup\scrI$.) Then since $(\M,\g)$ is Ricci
flat near the conformal infinity $\scrI$ the conformal scale tractor
$I$ defined on $\M$ corresponding to the metric $\g$ is parallel
(and null) near $\scrI$ and thus has a natural extension (via parallel
transport, or simply by taking the limit) to all of $M$. Let us assume
for simplicity that $(\M,\g)$ is globally Ricci flat, then the naturally
extended tractor $I$ is globally parallel on the non-physical spacetime
$(M,g)$. If we write out the equation $\nabla_{a}I^{B}=0$ on $M$
in slots using the decomposition of the standard tractor bundle of
$(M,[g])$ induced by $g$ then we get the system of equations

\begin{eqnarray*}
\nabla_{a}\sigma & = & \mu_{a}\\
\nabla_{a}\mu_{b} & = & -\sigma \V_{ab}-\rho\boldsymbol{g}_{ab}\\
\nabla_{a}\rho & = & \V_{ab}\mu^{b}
\end{eqnarray*}
where $I\overset{g}{=}(\sigma,\mu_{a},\rho)$. If we also trivialise
the conformal density bundles using $g$ then $\sigma$ becomes $\Omega$,
$\boldsymbol{g}$ becomes $g$, and we recognise the above three equations
as the first three equations in what are commonly known as \emph{Friedrich's
conformal field equations}. A fourth member of the conformal field
equations can be obtained by writing $I_{A}I^{A}=0$ out as
\[
2\sigma\rho+\mu_{a}\mu^{a}=0.
\]
Next we observe that $\nabla_{a}I^{B}=0$ clearly implies $\kappal_{ab}{^{C}}_{D}I^{D}=0$,
and the projecting part of this equation gives the \emph{conformal
C-space equation}
\[
\nabla_{b} \V_{ac}-\nabla_{a}\V_{bc}=W_{abcd}\sigma^{-1}\mu^{d}
\]
after multiplication by $\sigma^{-1}$. (The ``bottom slot'' of
$\kappal_{ab}{^{C}}_{D}I^{D}=0$ taken w.r.t. $g$ is $(\nabla_{b}\V_{ac}-\nabla_{a}\V_{bc})\mu^{c}=0$
which also follows from contracting the above displayed equation with
$\mu^{c}$.) The contracted Bianchi identity states that 
\[
\nabla^{d}W_{abcd}=\nabla_{b}\V_{ac}-\nabla_{a}\V_{bc}
\]
and from this and the previous display it follows that
\[
\nabla^{d}(\sigma^{-1}W_{abcd})=0
\]
(where we have used that $\nabla_{a}\sigma=\mu_{a}$). It can be shown
\cite{Joerg-ConfInf} that the field $\sigma^{-1}W_{ab}{^c}_d$ is regular
at $\scrI$ (recall the Sachs peeling property), and we will write this field as $K_{ab}{^c}_d$. If we substitute $K_{abcd}=\bg_{ce}K_{ab}{^e}_d$ for $\sigma^{-1}W_{abcd}$ in the C-space equation and in the last equation displayed above we get the remaining two equations from Friedrich's conformal system
\[
\nabla_{b}\V_{ac}-\nabla_{a}\V_{bc}=K_{abcd}\mu^{d}
\]
and
\[
\nabla^{a}K_{abcd}=0,
\]
which hold not only on $\M$ but on all of $M$. This pair of equations encode not only the fact that $(\M,\g)$ is Cotton flat (both equations being in some sense conformal C-space equations), but also by their difference they encode the contracted Bianchi identity (and hence the full Bianchi identity if $\M$ is $4$-dimensional \cite{Friedrich-ConfEinsteinEv}).

The equations from Friedrich's conformal system are thus seen to be
elementary consequences of the system of four tractor equations
\begin{eqnarray*} 
\nabla_{a}I^{B} & = & 0\\
I^{A}I_{A} & = & 0\\
\kappal_{ab}{^{C}}_{D}I^{D} & = & 0\\
\nabla_{[a}\kappal_{bc]DE} & = & 0.
\end{eqnarray*}

\begin{remark}
There are in fact more equations that could be considered as a part
of Friedrich's conformal system, however these simply define the connection
and the curvature terms used in the equations we have given. These
equations would be the expressions $\nabla g=0$ and $T^{\nabla}=0$
satisfied by the Levi-Civita connection of $g$, and the decomposition
of the Riemannian curvature tensor of $\nabla$ as
\[
R_{abcd}=\sigma K_{abcd}+2\V_{a[c}\boldsymbol{g}_{d]b}-2\V_{b[c}\boldsymbol{g}_{d]a}
\]
which serves to define the curvature tensors $K_{abcd}$ and $\V_{ab}$
(along with the appropriate symmetry conditions on $K_{abcd}$ and
$\V_{ab}$ as well as the condition that $\boldsymbol{g}^{ac}K_{abcd}=0$). 

Note that we can easily allow for a nonzero cosmological constant in our tractor system by taking $I^{A}I_{A}$ to be a constant rather than simply zero. \endrk
\end{remark}

\subsubsection{The purpose of the equations}
What are the conformal Field equations for? The answer is that one seeks solutions to them in the same way that one seeks solutions
to Einstein's field equations. A global solution to the conformal
field equations gives a conformally compactified solution of Einstein's
field equations. (In fact solutions of the conformal field equations
may extend to regions ``beyond infinity'' where $\Omega$ becomes
negative.) We should note one can very easily allow for a non zero
cosmological constant and even for non zero matter fields (especially
ones with nice conformal behaviour) in the conformal field equations.
The conformal field equations are then a tool for obtaining and investigating
isolated systems in general relativity.

Like Einstein's field equations, the conformal field equations have
an initial value formulation where initial data is specified on a
Cauchy hypersurface. But when working with the conformal field equations is that it is also natural to prescribe data on the conformal infinity; when initial data is prescribed on (part of) $\scrI^{-}$ as well as on an ingoing null hypersurface which meets $\scrI^{-}$ transversally we have the
\emph{characteristic initial value problem}. When initial data is
specified on a spacelike hypersurface which meets conformal infinity
transversally we have the \emph{hyperboloidal initial value problem}.
(The name comes from the fact that spacelike hyperboloid in Minkowski
space are the prime examples of such initial data hypersurfaces.)
If the hyperboloidal initial data hypersurface meets $\scrI^{-}$
(rather than $\scrI^{+}$) then one should also prescribe data on
the part of $\scrI^{-}$ to the future of the hypersurface, giving
rise to an initial-boundary value problem.

What is being sought in the study of these various geometric PDE problems?
Firstly information about when spacetimes will admit a conformal infinity
and what kind of smoothness it might have. Secondly information about
gravitational radiation produced by various gravitational systems
as well as the way that such systems interact with gravitational radiation
(scattering properties). Thirdly, one is obviously interested in the
end in having a general understanding of the solutions of the conformal
field equations (though this is a very hard problem). These problems
have been studied a good deal, both analytically and
numerically, however there are many questions left to be answered.
For helpful overviews of this work consult \cite{Joerg-ConfInf,Friedrich-ConfEinsteinEv}.

\subsubsection{Regularity}
An important feature of the conformal field equations is that they
are \emph{regular} at infinity. From the tractor calculus point of
view this is obvious since the conformal structure extends to the
conformal infinity so the tractor system displayed above cannot break
down there, but one can also easily see this from the usual form of
the equations. The significance of this is highlighted when one considers
what field equations one might naively expect to use in this setting:
the most obvious guess would be to rewrite $\Rico=0$ in terms of the
Ricci tensor of the unphysical metric $g$ and it's Levi-Civita connection
yielding 
\[
\Ric_{ab}+\frac{d-2}{\Omega}\nabla_{a}\nabla_{b}\Omega-\boldsymbol{g}_{ab}\left(\frac{1}{\Omega}\nabla^{c}\nabla_{c}\Omega-\frac{d-1}{\Omega^{2}}(\nabla^{c}\Omega)\nabla_{c}\Omega\right)=0
\]
which degenerates as $\Omega\rightarrow0$.

It is worth noting the significance of the regularity of the conformal
field equations in the area of numerical relativity. The appeal of
``conformal compactification'' to numerical relativists should be
obvious: it means that one is dealing with finite domains. 

\subsubsection{Different reductions and different forms of the equations}

The conformal field equations are a system of geometric partial differential
equations. In order to study them analytically or numerically they
need to be reduced to a classical system of PDE; this involves introducing
coordinates and adding conditions on various fields to pin down the
natural gauge freedom in the equations (gauge fixing). There is a
significant amount of freedom in how one reduces the system. If this
process is done carefully with the conformal field equations one can
obtain (in $4$ dimensions) a symmetric hyperbolic system of evolution equations together with an elliptic system of constraint equations (reflecting the fact that the conformal field equations are overdetermined). The constraint equations can be taken as conditions on initial data for the Cauchy problem for the conformal field equations, whereas the evolution equations can be taken as prescribing how such data will evolve off the Cauchy hypersurface. For further discussion of the
reduction process and a demonstration of how the conformal field equations
can be reduced see \cite{Joerg-ConfInf}.

One may employ different reductions of the conformal field equations
in different settings and for different purposes. Indeed, the form
of Friedrich's conformal field equations which we presented above
is by no means the only form of the equations that is used, and the
significance of the other forms is that they allow for a still broader
range of different reductions. As an example of this, note that one
could instead have employed the splitting of the tractor bundle induced
by a Weyl connection \cite{CGTracBund} to obtain a system of
conformal field equations in a similar to what we did above; this
would result in a different form of the equations which have different
gauge freedoms and from which we can obtain different reduced systems;
the equations which one would obtain this way are what Friedrich calls
the \emph{general conformal field equations} whereas the equations
we presented above are referred to as the \emph{metric conformal field
equations} \cite{Friedrich-ConfEinsteinEv}. Friedrich also frequently
casts the equations in spinor form, and now we have seen that the
conformal field equations take a very simple tractor form 
\begin{eqnarray*}
\nabla_{a}I^{B} & = & 0\\
I^{A}I_{A} & = & 0\\
\kappal_{ab}{^{C}}_{D}I^{D} & = & 0\\
\nabla_{[a}\kappal_{bc]DE} & = & 0
\end{eqnarray*}
(where we also need to impose the appropriate conditions on $\nabla$
and $\kappal$ as variables). It may indeed prove profitable to see the various reductions of the conformal field equations as being reductions of this tractor system. The gauge freedom(s) in Friedrich's field equations can be seen as coming from the splitting of the tractor equations into tensor equations (along with freedom to choose coordinates). Friedrich's use of conformal geodesics \cite{Friedrich-ConfEinsteinEv} in constructing coordinates as part of the reduction process also fits quite nicely with the tractor picture (see, e.g., \cite{Luebbe, LuebbeTod}).

\begin{remark}
In \cite{FrauSpar} Fraundiener and Sparling observed that in the $4$-dimensional (spin) case Friedrich's conformal field equations could be recast in terms of local twistors involving the so called ``infinity twistor'' $I^{\alpha \beta}$. In this case the bundle of real bitwistors is canonically isomorphic to the standard tractor bundle, and the ``infinity twistor'' $I^{\alpha \beta}$ corresponds (up to a constant factor) to the scale tractor $I^A$, so that the local twistor system in \cite{FrauSpar} is closely related to the tractor system above. Indeed the connection is so close (the modern approach to conformal tractors having developed out of study of the local twistor calculus \cite{BEG}) that we may consider the local twistor formulation to be the origin of the above tractor system. The conformal field equations have also been presented in terms of tractors by Christian L\"ubbe \cite{Luebbe}. L\"ubbe and Tod have applied the tractor calculus to the study of conformal gauge singularities in general relativity (see, e.g., \cite{LuebbeTod}). \endrk
\end{remark}

\section{Lecture 6: Conformal hypersurfaces} \label{ConfHyp}

In order to progress in our study of conformal compactification we
first need to spend some time considering the geometry of embedded
hypersurfaces in conformal manifolds (or of boundaries to conformal
manifolds if you like). In particular we will need to examine how the
standard tractor bundle (and connection, etc.) of the hypersurface
with its induced conformal structure is related to the standard
tractor bundle (and connection, etc.) of the ambient space. Although
our main motivation is the study of conformal compactification for
this lecture we will consider conformal hypersurfaces more generally
since there are many other important kinds of hypersurface which turn
up in general relativity. We will however restrict ourselves to the
case of nondegenerate hypersurfaces, in particular we do not give a
treatment of null hypersurfaces here.

\subsection{Conformal hypersurfaces}
\newcommand{\sbg}{\mbox{\boldmath{\scriptsize$ g$}}}

By a {\em hypersurface} $\Sigma$ in a manifold $M$ we mean a smoothly
embedded codimension 1 submanifold of $M$.  We recall some facts
concerning hypersurfaces in a conformal manifold
$(M^d,c)$, $d\geq 3$. In fact we wish to include the case that $\Sigma$ might be a boundary component. In the case of a pseudo-Riemannian (or conformal pseudo-Riemannian) manifold with boundary then, without further
comment, we will assume that the conformal structure extends smoothly
to the boundary.

Here we shall restrict to hypersurfaces $\Sigma$ with the property
that the any conormal field along $\Sigma$ is nowhere null (i.e. to nondegenerate hypersurfaces).  In this case the restriction of any metric $g\in \cc$ gives a metric $\bar{g}$ on $\Sigma$. Different metrics, among the metrics so obtained, are related conformally and so the conformal class $\cc$ determines a conformal structure $\overline{\cc}$ on $\Sigma$. To distinguish from the ambient objects, we shall overline the corresponding objects intrinsic to this conformal structure. For example $\overline{\bg}$ denotes the conformal metric on $\Sigma$.

Then by working locally we may assume that there is a section
$n_a\in \Gamma(\ce_a[1])$ on $M$ such that, along $\Sigma$, $n_a$ is a
conormal satisfying $|n|^2_{\sbg}:=\bg^{ab}n_a n_b=\pm 1$. This means that if $g=\sigma ^{-2}\bg$ is any metric in $\cc$ then $n^g_a=\sigma^{-1}n_a$ is an extension to $M$ of a unit conormal to $\Sigma$ in $(M,g)$. So $n_a|_\Sigma$ is the conformally invariant version of a unit conormal in pseudo-Riemannian geometry; $n_a$ must have conformal weight 1 since $\bg^{-1}$ has conformal weight $-2$. 

We choose to work with the weighted (extended) conormal field $n_a$ even in the presence of a metric $g$ from the conformal class. Thus we end up with a weight 1 {\em second fundamental form} $L_{ab}$ by restricting, along $\Sigma$, $\nabla_a n_b$ to
$T\Sigma\times T\Sigma\subset (TM\times TM)|_\Sigma$, where $\nabla=\nabla^g$. (Here we are viewing $T^*\Sigma$ as the subbundle of $T^*M|_\Sigma$ orthogonal to $n^a$.) Explicitly $L_{ab}$ is given by
$$
L_{ab}:=\nabla_a n_b\mp n_an^c\nabla_c n_b   \quad \mbox{along }~\Sigma ,
$$ 
since $|n|^2_{\sbg}$ is constant along $\Sigma$.   From this formula, it is easily verified that $L_{ab}$ is independent of how $n_a$ is extended off $\Sigma$. It is timely to note that $L_{ab}$ harbours a hypersurface conformal invariant: Using the formulae \nn{eq:two} and \nn{lct} we compute that under a conformal rescaling, $g\mapsto \widehat{g}=e^{2\om} g$, $L_{ab}$ transforms according to 
$$
L^{\widehat{g}}_{ab}=L^g_{ab}+\overline{\bg}_{ab}\Upsilon_cn^c,
$$
where as usual $\Upsilon$ is the exterior derivative of $\omega$ (which is equal to the log exterior derivative $\Omega^{-1}\mathrm{d}\Omega$ of $\Omega=e^\omega$) and we use this notation below without further mention. 
Thus we see easily the following well-known result:
\begin{proposition}\label{tf2}
The trace-free part of the second fundamental form 
$$
\mathring{L}_{ab}= L_{ab}-H\overline{\bg}_{ab}, \quad \mbox{where}, \quad H:= \frac{1}{d-1}\overline{\bg}^{cd}L_{cd} 
$$ 
is conformally invariant.
\end{proposition}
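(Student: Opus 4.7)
The plan is to deduce the result directly from the transformation formula for $L_{ab}$ stated just above the proposition, together with the observation that the conformal metric $\overline{\bg}_{ab}$ is itself a conformally invariant object (it is defined intrinsically on $\Sigma$ from $\overline{\cc}$, with no reference to a choice of scale). So the only thing that changes under $g\mapsto\widehat{g}=e^{2\omega}g$ is $L_{ab}$ and, consequently, its trace $H$.

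First I would compute how $H$ transforms. Tracing both sides of
\[
L^{\widehat{g}}_{ab}=L^{g}_{ab}+\overline{\bg}_{ab}\,\Upsilon_{c}n^{c}
\]
with $\overline{\bg}^{ab}$ and using the dimension count $\overline{\bg}^{ab}\overline{\bg}_{ab}=d-1$ (since $\Sigma$ has dimension $d-1$) gives
\[
\overline{\bg}^{ab}L^{\widehat{g}}_{ab}=\overline{\bg}^{ab}L^{g}_{ab}+(d-1)\,\Upsilon_{c}n^{c},
\]
hence, after dividing by $d-1$,
\[
H^{\widehat{g}}=H^{g}+\Upsilon_{c}n^{c}.
\]

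Then I would substitute back into the definition of $\mathring{L}_{ab}$. Since $\overline{\bg}_{ab}$ is invariant,
\[
\mathring{L}^{\widehat{g}}_{ab}
= L^{\widehat{g}}_{ab}-H^{\widehat{g}}\overline{\bg}_{ab}
= L^{g}_{ab}+\overline{\bg}_{ab}\,\Upsilon_{c}n^{c}-\bigl(H^{g}+\Upsilon_{c}n^{c}\bigr)\overline{\bg}_{ab}
= L^{g}_{ab}-H^{g}\overline{\bg}_{ab}=\mathring{L}^{g}_{ab},
\]
so the $\Upsilon$-term introduced by the rescaling is precisely the pure-trace piece removed by passing to the trace-free part.

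There is really no obstacle here once the transformation law for $L_{ab}$ is in hand: the whole content of the proposition is the cancellation in the last display. The only point that deserves care is the dimension count in the trace (writing $d-1$ for the hypersurface dimension) and keeping in mind that $\overline{\bg}_{ab}$, being the conformal metric on $\Sigma$, does not depend on the choice of $g\in\cc$, so it carries no conformal correction of its own.
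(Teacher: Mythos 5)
Your proof is correct and is essentially the paper's own argument: the paper derives the same transformation law $H^{\widehat g}=H^g+n^a\Upsilon_a$ by tracing $L^{\widehat g}_{ab}=L^g_{ab}+\overline{\bg}_{ab}\Upsilon_c n^c$ and then observes the cancellation of the pure-trace term, exactly as you do. Your explicit attention to the invariance of $\overline{\bg}_{ab}$ and the dimension count $\overline{\bg}^{ab}\overline{\bg}_{ab}=d-1$ just spells out what the paper leaves implicit in ``Thus we see easily.''
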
 
The averaged trace of $L$ ($=L^g$), denoted $H$ above, is the {\em mean curvature} of $\Sigma$. Evidently this a conformal $-1$-density and under a conformal rescaling, $g\mapsto \widehat{g}=e^{2\om} g$, $H^g$ transforms to $H^{\widehat{g}}=H^g+n^a\Upsilon_a$.

Thus we obtain a conformally invariant section $N$ of $\cT|_\Sigma$
$$
N_A \stackrel{g}{=}\left(\begin{array}{c}0\\
n_a\\
-H^g\end{array}\right),
$$ 
and from \nn{mform} $h(N,N)=\pm 1$ along $\Sigma$; the conformal invariance follows because the right-hand-side of the display transforms according to the ``tractor characterising'' transformation \nn{ttrans}.  Obviously $N$ is independent of any choices in the extension of $n_a$ off
$\Sigma$. This is the {\em normal tractor} of \cite{BEG} and may be
viewed as a tractor bundle analogue of the unit conormal field from
the theory of pseudo-Riemannian hypersurfaces.

\subsubsection{Umbilicity} 
A point $p$ in a hypersurface is said to be an {\em umbilic point} if,
at that point, the trace-free part $\mathring{L}$ of the second
fundamental form is zero. Evidently this is a conformally invariant
condition.  A hypersurface is {\em totally umbilic} if this holds at
all points.  As an easy first application of the normal tractor we
recall that it leads to a nice characterisation of the the umbilicity
condition.

Differentiating $N$ tangentially along $\Sigma$ using
$\nd^\cT$, we obtain the following result. 
\begin{lemma}
\begin{equation}\label{nN}
\mathbb{L}_{aB}:=\underline{\nabla}_a N_B \stackrel{g_{cb}}{=} \left(\begin{array}{c}0\\
\mathring{L}_{ab}\\
-\frac{1}{d-2}\nabla^b\mathring{L}_{ab} \end{array}\right)
\end{equation}
where $\underline{\nabla}$ is the pullback to
$\Sigma$ of the ambient tractor connection.
\end{lemma}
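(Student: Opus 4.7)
The strategy is to choose a metric $g\in\cc$, identify $N_B$ with its slot decomposition $(0,n_b,-H)^T$ in that scale, and read off $\underline{\nabla}_a N_B$ directly from the tractor connection formula \nn{cform}. Because the tractor metric is parallel and, in the paper's convention, the column-vector slot ordering for $V^A$ and $V_A$ coincides (compare $X_A\stackrel{g}{=}(0,0,\rho)^T$ from Proposition \ref{Xnull}), the formula \nn{cform} applies to $N_B$ verbatim and yields, in the three slots,
\[
-n_a,\qquad \nabla_a n_b-H\bg_{ab},\qquad -\nabla_a H-\V_{ab}n^b.
\]
The task is then to restrict $a$ to $T\Sigma$, pull back along $\Sigma$, and recognise each expression.

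The first two slots are immediate. The top slot vanishes because $n_a$ is the weighted conormal and so annihilates every tangent vector to $\Sigma$. For the middle slot, the tangent-tangent component is $L_{ab}-H\overline{\bg}_{ab}=\mathring{L}_{ab}$ straight from the definitions of $L$ and $\mathring{L}$, and the normal component in the $b$-slot vanishes because $n^b\nabla_a n_b=\frac{1}{2}\nabla_a(n^bn_b)=0$ (as $n^bn_b=\pm 1$ is constant along $\Sigma$) and because the $H\bg_{ab}$ term contributes $Hn_a=0$ in the normal direction for tangential $a$.

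The only substantive step is matching the bottom slot with $-\frac{1}{d-2}\nabla^b\mathring{L}_{ab}$, which will come from a contracted Codazzi--Mainardi identity. Starting from the standard Codazzi equation $R_{abcd}n^d=\overline{\nabla}_a L_{bc}-\overline{\nabla}_b L_{ac}$ for $a,b,c$ tangent, contract with $\overline{\bg}^{ac}$. The right-hand side becomes $\overline{\nabla}^c L_{bc}-(d-1)\overline{\nabla}_b H$. For the left-hand side, decomposing $\overline{\bg}^{ac}=\bg^{ac}-\epsilon\,n^an^c$ with $\epsilon=n^cn_c=\pm 1$ splits it into $\Ric_{bd}n^d-\epsilon\,n^an^c R_{abcd}n^d$; the first piece equals $(d-2)\V_{bd}n^d$ by $\Ric_{ab}=(d-2)\V_{ab}+\J\bg_{ab}$ and $n_b=0$ (since $b$ is tangential). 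The double-normal term $n^an^c R_{abcd}n^d$ can be shown to vanish: its Weyl contribution is already zero from $n^cn^d W_{abcd}=0$ (antisymmetry of $W$ in $(cd)$), while a short term-by-term expansion of the Schouten part $R-W=2\bg_{c[a}\V_{b]d}+2\bg_{d[b}\V_{a]c}$ shows the four resulting summands cancel in pairs after using $n^an_a=\epsilon$ and the symmetry of $\V$. Combining everything produces $(d-2)\V_{bd}n^d=\overline{\nabla}^c L_{bc}-(d-1)\overline{\nabla}_b H$, and on substituting $L_{bc}=\mathring{L}_{bc}+H\overline{\bg}_{bc}$ this rearranges to
\[
\overline{\nabla}^c\mathring{L}_{bc}=(d-2)\bigl(\V_{bd}n^d+\overline{\nabla}_b H\bigr),
\]
which exhibits $-\nabla_a H-\V_{ab}n^b$ as $-\frac{1}{d-2}\nabla^b\mathring{L}_{ab}$. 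The main obstacle is this contracted Codazzi computation, and specifically the verification that the double-normal curvature contraction $n^an^c R_{abcd}n^d$ vanishes; once that is in hand the lemma follows by direct substitution into \nn{cform}.
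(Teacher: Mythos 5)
Your proof is correct and follows essentially the same route as the paper's: read off $\nabla_aN_B$ from \nn{cform}, project tangentially to get the top two slots, and identify the bottom slot via the contracted Codazzi equation (a step the paper delegates to \cite{YuriTh,GoY} but which you carry out explicitly, and correctly). The only inefficiency is in your treatment of the double-normal term $n^an^cR_{abcd}n^d$: it vanishes at once because $n^cn^dR_{abcd}=0$ by the antisymmetry of the full Riemann tensor in its last index pair, so the separate Weyl/Schouten term-by-term expansion is unnecessary.
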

\begin{proof}
Using the formula \nn{cform} for the
tractor connection, we have
$$
\nabla_c N_B \stackrel{g}{=} \left(\begin{array}{c} -n_c \\
\nabla_c n_b-\bg H\\
-\nabla_c H -P_{cb}n^b  \end{array}\right).
$$ 
Thus applying the orthogonal $TM|_\Sigma \to T\Sigma$
 projector $\Pi^c_a:=(\delta^c_a\mp n^cn_a)$ we
obtain immediately the top two terms on the right-hand-side of
\nn{nN}. The remaining term follows after using the hypersurface 
Codazzi equation for pseudo-Riemannian geometry
$$
\overline{\nabla}_a L_{bc} - \overline{\nabla}_b L_{ac}
= \Pi^{a'}_a \Pi^{b'}_b R_{a'b'cd}n^d,
$$ 
where $\overline{\nabla}$ denotes the Levi-Civita connection for the metric $\overline{g}$ induced by $g$ (for full details see \cite{YuriTh,GoY}).  
\end{proof}
Thus we recover the following result.
\begin{proposition}\label{umbilic}\cite{BEG}
Along a conformal hypersurface $\Sigma$, the normal tractor $N$ is parallel, with respect to $\nabla^{\cT}$,
if and only if  the hypersurface $\Sigma$ is
totally umbilic.
\end{proposition}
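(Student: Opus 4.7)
The plan is to read off the statement directly from the preceding Lemma, which computes the tangential tractor derivative $\mathbb{L}_{aB} = \underline{\nabla}_a N_B$ in a metric splitting as
$$
\mathbb{L}_{aB} \stackrel{g}{=} \left(\begin{array}{c}0\\ \mathring{L}_{ab}\\ -\frac{1}{d-2}\nabla^b\mathring{L}_{ab}\end{array}\right).
$$
Since both directions of the biconditional concern the vanishing of $\mathbb{L}_{aB}$ on $\Sigma$, everything reduces to inspecting this formula.

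For the forward implication, I would suppose $\Sigma$ is totally umbilic, so that $\mathring{L}_{ab}=0$ at every point of $\Sigma$. Then both the middle slot and the bottom slot of $\mathbb{L}_{aB}$ vanish (the top slot is already zero), so $\underline{\nabla}_a N_B = 0$ along $\Sigma$, i.e.\ $N$ is parallel with respect to the pulled-back tractor connection. For the reverse implication, I would suppose $N$ is parallel along $\Sigma$, so that the right-hand side of the Lemma vanishes. Projecting out the middle slot gives $\mathring{L}_{ab}=0$, which is exactly the condition that $\Sigma$ be totally umbilic.

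The only thing that needs to be checked is that the splitting \eqref{nN} is well-defined and behaves well under conformal rescaling, so that the vanishing of the middle slot in one scale is equivalent to its vanishing in any other; but this is guaranteed by the conformal invariance of $\mathring{L}_{ab}$ established in Proposition \ref{tf2} (together with the fact that the top slot is zero, so the splitting formula \eqref{ttrans} does not mix other components into the middle slot). There is no genuine obstacle here: the entire content of the proposition is packaged in the Lemma, and the proof is a one-line reading of the middle component.
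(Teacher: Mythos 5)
Your proof is correct and is essentially identical to the paper's: the paper derives Proposition \ref{umbilic} as an immediate consequence of the Lemma giving \nn{nN} (``Thus we recover the following result''), exactly as you do, with the middle slot forcing $\mathring{L}_{ab}=0$ in one direction and the vanishing of $\mathring{L}_{ab}$ (hence also of its tangential divergence in the bottom slot) giving the converse.
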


\subsubsection{Conformal calculus for hypersurfaces}\label{hcalc}
 
The local calculus for hypersurfaces in Riemannian geometry is to a
large extent straightforward because there is a particularly simple
formula, known as the {\em Gauss formula}, which relates the ambient
Levi-Civita connection to the Levi-Civita connection of the induced
metric.  It is natural to ask if we have the same with our conformal
tractor calculus. 

Given an ambient metric $g$ we write $\underline{\nabla}$ to denote the pullback of the ambient Levi-Civita connection along the embedding of the hypersurface $\Sigma$, i.e. the ambient connection differentiating sections of $TM|_\Sigma$ in directions tangent to the hypersurface. Then the Gauss formula may be expressed as 
$$
\underline{\nabla}_a v^b = \overline{\nabla}_a v^b \mp   n^b L_{ac}v^c
$$
for any tangent vector field $v$ to $\Sigma$ (thought of as a section $v^a$ of $\ce^a|_\Sigma$ satisfying $v^a n_a=0$). We now turn towards deriving a tractor analogue of this.

\newcommand{\Proj}{\operatorname{Proj}}
\renewcommand{\S}{\Sigma}
\newcommand{\bcc}{\overline{\cc}} \newcommand{\bcT}{\overline{\cT}}
Note that by dint of its conformal structure the hypersurface
$(\Sigma,\bcc)$ has its own {\em intrinsic} tractor calculus, and in
particular a rank $d+1$ standard tractor bundle $\bcT$.  Before we
could hope to address the question above we need to relate $\bcT$ to
the ambient tractor bundle $\cT$.

First observe that, along $\Sigma$, $\cT$ has a natural rank
$(d+1)$-subbundle, namely $N^\perp$ the orthogonal complement to
$N_B$. 
As noted in \cite{BrGonon,Grant}, there is a canonical (conformally
invariant) isomorphism
\begin{equation}\label{trisom}
N^\perp \stackrel{\simeq}{\longrightarrow} \ct_\S ~.
\end{equation} 
Calculating in a scale $g$ on $M$ the tractor bundle $\cT$, and hence also $N^\perp$, decomposes into a triple. Then the mapping of the isomorphism 
is 
\begin{equation}\label{xmap}
[N^\perp]_g\ni \left( \begin{array}{c}
\si\\
\mu_b\\ \rho
\end{array} \right) \mapsto 
\left( \begin{array}{c}
\si\\
\mu_b  \mp H n_b \si
\\ \rho \pm \frac{1}{2}H^2 \si
\end{array} \right) \in [\bcT]_{\overline{g}}
\end{equation}
 where, as usual, $H$ denotes the mean curvature of $\S$ in the scale
 $g$ and $\overline{g}$ is the pullback of $g$ to $\S$.  Since
 $(\si,\mu_b,\rho )$ is a section of $[N^\perp]_g$ we have $n^a \mu_a=
 H\si$. Using this one easily verifies that the mapping is conformally
 invariant: If we transform to $\widehat{g}=e^{2\om}g$, $\om\in \ce$,
 then $(\si,\mu_b,\rho )$ transforms according to \nn{ttrans}. Using
 that $\widehat{H}=H+n^a\Upsilon_a$ one calculates that the image of
 $(\si,\mu_b,\rho )$ (under the map displayed) transforms by the
 intrinsic version of \nn{ttrans}, that is by \nn{ttrans} except where
 $\Upsilon_a$ is replaced by $\overline{\Upsilon}_a=\Upsilon_a \mp
 n_an^b\Upsilon_b$ (which on $\S$ agrees with $\overline{\mathrm{d}}
 \omega$, the tangential derivative of $\omega$). This signals that the explicit map displayed in \nn{xmap}
 descends to a conformally invariant map \nn{trisom}. We henceforth
 use this to identify $N^\perp$ with $\cT_\Sigma$, and write
 $\Proj_{\Sigma}:\ct|_\Sigma\to \bcT$ for the orthogonal projection
 afforded by $N$ (or using abstract indices $\Pi^A_B=\delta^A_B \mp N^A N_B$).

It follows easily from \nn{xmap} that the tractor metric $\overline{h}$
on $\bcT$ agrees with the restriction of the ambient tractor metric
$h$ to $N^\perp$. In summary we have: 
\begin{theorem}\label{idts}
Let $(M^d,\cc)$ conformal manifold of dimension $d\geq 4$ and $\Sigma$
a regular hypersurface in $M$. Then, with $\ocT$ deonting the
intrinsic tractor bundle of the induced conformal structure
$\cc_\Sigma$, there is a canonical isomorphism
$$
\ocT\to N^\perp .
$$ Furthermore the tractor metric of $\cc_\Sigma$ coincides with the
pullback of the ambient tractor metric, under this map.
\end{theorem}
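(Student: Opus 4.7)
The plan is to verify that the explicit formula \nn{xmap} defines a conformally invariant bundle isomorphism $N^\perp \to \ocT$ which pulls back the intrinsic tractor metric $\overline{h}$ to the restriction of the ambient tractor metric $h$. First, working in a chosen scale $g \in \cc$, I would observe that if $(\si, \mu_b, \rho) \in [N^\perp]_g$ then by definition of $N^\perp$ one has $n^b \mu_b = H\si$, which guarantees that the image $\mu'_b := \mu_b \mp H n_b \si$ is orthogonal to $n^b$ and so descends, along $\Sigma$, to a section of $\ce_b[1]|_\Sigma$ intrinsic to $(\Sigma, \overline{\cc})$. Since \nn{xmap} is upper triangular with respect to the filtration \nn{Tcomp} and acts as the identity on the associated graded, it is a linear bundle isomorphism between bundles of the same rank $d+1$.

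Next I would establish conformal invariance. Under $g \mapsto \widehat{g} = e^{2\om} g$ with $\Upsilon = \mathrm{d}\om$, the triple $(\si, \mu_b, \rho)$ transforms by the ambient rule \nn{ttrans}, while its image must transform by the intrinsic rule on $\Sigma$ with $\Upsilon$ replaced by $\overline{\Upsilon}_b := \Upsilon_b \mp n_b n^c \Upsilon_c$ (the pullback of $\mathrm{d}\om$ to $\Sigma$). The essential input is the mean-curvature transformation $\widehat{H} = H + n^c \Upsilon_c$ recorded just after Proposition \ref{tf2}. A direct substitution in the middle slot gives
\[
\widehat{\mu}_b \mp \widehat{H} n_b \widehat{\si} = (\mu_b + \Upsilon_b \si) \mp (H + n^c \Upsilon_c) n_b \si = (\mu_b \mp H n_b \si) + \overline{\Upsilon}_b \si,
\]
which is exactly the intrinsic transformation of the middle slot. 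A parallel expansion of the bottom slot, using the constraint $n^b \mu_b = H\si$ together with the identity $\overline{\Upsilon}^c \overline{\Upsilon}_c = \Upsilon^c \Upsilon_c \mp (n^c \Upsilon_c)^2$ (which follows from $n^a n_a = \pm 1$), reproduces the intrinsic transformation law for $\rho' := \rho \pm \tfrac{1}{2} H^2 \si$.

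Finally, for the tractor metric, applying \nn{mform} to the image triple and using $n^b \mu_b = H\si$ gives the cancellation
\[
2\si \rho' + \overline{\bg}^{ab} \mu'_a \mu'_b = (2\si \rho \pm H^2 \si^2) + (\bg^{ab} \mu_a \mu_b \mp H^2 \si^2) = 2\si \rho + \bg^{ab} \mu_a \mu_b,
\]
which is precisely $h(V,V)$ for $V = (\si, \mu_b, \rho) \in [N^\perp]_g$, as required. The main obstacle is really bookkeeping: keeping the signature signs $\pm$ (coming from $|n|^2_{\sbg} = \pm 1$) consistent throughout all three computations and verifying that the various $H\si$ and $H^2 \si^2$ cross-terms cancel correctly. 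Each individual step is routine, but the interplay of the various signs needs care.
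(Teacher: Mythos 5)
Your proposal is correct and follows essentially the same route as the paper: the text preceding Theorem \ref{idts} verifies exactly that the explicit map \nn{xmap} intertwines the ambient transformation \nn{ttrans} with its intrinsic version (with $\Upsilon_a$ replaced by $\overline{\Upsilon}_a=\Upsilon_a\mp n_an^b\Upsilon_b$), using the constraint $n^a\mu_a=H\si$ and $\widehat{H}=H+n^a\Upsilon_a$, and then notes that the metric compatibility follows from \nn{xmap} and \nn{mform}. You have in fact supplied more of the sign bookkeeping (the cancellation of the $H\si$ and $H^2\si^2$ cross-terms) than the paper records, and your computations check out.
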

\noindent Henceforth we shall simply identify $\ocT$ and $N^\perp$. 

\begin{remark}\label{Hzero}
Note that if $\Sigma$ is minimal in the scale $g$, that is if $H^g=0$, then the isomorphism \nn{xmap} is simply
\begin{equation}\label{xmapHzero}
[N^\perp]_g\ni \left( \begin{array}{c}
\si\\
\mu_b\\ \rho
\end{array} \right) \mapsto 
\left( \begin{array}{c}
\si\\
\mu_b 
\\ \rho
\end{array} \right) \in [\bcT]_{\overline{g}}. 
\end{equation}
Moreover, it is easy to see that one can always find such a minimal scale $g$ for $\Sigma$. Let $g=\sigma^{-2}\bg$ be any metric in $\cc$ and let $\omega:=\mp s \sigma H^g$, where $s$ is a normalised defining function for $\Sigma$ (at least in a neighbourhood of $\Sigma$) and $H^g$ has been extended off $\Sigma$ arbitrarily. Then if $\hat{g}=e^{2\omega}g$ we have that, along $\Sigma$,
$$
H^{\hat{g}}=H^g+n^a\Upsilon_a = H^g+n^a\nabla_a \omega = 0 
$$
since by assumption $\nabla_a s= \sigma^{-1} n_a$ along $\Sigma$ and $s|_\Sigma =0$. \endrk
\end{remark}

\newcommand{\Rho}{P}
\newcommand{\chRho}{\pramb{\Rho}}
\newcommand{\bRho}{\overline{\Rho}}
\newcommand{\bn}{\overline{n}}
Now we have two connections on $\bcT$ that we may compare, namely the
intrinsic tractor connection $\overline{\nabla}^{\bcT}$, meaning the
normal tractor connection determined by conformal structure
$(\Sigma,\cc)$, and the {\em projected ambient tractor connection} 
$\chnabla$. 
On $U\in \Gamma(\bcT)$ the latter is defined by 
$$
\chnabla_a U^B:= \Pi^B_C (\Pi^c_a\nabla_c U^C)  \quad \mbox{along }~\Sigma ,
$$ 
where we view $U\in \Gamma(\bcT)$ as a section of $N^\perp$, and make an arbitrary smooth extension of this to a section of $\cT$ in a neighbourhood (in $M$) of $\Sigma$.  It is then easily verified that $\chnabla$ is a connection on $\bcT$. By construction it is conformally invariant. Thus the difference between this and the intrinsic tractor connection is some canonical conformally invariant section of
$T^*\Sigma\otimes \End (\bcT)$. 

The difference between the projected ambient and the intrinsic tractor
connections can be expressed using the tractor contorsion
$\TS_{a}{}^{B}{}_{C}$ defined by the equation 
\begin{equation} \label{eq:IntrTrCont}
\chnabla_{a}V^{B}=\bnabla_{a}V^{B}\mp\TS_{a}{}^{B}{}_{C}V^{C}
\end{equation}
where $V^{B}\in \Gamma(\overline{\ce}^{B})$ is an intrinsic tractor.
The intrinsic tractor contorsion can be computed explicitly in any scale $g$ to take the form 
\begin{equation} \label{eq:IntrTrContExp}
\TS_{a}{}^{B}{}_{C}\stackrel{g}{=} \bX^{B}\bZ_{C}{}^{c}\cF_{ac}-\bZ^{B}{}_{b}\bX_{C}\cF_{a}{}^{b} 
\end{equation}
or in other words 
\begin{equation} \label{eq:IntrTrContAdj}
\TS_{a B C} = \overline{\bbX}_{B C}{}^{c} \Fialkow_{a c} ,
\end{equation}
where evidently $\Fialkow_{a c}$ must be some conformal invariant of
hypersurfaces (here $\bX^B$ and $\bZ^B{_b}$ are standard tractor projectors for $\Sigma$ and $\overline{\bbX}_{B C}{}^{c}=2\bX_{[B}\bZ_{C]}{}^{c}$, which is conformally invariant). In fact the details of the computation (see \cite{Grant,YuriTh}) reveal this is the {\em Fialkow tensor} (cf.\ \cite{Fialkow})
\begin{equation}
\cF_{ab}=\tfrac{1}{n-2}\Big(W_{acbd}n^cn^d+\mathring{L}_{ab}^{2}-\tfrac{|{\mathring{L}|^2}}{2(n-1)}\overline\bg_{ab}\Big) ,\label{eq:FialkowExpl}
\end{equation}
where $\mathring{L}_{ab}^{2}:= \mathring{L}_{a}{}^c\mathring{L}_{cb}$ and $n=d-1$.
 Altogether we have,
\begin{equation}
\begin{split}
\tnabla_{a}V^{B} {=}& \Pi^{B}{}_{C}\tnabla_{a}V^{C} \pm \TN^{B}\TN_{C}\tnabla_{a}V^{C} \\ 
=& \chnabla_{a}V^{B} \mp \TN^{B}\mathbb{L}_{aC}V^{C} \\
=& \overline{\nabla}_a V^B - {{S_a}^B}_C V^C \mp \TN^B\mathbb{L}_{aC}V^C
\end{split}
\end{equation}
for an intrinsic tractor $V^{B}\in\bcT^{B}$. The tractor Gauss formula is therefore 
\begin{equation} \label{eq:TrGaussFla}
\tnabla_{a}V^{B} = \overline{\nabla}_a V^B \mp {{S_a}^B}_C V^C \mp \TN^B\mathbb{L}_{aC}V^C
\end{equation}
for any $V^{B}\in\bcT^{B}$. Here we see that the object 
$$
\bbL_{aB}:=\tnabla_{a}\TN_{B}
$$ is a tractor analogue of the second fundamental
form, we shall therefore call it the \emph{tractor shape form}.
 
These results provide the first steps in a calculus for conformal
hypersurfaces that is somewhat analogous to the local invariant
calculus for Riemannian hypersurfaces. In particular it can be used to
proliferate hypersurface conformal invariants and conformally
invariant operators \cite{YuriTh,GoY}. We will apply this calculus to the study of conformal infinities in the final two lectures.

\section{Lecture 7:  Geometry of conformal infinity}

Here we apply the results of the previous lecture on conformal hypersurfaces to the geometry of conformal infinity.

\subsection{Geometry of conformal infinity and its embedding}

We return now to the study of conformally compact geometries. We will consider in particular those which near the conformal infinity are asymptotically of constant nonzero scalar curvature. By imposing a constant dilation we may assume that $I^2$ approaches $\pm 1$.

We begin by observing that the normal tractor is linked, in an essential way, to the ambient geometry off the hypersurface $\Sigma:= \cZ(\si)$.
\begin{proposition}\label{ascN}
Let $(M^d,\cc,I)$ be an almost pseudo-Riemannian 
structure with scale singularity set $\Sigma\neq \emptyset$ and
$I^2=\pm 1 + \si^2 f$ for some smooth (weight $-2$) density $f$. Then by Theorem \ref{APRthm} $\Sigma$ is a smoothly embedded hypersurface and, with $N$ denoting the normal tractor for $\Sigma$, we
have $N=I|_\Sigma$.
\end{proposition}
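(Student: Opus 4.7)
The plan is to fix a metric $g \in \cc$, write both $I|_\Sigma$ and $N$ in the corresponding splitting of $\cT$, and compare slot by slot. Since $\si$ vanishes on $\Sigma$, the definition $I_A = \tfrac{1}{d}D_A\si$ gives
$$
I|_\Sigma \stackrel{g}{=} \left(\begin{array}{c} 0 \\ \nabla_a\si \\ -\tfrac{1}{d}\Delta\si \end{array}\right),
$$
while $N \stackrel{g}{=} (0, n_a, -H^g)$ for any choice of weighted unit conormal $n_a$ (weight $1$) along $\Sigma$. Since $N$ is independent of the extension, matching amounts to showing (i) that $\nabla_a\si|_\Sigma$ is such a unit conormal, so that we may take $n_a := \nabla_a\si$, and (ii) that with this choice $H^g = \tfrac{1}{d}\Delta\si$ on $\Sigma$.

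For (i), I would use the formula \nn{I22} for $I^2$, which at points of $\Sigma$ reduces to $I^2|_\Sigma = \bg^{ab}(\nabla_a\si)(\nabla_b\si)$ because $\si = 0$ there. The hypothesis $I^2 = \pm 1 + \si^2 f$ then gives $|\nabla\si|^2_{\bg}|_\Sigma = \pm 1$, so $\nabla_a\si$ is indeed a permissible unit (weight 1) conormal along $\Sigma$, and Theorem \ref{APRthm} already told us $\Sigma$ is smoothly embedded. This also fixes a sign convention $\epsilon := |n|^2 = \pm 1$.

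For (ii), with $n_a := \nabla_a\si$ extended off $\Sigma$, I would compute the trace $(d-1)H = \overline{\bg}^{ab}L_{ab}$. Using $\overline{\bg}^{ab} = \bg^{ab} - \epsilon\, n^a n^b$ and the defining formula $L_{ab} = \nabla_a n_b - \epsilon\, n_a n^c\nabla_c n_b$, the mixed terms drop because $\overline{\bg}^{ab}n_a = 0$, leaving
$$
(d-1)H = \nabla^a n_a - \epsilon\, n^a n^b \nabla_a n_b = \Delta\si - \tfrac{\epsilon}{2}\, n^a \nabla_a|\nabla\si|^2.
$$
Now rearranging \nn{I22} gives $|\nabla\si|^2 = I^2 + \tfrac{2}{d}\si(\J+\Delta)\si$, and substituting $I^2 = \pm 1 + \si^2 f$ shows that on $\Sigma$
$$
\nabla_a|\nabla\si|^2\bigl|_\Sigma = \tfrac{2}{d}\,n_a \Delta\si,
$$
all other terms carrying a factor of $\si$. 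Contracting with $n^a$ yields $n^a\nabla_a|\nabla\si|^2|_\Sigma = \tfrac{2\epsilon}{d}\Delta\si$, so $(d-1)H = \Delta\si - \tfrac{1}{d}\Delta\si = \tfrac{d-1}{d}\Delta\si$, as required.

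The main obstacle is step (ii): it is essential that $I^2$ is constant to leading order, i.e.\ the hypothesis $I^2 = \pm 1 + \si^2 f$ (rather than merely $I^2 = \pm 1$ on $\Sigma$), because the computation needs control on the \emph{normal derivative} of $|\nabla\si|^2$. Once this is in hand the remaining steps are routine slot-matching, and the conclusion $N = I|_\Sigma$ follows.
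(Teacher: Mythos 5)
Your proposal is correct and follows essentially the same route as the paper's proof: identify $I|_\Sigma$ slot by slot with $(0,\nabla_a\si,-\tfrac{1}{d}\Delta\si)$, use the formula for $I^2$ to see that $\nabla_a\si$ is a weight-$1$ unit conormal, and then compute $(d-1)H=\nabla^an_a\mp n^an^b\nabla_bn_a$ by differentiating the identity $|\nabla\si|^2=I^2+\tfrac{2}{d}\si(\J+\Delta)\si$ once, exactly as the paper does (the paper treats $f=0$ first and then remarks that the $\si^2f$ term is harmless because the relation is differentiated only once, which is the same observation you make directly).
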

\begin{proof}
For simplicity let us first assume $I^2=\pm 1$ (so $f=0$  and the structure is ASC). 
As usual let us write $\si:=h(X,I)$.  By definition 
$$
I_A=\frac{1}{d}D_A\si\stackrel{g}{=} 
\left(\begin{array}{c} 
\si \cr  \nabla_a \si\cr -\frac{1}{d}(\Delta \si +J \si) 
\end{array}\right) ~,
$$ 
where $g\in \cc$ and $\nabla$ denotes its Levi-Civita connection. 
 Let us write $n_a := \nabla_a \si$.  Along
$\Sigma$ we have $\si=0$, therefore 
 $$
I|_\Sigma  \stackrel{g}{=} \left(\begin{array}{c}
 0 \cr  n_a \cr
 - \frac{1}{d}\Delta \si \end{array}\right)  ~,
$$ along $\Sigma$. Clearly then $|n|^2_{\sbg}=\pm 1$, along $\Sigma$, since $I^2=\pm 1$. So $n_a|_\Sigma$ is a conformal weight 1 conormal field for $\Sigma$.

Next we calculate the mean curvature $H=H^g$ in terms of $\si$.  Recall $
(d-1)H=\nd^an_a \mp n^an^b\nd_b n_a , $ on $\Sigma$. 
 We calculate the right hand side in
a neighbourhood of $\Sigma$. Since $n_a=\nabla_a\si$, we have
$\nd^a n_a=\Delta \si$. On the other hand
$$
n^an^b\nd_b n_a=\frac{1}{2}n^b\nd_b
(n^an_a)=\frac{1}{2} n^b\nd_b(\pm 1+\frac{2}{d}\si\Delta \si +\frac{2}{d}J\si^2),
$$
where we used that $|\frac{1}{d}D \si|^2=\pm 1$ and so $n^an_a=\pm 1+\frac{2}{d}\si\Delta \si +\frac{2}{d}J\si^2 $.
Now along $\Sigma $ we have $\pm 1=n^an_a=n^a\nd_a \si$, and so there this simplifies to 
$$
n^an^b\nd_b n_a=\pm \frac{1}{d}\Delta \si.
$$
Putting these results together, we have
$$
(d-1)H= \frac{1}{d}(d-1)\Delta \si\quad \Rightarrow \quad H=\frac{1}{d}\Delta \si~ \quad \mbox{along } \Sigma .
$$
Thus 
$$
I|_\Sigma  \stackrel{g}{=} \left(\begin{array}{c}
0 \cr  n_a \cr
 -H \end{array}\right) ~,
$$ as claimed. Now note that if we repeat the calculation with $I^2=\pm 1 + \si^2 f$ then the result still holds, as in the calculation this relation was differentiated just once. 
\end{proof}

\begin{corollary}\label{aEu}
Let $(M^d,\cc,I)$ be an almost pseudo-Riemannian structure with scale
singularity set $\Sigma\neq \emptyset$, and that is asymptotically
Einstein in the sense that $I^2|_\Sigma= \pm 1$, and $\nabla_a I_B = \si f_{aB}$ for some smooth (weight -1) tractor valued 1-form $f_{aB}$.  Then $\Sigma $ is a totally umbilic hypersurface.
\end{corollary}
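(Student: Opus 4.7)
The plan is to combine Proposition \ref{ascN} (identifying $I|_\Sigma$ with the normal tractor $N$) with Proposition \ref{umbilic} (umbilicity $\Leftrightarrow$ $N$ parallel along $\Sigma$). So the first task is to check that the hypotheses of Proposition \ref{ascN} are met, i.e.\ that $I^2 = \pm 1 + \sigma^2 f$ for some smooth weight $-2$ density $f$.

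For this, I would differentiate $I^2 = h^{AB}I_A I_B$ using that $h$ is preserved by the tractor connection (Theorem \ref{mpara}), obtaining
\begin{equation*}
\nabla_a(I^2) = 2 I^B \nabla_a I_B = 2\sigma\, I^B f_{aB}.
\end{equation*}
Thus both $I^2 - (\pm 1)$ and its first derivative vanish along $\Sigma = \cZ(\sigma)$, so $I^2 - (\pm 1)$ vanishes to at least second order on $\Sigma$. Since $\sigma$ is a defining density for $\Sigma$ (by Theorem \ref{APRthm}, as $I^2 \neq 0$ near $\Sigma$), it follows that $I^2 = \pm 1 + \sigma^2 f$ for some smooth $f \in \Gamma(\ce[-2])$. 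Hence Proposition \ref{ascN} applies and gives $N_B = I_B|_\Sigma$.

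Now the umbilicity follows immediately: the pullback of the ambient tractor connection to $\Sigma$ applied to $N$ is
\begin{equation*}
\underline{\nabla}_a N_B = \underline{\nabla}_a (I_B|_\Sigma) = (\nabla_a I_B)|_\Sigma = (\sigma f_{aB})|_\Sigma = 0,
\end{equation*}
so $N$ is parallel along $\Sigma$. By Proposition \ref{umbilic}, $\Sigma$ is totally umbilic.

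The argument is essentially a two-line deduction once the two earlier propositions are in hand; there is no real obstacle. The only subtle point worth spelling out carefully is the promotion of the first-order vanishing condition $\nabla I = \sigma f$ to the second-order condition $I^2 = \pm 1 + \sigma^2 f$ needed to invoke Proposition \ref{ascN}, which uses metricity of the tractor connection in an essential way.
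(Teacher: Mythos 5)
Your proof is correct and follows essentially the same route as the paper: verify $I^2 = \pm 1 + \sigma^2 f$, invoke Proposition \ref{ascN} to get $N = I|_\Sigma$, note $N$ is then parallel along $\Sigma$, and conclude via Proposition \ref{umbilic}. In fact you supply a detail the paper leaves implicit, namely the computation $\nabla_a(I^2) = 2\sigma I^B f_{aB}$ showing the second-order vanishing needed to write $I^2 - (\pm 1) = \sigma^2 f$.
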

\begin{proof}
The assumptions on the scale tractor $I$ imply that $I^2=\pm 1 + \si^2 f$ for some smooth function $f$ (since we assume $\cc$ and $\si$ smooth).
Thus it follows from Proposition \ref{ascN} above that, along the singularity hypersurface, $I$ agrees with the normal tractor $N$.   Thus $N$ is parallel along $\Sigma$ and so, from Proposition \ref{umbilic}, $\Sigma$ is totally umbilic.
\end{proof}

Note that a hypersurface is totally umbilic if and only if is conformally
totally geodesic: if $\Sigma$ is totally umbilic then locally it is
straightforward to find a metric $g\in \cc$ so that $H^g =0$ whence
$L^g_{ab}=0$ (this was demonstrated in Remark \ref{Hzero}). In this scale any geodesic on the submanifold $\Sigma$, with its
induced metric $\og$, is also a geodesic of the ambient $(M,g)$.  So
the condition of being totally umbilic is a strong matching of the
conformal structures.

In fact for a conformally compact metric that is asymptotically
Einstein, as in Corollary \ref{aEu} above, there is an even stronger
compatibility (involving a higher order of contact) between the geometry of $\cc$ and $\overline{\cc}$.
First a preliminary result. 
\begin{proposition}\label{lastnail}
Let $(M^{d\geq 4},\cc,I)$ be an almost pseudo-Riemannian structure with scale singularity set $\Sigma\neq \emptyset$, and that is asymptotically
Einstein in the sense that $I^2|_\Sigma= \pm 1$, and $\nabla_a I_B = \si^2 f_{aB}$ for some smooth (weight $-2$) tractor valued 1-form  field $f_{aB}$. Then the Weyl curvature $W_{ab}{}^c{}_d$ satisfies 
$$
W_{ab}{}^c{}_dn^d =0, \quad \mbox{along }~\Sigma ,
$$
where $n^d$ is the normal field.
\end{proposition}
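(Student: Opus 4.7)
The plan is to leverage the near-parallelism of $I$ to show that the tractor curvature annihilates $I$ along $\Sigma$, and then extract the condition $W_{ab}{}^c{}_d n^d = 0$ by reading the middle slot of this identity in a metric scale, using the explicit curvature formula \eqref{curvform}.

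First I would verify that Proposition \ref{ascN} applies, so that $I|_\Sigma = N$. Since the tractor connection preserves $h$, the hypothesis $\nabla_a I_B = \sigma^2 f_{aB}$ gives $\nabla_a(I^2) = 2\sigma^2 I^B f_{aB}$. Combined with $I^2|_\Sigma = \pm 1$ and the fact (from Theorem \ref{APRthm}) that $\sigma$ is a defining density for the smooth hypersurface $\Sigma$, a short Taylor argument yields $I^2 = \pm 1 + \sigma^2 \tilde f$ for some smooth weight $-2$ density $\tilde f$. Hence Proposition \ref{ascN} gives $I|_\Sigma = N$; in a scale $g\in \cc$,
$$I|_\Sigma \stackrel{g}{=} (0,\, n_a,\, -H^g).$$

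Next I would differentiate the hypothesis once more: $\nabla_b \nabla_a I^C = 2\sigma (\nabla_b \sigma) f_a{}^C + \sigma^2 \nabla_b f_a{}^C$, which vanishes along $\Sigma$ since $\sigma|_\Sigma = 0$. The same holds for $\nabla_a \nabla_b I^C$, so by the definition of tractor curvature
$$\kappal_{ab}{}^C{}_D\, I^D \;=\; (\nabla_a \nabla_b - \nabla_b \nabla_a) I^C \;=\; 0 \quad \text{along } \Sigma.$$

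Finally I would unpack this tractor identity in a scale $g$ using \eqref{curvform} together with $I|_\Sigma \stackrel{g}{=} (0, n_a, -H^g)$. Because $\sigma|_\Sigma = 0$ the top component of $I$ makes no contribution, so the middle slot of $\kappal_{ab}{}^C{}_D I^D$ collapses to $W_{ab}{}^c{}_d n^d$ and the bottom slot to $-C_{abd} n^d$, both of which must therefore vanish on $\Sigma$; the middle gives the desired conclusion (with $C_{abd} n^d = 0$ as a bonus). The only mildly subtle point is the first step, where we must upgrade the first-order control $\nabla_a I_B = O(\sigma^2)$ to the second-order statement $I^2 = \pm 1 + O(\sigma^2)$ needed to invoke Proposition \ref{ascN}; everything else is a direct computation with the $3\times 3$ curvature matrix.
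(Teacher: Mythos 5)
Your proposal is correct and follows essentially the same route as the paper's proof: differentiate the hypothesis $\nabla_a I_B=\si^2 f_{aB}$ once to conclude $\kappal_{ab}{}^C{}_D I^D=0$ along $\Sigma$, identify $I|_\Sigma$ with the normal tractor $N$ via Proposition \ref{ascN}, and read off $W_{ab}{}^c{}_d n^d=0$ (and $C_{abd}n^d=0$) from the slot formula \nn{curvform}. The paper states this more tersely ("$I_B$ is parallel to the given order, hence $\kappal_{ab}{}^C{}_D I^D=0$"), while you usefully spell out the Taylor-type upgrade from $I^2|_\Sigma=\pm1$ and $\nabla I=O(\si^2)$ to $I^2=\pm1+\si^2\tilde f$ needed to invoke Proposition \ref{ascN} -- a step the paper leaves implicit here but performs explicitly in the proof of Corollary \ref{aEu}.
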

\begin{proof}
Since, along $\Sigma$, $I_B$ is parallel to the given order we have that the tractor curvature satisfies 
$$
\kappal_{ab}{}^C{}_D I^D =\kappal_{ab}{}^C{}_D N^D = 0  \quad \mbox{along }~\Sigma ,
$$
and from the formulae for $\kappal$ and $N$ the result is immediate.
\end{proof}

\begin{theorem}\label{umbiltr}
Let $(M^{d\geq 4},\cc,I)$ be an almost pseudo-Riemannian structure with scale singularity set $\Sigma\neq \emptyset$, and that is asymptotically
Einstein in the sense that $I^2|_\Sigma= \pm 1$, and $\nabla_a I_B = \si^2 f_{aB}$ for some smooth (weight $-2$) tractor valued 1-form field $f_{aB}$.
Then the tractor connection of $(M,\cc)$ preserves the intrinsic tractor
bundle of $\Sigma$, where the latter is viewed as a subbundle of the ambient tractors: $\cT_\Sigma\subset \cT$.  Furthermore the restriction of
the parallel transport of $\nabla^\cT$ coincides with the intrinsic tractor parallel transport of $\nabla^{\ct_\Sigma}$.
\end{theorem}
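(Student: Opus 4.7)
My plan is to reduce the theorem to two vanishing statements that are already essentially in hand: the vanishing of the tractor shape form $\bbL_{aB}$ and the vanishing of the Fialkow tensor $\cF_{ab}$ along $\Sigma$. These together force the tractor Gauss formula \nn{eq:TrGaussFla} to collapse to the identity $\tnabla_a V^B = \overline{\nabla}_a V^B$ for $V \in \Gamma(\bcT)$.

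First, I would invoke Corollary \ref{aEu} to conclude that $\Sigma$ is totally umbilic, so $\mathring{L}_{ab}=0$ along $\Sigma$. By Proposition \ref{umbilic} (equivalently the explicit formula \nn{nN}) this gives $\bbL_{aB} = \underline{\nabla}_a N_B = 0$. This is already enough for the first claim: if $V \in \Gamma(N^\perp)$ and $a$ is tangential, then differentiating $h(V,N)=0$ along $\Sigma$ yields
\[
h(\underline{\nabla}_a V, N) = -h(V,\underline{\nabla}_a N) = -h(V,\bbL_{aB}) = 0,
\]
so the ambient tractor connection preserves the subbundle $N^\perp \cong \bcT$.

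Next I would show the induced connection on $N^\perp$ is precisely the intrinsic tractor connection $\overline{\nabla}^{\bcT}$. By \nn{eq:TrGaussFla},
\[
\tnabla_a V^B = \overline{\nabla}_a V^B \mp S_a{}^B{}_C V^C \mp N^B \bbL_{aC} V^C
\]
for $V^B \in \Gamma(\bcT)$. The last term vanishes by umbilicity as already noted. For the Fialkow contorsion term I would use the explicit formula \nn{eq:FialkowExpl}: the two $\mathring{L}$ terms vanish by umbilicity, and the remaining term $W_{acbd}n^c n^d$ vanishes by Proposition \ref{lastnail}, which gives $W_{ab}{}^c{}_d n^d = 0$ along $\Sigma$. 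Hence $\cF_{ab}=0$, so $S_a{}^B{}_C=0$ by \nn{eq:IntrTrContAdj}, and the Gauss formula reduces to $\tnabla_a V^B = \overline{\nabla}_a V^B$.

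Finally, the parallel transport statement is then automatic: both connections agree as operators on $\Gamma(\bcT)$, so they induce the same parallel transport along curves in $\Sigma$. The main subtlety to watch out for is interpreting $\underline{\nabla}$ correctly on sections defined only along $\Sigma$ (using an arbitrary smooth extension to a neighbourhood, with the tangential projection killing any dependence on the extension); beyond that, the proof is essentially an assembly of Corollary \ref{aEu}, Proposition \ref{umbilic}, Proposition \ref{lastnail}, and the tractor Gauss formula. I do not anticipate a serious obstacle — the hypothesis $\nabla_a I_B = \sigma^2 f_{aB}$ was precisely designed to supply the two needed vanishings at the hypersurface.
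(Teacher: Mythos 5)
Your proof is correct and follows essentially the same route as the paper's: identify $\bcT$ with $N^\perp$, show the subbundle is preserved because $\underline{\nabla}_a N_B=0$, and then collapse the tractor Gauss formula by showing the Fialkow tensor vanishes via Corollary \ref{aEu} (umbilicity) and Proposition \ref{lastnail}. The only cosmetic difference is that the paper gets $\underline{\nabla}_a N_B=0$ directly from $N=I|_\Sigma$ together with $\nabla_a I_B=\si^2 f_{aB}$ restricted to $\Sigma$, whereas you route it through umbilicity and Proposition \ref{umbilic}; both are valid.
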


\begin{proof}
From Theorem \ref{idts} the tractor bundle
$\ocT$ of $(\Sigma,\cc)$ may be identified with the subbundle
$N^\perp$ in $\cT|_\Sigma$, consisting of standard tractors orthogonal to the normal tractor $N$. Since $N=I|_\Sigma$ is parallel along
$\Sigma$ this subbundle is preserved by the ambient tractor connection
and the projected ambient tractor connection $\chnabla$ coincides with restriction to $\ocT$ of the pullback of the ambient tractor connection: 
$$
\chnabla_a=\Pi_a^b\nabla^\cT_b \quad \mbox{on}\quad \ocT \subset \cT.
$$

Thus the result follows from the tractor Gauss formula if the Fialkow tensor 
$$
\cF_{ab}=\tfrac{1}{n-2}\Big(W_{acbd}n^cn^d+\mathring{L}_{ab}^{2}-\tfrac{|{\mathring{L}|^2}}{2(n-1)}\overline\bg_{ab}\Big),
$$
of \nn{eq:FialkowExpl} vanishes. Now from
Corollary \ref{aEu} 
$\Sigma$ is totally umbilic, and so $\mathring{L}_{ab}=0$, while Proposition \ref{lastnail} states that $W_{acbd}n^d=0$ along $\Sigma$. Thus $\cF_{ab}=0$.
\end{proof}

\begin{remark}
In the case of an almost Einstein manifold the Theorem \ref{umbiltr} can also be seen to follow 
directly from the general theory of \cite{CGH}, see Theorem 3.5. \endrk
\end{remark}

\section{Lecture 8:  Boundary calculus and asymptotic analysis}

In the previous lectures we have seen that for conformally compact
manifolds with a nondegenerate conformal infinity there are nice tools for studying the link between the conformal structure on the conformal infinity and its relation to the ambient pseudo-Riemannian structure. These showed for example that conditions like asymptotically ASC and then asymptotically Einstein lead to a higher order of contact between the ambient and boundary conformal structures. 

Here we show that the same tools, which are canonical to the geometry  of almost-pseudo Riemannian manifolds, lead first to an interesting canonical calculus along the boundary and then to canonical boundary problems that can be partly solved by these tools. Further details of the boundary calculus for conformally compactified manifolds presented in this lecture, and on applications, can be found in \cite{GLW,GoW-boundcalc, GoW-willmore}.

\newcommand{\w}{\mbox{\bf w}}
\newcommand{\ID}{I\hspace*{-1mm} \cdot \hspace*{-1mm} D}
\subsection{The canonical degenerate Laplacian} \label{degL} On an almost pseudo-Riemannian manifold $(M,\cc,I)$ there is a canonical degenerate Laplacian type differential operator, namely 
$$
\ID:= I^AD_A.
$$
This acts on any weighted tractor bundle, preserving its tensor type
but lowering the weight:
$$
\ID : \ce^\Phi[w]\to  \ce^\Phi[w-1].
$$

In the following it will be useful to define 
define the {\em weight operator} $\w$. On
sections of a conformal density bundle this is just the linear
operator that returns the weight. So if $\tau\in \Gamma (\ce[w_0])$ then
$$
\w \, \tau = w_0 \tau.
$$ Then this is extended in obvious way to weighted tensor or tractor
bundles. So if $\cB$ is some vector bundle of conformal weight zero
then $\w$ acts as the zero operator on its sections and then if $\beta
\in \Gamma (\cB[w_0])$ we have
$$
\w \, \beta = w_0 \beta.
$$ 

Now expanding $\ID$ in terms of some background metric $g\in\cc$, we have 
$$
\ID \stackrel{g}{=}
\left(\begin{array}{rrr}
-\frac{1}{d}(\Delta \si +\J\si) & \nabla^a \si & \si
\end{array}\right)
\left(\begin{array}{lll}
\w (d+2\w-2) \\  \nabla_a ( d+2\w-2) \\ -(\Delta +\J \w)
\end{array}\right).
$$
As an operator on any density or weighted tractor bundle of weight $w$ each occurrence of $\w$ evaluates to $w$. 
So then 
\begin{equation}\label{degLa}
\ID \stackrel{g}{=} -\si \Delta
+(d+2w-2)[(\nabla^a \si)\nabla_a - \frac{w}{d} (\Delta \si)] -\frac{2w}{d}(d+w-1)\si \J~
\end{equation} on $\ce^\Phi [w]$.
Now if we calculate in the metric $\g=\si^{-2}\bg$, away from  the
zero locus of $\si$, and trivialise the densities accordingly,  then $\si$ is represented by 1 in the trivialisation, and we have

$$
\ID \stackrel{\g}{=}-  \Big(\Delta^{\g} + \frac{2w(d+w-1)}{d}\J^{\g} \Big).
$$ In particular if $\g$ satisfies $\J^{\g}=\mp\frac{d}{2}$ ({\it
  i.e.}\ $\Sc^{\g}=\mp d(d-1)$ or equivalently $I^2=\pm 1$) then,
relabeling $d+w-1=:s$ and $d-1=:n$, we have
\begin{equation}\label{IdotDsc}
\ID \stackrel{\g}{=} -  \big(\Delta^{\g} \pm s(n-s) \big).
\end{equation}
On the other hand, looking again to~\nn{degLa}, we see that $\ID$ degenerates along the conformal infinity 
$\Sigma = \mathcal{Z}(\si)$ (assumed non-empty), and there the operator  is
first order. In particular if the structure is asymptotically ASC in the 
sense that $I^2=\pm 1+ \si f$, for some smooth (weight $-1$) density $f$, then along $\Sigma$
$$
\ID = (d+2w-2) \delta_n~,
$$
on $\ce^{\Phi}[w]$, where $\delta_n$  is the conformal Robin operator,
$$
\delta_n \stackrel{g}{=}  n^a\nabla^g_a - w H^g ,
$$
of~\cite{cherrier,BrGonon} (twisted with the tractor connection);
here $n^a$ is a length $\pm 1$ normal and~$H^g$ the mean curvature, as measured
in the metric $g$.

\subsection{A boundary calculus for the degenerate Laplacian}
\label{boundcalc}

Let $(M,\cc)$ be a conformal structure of dimension $d\geq 3$ and of any
signature. 
Given $\si$ a section of $\ce[1]$, write $I_A $ for the
corresponding scale tractor as usual. That is $I_A= \frac{1}{d}D_A\si $.
Then $\si=X^AI_A$.

\subsubsection{The ${\mathfrak sl}(2)$-algebra}
 Suppose that $f\in \ce^\Phi[w]$, where
$\ce^\Phi$ denotes any tractor bundle. Select $g\in \cc$ for the purpose
of calculation, and write $I_A\stackrel{g}{=}(\si,~\nu_a,~\rho)$ to
simplify the notation.  Then using $\nu_a=\nabla_a\si$,  we have
\begin{equation*}
\begin{split}
\ID\big(\si f\big) =  (d+2w)\big((w+1) \si \rho f +\si \nu_a\nabla^a f+ 
f \nu_a\nu^a\big)\quad\\[2mm] \qquad\qquad  -\si\big(\si \Delta f+2\nu_a\nabla^a f +f\Delta \si +(w+1)\J\si f\big)\, ,
\end{split}
\end{equation*}
while
$$ - \si\,  \ID\,  f = -\si(d+2w-2)  \big(w\rho f +
\nu_a\nabla^a f\big) +\si^2 (\Delta f + w \J f)\, .
$$
So, by virtue of the fact that $\rho=-\frac1d(\Delta\si+\J\si)$,  we have
$$
[\ID,\si] f= (d+2w) (2\si\rho  +\nu_a\nu^a) f .
$$
Now $I^AI_A=I^2\stackrel{g}{=}2\si\rho+\nu_a\nu^a$, whence
the last display simplifies to
$$
[\ID,\si] f =(d+2w) I^2 f . 
$$ 
Denoting by $\w$ the weight operator on
tractors, we have the following. 
\begin{lemma}\label{slem}
Acting on any section of a weighted tractor bundle we have 
$$
[\ID,\si]  =  I^2 (d+2\w)  ,
$$
where $\w$ is the weight operator.
 \end{lemma}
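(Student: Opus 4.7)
The plan is to verify the identity by direct computation in an arbitrary background metric $g\in\cc$, exploiting the explicit formula \nn{degLa} for $\ID$ acting on weighted tractor bundles. The key subtlety to keep track of is that multiplication by $\sigma$ raises the conformal weight by $1$, so when we apply $\ID$ to $\sigma f$ we must use the weight-$(w+1)$ version of \nn{degLa}, whereas $\sigma\ID f$ uses the weight-$w$ version; the commutator will measure precisely this weight shift.

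First I would fix $g\in\cc$, write $I_A\stackrel{g}{=}(\sigma,\nu_a,\rho)$ with $\nu_a=\nabla_a\sigma$ and $\rho=-\tfrac{1}{d}(\Delta\sigma+\J\sigma)$, and recall that $I^2 = 2\sigma\rho + \nu_a\nu^a$. Then, applying the Leibniz rule carefully (treating $\sigma f$ as having weight $w+1$), I would expand
\[
\ID(\sigma f) = (d+2w)\bigl((w+1)\sigma\rho f + \sigma \nu_a\nabla^a f + f\nu_a\nu^a\bigr) - \sigma\bigl(\sigma\Delta f + 2\nu_a\nabla^a f + f\Delta\sigma + (w+1)\J\sigma f\bigr),
\]
and separately
\[
-\sigma\,\ID f = -\sigma(d+2w-2)\bigl(w\rho f + \nu_a\nabla^a f\bigr) + \sigma^2(\Delta f + w\J f).
\]

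Next I would add the two expressions and collect terms. The $\sigma^2\Delta f$ contributions cancel, and the $\sigma\nu_a\nabla^a f$ terms combine to yield a coefficient proportional to $(d+2w)-2-(d+2w-2)=0$, so those cancel as well. What remains is an expression linear in $f$ whose coefficient involves $\sigma\rho$, $\nu_a\nu^a$, $\sigma\Delta\sigma$ and $\J\sigma^2$. Using the defining relation $\rho=-\tfrac{1}{d}(\Delta\sigma+\J\sigma)$ to eliminate $\Delta\sigma$ and $\J\sigma^2$ in favour of $\sigma\rho$, the remaining coefficient simplifies to $(d+2w)(2\sigma\rho+\nu_a\nu^a) = (d+2w)I^2$. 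This gives the identity pointwise on sections of weight $w$, and since the weight operator $\w$ acts by $w$ on such sections, we may rewrite the result as $[\ID,\sigma] = I^2(d+2\w)$ acting on arbitrary weighted tractor bundles.

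The main obstacle is really just the bookkeeping: ensuring that every occurrence of the weight parameter in \nn{degLa} is shifted correctly between the two terms of the commutator, and that the $\rho$-substitution is applied so that the final answer is manifestly the scalar tractor quantity $I^2$ times $(d+2\w)$ rather than some metric-dependent combination. No commutator of derivatives and no curvature identity beyond the definition of $\rho$ is required, so conformal invariance of the final identity is automatic from the conformal invariance of $\ID$ and $I^2$.
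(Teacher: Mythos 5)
Your computation is correct and follows essentially the same route as the paper's own proof: expand $\ID(\si f)$ at weight $w+1$ and $\si\,\ID f$ at weight $w$ via \nn{degLa}, cancel the $\si^2\Delta f$ and $\si\nu_a\nabla^a f$ terms, and use $\rho=-\tfrac1d(\Delta\si+\J\si)$ to recognise the remainder as $(d+2w)(2\si\rho+\nu_a\nu^a)f=(d+2w)I^2f$. Nothing further is needed.
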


\begin{remark}\label{remarb}
	 	A  similar  computation  to  above  shows  that, more generally,
	 	$$
	 	I\cdot  D  \big(\sigma^\alpha  f\big)  -  \sigma^\alpha   
		I\cdot  D  f  =  \sigma^{\alpha-1}\alpha\,   I^2 (d+2\w+\alpha-1)   f\,  ,
	 	$$
	 	for  any  constant  $\alpha$. \endrk
\end{remark}

 The operator $\ID$ lowers conformal weight by 1. On the other
 hand, as an operator (by tensor product) $\si$ raises conformal
 weight by 1.  We can record this by the commutator relations
$$
[\w,\ID]=-\ID\quad \mbox{and} \quad [\w, \si] =\si, 
$$
so with the Lemma we see that the operators $\si$, $\ID$,
 and $\w$, acting on weighted scalar or tractor fields, generate an
${\mathfrak sl}(2)$ Lie algebra, provided $I^2$ is nowhere vanishing. It
is convenient to fix a normalisation of the generators; we record
this and our observations as follows. 
\begin{proposition}\label{slprop} Suppose that $(M,c,\si)$ 
is such that
  $I^2$ is nowhere vanishing.   Setting $x:=
  \si$, $y:= -\frac{1}{I^2}\ID$, and $h:=d+2\w$ we obtain the
  commutation relations
$$
[h,x]=2x, \quad [h,y]=-2y, \quad [x,y]=h, 
$$
of standard  ${\mathfrak sl}(2)$-algebra generators. 

In the case of $I^2=0$ the result is an
 In\"{o}n\"{u}-Wigner contraction of the~${\mathfrak sl}(2)$-algebra: 
$$
[h,x]=2x, \quad [h,y]=-2y, \quad [x,y]=0, 
$$ 
where  $h$ and $x$ are as before, but now $y=-\ID$. 
\end{proposition}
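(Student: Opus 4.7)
The proof is essentially an algebraic verification assembled from Lemma \ref{slem} and the two weight-commutator identities $[\w,\ID]=-\ID$ and $[\w,\sigma]=\sigma$ recorded immediately before the statement. The key preliminary observation is that $I^2 = I_A I^A$ has conformal weight zero, since $I_A = \frac{1}{d}D_A \sigma$ is a weight-zero tractor. Consequently, multiplication by $I^2$ (and by $1/I^2$ wherever it is defined) commutes with the weight operator $\w$ and with the weight-raising multiplication operator $\sigma$. Under the hypothesis that $I^2$ is nowhere vanishing, $1/I^2$ is a smooth function on $M$, so $y := -\frac{1}{I^2}\ID$ is a bona fide operator on every weighted tractor bundle.

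With this in hand I would verify the three brackets in turn. First, $[h,x] = [d+2\w,\sigma] = 2[\w,\sigma] = 2\sigma = 2x$. Second, since $I^2$ has weight zero we can pull $1/I^2$ through $h$ to obtain $[h,y] = -\frac{1}{I^2}[h,\ID] = -\frac{2}{I^2}[\w,\ID] = -\frac{2}{I^2}(-\ID) = -2y$. Third, again pulling $1/I^2$ past the multiplication operator $\sigma$ and invoking Lemma \ref{slem},
\[
[x,y] = \left[\sigma,\, -\tfrac{1}{I^2}\ID\right] = \tfrac{1}{I^2}[\ID,\sigma] = \tfrac{1}{I^2}\cdot I^2(d+2\w) = h,
\]
which completes the $\mathfrak{sl}(2)$ relations.

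For the contracted algebra, assume instead $I^2 \equiv 0$ and take $y = -\ID$. The first two brackets go through verbatim without the $1/I^2$ factors, giving $[h,x]=2x$ and $[h,y]=-2y$. The third bracket now reads $[x,y] = [\sigma,-\ID] = [\ID,\sigma] = I^2(d+2\w) = 0$, so the algebra degenerates as claimed. This is precisely the Inönü-Wigner contraction obtained by the rescaling $y \mapsto I^2 y$ followed by the limit $I^2 \to 0$ in the generic $\mathfrak{sl}(2)$ algebra. The entire argument is mechanical; the only point requiring any care is tracking conformal weights to justify commuting $I^2$ past $\w$ and $\sigma$, and there is no real obstacle since all of the genuine content is already contained in Lemma \ref{slem}.
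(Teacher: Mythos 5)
Your verification is correct and follows the same route as the paper: the paper derives the proposition directly from Lemma \ref{slem} together with the commutators $[\w,\ID]=-\ID$ and $[\w,\si]=\si$, exactly as you do. Your explicit remark that $I^2$ has conformal weight zero (so that multiplication by $1/I^2$ commutes with $\w$ and with $\si$) is the only step the paper leaves tacit, and it is the right justification for the normalisation $y=-\frac{1}{I^2}\ID$.
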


\noindent
Subsequently $\mathfrak{g}$ will be used to denote this (${\mathfrak sl}(2)$)
Lie algebra of operators.
From Proposition~\ref{slprop}  (and  in  concordance  with  remark~\ref{remarb}) follow some useful identities in the universal
enveloping algebra $\cU(\mathfrak{g})$.
\begin{corollary}\label{corpid}
\begin{eqnarray}
&[x^k,y]=x^{k-1} k(d+2\w+k-1) = x^{k-1} k (h+k-1)&\nonumber \\[1mm]{} & {\rm  and }&\label{pid}\\[1mm]{} & \ [x,y^k]=y^{k-1}
k(d+2\w-k+1)=y^{k-1}k(h-k+1)\, .&\nonumber
\end{eqnarray}
\end{corollary}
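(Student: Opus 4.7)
The plan is to prove both identities by induction on $k\ge 1$, working inside $\cU(\mathfrak{g})$ and using only the $\mathfrak{sl}(2)$-relations $[h,x]=2x$, $[h,y]=-2y$, $[x,y]=h$ supplied by Proposition \ref{slprop}. The base case $k=1$ of either identity is just $[x,y]=h$, which matches $x^{0}\cdot 1\cdot(h+0)=h$ and $y^{0}\cdot 1\cdot(h-0)=h$, respectively.

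For the inductive step I would use the derivation property of the commutator, writing
$$[x^{k+1},y]=x[x^k,y]+[x,y]\,x^k \qquad \text{and} \qquad [x,y^{k+1}]=[x,y^k]\,y+y^k[x,y].$$
The first summand on each right-hand side is disposed of by the induction hypothesis. The second summand contains $[x,y]=h$, which then has to be commuted past $x^k$ (respectively $y^k$); this is the only real computation in the proof.

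The required auxiliary identities $[h,x^k]=2k\,x^k$ and $[h,y^k]=-2k\,y^k$ follow immediately by a secondary induction from $[h,x]=2x$ and $[h,y]=-2y$ (equivalently, $x^k$ and $y^k$ are weight vectors of $\operatorname{ad} h$ of weights $\pm 2k$). These give $h\,x^k=x^k(h+2k)$ and $h\,y^k=y^k(h-2k)$. Substituting into the first identity yields
$$[x^{k+1},y]=x^{k}k(h+k-1)+x^{k}(h+2k)=x^{k}(k+1)(h+k),$$
which is exactly the desired formula at index $k+1$. The second identity is handled in the same manner, with $(h-k+1)y=y(h-k-1)$ in place of the analogous move above; the sign flip in $[h,y]$ is precisely what converts $h+k-1$ into $h-k+1$.

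I do not expect any genuine obstacle here: the statement is a standard identity in the universal enveloping algebra of $\mathfrak{sl}(2)$, and the only content beyond Proposition \ref{slprop} is the bookkeeping of the $\operatorname{ad} h$-weights of $x^{k}$ and $y^{k}$. Evaluating the resulting operator identities on a section of a weighted tractor bundle of weight $w$ replaces $h$ throughout by $d+2w$, yielding the precise form recorded in \nn{pid}.
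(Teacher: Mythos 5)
Your induction is correct and is exactly the intended derivation: the paper gives no written proof of Corollary~\ref{corpid}, asserting only that it follows from the $\mathfrak{sl}(2)$ relations of Proposition~\ref{slprop} (cf.\ Remark~\ref{remarb}, which is the $k$-th-power analogue of your first identity computed directly), and your argument via the derivation property of the commutator together with the weight identities $hx^k=x^k(h+2k)$, $hy^k=y^k(h-2k)$ is the standard verification. The one point worth being explicit about --- which your computation does respect --- is that $\w$ fails to commute with $x$ and $y$, so the factor $(h+k-1)$, resp.\ $(h-k+1)$, must sit to the right of $x^{k-1}$, resp.\ $y^{k-1}$, exactly as in \nn{pid}.
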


\subsection{Tangential operators and holographic formul\ae\ }
\label{tangential ops}

Suppose that $\si\in \Gamma(\ce[1])$ is such that $I_A=\frac{1}{d}D_A
\si$ satisfies that $I^AI_A= I^2$ is nowhere zero.  As explained in
Section~\ref{scale}, the zero locus
$\mathcal{Z}(\si)$ of~$\si$ is then either empty or forms a smooth
hypersurface.

Conversely if $\Sigma$ is any smooth oriented hypersurface then, at
least in a neighbourhood of $\Sigma$, there is a smooth defining
function $s$.  Now take $\si\in \Gamma (\ce[1])$ to be the unique density which gives $s$ in the trivialisation of $\ce[1]$ determined by some $g\in \cc$. It follows then that $\Sigma =\mathcal{Z}(\si)$ and $\nabla^g \si $ is non-zero at all points of $\Sigma$. If $\nabla^g \si $ is nowhere null along~$\Sigma$ then $\Sigma$ is nondegenerate and $I^2$ is nowhere vanishing in a neighbourhood of $\Sigma$, and we are in the situation of the previous paragraph. We call such a $\si$ a {\em
defining density} for $\Sigma$ (recall our definition from Lecture \ref{ConfHyp}), and to simplify the discussion we shall take $M$ to agree with this neighbourhood of $\Sigma$. Until further notice $\si$ will mean such a section of $\ce[1]$ with $\Sigma=\mathcal{Z}(\si)$ non-empty and nondegenerate.  Note that $\Sigma$ has a conformal structure $\overline{\cc}$ induced in the obvious way from $(M,\cc)$ and is a conformal infinity for the metric $\g:= \si^{-2}\bg$ on $M\setminus \Sigma$. 

\subsubsection{Tangential operators}\label{tanS}
Suppose that $\si$ is a defining density for a hypersurface $\Sigma$.
Let $P: \ce^\Phi[w_1]\to \ce^\Phi[w_2]$ be some linear differential
 operator defined in a neighbourhood of $\Sigma$. 
 We shall say that $P$ {\em acts
  tangentially (along $\Sigma$)} if $P \circ \si =\si \circ \widetilde P $, where $\widetilde P
:\ce^\Phi[w_1-1]\to \ce^\Phi[w_2-1]$ is some other linear
 operator on the same neighbourhood.
The point is that for a tangential operator $P$ we have 
$$
P (f+\si h)= P f+ \si \widetilde P h.
$$  
Thus along $\Sigma$ the operator $P$ is insensitive to how $f$ is
extended off $\Sigma$.  It is easily seen that if $P$ is a tangential differential operator then there is a formula for $P$,
along $\Sigma$, involving only derivatives tangential to $\Sigma$, and
the converse also holds.

Using Corollary~\ref{corpid} we see at once that certain powers of
$\ID$ act tangentially on appropriately weighted tractor bundles.
We state this precisely. Suppose that $\Sigma$ is a (nondegenerate) hypersurface in a conformal manifold $(M^{n+1},\cc)$, and $\si$ a defining density for $\Sigma$. Then recall $\Sigma =\mathcal{Z}(\si)$ and $I^2$ is nowhere zero in a neighbourhood of $\Sigma$, where $I_A:=\frac{1}{n+1}D_A\si$
is the scale tractor.  The following holds.
\begin{theorem}\label{tanth}
  Let $\ce^\Phi$ be any tractor bundle and $k\in \mathbb{Z}_{\geq 1}$.
  Then, for each $k\in \mathbb{Z}_{\geq 1}$, along $\Sigma$
\begin{equation}\label{tan}
P_k: \ce^\Phi[\frac{k-n}{2}]\to \ce^\Phi[\frac{-k-n}{2}]
\quad \mbox{given by}\quad 
P_k:= \Big(-\frac{1}{I^2} \ID\Big)^k
\end{equation}
is a tangential differential operator, and so determines a canonical
differential operator $P_k: \ce^\Phi[\frac{k-n}{2}]|_\Sigma \to
\ce^\Phi[\frac{-k-n}{2}]|_\Sigma$.
\end{theorem}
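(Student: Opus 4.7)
The plan is to read Theorem~\ref{tanth} as a purely algebraic statement about the $\mathfrak{sl}(2)$-triple $(x,y,h)=(\si,\,-\tfrac{1}{I^2}\ID,\,d+2\w)$ of Proposition~\ref{slprop}, and to show that the obstruction to $y^k$ being tangential along $\Sigma=\mathcal{Z}(\si)$ is precisely a factor that vanishes at the designated weight. Non‑degeneracy of $\Sigma$ is built in by the very existence of $y$: it needs $I^2$ nowhere zero in a neighbourhood of $\Sigma$, which is guaranteed because $\si$ is a defining density.

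First I would recall the tangentiality criterion: a differential operator $P$ along $\Sigma$ acts tangentially iff it commutes with multiplication by $\si$ modulo $\si$, i.e.\ iff there exists $\widetilde P$ with $P\circ \si = \si\circ \widetilde P$. Equivalently, $P$ then descends to an operator $\ce^\Phi[w]|_\Sigma\to\ce^\Phi[w']|_\Sigma$, since any two smooth extensions of a section on $\Sigma$ differ by $\si h$. So it suffices to show $[y^k,\,\si]=0$ when the right‑hand factor is a section of weight $\tfrac{k-n}{2}-1=\tfrac{k-n-2}{2}$ (the weight obtained after peeling off one power of $\si$ from the domain $\ce^\Phi[\tfrac{k-n}{2}]$).

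Next I would apply Corollary~\ref{corpid}, which gives
\begin{equation*}
[\si,\,y^k]\;=\;[x,y^k]\;=\;k\,(h-k+1)\,y^{k-1}\;=\;k\,(d+2\w-k+1)\,y^{k-1}.
\end{equation*}
On a section of weight $w=\tfrac{k-n-2}{2}$ one has $d+2w=(n+1)+(k-n-2)=k-1$, so the prefactor $d+2\w-k+1$ evaluates to $0$. Hence $[y^k,\si]f=0$ for every $f\in\ce^\Phi[\tfrac{k-n-2}{2}]$, which is exactly the tangentiality identity $P_k(\si f)=\si\,P_k(f)$, with $\widetilde{P_k}=P_k$ on that weight. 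Independence of the extension off $\Sigma$, and hence the existence of the induced operator $P_k:\ce^\Phi[\tfrac{k-n}{2}]|_\Sigma\to\ce^\Phi[\tfrac{-k-n}{2}]|_\Sigma$, then follows from the criterion recalled in the first step.

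There is no real obstacle here once the $\mathfrak{sl}(2)$ machinery of Section~\ref{boundcalc} is in hand; the entire content is the weight calculation $d+2\w-k+1=0$ at $w=\tfrac{k-n-2}{2}$, which identifies the specific domain weight $\tfrac{k-n}{2}$ singled out in the statement. The only subtlety worth flagging is that the construction, and in particular the definition of $y=-\tfrac{1}{I^2}\ID$, requires $I^2$ to be nowhere zero in a neighbourhood of $\Sigma$; this is automatic from the standing assumption that $\si$ is a defining density for a nondegenerate hypersurface. I would also remark that the same method explains (and bounds) the list of tangential powers: by Remark~\ref{remarb} the obstruction for $y^\alpha$ to pass through $\si$ is a product of factors $d+2\w+j$ with $j$ running over integers, and for integer $k$ exactly one such factor vanishes at $w=\tfrac{k-n}{2}$, giving the operators $P_k$ of the theorem.
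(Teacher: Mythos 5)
Your proof is correct and is essentially the paper's own argument: the paper likewise deduces tangentiality immediately from Corollary \ref{corpid}, taking $\widetilde{P_k}=\bigl(-\tfrac{1}{I^2}\,\ID\bigr)^k$, with the content being exactly your weight computation that $d+2w-k+1=0$ on sections of weight $\tfrac{k-n-2}{2}$. Your additional remarks on nondegeneracy and on Remark \ref{remarb} are consistent with the surrounding text but add nothing beyond the paper's two-line proof.
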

\begin{proof}
  The $P_k$ are differential by construction. Thus the result is
  immediate from Corollary~\ref{corpid} with $\widetilde P =
  \Big(-\frac{1}{I^2} \ID\Big)^k$.
\end{proof}

\subsection{The extension problems and their asymptotics} \label{ext}

Henceforth we consider an almost pseudo-Riemannian structure $(M,c,\si)$ with $\si$ a defining density for a hypersurface $\Sigma$ and $I^2$ nowhere zero.  We consider the problem of solving, off  $\Sigma$ asymptotically,
$$
\ID
f=0\, ,
$$ 
for $f\in \Gamma (\ce^\Phi[w_0])$ and some given weight $w_0$. For
simplicity we henceforth calculate on the side of $\Sigma$ where $\si$ is
non-negative, so effectively this amounts to working locally along the boundary of a conformally compact manifold. 

\subsubsection{Solutions of the first kind}\label{first}
Here we treat an obvious Dirichlet-like problem where we view
$f|_\Sigma$ as the initial data.  Suppose that $f_0$ is an arbitrary
smooth extension of $f|_\Sigma$ to a section of $\ce^\Phi[w_0]$ over
$M$. We seek to solve the following problem:

\begin{problem}\label{extp}
  Given $f|_{\Sigma}$, and an arbitrary extension $f_0$ of this to
  $\ce^\Phi[w_0]$ over~$M$, find $f_i\in \ce^{\Phi}[w_0-i]$ (over
  $M$), $i=1,2,\cdots$, so that
$$
f^{(\ell)}: = f_0 + \si f_1  + \si^2f_2 +\cdots + O(\si^{\ell+1})
$$ solves $\ID f = O(\si^\ell)$, off $\Sigma$, 
for $\ell\in \mathbb{N}\cup \infty$
as high as possible.
\end{problem}
\begin{remark}
  For $i\geq 1$ we do not assume that the $f_i$ are necessarily
  non-vanishing along~$\Sigma$. Also, note that the stipulation ``given $f|_{\Sigma}$, and an arbitrary extension $f_0$ of this'' can be rephrased as ``given $f_0$ in the space of sections with a fixed restriction to ${\Sigma}$ (denoted $f|_{\Sigma}$)''. \endrk
\end{remark}

We write $h_0 =d+2w_0$ so that $h f_0=h_0 f_0$, for example.  The
existence or not of a solution at generic weights is governed by the
following result.  
\begin{lemma}\label{gjmsstyle}
  Let $f^{(\ell)}$ be a solution of Problem~\ref{extp} to order
  $\ell\in \mathbb{Z}_{\geq 0}$. Then provided $\ell\neq h_0-2= n+2w_0-1$ 
  there is an extension 
$$
f^{(\ell+1)}= f^{(\ell)}+\si^{\ell+1}
  f_{\ell+1},
$$
 unique modulo $\si^{\ell+2}$, which solves
$$
\ID f^{(\ell+1)}= 0 \quad \mbox{modulo} \quad O(\si^{\ell+1}).
$$  

If $\ell = h_0-2$ then the extension is obstructed by $P_{\ell+1} f_0|_\Sigma$, where $P_{\ell+1} = (-\frac{1}{I^2} \ID f)^{\ell+1}$ is the tangential operator on densities of weight $w_0$ given by Theorem \ref{tanth}. 
\end{lemma}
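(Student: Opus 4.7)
The plan is an induction on $\ell$, driven entirely by the commutator $[\ID,\sigma]=I^2(d+2\w)$ of Lemma \ref{slem} and its iterated consequences in Remark \ref{remarb} and Corollary \ref{corpid}. Set $h_0:=d+2w_0$, so the critical value of $\ell$ is $h_0-2$. Inductively I would record the failure of $f^{(\ell)}$ at order $\ell$ as
$$
\ID f^{(\ell)}=\sigma^\ell R_\ell,\qquad R_\ell\in \Gamma(\ce^\Phi[w_0-\ell-1]),
$$
so the extension problem at step $\ell+1$ amounts to killing $R_\ell|_\Sigma$.

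First I would handle the non-critical case by trying the ansatz $f^{(\ell+1)}=f^{(\ell)}+\sigma^{\ell+1}f_{\ell+1}$, with $f_{\ell+1}\in\Gamma(\ce^\Phi[w_0-\ell-1])$. Applying Remark \ref{remarb} with $\alpha=\ell+1$, and using that $\w$ acts on $f_{\ell+1}$ by $w_0-\ell-1$, gives
$$
\ID f^{(\ell+1)}\equiv \sigma^\ell\bigl[R_\ell+(\ell+1)(h_0-\ell-2)I^2 f_{\ell+1}\bigr]\pmod{\sigma^{\ell+1}}.
$$
When $\ell\neq h_0-2$, the coefficient $(\ell+1)(h_0-\ell-2)I^2$ is nowhere zero, so the equation uniquely determines $f_{\ell+1}|_\Sigma$; the freedom in extending this off $\Sigma$ is exactly $\sigma\cdot\Gamma(\ce^\Phi[w_0-\ell-2])$, so $\sigma^{\ell+1}f_{\ell+1}$, hence $f^{(\ell+1)}$, is determined modulo $\sigma^{\ell+2}$, giving the uniqueness statement.

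In the critical case $\ell=h_0-2$ the coefficient above vanishes identically, so $f_{\ell+1}$ cannot affect the leading obstruction, and the extension exists iff $R_\ell|_\Sigma=0$. The remaining task is to identify $R_\ell|_\Sigma$ with a nonzero multiple of $P_{\ell+1}f_0|_\Sigma$, which I would do as follows. Write $y:=-\tfrac{1}{I^2}\ID$, so that $P_{\ell+1}=y^{\ell+1}$ and $yf^{(\ell)}=\sigma^\ell S$ with $S:=-R_\ell/I^2$. Iterating the identity $y\sigma^k T=\sigma^k yT-k(d+2w_T+k-1)\sigma^{k-1}T$ from Corollary \ref{corpid} (a short induction on $\ell$, using that at each step the surviving constant term picks up one more factor from the commutator) yields the standard $\mathfrak{sl}(2)$ ``highest-weight'' formula
$$
y^\ell\sigma^\ell T\equiv (-1)^\ell \ell!\prod_{j=0}^{\ell-1}(d+2w_T+j)\,T\pmod{\sigma},
$$
for any $T$ of weight $w_T$. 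Specialising to $T=S$ (of weight $w_0-\ell-1$) and $\ell=h_0-2$ makes the factors $d+2w_T+j$ run through $-\ell,-\ell+1,\ldots,-1$, all nonzero, whose product is $(-1)^\ell\ell!$. Hence
$$
P_{\ell+1}f^{(\ell)}|_\Sigma=y^\ell(\sigma^\ell S)|_\Sigma=(\ell!)^2 S|_\Sigma=-\tfrac{(\ell!)^2}{I^2|_\Sigma}\,R_\ell|_\Sigma.
$$
On the other hand, $P_{\ell+1}$ is tangential on $\ce^\Phi[w_0]$ at the critical weight $w_0=(\ell+1-n)/2$ by Theorem \ref{tanth}, so $P_{\ell+1}f^{(\ell)}|_\Sigma=P_{\ell+1}f_0|_\Sigma$ because $f^{(\ell)}-f_0=O(\sigma)$. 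Combining the two lines identifies $R_\ell|_\Sigma$ with a nonzero multiple of $P_{\ell+1}f_0|_\Sigma$, as required.

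The main obstacle I expect is the iterated $\mathfrak{sl}(2)$ identity for $y^\ell\sigma^\ell$ modulo $\sigma$: the calculation itself is routine, but careful bookkeeping of the weight shifts produced by each factor of $y$ (which drops weight by $1$) is needed so that Corollary \ref{corpid} is applied at the correct weight at each step, and so that one can see that no accidental cancellation occurs in the critical product.
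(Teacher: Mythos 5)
Your proposal is correct and follows essentially the same route as the paper's proof: the $\mathfrak{sl}(2)$ commutator identities (Lemma \ref{slem}, Remark \ref{remarb}, Corollary \ref{corpid}) applied to the ansatz $f^{(\ell)}+\sigma^{\ell+1}f_{\ell+1}$, with the coefficient $(\ell+1)(h_0-\ell-2)$ governing solvability. The only difference is that you carry out explicitly, with the constant $(\ell!)^2$ and the verification that the factors $-\ell,\dots,-1$ are all nonzero, the step the paper dispatches as ``a simple induction'' when identifying the obstruction with a nonzero multiple of $P_{\ell+1}f_0|_\Sigma$; this added detail is accurate.
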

\begin{proof} Note that $\ID f=0$ is equivalent to
  $-\frac{1}{I^2} \ID f=0$ and so we can recast this as a formal
  problem using the Lie algebra $\mathfrak{g}=\langle x,y,h\rangle$ from
  Proposition~\ref{slprop}. 
Using the notation from there
$$
y f^{(\ell+1)} = y f^{(\ell)} - 
x^\ell (\ell+1)(h+\ell)f_{\ell+1}+O(x^{\ell+1}). 
$$
Now $h f_{\ell+1}=\big(h_0-2(\ell+1)\big) f_{\ell+1}$, thus 
\begin{equation}\label{key1}
y f^{(\ell+1)} = y f^{(\ell)} - 
x^\ell (\ell+1)(h_0-\ell-2)f_{\ell+1}+O(x^{\ell+1}). 
\end{equation}
By assumption $y f^{(\ell)}=O(x^\ell)$, thus if $\ell\neq h_0-2$
we can solve $ y f^{(\ell+1)} = O(x^{\ell+1})$ and this uniquely
determines $f_{\ell+1}|_\Sigma$.

On the other hand if $\ell= h_0-2$ then~\nn{key1} shows that, 
modulo $O(x^{\ell+1})$, 
$$
y f^{(\ell)}= y \big( f^{(\ell)} + x^{\ell+1}f_{\ell+1} \big),
$$
regardless of $f_{\ell+1}$. It follows that the map $f_0\mapsto x^{-\ell} y
f^{(\ell)}$ is tangential and $ x^{-\ell} y f^{(\ell)}|_\Sigma$
is the obstruction to solving $ y f^{(\ell+1)} = O(x^{\ell+1})$.
By a simple induction this is seen to be a non-zero multiple of 
$y^{\ell+1}f_0|_\Sigma$.
\end{proof}

Thus by induction we conclude the following.
\begin{proposition}\label{gformal}
  For $h_0\notin \mathbb{Z}_{\geq 2}$ Problem~\nn{extp} can be solved
  to order $\ell$~$=$~$\infty$. For $h_0\in \mathbb{Z}_{\geq 2}$ the solution is
  obstructed by $[P_{h_0-1} f]|_\Sigma$; if, for a particular $f$,
  $[P_{h_0-1} f]|_\Sigma$ $=$~$0$ then there is a family of solutions to
  order $\ell=\infty$ parametrised by sections $f_{h_0-1}\in \Gamma
  \ce^\Phi[-d-w_0+1]|_\Sigma$.
\end{proposition}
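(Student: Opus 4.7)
The strategy is a straightforward induction on the order of approximation $\ell$, using Lemma \ref{gjmsstyle} as the inductive step. We start from $f^{(0)} := f_0$, which tautologically satisfies $\ID f^{(0)} = O(\sigma^0)$, and at each stage seek $f^{(\ell+1)} = f^{(\ell)} + \sigma^{\ell+1} f_{\ell+1}$ that improves the order of the residual by one. The entire argument turns on the factor $(\ell+1)(h_0 - \ell - 2)$ that appears in equation~(\ref{key1}) from the proof of Lemma~\ref{gjmsstyle}: this controls both unique solvability at a given step and the resonance which produces the obstruction.

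First I would dispatch the generic case $h_0 \notin \mathbb{Z}_{\geq 2}$. In this case the critical value $\ell = h_0 - 2$ is never attained by a non-negative integer $\ell$, so the coefficient $(\ell+1)(h_0-\ell-2)$ is non-zero at every step, and the non-resonant branch of Lemma~\ref{gjmsstyle} uniquely determines $f_{\ell+1}|_\Sigma$ so that $\ID f^{(\ell+1)} = O(\sigma^{\ell+1})$. Iterating yields a formal solution $f^{(\infty)} = \sum_{i\geq 0} \sigma^i f_i$ to all orders.

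Next I would handle the resonant case $h_0 \in \mathbb{Z}_{\geq 2}$. The non-resonant branch of Lemma~\ref{gjmsstyle} applies unchanged for $0 \leq \ell < h_0 - 2$, successively pinning down $f_1|_\Sigma, \ldots, f_{h_0-2}|_\Sigma$. At the critical step $\ell = h_0 - 2$, equation~(\ref{key1}) reduces to $y f^{(h_0-1)} \equiv y f^{(h_0-2)} \pmod{\sigma^{h_0-1}}$, so the residual is independent of the choice of $f_{h_0-1}|_\Sigma$ and solvability requires precisely that $[P_{h_0-1}f_0]|_\Sigma = 0$. Since $P_{h_0-1}$ is tangential by Theorem~\ref{tanth}, this quantity is independent of the extension $f_0$ of $f|_\Sigma$ and is therefore well-defined as $[P_{h_0-1}f]|_\Sigma$. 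When the obstruction vanishes, $f_{h_0-1}|_\Sigma$ is completely undetermined, providing a free parameter which is a section of $\ce^\Phi[w_0 - (h_0-1)] = \ce^\Phi[-d-w_0+1]$ over $\Sigma$. For $\ell > h_0 - 2$ one again has $\ell \neq h_0 - 2$, so the non-resonant branch resumes and the induction proceeds without further obstruction to all orders, now parametrised by the choice of $f_{h_0-1}|_\Sigma$.

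I do not expect any genuinely hard step; the substantive work is already packaged in Lemma~\ref{gjmsstyle} and the tangentiality statement Theorem~\ref{tanth}. The one point requiring care is interpreting the obstruction as a well-defined object along $\Sigma$ depending only on $f|_\Sigma$, which is exactly what tangentiality of $P_{h_0-1}$ delivers; and verifying that after freely prescribing $f_{h_0-1}|_\Sigma$ the inductive machinery restarts cleanly, which follows because the $\mathfrak{sl}(2)$ commutator relations of Proposition~\ref{slprop} are insensitive to where in the induction we are.
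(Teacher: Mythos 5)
Your proposal is correct and follows essentially the same route as the paper, which simply performs the induction on $\ell$ using Lemma \ref{gjmsstyle}, with the generic/resonant dichotomy governed by the factor $(\ell+1)(h_0-\ell-2)$ and the obstruction identified via the tangentiality of $P_{h_0-1}$ from Theorem \ref{tanth}. The weight count $w_0-(h_0-1)=-d-w_0+1$ and the observation that the induction restarts cleanly after the critical step are exactly as intended.
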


\appendix

\section{Conformal Killing vector fields and adjoint tractors}

Here we discuss conformal Killing vector fields and the prolongation
of the conformal Killing equation which is given by a connection on a
conformal tractor bundle called the adjoint tractor bundle. Although
the adjoint bundle is not a new tractor bundle \emph{per se} (being
the bundle of skew endomorphisms of $\ce^A$, naturally isomorphic to
$\ce_{[AB]}$), it is a very important tractor bundle and is worthy of
our further attention; we discuss briefly the connection of the
conformal Cartan bundle with the adjoint tractor bundle and its
calculus. We finish this section by applying the adjoint tractor
calculus and the prolongation of the conformal Killing equation to Lie
derivatives of tractors with respect to conformal Killing vector
fields.

\subsection{The conformal Cartan bundle and the adjoint tractor bundle} \label{CartanConn}

In the lectures we have been discussing tractor calculus on the so called standard tractor bundle on a conformal manifold $(M,\cc)$, which can be seen as the associated bundle $\mathcal{G}\times_P \mathbb{R}^{p+1,q+1}$ to the $P$-principal conformal Cartan bundle $\mathcal{G}\rightarrow M$. We have not discussed the construction of the conformal Cartan bundle in the lectures because it is easier to construct the standard tractor bundle directly from the underlying conformal structure. Indeed, having understood the conformal standard tractor bundle one could simply define the conformal Cartan bundle as an adapted frame bundle for the standard tractor bundle $\mathcal{T}$; from this point of view the conformal Cartan connection simply encodes the tractor parallel transport of the frames, c.f. \nn{connids}. Although for calculational purposes the standard tractor approach presented in the lectures is optimal, we have seen (e.g., in section \ref{HolonomyRed}) that the Cartan geometric point of view can afford additional insights. This is also the case when it comes to discussing the adjoint tractor bundle.

If $P\subset G=\mathrm{O}(p,q)$ is the stabiliser of a null ray in $\mathbb{R}^{p+1,q+1}$ then $P$ contains a subgroup which may be identified with $\mathrm{CO}(p,q)$. The conformal Cartan bundle of $(M,c)$ is then easily defined as the extension of the conformal orthogonal frame bundle $\mathcal{G}_0\rightarrow M$ to a principal $P$-bundle $\mathcal{G}\rightarrow M$ corresponding to the inclusion $\mathrm{CO}(p,q)\subset P$. The total space of this bundle is then $\mathcal{G}_0\times_{CO(p,q)}P$. 

The conformal Cartan connection $\omega$ is a 1-form on $\mathcal{G}$ taking values in the Lie algebra of $G$, which can be written as a (vector space) direct sum
$$
\mathbb{R}^d \oplus \mathfrak{co}(p,q) \oplus \left( \mathbb{R}^d\right)^*
$$ 
where $d=p+q$. The $\mathbb{R}^d$ component of $\omega$ is simply the trivial extension of the \emph{soldering form} of $\mathcal{G}_0$ (which is tautologically defined on any reduction of the frame bundle of a manifold) to $\mathcal{G}$; this component corresponds to the terms $-\mu_a$ and $+\bg_{ab}\rho$ in the formula \nn{cform} for the standard tractor connection. The next component arises from noting that sections of the bundle $\mathcal{G}\rightarrow \mathcal{G}_0$ are in 1-1 correspondence with affine connections on $M$ which preserve the conformal metric $\bg$ (\emph{Weyl connections}); the $\mathfrak{co}(p,q)$ component of $\omega$ is the 1-form $\boldsymbol{\gamma}$ on $\mathcal{G}$ whose pullback to $\mathcal{G}_0$ by any section is the connection 1-form for the corresponding Weyl connection. In order to obtain the formula for the tractor connection from that of the Cartan connection one must first pull everything down from $\mathcal{G}$ to $\mathcal{G}_0$ using a Weyl connection, allowing you to break tractors (in associated bundles to $\mathcal{G}$) up into weighted tensors (in associated bundles to $\mathcal{G}_0$). If one does this using the Levi-Civita $\nabla$ connection of a metric $g$ then one can see that the component $\boldsymbol{\gamma}$ of $\omega$ gives rise to the three terms involving $\nabla$ in \nn{cform}. The final component of $\omega$ gives rise to the two terms involving the Schouten tensor in \nn{cform} and is determined by the other two components of $\omega$ and an algebraic condition on the curvature 
$$
\mathrm{d}\omega + \omega \wedge \omega
$$ 
of $\omega$ (which can be identified with the tractor curvature). For more details see the first chapter of \cite{CSbook}.

The conformal Cartan connection has the following three properties:
\begin{itemize}
\item $\omega_u : T_u \mathcal{G} \rightarrow \mathcal{G}\times \mathfrak{g}$ is a linear isomorphism for each $u\in \mathcal{G}$;
\item $\omega$ is $P$-equivariant, i.e. $(r^p)^*\omega = Ad(p^{-1})\circ \omega$ for all $p\in P$ (where $r^p$ denotes the right action of $p$ on $\mathcal{G}$);
\item $\omega$ returns the generators of fundamental vector fields, i.e.
$$
\omega \left(\left.\frac{d}{dt}\right|_0u\cdot exp(tX)\right)=X
$$
for all $u\in \mathcal{G}$ and $X\in \mathfrak{p}=\mathrm{Lie}(P)$.
\end{itemize}
These properties more generally define what is called a \emph{Cartan connection} of type $(G,P)$ on a $P$-principal bundle $\mathcal{G}$. In the model case the Lie group $G$ can be seen as a $P$-principal over the model space $G/P$ and the Maurer-Cartan 1-form $\omega_{MC}$, which evaluates left invariant vector fields at the identity, is a Cartan connection which has vanishing curvature by the Maurer-Cartan structure equations
$$
\mathrm{d}\omega_{MC} + \omega_{MC}\wedge \omega_{MC} =0.
$$

The \emph{adjoint tractor bundle} $\mathcal{A}\rightarrow M$ of a conformal manifold $(M,\cc)$ is the associated bundle to the conformal Cartan bundle $\mathcal{G}$ corresponding to the adjoint representation of $G$ (restricted to $P$), i.e. $\mathcal{A}=\mathcal{G}\times_P \mathfrak{g}$. Since $\mathfrak{g}=\mathfrak{so}(p+1,q+1)$ can be identified with the skew-symmetric endomorphisms of $\mathbb{R}^{p+1,q+1}$ the adjoint tractor bundle can similarly be identified with the bundle of skew-symmetric endomorphisms of the standard tractor bundle $\mathcal{T}$ (with respect to the tractor metric). Clearly then we may identify $\mathcal{A}$ with $\mathcal{E}_{[AB]}$ by lowering a tractor index. An adjoint tractor $\mathbb{L}^A{_B}$ can be written in terms of the direct sum decomposition of $\mathcal{T}$ (and hence $\mathcal{A}$) as 
\begin{equation} \label{AdjTrac}
\left(\begin{array}{ccc} -\nu & -l_b & 0 \\
-\rho^a & \mu^a{_b} & l^a \\ 
0 & \rho_b & \nu \\
\end{array}\right)
\end{equation}
where $\mu_{ab}=\mu_{[ab]}$ and the matrix acts on standard tractors from the left, as the tractor curvature does in \nn{curvform} (the tractor curvature is in fact an adjoint tractor valued 2-form). It is easy to see that the two appearances of $l^a$ make up the ``top slot'' of $\mathbb{L}^A{_B}$, so that there is an invariant projection $\Pi$ from $\mathcal{A}$ to $TM$ that takes $\mathbb{L}^A{_B}$ to $l^a$.

By writing an adjoint tractor $\mathbb{L}^A{_B}$ in terms of the splitting tractors ($X^A$, $Z^A_a$, $Y^A$) corresponding to a choice of metric $g$ one can easily obtain the formula for the tractor connection acting on $\mathbb{L}^A{_B}$ using \nn{connids}. If $\mathbb{L}^A{_B}$ is given in matrix form (w.r.t. $g$) by \nn{AdjTrac} then we have
\begin{equation} \label{AdjTracConn}
\nabla_a \mathbb{L}^B{_C} \overset{g}{=} 
\left(\begin{array}{ccc} * & * & 0 \\
* & \bg^{bb'}(\nabla_a\mu_{b'c} - 2\V_{a[b'}l_{c]} + 2\bg_{a[b'}\rho_{c]}) & \nabla_a l^b + \mu_a{^b} + \nu \delta_a^b \\ 
0 & \nabla_a\rho_c - \V_a^b\mu_{bc} +\nu \V_{ac} & \nabla_a\nu -\V_a^b l_b -\rho_a\\
\end{array}\right)
\end{equation}
where the entries marked with a $*$ are determined by skew-symmetry.

\subsection{Prolonging the conformal Killing equation}
Note that if $\mathbb{L}$ is a parallel adjoint tractor, and $l=\Pi(\mathbb{L})$, then from the above display we must have
$$
\nabla_a l_b + \mu_{ab} + \nu \bg_{ab} = 0
$$
where $\mu_{ab}$ is skew. This implies that $\nabla_{(a} l_{b)_o}=0$, in other words $l^a$ is a \emph{conformal Killing vector field}. A conformal Killing vector field $k^a$ is a nonzero solution of the conformally invariant equation
$$
\nabla_{(a} k_{b)_o} =0,
$$ 
where $k_b=\bg_{bc}k^c$. Geometrically this equation says that the local flow of $k$ preserves any metric $g\in \cc$ up to a conformal factor, equivalently, the Lie derivative of any metric $g\in \cc$ with respect to $k$ is proportional to $g$. It is natural to ask whether there is a 1-1 correspondence between conformal Killing vector fields and (nonzero) parallel adjoint tractor fields -- the answer to this question turns out to be no, except for on the flat model (where one can use this correspondence to easily write the $d(d-1)$-dimensional space of Killing vector fields explicitly).

One can however construct a different conformally invariant connection on the adjoint tractor bundle which does prolong the conformal Killing equation, i.e. for which parallel sections are in 1-1 correspondence with solutions. One can obtain this system directly by (fixing $g$ and $\nabla=\nabla^g$ and then) writing the equation $\nabla_{(a} k_{b)_o} =0$ as
$$
\nabla_a k_b = \mu_{ab} + \nu \bg_{ab},
$$
introducing the new variables $\mu_{ab}\in \Gamma(\ce_{[ab]}[2])$ and $\nu\in \Gamma(\ce)$. Beyond this the key step is to introduce the fourth variable 
$$
\rho_a = \nabla_a\nu +\V_{ab}k^b \in \Gamma(\ce_a)
$$
(rather than $\rho_a=\nabla_a \nu$), c.f. \nn{AdjTracConn}. The remainder of the prolongation process simply involves taking covariant derivatives of the above two displays and then skew-symmetrising over certain pairs of indices in them in order to bring out curvature terms (as well as using Bianchi identities to simplify expressions); from these ``differential consequences'' of the above two displays one may derive expressions for $\nabla_a\mu_{bc}$ and $\nabla_a\rho_{b}$ which are linear in the other three variables (respectively). (The way to accomplish this process efficiently is to suppose you have a flat Levi-Civita connection first and go through the process to obtain both expressions, then go back through the same steps and take into account the non-vanishing curvature for the general case.) The result is the following system of differential equations
\begin{eqnarray*}
\nabla_a k_b &=& \nu \bg_{ab} + \mu_{ab} \\
\nabla_a \mu_{bc} &=& - 2\V_{a[b}k_{c]} - 2\bg_{a[b}\rho_{c]} +W_{dabc}k^d \\
\nabla_a \nu &=& \rho_a - \V_{ab}k^b \\
\nabla_a \rho_b &=& -\V_a^c\mu_{bc} - \V_{ab}\nu - C_{cab}k^c
\end{eqnarray*}
where $C_{abc}=2\nabla_{[a}\V_{bc]}$ is the Cotton tensor.

From the above system we can see that if we define the connection $\tilde{\nabla}$ on $\mathcal{A}$ by
\begin{equation}
\tilde{\nabla}\mathbb{L} = \nabla \mathbb{L} + i_{\Pi (\mathbb{L})}\kappal \quad \mbox{for all} \quad \mathbb{L}\in \Gamma(\mathcal{A})
\end{equation}
then there is a one to one correspondence between (nonzero) $\tilde{\nabla}$-parallel sections of $\mathcal{A}$ and conformal Killing vector fields on $(M,\cc)$. To check this we simply calculate (in a scale $g$) that
\begin{align*}
\tilde{\nabla}_a 
\left(\begin{array}{ccc} * & * & 0 \\
* & \mu^b{_c} & l^b \\ 
0 & \rho_c & \nu \\
\end{array}\right)
 &=
\left(\begin{array}{ccc} * & * & 0 \\
* & \bg^{bb'}(\nabla_a\mu_{b'c} - 2\V_{a[b'}l_{c]} + 2\bg_{a[b'}\rho_{c]}) & \nabla_a l^b + \mu_a{^b} + \nu \delta_a^b \\ 
0 & \nabla_a\rho_c - \V_a^b\mu_{bc} +\nu \V_{ac} & \nabla_a\nu -\V_a^b l_b -\rho_a\\
\end{array}\right) \\
& \quad \; \; +
\left(\begin{array}{ccc} * & * & 0 \\
* & W_{da}{^b}_c l^d & 0 \\ 
0 & -C_{dac}l^d & 0 \\
\end{array}\right)
\end{align*}
and observe that by setting the right hand side equal to zero (and substituting $k^a=-l^a$) we recover our prolonged system for the conformal Killing equation. Note that it is possible to take a a much more abstract and abstract approach to obtaining this prolonged system and the corresponding connection $\tilde{\nabla}$ on $\mathcal{A}$ (see, e.g., \cite{CapInfAut,CSbook}). From the general theory (or direct observation) we also have the invariant linear differential operator $L: TM\rightarrow \mathcal{A}$ which takes a vector field $l^a$ on $M$ to the adjoint tractor $\mathbb{L}^A{_B}$ given in a scale $g$ by \nn{AdjTrac} with $\mu_{ab}=-\nabla_{[a}l_{b]}$, $\nu=-\frac{1}{d}\nabla_a l^a$, and $\rho_a = \nabla_a \nu - \V_{ab}l^b$. Clearly $\Pi\circ L =\mathrm{id}_{TM}$ and consequently $L$ is referred to as a \emph{differential splitting operator}.

\subsection{The fundamental derivative and Lie derivatives of tractors}

Let $\mathbb{V}$ be a representation of $P$ and let $\mathcal{V}=\mathcal{G}\times_P \mathbb{V}$. If a function $\tilde{v}:\mathcal{G}\rightarrow \mathbb{V}$ satisfies
$$
\tilde{v}(u\cdot p) = p^{-1}\cdot \tilde{v}(u) \quad \mbox{for all}\quad  u\in \mathcal{G},\, p\in P  
$$
then we say that $\tilde{v}$ is $P$-\emph{equivariant}. Observe that if $\tilde{v}:\mathcal{G}\rightarrow \mathbb{V}$ is a smooth $P$-equivariant map then the map $v:M\rightarrow \mathcal{V}$ which takes $x$ to $[(u,v(u))]$ for some $u\in \mathcal{G}_x$ defines a smooth section of $\mathcal{V}\rightarrow M$ (being independent of the choice of $u\in \mathcal{G}_x$ for each $x$). It is easy to see that sections of $\mathcal{V}\rightarrow M$ are in 1-1 correspondence with such functions. (We have used this already in discussing conformal densities where $\mathbb{R}_+$ replaces $P$ and $\mathcal{Q}$ replaces $\mathcal{G}$, see section \ref{cg}.) Now observe that the conformal Cartan connection $\omega$ allows us to view smooth $P$-equivariant functions $\tilde{\mathbb{L}}:\mathcal{G}\rightarrow\mathfrak{g}$ as smooth vector fields $V_\mathbb{L}$ on $\mathcal{G}$ (since $\omega_u:T_u \mathcal{G}\rightarrow \mathfrak{g}$ is an isomorphism for each $u\in\mathcal{G}$ and $\omega$ is smooth). From the properties of the Cartan connection is not hard to see that any such vector field $V_\mathbb{L}$ must be $P$-invariant, that is, 
$$
(r^p)^* V_\mathbb{L} = V_\mathbb{L} \quad \mbox{for all} \quad p\in P
$$
where $r^p:\mathcal{G}\rightarrow \mathcal{G}$ denotes the right action of $p\in P$. Moreover, the Cartan connection gives a 1-1 correspondence between $P$-equivariant functions $\tilde{\mathbb{L}}:\mathcal{G}\rightarrow\mathfrak{g}$ and $P$-invariant vector fields on $\mathcal{G}$. Combining this with the previous observation we see that one may naturally identify $\Gamma(\mathcal{A})$ with the space $\mathfrak{X}(\mathcal{G})^P$ of $P$-invariant vector fields on $\mathcal{G}$.

The observations of the preceding paragraph allow us to define a new canonical differential operator acting on sections of any vector bundle $\mathcal{V}$ associated to the conformal Cartan bundle $\mathcal{G}$. Fix $\mathbb{L}\in \Gamma(\mathcal{A})$ and let $v\in \Gamma(\mathcal{V})= \Gamma(\mathcal{G}\times_P \mathbb{V})$, then $V_{\mathbb{L}}\tilde{v}=\mathrm{d}\tilde{v}(V_{\mathbb{L}})$ is again a $P$-equivariant function from $\mathcal{G}$ to $\mathbb{V}$ and thus defines a section $D_\mathbb{L} v$ of $\mathcal{V}$. Thus for each section $\mathbb{L}\in \Gamma(\mathcal{A})$ we have a first order differential operator $D_\mathbb{L}:\mathcal{V}\rightarrow \mathcal{V}$. It is easy to see that $D_{f\mathbb{L}}=fD_\mathbb{L}$ for any $f\in C^\infty(M)$ so that we really have a (first order) differential operator taking sections of $\mathcal{V}$ to sections of $\mathcal{A}^*\otimes \mathcal{V}$. The operator
$$
D:\mathcal{V}\rightarrow \mathcal{A}^*\otimes \mathcal{V}
$$
defined in this way is referred to as the \emph{fundamental derivative} (or \emph{fundamental D operator}). This operator was introduced by \v Cap and Gover in \cite{CapGoTAMS}.

Now if $k$ is a conformal Killing vector field on $(M,\cc)$ and $v$ is a section of $\mathcal{V}=\mathcal{G}\times_P \mathbb{V}$ then we may talk about the Lie derivative of $v$ with respect to $k$; since $\mathcal{V}\rightarrow M$ is a natural bundle in the category of conformal manifolds (with diffeomorphisms as maps) one may pull the section $v$ of $\mathcal{V}$ back by the (local) flow of $k$ and define the Lie derivative by
$$
\mathcal{L}_k v = \left.\frac{d}{dt}\right|_{t=0}(Fl^k_t)^*(v).
$$
Notice that if $\mathbb{L}$ is an section of the adjoint tractor bundle then $V_{\mathbb{L}}\tilde{v}$ (which gives the $P$-equivariant function corresponding to $D_{\mathbb{L}}v$) may be written as the Lie derivative $\mathcal{L}_{V_{\mathbb{L}}}\tilde{v}$, it should not come as a total surprise then that the Lie derivative and the fundamental derivative are connected. In fact, one has that
$$
\mathcal{L}_k v = D_{L(k)} v
$$
for any conformal Killing vector field $k$ on $(M,\cc)$ and any section $v$ of a natural bundle $\mathcal{V}=\mathcal{G}\times_P \mathbb{V}$ (this is proven in \cite{SeanMSc}). 

Using the results of \cite{CapGoTAMS} (and carefully comparing sign conventions) we have that
\begin{equation}\label{LieDensity}
D_\mathbb{L} \tau \overset{g}{=} \nabla_{l}\tau + w\nu \tau
\end{equation}
for sections $\tau$ of the density bundle $\ce[w]$ (which can be thought of as an associated bundle to $\mathcal{G}_0$ and hence to $\mathcal{G}$), where $\mathbb{L}$ is given in the scale $g$ by \nn{AdjTrac}. We also have that
$$
D_\mathbb{L} V = \nabla^\mathcal{T}_{\Pi(\mathbb{L})} V - \mathbb{L}(V)
$$
for all $\mathbb{L}\in\Gamma(\mathcal{A})$ and $V\in \Gamma(\mathcal{T})$. The operator $D_\mathbb{L}$ is easily seen to satisfy the Leibniz property and hence for any tractor field $T^{A\cdots B}{_{C\cdots D}}$ we have
\begin{align*}
D_\mathbb{L} T^{A\cdots B}{_{C\cdots D}} &=& \nabla^\mathcal{T}_{\Pi(\mathbb{L})} T^{A\cdots B}{_{C\cdots D}} - \mathbb{L}^A{_{A'}} T^{A'\cdots B}{_{C\cdots D}} - \cdots - \mathbb{L}^B{_{B'}} T^{A\cdots B'}{_{C\cdots D}}\\
& & + \mathbb{L}^{C'}{_C} T^{A\cdots B}{_{C'\cdots D}} + \cdots +\mathbb{L}^{D'}{_D} T^{A\cdots B}{_{C\cdots D'}}.
\end{align*}

From all of this we can finally write down an explicit formula for the Lie derivative of a standard tractor field $V^A$ in terms of ``slots'': if  $k$ is a conformal Killing vector field on $(M,\cc)$ and $V^A \overset{g}{=} (\sigma, \mu^a, \rho)$ then
\begin{eqnarray*}
\mathcal{L}_k V^A &=& k^b \nabla_b V^B + \mathbb{K}^A{_B} V^B \\ &\overset{g}{=}&
k^b\left(\begin{array}{c} 
\nabla_b \si - \mu_b  \\
\nabla_b\mu^a +\rho\delta^a_b +\si\V^a_b \\ 
\nabla_b \rho - \V_{ab}\mu^a  \\
\end{array}\right) +
\left(\begin{array}{ccc} 
 -\nu   &  k_b      &   0  \\
-\rho^a & \mu^a{_b} & -k^a \\ 
   0    &  \rho_b   &  \nu \\
\end{array}\right) 
\left(\begin{array}{c} 
\si  \\
\mu^b  \\ 
\rho \\
\end{array}\right)\\
&\overset{g}{=}&
\left(\begin{array}{c} 
k^b\nabla_b \si - \nu\si  \\
k^b\nabla_b\mu^a + \mu^a{_b}\mu^b - \si \nabla^a\nu  \\ 
k^b\nabla_b \rho + \nu\rho + \mu^a\nabla_a\nu  \\
\end{array}\right)
=
\left(\begin{array}{c} 
\mathcal{L}_k \si  \\
\mathcal{L}_k \mu^a - \si \nabla^a\nu  \\ 
\mathcal{L}_k \rho + \mu^a\nabla_a\nu  \\
\end{array}\right)
\end{eqnarray*}
where $\mathbb{K}=L(-k)$ and we have used that $\rho_a = \nabla_a \nu + \V_{ab}k^b$ and that $\mathcal{L}_k=D_{-\mathbb{K}}$ on densities so that by \nn{LieDensity} we have $\mathcal{L}_k\si = \nabla_k \si - \nu\sigma$, $\mathcal{L}_k\rho = \nabla_k \rho + \nu\rho$, and
\begin{eqnarray*}
\mathcal{L}_k \mu^a &=& (k^b\nabla_b \mu^a - \mu^b \nabla_b k^a) + \nu\mu^a \\
&=& k^b\nabla_b\mu^a - \mu^b(\mu_b{^a}+\nu\delta_b^a) + \nu\mu^a \\
&=& k^b\nabla_b \mu^a + \mu^a{_b}\mu^b
\end{eqnarray*}
since $\mu^a$ has conformal weight $-1$. 

Note that one can also, of course, calculate the expression for $\mathcal{L}_k$ on densities and on standard tractor fields directly from the definition (which was done in \cite{SeanMSc}) by looking at what you get when you pull back densities and standard tractors by the local flow of $k$. As a check of our above formula for $\mathcal{L}_k V^A$ we observe that in the case where $k$ is a Killing vector field for $g$ then the flow of $k$ preserves $g$ and hence also preserves the splitting tractors $X^A$, $Z^A_a$ and $Y^A$ so that $\mathcal{L}_k X^A=0$, $\mathcal{L}_k Z^A_a=0$, and $\mathcal{L}_k Y^A=0$; thus by the Leibniz property and linearity one immediately has that if $V^A=\si X^A + \mu^aZ^A_a + \rho Y^A$ then $\mathcal{L}_k V^A=(\mathcal{L}_k\si) X^A + (\mathcal{L}_k\mu^a)Z^A_a + (\mathcal{L}_k\rho) Y^A$ which is consistent with our above formula for $\mathcal{L}_k V^A$ since $\mathcal{L}_k g=0$ forces $\nu=\frac{1}{d}\nabla^g_ak^a$ to be zero.

\begin{remark}
On vector fields the fundamental derivative acts according to
$$
D_\mathbb{L} v^b = l^a\nabla_a v^b + (\mu_a{^b} + \nu \delta_a^b)v^a
$$
where $\mathbb{L}$ is given in the scale $g$ by \ref{AdjTrac} and $\nabla=\nabla^g$. Thus if $k$ is a conformal Killing vector field then applying $\mathcal{L}_k=D_{L(k)}$ on vector fields simply returns the usual formula for the Lie derivative in terms of a torsion free (in this case Levi-Civita) connection:
$$
\mathcal{L}_k v^b = k^a \nabla_a v^b - (\nabla_a k^b)v^a.
$$ 
Similarly, using the Leibniz property of the fundamental derivative, we have that
\begin{align*}
D_\mathbb{L} T^{b \cdots c}{_{d\cdots e}} &=& l^a\nabla_a T^{b \cdots c}{_{d\cdots e}} 
- (\mu_a{^b} + \nu \delta_a^b)T^{a \cdots c}{_{d\cdots e}} - \cdots - (\mu_a{^c} + \nu \delta_a^c)T^{b \cdots a}{_{d\cdots e}} \\
&& + (\mu_d{^a} + \nu \delta_d^a)T^{b \cdots c}{_{a\cdots e}}+\cdots + (\mu_e{^a} + \nu \delta_e^a)T^{b \cdots c}{_{d\cdots a}},
\end{align*}
and again applying $\mathcal{L}_k=D_{L(k)}$ for a conformal killing vector field $k$ simply yields the standard formula for the Lie derivative. These observations further demonstrate the consistency of our claims. What's more, we may now calculate the fundamental derivative of any weighted tensor-tractor field $T^{b \cdots c}{_{d\cdots e}}{^{B\cdots C}}{_{D\cdots E}}$. For instance, by writing $\bg_{ab}$ as $\sigma^2g_{ab}$ and using the Leibniz property one may easily show that
$$
D_\mathbb{L} \bg_{ab} = 0
$$
for all $\mathbb{L}\in \Gamma(\mathcal{A})$. \endrk
\end{remark}

\subsubsection{Static and stationary spacetimes} As an application of the above, we observe that if $g\in\cc$ is an Einstein metric with corresponding scale tractor $I$ and $k$ is a Killing vector field for $g$ with $\mathbb{K}=L(-k)$ then $\mathbb{K}(I)=0$. This follows from the fact $\mathcal{L}_k I=0$ since the flow of $k$ preserves $g$ (and hence also $I$), and from the above formula for the Lie derivative of $I$,
$$
\mathcal{L}_k I^A = \nabla_k I^A + \mathbb{K}^A{_B}I^B = \mathbb{K}^A{_B}I^B,
$$ 
since $I$ is parallel. Moreover, if $k$ is \emph{hypersurface orthogonal} (i.e. its orthogonal distribution is integrable) then it is easy to see that $\mathbb{K}_{[AB}\mathbb{K}_{C]D}X^D=0$ so that $K_{AB}$ is simple and can be written as $2v_{[A} K_{B]}$ where $K_B=X^A\mathbb{K}_{AB}$ and $v_A K^A =0$; on top of this, from $\mathbb{K}_{AB}I^B=0$ we obtain $K_A I^A=0$ and hence also $v_A I^A =0$. These observations form the starting point for the development of conformal tractor calculus adapted to static and stationary spacetimes. In particular, we note that in the case where $(M,g,k)$ is a static spacetime then $K_A = u N_A$ where $N_A$ is the normal tractor to each of the spacelike hypersurfaces in the foliation given by $k^\perp \subset TM$ and $u$ is the so called \emph{static potential} (if we trivialise the density bundles using $g$); thus $K_A I^A=0$ and $K_A v^A =0$ imply that both $I^A$ and $v^A$ lie in the intrinsic tractor bundle of the foliating spacelike hypersurfaces and one can carry out dimensional reduction using tractors. One can in fact still carry out dimensional reduction in the stationary case by identifying tractors which are Lie dragged by $k$ and orthogonal to $K_B=X^A\mathbb{K}_{AB}$ with conformal tractors on the manifold of integral curves of $k$ (see \cite{SeanMSc} for further details in both cases).


\begin{thebibliography}{XX}



\bibitem{AtiyahBP} M.\ Atiyah, R.\ Bott, and V.K.\ Patodi, {\em On the heat 
		equation and the index theorem}, Inventiones Mathematicae, 
		{\bf 19} (1973), 279--330.

\bibitem{BEG} T.N.\ Bailey, M.G.\ Eastwood, and A.R.\ Gover, {\em
		Thomas's structure bundle for conformal, projective and related
		structures}, Rocky Mountain J.\ Math., {\bf 24} (1994),
		1191--1217.

\bibitem{BEGr} T.N.\ Bailey, M.G.\ Eastwood, and C.R.\ Graham, {\em Invariant
		theory for conformal and CR geometry}, Annals of Mathematics,
		{\bf 139} (1994), 491--552.
		
\bibitem{Branson-ConfSpin} T.\ Branson, {\em Conformal structure and
  spin geometry}, in Dirac Operators: Yesterday and Today, Int. Press,
  (2005), 163-191.

\bibitem{BrGonon} T.\ Branson and A.R.\ Gover, {\em Conformally
  invariant non-local operators}, Pacific Journal of Mathematics, {\bf
  201} (2001), 19--60.
		
\bibitem{CapInfAut} A.\ \v Cap, {\em Infinitesimal automorphisms and
  deformations of parabolic geometries}, J. Eur. Math. Soc., {\bf 10}
  (2008), 415--437.

\bibitem{CapGoamb} A.\ \v Cap and A.R.\ Gover, {\em Standard tractors
  and the conformal ambient metric construction}, Annals Global
  Anal.Geom.,  {\bf 24} (2003), 231--259.

\bibitem{CGTracBund} A.\ \v Cap and A.R.\ Gover, {\em Tractor bundles
  for irreducible parabolic geometries}, in S.M.F. Colloques,
  Seminaires \& Congres, {\bf 4} (2000), 129--154.

\bibitem{CapGoTAMS} A.\ \v Cap and A.R.\ Gover, {\em Tractor calculi
		for parabolic geometries}, Trans.\ Amer.\ Math.\ Soc., {\bf 354}
		(2002), 1511--1548.

\bibitem{CGHjlms} A.\ \v Cap, A.R.\ Gover, and M.\ Hammerl, {\em
  Projective BGG equations, algebraic sets, and compactifications of
  Einstein geometries}, J.\ London Math.\ Soc., {\bf 86} (2012), 
  433--454.


\bibitem{CGH} A.\ \v Cap, A.R.\ Gover, and M.\ Hammerl, {\em Holonomy
  reductions of Cartan geometries and curved orbit decompositions},
  Duke Math.\ J., {\bf 163} (2014), 5, 1035--1070.


\bibitem{CSbook} A.\ \v Cap, J.\ Slov\'{a}k, {Parabolic Geometries I:
		Background and General Theory}, Mathematical Surveys and
		Monographs. American Mathematical Society, Providence, RI, 2009.

\bibitem{cherrier} P.\ Cherrier, {\em Probl\`emes de Neumann non
  lin\'eaires sur les vari\`et\`es riemanniennes},
  J.\ Funct.\ Anal., {\bf 57} (1984), 154--206.

\bibitem{SeanMSc} S. Curry, Conformal Tractor Calculus for Stationary and
		Static Spacetimes, MSc thesis, University of Auckland, 2012.

\bibitem{DiScalaManno} A.J.\ Di Scala and G. Manno, {\em On the
  extendability of parallel sections of linear connections}, preprint,
  arXiv:1405.7688v1 [math.DG], (2014).

\bibitem{Dirac} P.A.M.\ Dirac, {\em The electron wave equation in de-Sitter
		space}, Ann.\ Math., {\bf 36} (1935), 657--669.

\bibitem{FGbook} C.\ Fefferman and C.R.\ Graham, The Ambient Metric, Annals of 
		Mathematics Studies, 178. Princeton University Press, 2012. 
		Available online: \url{http://arxiv.org/pdf/0710.0919v2.pdf}

\bibitem{Fialkow} A.\ Fialkow, {\em Conformal geometry of a subspace,}
  Trans.\ Amer.\ Math.\ Soc., {\bf 56} (1944), 309--433.

\bibitem{Joerg-ConfInf} J.\ Frauendiener, {\em Conformal infinity},
  Living Rev. Relativity, {\bf 7} (2004), 1. {[}Online Aricle{]}: cited
  21/04/2014, \url{http://www.livingreviews.org/lrr-2004-1}
	
\bibitem{FrauSpar} J.\ Frauendiener and G.A.J.\ Sparling, {\em Local
  twistors and the conformal field equations}, J.\ Math.\ Phys., {\bf
  41} (2000), 437--443.

\bibitem{Friedrich-ConfEinsteinEv} H.\ Friedrich, {\em Conformal
  Einstein evolution}, in Proceedings of the Tubingen Workshop on the
  Conformal Structure of Space-times, (H. Friedrich and
  J. Frauendiener, Eds.), Springer Lecture Notes in Physics, {\bf 604}
  (2002), 1--50.
		
\bibitem{Geroch-LocalChar} R.\ Geroch, {\em Local characterization of
  singularities in general relativity}, J.\ Math.\ Phys., {\bf 9} (1968),
  450-465.

\bibitem{Geroch-Asympt} R.\ Geroch, {\em Asymptotic structure of space-time}, in Asymptotic Structure of Space-Time, (F.P. Esposito and L. Witten, Eds.), Springer, (1977), 1-105.

\bibitem{Gal} A.R.\ Gover, {\em Almost Einstein and
		Poincare-Einstein manifolds in Riemannian signature}, J.\
		Geometry and Physics, {\bf 60} (2010), 182--204.

\bibitem{Go-srni1} A.R.\ Gover, {\em Aspects of parabolic invariant
  theory}, Supplemento ai Rendiconti del Circolo Matematico di
  Palermo, Serie II, {\bf 59} (1999), 25--47.

\bibitem{BrGo-deRham} A.R.\ Gover, {\em Conformal de Rham Hodge theory
  and operators generalising the Q-curvature}, Supplemento ai
  Rendiconti del Circolo Matematico di Palermo, Serie II, {\bf 75}
  (2005), 109--137.

\bibitem{Go-Adv} A.R.\ Gover, {\em Invariant theory and calculus for
  conformal geometries}, Advances in Mathematics, {\bf 163} (2001),
  206--257.

\bibitem{GLW} A.R.\ Gover, E. Latini, and A.\ Waldron, {\em
  Poincare-Einstein holography for forms via conformal geometry in the
  bulk}, Mem.\ Amer.\ Math.\ Soc., {\bf 235} (2014), 1--101.  \quad
  arXiv:1205.3489 [math.DG]


\bibitem{GN} A.R.\ Gover and P.\ Nurowski, {\em Obstructions to
  conformally Einstein metrics in $n$ dimensions}, J.\ Geom.\ Phys.,
  {\bf 56} (2006), 450--484.

\bibitem{GoPet-Lap} A.R.\ Gover and L.J.\ Peterson, {\em Conformally
  invariant powers of the Laplacian, Q-curvature, and tractor
  calculus}, Communications in Mathematical Physics, {\bf 235} (2003),
  339--378.

\bibitem{GoPetObstrn} A.R.\ Gover and L.J.\ Peterson, {\em The ambient
  obstruction tensor and the conformal deformation complex}, Pacific
  Journal of Mathematics, {\bf 226} (2006), 309--351.



\bibitem{GoY} A.R.\ Gover and Y.\ Vyatkin, in progress.


\bibitem{GoW-boundcalc} A.R.\ Gover and A.\ Waldron, {\em Boundary
  calculus for conformally compact manifolds}, Indiana
  Univ.\ Math.\ J., {\bf 63} (2014), 119--163. \quad arXiv:1104.2991v2
  [math.DG].



\bibitem{GoW-willmore} A.R.\ Gover and A.\ Waldron, {\em Generalising
  the Willmore equation: submanifold conformal invariants from a
  boundary Yamabe problem}, preprint, arXiv:1407.6742v1 [hep-th],
  (2014).



\bibitem{GJMS} C.R.\ Graham, R.\ Jenne, L.J.\ Mason, and
  G.A.J.\ Sparling, {\em Conformally invariant powers of the
    Laplacian. I. Existence}, J.\  London Math.\ Soc., {\bf 46} (1992),
  557--565.
		
\bibitem{GWsubtleties} C.R.\ Graham and T.\ Willse, {\em Subtleties
  concerning conformal tractor bundles}, Cent.\ Eur.\ J.\ Math., {\bf
  10} (2012), 1721--1732

\bibitem{Grant} D.H. Grant, A Conformally Invariant Third Order Neumann-Type 
		Operator for Hypersurfaces, MSc thesis, University of Auckland, 
		2003.

\bibitem{KNT} C.\ Kozameh, E.T.\ Newman, K.P.\ Tod, {\em Conformal
  Einstein spaces}, Gen.\ Relativity Gravitation, {\bf 17} (1985),
  343--352.

\bibitem{LeBrunAmbi85} C.\ LeBrun, {\em Ambitwistors and Einstein's equations}, 
		Class. Quant. Grav., {\bf 2} (1985), 555--563.

\bibitem{Leitner} F.\ Leitner, {\em Conformal Killing forms with
		normalisation condition}, Rend.\ Circ.\ Mat.\ Palermo
		Suppl.\ No. {\bf 75} (2005),  279--292.

\bibitem{Luebbe} C.\ L\"ubbe, {\em A conformal extension theorem based
  on null conformal geodesics}, J.\ Math.\ Phys., {\bf 50}, 112502,
  (2009).

\bibitem{LuebbeCFE} C.\ L\"ubbe, {\em The conformal field equations
  and the tractor formalism in general relativity}, conference
  presentation, BritGrav 10, Dublin, 7th April 2010. Slides available
  online: \url{http://www.dcu.ie/~nolanb/Luebbe.pdf}

\bibitem{LuebbeTod} C.\ L\"ubbe and K.P. Tod, {\em An extension
  theorem for conformal gauge singularities}, J.\ Math.\ Phys., {\bf 50},
  112501, (2009).


\bibitem{palais} R.S.\ Palais, Seminar on the Atiyah-Singer index
  theorem. With contributions by M. F. Atiyah, A. Borel, E. E. Floyd,
  R. T. Seeley, W. Shih and R. Solovay. Annals of Mathematics Studies,
  No. 57, Princeton University Press, 1965.

\bibitem{PanSIGMA} S.M.\ Paneitz, {\em A quartic conformally covariant
  differential operator for arbitrary pseudo-Riemannian manifolds
  (Summary)}, SIGMA, {\bf 4} (2008), Paper 036, 3 pp.

\bibitem{Penrose-ConfInfLetter} R.\ Penrose, {\em Asymptotic
  properties of fields and space-times}, Phys.\ Rev.\ Lett.,
  \textbf{10} (1963), 66--68.

\bibitem{Penrose-LightConeInf} R.\ Penrose, {\em The light cone at
  infinity}, in Relativistic Theories of Gravitation, (L. Infeld,
  Ed.), Pergamon Press, Oxford, U.K., (1964), 369-373.
		
\bibitem{Penrose-str} R.\ Penrose, {\em Structure of space-time}, in 	
		Battelle Rencontres, (DeWitt, C.M., and Wheeler, J.A., Eds.), 
		W.A. Benjamin, Inc., New York, NY, U.S.A., (1968), 121--235.
		
\bibitem{Penrose-zrmFieldAsympt} R.\ Penrose, {\em Zero rest-mass
  fields including gravitation: asymptotic behaviour},
  Proc.\ R.\ Soc.\ Lond.,  {\bf A284} (1965), 159-203.
		
\bibitem{PenroseMacCallum} R.\ Penrose, M.\ MacCallum, {\em Twistor
  theory: an approach to the quantisation of fields and space-time},
  Physics Reports (Section C of Physics Letters), {\bf 6} (1972), 
  241--316.
		
\bibitem{Stredder} P. Stredder, {\em Natural differential operators on
  Riemannian manifolds and representations of the orthogonal and
  special orthogonal groups}, J.\ Differential Geom., {\bf 10} (1975),
  647-660.


\bibitem{Veblen} O.\ Veblen, {\em A conformal wave equation}, Proc. Nat.
		Acad. Sci. USA, {\bf 21} (1935), 484--487.

\bibitem{YuriTh} Y.\ Vyatkin, Manufacturing Conformal Invariants of
  Hypersurfaces, PhD thesis, University of Auckland, 2013.

\bibitem{Wald} R.M.\ Wald, General Relativity, University of
		Chicago Press, 1984.



\end{thebibliography}
\end{document}